

\documentclass[a4paper, 11pt]{amsart}


\usepackage[T1]{fontenc}
\usepackage[english]{babel}
\usepackage[utf8]{inputenx}
\usepackage{amsmath,amsfonts,amssymb,amsthm,mathrsfs,pinlabel,graphicx} 
\usepackage{tikz}
\usetikzlibrary{arrows}
\usepackage[all,graph]{xy}
\usepackage{todonotes}
\usepackage{fancyhdr}
\usepackage{url}



%
%
%
%
%
%
%
\numberwithin{equation}{section}
\numberwithin{figure}{section}


\def\ol#1{\overline{#1}}

\newtheorem{theorem}{Theorem}[section]
\newtheorem*{theorem*}{Theorem}
\newtheorem{lemma}[theorem]{Lemma}
\newtheorem{proposition}[theorem]{Proposition}
\newtheorem{corollary}[theorem]{Corollary}

\theoremstyle{remark} 
\newtheorem{remark}[theorem]{Remark}
\newtheorem{example}[theorem]{Example}

\newtheorem*{ack}{Acknowledgements}

\theoremstyle{definition} 
\newtheorem{definition}[theorem]{Definition}
\newtheorem*{motto}{Motto}
\newtheorem{conjecture}[theorem]{Conjecture}
\newtheorem{problem}{Problem}


\def\spinc{Spin${}^c$} 
\def\sss{\mathfrak{s}}
\def\sst{\mathfrak{t}}

\DeclareMathOperator{\rk}{\operatorname{rank}}

\DeclareMathOperator{\Sym}{\operatorname{Sym}}
\DeclareMathOperator{\Spin}{\operatorname{Spin}}
\DeclareMathOperator{\SO}{\operatorname{SO}}

\DeclareMathOperator{\U}{\operatorname{U}}


\newcommand\cL{{\mathcal L}}
\newcommand\cM{{\mathcal M}}

\newcommand\cO{{\mathcal O}}

\newcommand\CC{{\mathbb C}}

\newcommand\QQ{{\mathbb Q}}
\newcommand\RR{{\mathbb R}}

\newcommand\TT{{\mathbb T}}
\newcommand\ZZ{{\mathbb Z}}

\newcommand\fs{{\mathfrak s}}
\newcommand\ft{{\mathfrak t}}

\newcommand\gL{{\Lambda}}

\newcommand\gS{{\Sigma}}

\newcommand\ga{{\alpha}}
\newcommand\gb{{\beta}}
\newcommand\ggm{{\gamma}}

\newcommand\gs{{\sigma}}

\newcommand\bfx{{\mathbf{x}}}
\newcommand\bfy{{\mathbf{y}}}
\newcommand\bfz{{\mathbf{z}}}

\newcommand\bfga{{\boldsymbol\alpha}}
\newcommand\bfgb{{\boldsymbol\beta}}
\newcommand\bfgc{{\boldsymbol\gamma}}


\mathchardef\ordinarycolon\mathcode`\:
\mathcode`\:=\string"8000
\begingroup \catcode`\:=\active
\gdef:{\mathrel{\mathop\ordinarycolon}}
\endgroup


\title[Heegaard Floer Homologies]{Heegaard Floer Homologies and Rational Cuspidal Curves. Lecture notes.} 

\author{Adam Baranowski}
\address{Institute of Mathematics, University of Warsaw, ul. Banacha 2,
02-097 Warsaw, Poland.}
\email{adam.baranowski@protonmail.com}
\author{Maciej Borodzik}
\address{Institute of Mathematics, Polish Academy of Science, ul. \'Sniadeckich 8, Warsaw, Poland.}
\address{Institute of Mathematics, University of Warsaw, ul. Banacha 2,
02-097 Warsaw, Poland.}
\email{mcboro@mimuw.edu.pl}
\author{Juan Serrano de Rodrigo}
\address{Dpto. de Matemáticas, Universidad de Zaragoza, C/ Pedro Cerbuna 12, 50009 Zaragoza, España.}
\email{juansr@unizar.es}
\thispagestyle{empty}
\pagestyle{fancy}
\lhead{\small A.~Baranowski, M.~Borodzik and J.~Serrano}	
\rhead{\small Heegaard Floer Homologies}


\begin{document}
	

\begin{abstract} 
This is an expanded version of the lecture course the second author gave at Winterbraids VI in Lille in February 2016.
\end{abstract}
	
\maketitle


%

\section{Introduction} \label{Introduction}

Heegaard Floer homologies were defined around 2000 by Ozsv\'ath and Szab\'o. Since then a lot of research has been done in the subject and the number of papers that have appeared in the last 15 years is immense. It appears now that the whole knot theory and topology of  three--manifolds has been affected at least in some way by this new theory. 

Even though it is generally believed and almost completely proved  (see \cite{KuLeTa11})   that Heegaard Floer theory contains the same amount of information as the Seiberg--Witten theory, the Heegaard Floer theory has an advantage over the latter, namely often problems in Heegaard Floer theory
can be reduced to combinatorics of Heegaard diagrams, which makes Heegaard Floer theory  more accessible to an inexperienced reader. Moreover, this
combinatorial flavor of Heegaard Floer theory sometimes makes it possible to effectively calculate Heegaard Floer homology groups, for example
from a surgery formula \cite{OzSz-integer,OzSz11,OM}.
 
As for the knot Floer theory: given any knot, there is not only an algorithm calculating knot homology groups \cite{OSS-book}, 
but also one often understands general properties of Floer chain complexes for knots, like torus knots and alternating knots, including two--bridge knots.

The immense speed of the development of Heegaard Floer theory makes it quite difficult for a non--expert to get an overview of the field. In the ever-growing pile of articles on the subject it might be hard not to get lost and to find the most important articles.  Luckily, a few excellent
survey papers have appeared: those by Ozsv\'ath and Szab\'o \cite{OzSz-intro1,OzSz-intro2}, and more modern ones of Juh\'asz \cite{Juh} 
and Manolescu \cite{Man}. A recent book \cite{OSS-book} covers the grid diagram approaches to Heegaard Floer theory.

The aim of these notes is to give another introduction into the subject but this time with a clear view towards algebraic geometry. We focus on parts of the theory which are relevant in the applications, like L--space knots and  $d$--invariants. We omit parts which, at least at present, 
have little application in algebraic geometry.

\subsection{What is not in the notes?}

Actually only a small part of the theory is covered in the notes. We do not mention any analytic difficulties with defining the Heegaard Floer
theory rigorously, like compactness and smoothness of the moduli space of holomorphic disks used in \cite{os-threemanifold}. We focus mostly on rational homology three--spheres, not mentioning technical issues with defining Heegaard Floer homologies on manifolds with $b_1(Y)>0$. In particular, we do not discuss the action of $\Lambda^* H_1(Y;\ZZ)$ on the Heegaard Floer chain complex. Refer to \cite{levine-ruberman} for more details. 

Knot Floer homology is defined via Heegaard diagrams and only for knots. In the notes we do not give any construction via grid diagrams,  even though
it is purely combinatorial and has much less prerequisite knowledge;  nonetheless it seems somehow  that the original approach of Rasmussen and Ozsv\'ath--Szab\'o reveals better why knot Floer homology is such a powerful tool. For a detailed account on grid Floer homology we refer to an excellent book of Ozsv\'ath, Stipsicz and Szab\'o \cite{OSS-book} mentioned above. For other ways to define the knot Floer homology we refer to the survey of Manolescu \cite{Man} and references therein.

We do not discuss the construction and properties of Heegaard Floer theory for links. The definition might seem very similar for links as it is for knots, yet the applications are much harder. In particular, the surgery formula for links is very hard, see \cite{OM} for details and \cite{Liu} for exemplary applications.

We do not introduce the $\tau$--invariant, which is a smooth concordance invariant that detects the four-genus of many knots, including torus knots, 
see \cite{OS-fourball}: for algebraic links it is equal to the three--genus anyway, so it does not bring any new piece of information about algebraic knots. Likewise, we do not discuss the $\Upsilon$ function of Ozsv\'ath, Stipsicz and Szab\'o \cite{OSS}, which is a significant refinement of the $\tau$ invariant. 
For algebraic knots the $\Upsilon$ function is related to the $V_m$ invariants; see \cite{BH}.

Concordance invariants are only mentioned in the paper, we refer to a recent survey of Hom \cite{Hom} for more details.
The whole research concerning alternating links and Heegaard Floer--thin links is not mentioned in the article; see \cite{OS-alternating,MOq}. We do not discuss double branched covers of links and their $d$--invariants, like in \cite{MOw}. We do not provide any relations of Heegaard Floer theory with Khovanov homology; like in
\cite{OS-alternating}.

Sutured Heegaard Floer theory \cite{Juh-sut} as well as its younger cousin, the bordered Floer theory, see \cite{LOT0,LOT1,LOT2}, is not covered in these
notes. Bordered Heegaard Floer theory is a generalization of the Heegaard Floer theory for three-manifolds with boundary, with the aim to calculate
Heegaard Floer homology groups by a cut-and-paste method. The algebraic setup for the bordered Floer theory is rather complicated, but the theory itself contains
a lot of information, for example the S-equivalence class of a Seifert matrix of a knot can be read off from the bordered Floer homology of the knot
complement, see \cite{HLW}. It is known that knot Floer homology does not determine the Seifert matrix, see the discussion in \cite[Section 1]{HLW}.

On the singularity theory side, we do not give full details on the classification of algebraic knots (and links). A concise but self--contained description is given in the book of Eisenbud--Neumann \cite{EN}, which is also very well suited for topologists. We discuss only quickly and superficially the theory of rational cuspidal curves, referring to the thesis of Moe \cite{Moe08} or to a book of Namba \cite{Namba} for a more classical version. 
The techniques such as spectrum semicontinuity or applications of 
the Bogomolov--Miyaoka--Yau inequality are not given. A reader wishing to learn methods of spectrum semicontinuity is referred to \cite{FLMN04}, a nice application
of the Bogomolov--Miyaoka--Yau inequality in the theory of rational cuspidal curves is given also in \cite{Orev02}. 

\subsection{What is in the notes?}

Compared to what is not in the notes, the content of the paper is very scarce. 
With a view towards applications in algebraic geometry 
we try to give just about enough details for the reader
to understand the two results about semigroup distribution property of rational cuspidal curves:
Theorem~\ref{thm:BL-main1} and Theorem~\ref{thm:BL-main2}, as well as their proofs. Consequently, we introduce Heegard Floer homology
in Section~\ref{KFH}, where we also give a very brief description of \spinc{} structures on three-- and four--manifolds. In Section~\ref{sec:why}
we state two main results on Heegaard Floer theory: the adjunction inequality and the surgery exact sequence. These results are used in proofs
of most of the main theorems on Heegaard Floer theory. Section~\ref{sec:cobordism} deals with cobordisms in Heegaard Floer theory,
in particular, we define $d$--invariants, show their behavior on the cobordism and define the absolute grading in the Heegaard Floer homology. At present,
Theorem~\ref{thm:dinvariants} is the most important result of Ozsv\'ath--Szab\'o from the point of view of applications in algebraic geometry.

Next we discuss knot Floer homology in Section~\ref{sec:knots}. Our emphasis is on the $V_m$--invariants for knots introduced in
Section~\ref{sec:Vm} and then on L--space knots, which we discuss in detail in Section~\ref{sec:Lspace}.  

Section~\ref{sec:cuspidal} contains a (short and by no means complete) account on cuspidal singularities. We give basic definitions and
pass quickly to the construction and basic properties of semigroups of singular points. We provide relations between semigroups and
Alexander polynomials. We finish by linking the semigroups of singular points with the $V_m$--invariants of the links of singularities.

In Section~\ref{sec:rational} we first go quickly through recent results on rational cuspidal curves and give Theorem~\ref{thm:BL-main1}
and \ref{thm:BL-main2}, which are central results of these notes. We then discuss a relation of these results with the FLMN conjecture (Conjecture~\ref{conj:flmn}), whose motivation we also recall. Finally, we show highlights and weak points of Theorem~\ref{thm:BL-main2}, as well as a counterexample to the original Conjecture~\ref{conj:flmn} found by Bodn\'ar and N\'emethi.

We have decided to give the reader a lot of problems to solve. Most of these are quick observations, some of them might require extra work.
There is one problem, namely Problem~\ref{prob:onlyopen}, which is a research problem.

\begin{ack}
The authors would like to thank the organizers of Winterbraids VI for their effort in organizing the conference and for creating a place
for disseminating new ideas and building new perspectives in low-dimensional topology. The authors would also like to thank 
Marco Golla, Jen Hom, Charles Livingston and Andr\'as Stipsicz for many valuable comments on a preliminary version of the notes. We are
particularly indebted to the referee for his/her remarks that led to a significant improvement of the article. 
\end{ack}
\section{Heegaard Floer homology} \label{KFH}

\subsection{Preliminaries. \spinc{} structures on three-- and four--manifolds.} \label{section:spinc_structures}
This section gathers some facts about \spinc{} structures, which will be used in later sections. We will consider only \spinc{} structures
on the tangent bundle of a manifold. We refer to \cite[Chapter 2]{Fri} for a more detailed discussion. Other, concise
references are \cite[Section 1.4]{GS} or \cite[Section 1.3]{Nic}. A reader might want to skip this section at first reading.

Recall that for $n \ge 3$ the fundamental group of the special orthogonal group $\SO(n) := \SO(n,\RR)$ is $\pi_1(\SO(n))=\ZZ_2$. 
We define the \emph{spin group} $\Spin(n)$ to be the non-trivial double cover of $\SO(n)$, thus in the case $n \ge 3$, it is the universal cover of $\SO(n)$. By the construction there is a canonical inclusion $\ZZ_2 \hookrightarrow \Spin(n)$.
The group $\Spin^c(n)$ is defined to be
\begin{equation}\label{eq:spincdef}
\Spin^c(n) := \big( \Spin(n) \times \U(1) \big) / \ZZ_2.
\end{equation}
It fits into the following short exact sequence
$$
1 \rightarrow \U(1) \xrightarrow{i} \Spin^c(n) \xrightarrow{p} \SO(n) \rightarrow 1,
$$
where $i$ sends $z$ to $[1, z]$ and $p$ is the projection of $\Spin^c(n)$ onto $\SO(n)$ via $\Spin(n)$.
\begin{problem}
	Verify that the projection $p$ is well defined and gives rise to the short exact sequence above.
\end{problem}

Consider now an oriented, $n$--dimensional Riemannian manifold $M$. We can regard the tangent bundle $TM$ as associated to the $SO(n)$--principal bundle $P_{SO(n)}$ of oriented orthonormal frames.

\begin{definition}[$\Spin^c$ structure]
	A \emph{$\Spin^c$ structure} on $M$ is a pair $(P, \gL)$ consisting of a $\Spin^c(n)$--principal bundle $P$ over $M$ and a map $\gL : P \to P_{SO(n)}$ such that the diagram
	\[
\xymatrix{\Spin^c(n)\times P \ar[r] \ar[dd]^{p \times \gL} & P \ar[dd]^{\gL} \ar[dr] & \\
		& & M \\
		SO(n) \times P_{SO(n)}\ar[r] & P_{SO(n)} \ar[ur] &
	}
	\]
with horizontal maps being the group actions on principal bundles, commutes.
We denote the set of all $\Spin^c$ structures on $M$ as $\Spin^c(M)$.
\end{definition}




There is a group homomorphism $\pi : \Spin^c(n)\to S^1$, a projection on the second factor in \eqref{eq:spincdef} given by $[g, z] \mapsto z^2$. The composition
$\pi \circ i$ is then a double cover of $S^1$. Thus, given a \spinc{} structure $(P, \gL)$ on $M$, the map $\pi$ can be used to construct an $S^1$--principal bundle $P_1 = P / \Spin(n)$ over $M$. From this we can define the so-called \emph{determinant line bundle} $L\to M$, which is given by $L = P_1 \times_{S^1} \CC$. One can in fact think of a \spinc{} structure on $M$ as of a choice of a complex line bundle $L$ and a Spin structure on $TM\otimes L^{-1}$. We refer to \cite[Section 1.3]{Nic} and \cite[Section 2.4]{Fri} for more details.

\begin{definition}
The \emph{first Chern class} of a \spinc{} structure $\sss$ on a manifold $M$ is $c_1(\sss)=c_1(L)$.
\end{definition}
As $TM\otimes L^{-1}$ is a Spin bundle, its second Stiefel--Whitney class vanishes. A quick calculus on
characteristic classes yields the 
following fact, see \cite[Section 1.3.3]{Nic}.
\begin{proposition}
We have that $c_1(\sss)\bmod 2\equiv w_2(M)$, where $w_2(M)$ is the second Stiefel--Whitney class of $M$.
\end{proposition}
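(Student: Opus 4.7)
The plan is to reduce the statement to a straightforward application of the Whitney sum formula, after a small reformulation of the notion of \spinc{} structure. Rather than the phrasing ``$TM\otimes L^{-1}$ is \Spin'' used in the excerpt, I would work with the equivalent characterization that the real vector bundle $TM\oplus L_\RR$ carries a \Spin structure, where $L_\RR$ denotes the oriented rank-$2$ real bundle underlying $L$. That these two conditions are equivalent follows on unpacking the definition of $\Spin^c(n)$ as the double cover of $\SO(n)\times\U(1)$ obtained from the map $(p,\pi)\colon\Spin^c(n)\to\SO(n)\times\U(1)$ with $\pi([g,z])=z^2$: one forms the vector bundle associated to the $\Spin^c$ principal bundle through the standard representation of $\SO(n)\times\U(1)$ on $\RR^n\oplus\RR^2$, and the \Spin lift on this $\SO(n+2)$ bundle is precisely what the $\Spin^c$ principal bundle provides.

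With that reformulation in hand, the actual computation is a three-line manipulation. The \Spin condition is $w_2(TM\oplus L_\RR)=0$. The Whitney sum formula gives
$$w_2(TM\oplus L_\RR)=w_2(TM)+w_1(TM)\smile w_1(L_\RR)+w_2(L_\RR),$$
and both first Stiefel--Whitney classes vanish, since $M$ is oriented and $L_\RR$ inherits an orientation from the complex structure on $L$. Hence $w_2(TM)=w_2(L_\RR)$.

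To finish, I would invoke the standard identity $w_2(L_\RR)=c_1(L)\bmod 2$ valid for any complex line bundle $L$; this in turn encodes the compatibility between Chern and Stiefel--Whitney classes under the canonical identification $\U(1)\cong\SO(2)$, and can be verified on the universal example $BU(1)$. Combining these equalities,
$$c_1(\sss)\bmod 2\;=\;c_1(L)\bmod 2\;=\;w_2(L_\RR)\;=\;w_2(TM)\;=\;w_2(M),$$
as desired. The only non-mechanical step is the opening reformulation of the \spinc{} condition; everything after that is a textbook manipulation of characteristic classes, and I do not foresee any obstacle there.
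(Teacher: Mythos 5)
Your proof is correct, and its skeleton is the same as the paper's: exhibit an auxiliary bundle built from $TM$ and $L$ that the \spinc{} structure makes Spin, conclude that its $w_2$ vanishes, and then unwind the characteristic classes. The paper phrases the auxiliary bundle as ``$TM\otimes L^{-1}$'' and delegates the ensuing ``quick calculus'' entirely to \cite[Section 1.3.3]{Nic}, whereas you work with $TM\oplus L_\RR$; this is the better choice for actually writing the computation down, since the Whitney sum formula makes the step $w_2(TM)+w_2(L_\RR)=0$ immediate (and your identification of $\Spin^c(n)$ with the preimage of $\SO(n)\times\SO(2)$ in $\Spin(n+2)$, justifying that $TM\oplus L_\RR$ is Spin, is the standard and correct way to make the paper's loose tensor-product phrasing precise). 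In short: same strategy, but you supply the argument the paper only cites, and in a form where the final identity $w_2(L_\RR)=c_1(L)\bmod 2$ closes the proof cleanly.
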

\begin{remark}
The meaning of `mod $2$' can be made precise by considering the short exact sequence $0\to\ZZ\stackrel{\cdot 2}\to\ZZ\to\ZZ_2\to 0$. Associated
with it is a long cohomology exact sequence (this is best seen, when using \v{C}ech homology, see \cite{Nic}), in particular there is a well-defined
map $H^j(X;\ZZ)\to H^j(X;\ZZ_2)$ for any compact topological space $X$ and any $j\ge 0$. This map is often denoted $x\mapsto x\bmod 2$.
\end{remark}

\begin{proposition}[see \expandafter{\cite[Proposition 1.3.14, Exercise 1.3.12]{Nic}}]\label{prop:charel}
Let $M$ be a closed oriented manifold.
Let $\cL_M\subset H^2(M;\ZZ)$ be the set of integral lifts of the second Stiefel--Whitney class $w_2(M)$. The map $c_1\colon Spin^c(M)\to\cL_M$
is surjective. Moreover, if $H^2(M;\ZZ)$ has no 2-torsion, then this map is also injective.
\end{proposition}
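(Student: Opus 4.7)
The plan is to first import the torsor structure on $\Spin^c(M)$ and then deduce the two claims formally. The key input is that whenever $\Spin^c(M)$ is nonempty, it carries a free and transitive action of $H^2(M;\ZZ)$ (obtained by twisting the determinant line bundle by a complex line bundle), and the first Chern class satisfies the $2$-equivariance
\[
c_1(\sss+\alpha) = c_1(\sss) + 2\alpha, \qquad \sss\in\Spin^c(M),\ \alpha\in H^2(M;\ZZ).
\]
The factor of $2$ reflects the fact that the homomorphism $\pi\colon\Spin^c(n)\to U(1)$, $[g,z]\mapsto z^2$, doubles the generator of $\pi_1(U(1))$; at the level of classifying spaces this is the degree-$2$ self-map of $BU(1)$ induced on the fibre of $B\Spin^c(n)\to BSO(n)$. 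I would cite this rather than re-derive it, as it is the proper content of the preceding discussion.

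For surjectivity there is nothing to prove when $\cL_M$ is empty, so assume $\cL_M\neq\emptyset$. I first note that $\Spin^c(M)$ is then nonempty: the obstruction to its existence is the integral Stiefel--Whitney class $W_3(M)=\beta(w_2(M))$, where $\beta$ is the Bockstein of $0\to\ZZ\xrightarrow{\cdot 2}\ZZ\to\ZZ_2\to 0$, and the existence of any integral lift of $w_2(M)$ makes $\beta(w_2(M))=0$. Fix $\sss_0\in\Spin^c(M)$ and set $c_0=c_1(\sss_0)\in\cL_M$, where the membership is supplied by the proposition preceding the statement. For an arbitrary $c\in\cL_M$, the difference $c-c_0$ reduces to zero modulo $2$, so by exactness of the long cohomology sequence attached to the above Bockstein it equals $2\alpha$ for some $\alpha\in H^2(M;\ZZ)$. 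The equivariance then gives
\[
c_1(\sss_0+\alpha)=c_0+2\alpha=c,
\]
proving surjectivity.

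For injectivity under the no-$2$-torsion hypothesis, suppose $c_1(\sss)=c_1(\sss')$. Using the torsor action, write $\sss'=\sss+\alpha$; equivariance yields $2\alpha=0$, and the absence of $2$-torsion in $H^2(M;\ZZ)$ forces $\alpha=0$, hence $\sss=\sss'$.

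The only substantive step is the torsor/equivariance input; once accepted, the rest is a short chase in the Bockstein sequence together with the existence of a $\Spin^c$ structure whenever $W_3(M)=0$. Establishing the $H^2(M;\ZZ)$-action and, crucially, the factor of $2$ in the Chern-class formula is where all the geometric content lies, and would be cited from \cite{Nic} or \cite{Fri} rather than reproved at the level of these notes.
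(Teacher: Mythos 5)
Your argument is correct and is exactly the standard proof that the paper delegates to \cite[Proposition 1.3.14, Exercise 1.3.12]{Nic}: the paper itself records the two inputs you use (the congruence $c_1(\sss)\bmod 2\equiv w_2(M)$ and the free, transitive action of $H^2(M;\ZZ)$ on $\Spin^c(M)$) and you assemble them via the equivariance $c_1(\sss+\alpha)=c_1(\sss)+2\alpha$ and exactness of the Bockstein sequence for $0\to\ZZ\xrightarrow{\cdot 2}\ZZ\to\ZZ_2\to 0$. Your care with the degenerate case ($\cL_M=\emptyset$, equivalently $W_3(M)\neq 0$) is appropriate, since in dimensions $\ge 5$ closed oriented manifolds without \spinc{} structures do exist.
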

\begin{problem}
Show that if $M$ is simply connected, then $H^2(M;\ZZ)$ has no 2-torsion.
\end{problem}
\begin{definition}
An element $x\in\cL_M$ is called a \emph{characteristic element}.
\end{definition}
In other words for manifolds such that $H^2(M;\ZZ)$ has no 2-torsion, \spinc{} structures correspond precisely to characteristic elements.

Another way of understanding \spinc{} structures on a manifold is to see that two different \spinc{} structures on $M$ differ by a complex line bundle,
hence the class of isomorphisms of complex line bundles (which in the smooth category is the same as $H^2(M;\ZZ)$) acts
on the set of all \spinc{} structures on $M$. This action can be shown to be transitive and free, see again \cite[Section 1.3]{Nic}, however
there is (usually) no canonical identification of $\Spin^c(M)$ with $H^2(M;\ZZ)$. Anyway, if $H^2(M;\ZZ)$ is finite, then the number
of \spinc{} structures on $M$ is equal to the cardinality of $H^2(M;\ZZ)$.

We also recall another equivalent formulation of $\Spin^c$ structures on three--manifolds due to Turaev \cite{Tur97}. Let $M$ be a closed, connected, oriented three--manifold. An \emph{Euler structure} is an equivalence class of non-vanishing vector field on $M$, where two vector fields $v$ and $w$ are said
to be equivalent if there exists a closed ball $B\subset M$ such that $v$ is homotopic to $w$ through non-vanishing vector fields on $M \setminus \operatorname{Int} B$.
\begin{proposition}[see \cite{Tur97}]
The set of Euler structures on a three--manifold is in a one-to-one correspondence with the set of \spinc{} structures.
\end{proposition}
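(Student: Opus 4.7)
The strategy is to construct an explicit map $\Phi\colon \mathrm{Eul}(M)\to\Spin^c(M)$ and then to show bijectivity by exhibiting $\Phi$ as an equivariant map between two torsors over $H^2(M;\ZZ)$. Since every closed oriented three--manifold is parallelizable (Stiefel), non-vanishing vector fields exist on $M$, so $\mathrm{Eul}(M)$ is non-empty; by the earlier discussion $\Spin^c(M)$ is also non-empty and carries a free transitive $H^2(M;\ZZ)$-action.

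\textbf{Construction and well-definedness of $\Phi$.} Fix a Riemannian metric on $M$. A non-vanishing unit vector field $v$ splits $TM = \RR\langle v\rangle \oplus v^\perp$, which corresponds to a reduction of the $\SO(3)$-frame bundle $P_{\SO(3)}$ to an $\SO(2)$-subbundle $P_{\SO(2)}$. Using the isomorphism $\Spin^c(3)\cong \U(2)$ recalled in \cite[Section~1.3]{Nic}, together with the standard inclusion $\SO(2)=\U(1)\hookrightarrow \U(2)$ lifting $\SO(2)\hookrightarrow \SO(3)$, I enlarge $P_{\SO(2)}$ to a $\Spin^c(3)$-principal bundle $P := P_{\SO(2)}\times_{\U(1)}\U(2)$; together with the natural map $\Lambda\colon P\to P_{\SO(3)}$ this defines a $\Spin^c$ structure $\Phi(v)$ on $M$. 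If $v,w$ represent the same Euler class, a homotopy through non-vanishing fields on $M\setminus \operatorname{Int} B$ induces an isomorphism between $\Phi(v)$ and $\Phi(w)$ on $M\setminus \operatorname{Int} B$; since any $\Spin^c$ structure on a $3$-ball is unique up to isomorphism, the identification extends across $B$, so $\Phi$ descends to a well-defined map on Euler classes.

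\textbf{Bijectivity and main obstacle.} To finish, I would show that $\mathrm{Eul}(M)$ also carries a free and transitive $H^2(M;\ZZ)$-action (equivalently, an $H_1(M;\ZZ)$-action via Poincar\'e duality), defined by obstruction theory: given two representatives $v,w$, modify them to agree outside a small ball and read off the difference as an obstruction valued in $\pi_2(S^2)\cong\ZZ$, assembled over a CW-decomposition of $M$ into a class in $H^2(M;\ZZ)$. One then checks that $\Phi$ is $H^2(M;\ZZ)$-equivariant by tracking how the inclusion $\U(1)\hookrightarrow\U(2)=\Spin^c(3)$ interacts with the action of line bundles on $\Spin^c$ structures. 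An equivariant map between non-empty torsors over the same group is automatically a bijection. The main obstacle is not constructing $\Phi$ but rather establishing the torsor structure on $\mathrm{Eul}(M)$ in a way that makes the equivariance of $\Phi$ transparent; this needs a careful Pontryagin--Thom style analysis of sections of the unit tangent bundle of $M$, or equivalently of homotopy classes of maps $M\to S^2$ after trivializing $TM$.
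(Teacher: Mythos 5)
The paper gives no proof of this proposition: it is quoted directly from \cite{Tur97}, and your outline reproduces the argument of that reference (see also \cite[Section 1.3]{Nic}), so the only question is whether your sketch is complete. The construction of $\Phi$ and its well-definedness are fine; for the latter, the cleanest way to ``extend across $B$'' is to note that two \spinc{} structures on $M$ differ by a class in $H^2(M;\ZZ)$ and that the restriction $H^2(M;\ZZ)\to H^2(M\setminus\operatorname{Int}B;\ZZ)$ is injective (by excision $H^k(M,M\setminus\operatorname{Int}B;\ZZ)\cong H^k(D^3,S^2;\ZZ)=0$ for $k\le 2$), so an isomorphism over $M\setminus\operatorname{Int}B$ already forces one over $M$.

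The two steps you defer are, however, the entire content of the proof, so they deserve more than a plan. The torsor structure on $\mathrm{Eul}(M)$ goes through exactly as you say: after trivializing $TM$, non-vanishing fields become maps $M\to S^2$, the primary obstruction to a homotopy lies in $H^2(M;\pi_2(S^2))=H^2(M;\ZZ)$ and acts freely and transitively once the secondary obstruction in $H^3(M;\pi_3(S^2))\cong\ZZ$ is killed by permitting the modification inside a ball --- which is precisely what the equivalence relation defining Euler structures is designed to do (and is the content of the Problem following the proposition). The one place where genuine care is needed is the equivariance of $\Phi$. The determinant line of $\Phi(v)$ is $v^\perp$ viewed as a complex line bundle, so $c_1(\Phi(v))-c_1(\Phi(w))=e(v^\perp)-e(w^\perp)=2\,\delta(v,w)$, where $\delta(v,w)$ is the obstruction-theoretic difference class; on the other side, the action of $a\in H^2(M;\ZZ)$ on $\Spin^c(M)$ changes $c_1$ by $2a$. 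Consequently, comparing first Chern classes alone would only prove injectivity of $\Phi$ up to $2$-torsion in $H^2(M;\ZZ)$: the equivariance must be verified at the level of the torsor actions themselves (twisting the $\Spin^c$ bundle by a line bundle versus altering the vector field over the $2$-skeleton), which is what \cite{Tur97} does. With that check carried out, your argument is complete and coincides with the cited one.
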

\begin{problem}
Construct geometrically a transitive and free action of $H_1(M;\ZZ)$ on the set of all Euler structures on a closed three--manifold.
\end{problem}

We pass to a description of \spinc{} structures on four---manifolds. We begin with the following fact.
\begin{lemma}[see \expandafter{\cite[Proposition 1.4.18]{GS}}]\label{lem:char}
Let $M$ be a four--manifold with the intersection form $Q\colon H_2(M;\ZZ)\times H_2(M;\ZZ)\to\ZZ$. Then for any $x\in H_2(M;\ZZ)$ we have
$\langle w_2(M),x\rangle\equiv Q(x,x)\bmod 2$. 
\end{lemma}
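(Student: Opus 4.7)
The plan is to reduce to a geometric computation on an embedded surface representing $x$. The first step will be to invoke the classical fact that any class $x \in H_2(M;\ZZ)$ on a smooth four-manifold can be represented by a smoothly embedded closed oriented surface $\Sigma \subset M$ with $[\Sigma] = x$ (for instance, represent $x$ by a map from a surface, put it in general position to get an immersion with transverse double points, then resolve double points by local surgery preserving the homology class). Under this reduction, $Q(x,x)$ becomes the self-intersection number $\Sigma \cdot \Sigma$, which by the tubular neighborhood theorem equals $\langle e(N_\Sigma),[\Sigma]\rangle$, where $N_\Sigma \to \Sigma$ is the (rank $2$, oriented) normal bundle.

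Next I would compute $w_2(M)$ restricted to $\Sigma$ using the splitting $TM|_\Sigma = T\Sigma \oplus N_\Sigma$ and the Whitney sum formula:
\[
w_2(TM)|_\Sigma \;=\; w_2(T\Sigma) \;+\; w_1(T\Sigma)\smile w_1(N_\Sigma) \;+\; w_2(N_\Sigma).
\]
Since $\Sigma$ is oriented, $w_1(T\Sigma)=0$, so the middle term vanishes. Evaluating on the fundamental class $[\Sigma]$ and using naturality of Stiefel--Whitney classes under the inclusion $i\colon \Sigma \hookrightarrow M$, we get
\[
\langle w_2(M), x\rangle \;=\; \langle w_2(T\Sigma), [\Sigma]\rangle \;+\; \langle w_2(N_\Sigma), [\Sigma]\rangle \pmod{2}.
\]

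The final step is to identify each of the two summands on the right. For an oriented closed surface, $\langle w_2(T\Sigma),[\Sigma]\rangle \equiv \chi(\Sigma) \pmod 2$, and $\chi(\Sigma) = 2 - 2g$ is even, so this summand vanishes. For the normal bundle, which is an oriented rank-$2$ real bundle, the standard identity $w_2(N_\Sigma) \equiv c_1(N_\Sigma) \bmod 2$ (arising from the coefficient reduction applied to the structure group $\U(1) \cong \SO(2)$) together with $c_1(N_\Sigma) = e(N_\Sigma)$ gives $\langle w_2(N_\Sigma),[\Sigma]\rangle \equiv \langle e(N_\Sigma),[\Sigma]\rangle = \Sigma \cdot \Sigma = Q(x,x) \pmod 2$. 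Combining these yields the claim.

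The main obstacle is the first step, that is, the existence of a smooth embedded oriented surface representing an arbitrary integral second homology class; I would appeal to this as a standard fact in four-manifold topology rather than reprove it. A tidy alternative, which sidesteps this issue entirely, is to derive the statement from Wu's formula: since $M$ is oriented we have $v_1 = 0$ and $w_2(M) = v_2(M)$, while for any $\alpha \in H^2(M;\ZZ_2)$ the defining property of the Wu class gives $\alpha \smile v_2 = \mathrm{Sq}^2 \alpha = \alpha \smile \alpha$ on the fundamental class; applying this to the Poincar\'e dual of $x \bmod 2$ gives the result. I would mention this as a remark but present the surface-theoretic argument as the main proof, since it is more in the spirit of these notes.
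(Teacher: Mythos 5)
Your argument is correct. Note that the paper itself gives no proof of Lemma~\ref{lem:char}; it simply cites \cite[Proposition 1.4.18]{GS}, so you are supplying the standard argument rather than diverging from one. Both of your routes are sound: the surface-theoretic one correctly reduces $Q(x,x)$ to the Euler number of the normal bundle, kills the tangential contribution via $w_1(T\Sigma)=0$ and $\chi(\Sigma)\equiv 0 \bmod 2$, and identifies $w_2(N_\Sigma)$ with $e(N_\Sigma) \bmod 2$; the Wu-formula remark ($w_2=v_2$ for oriented $M$, and $\alpha\smile v_2=\mathrm{Sq}^2\alpha=\alpha\smile\alpha$ in top degree) is the cleaner and more general argument, valid for closed oriented topological four--manifolds, whereas the embedded-surface proof needs $M$ smooth (and both need $M$ closed and oriented, hypotheses the lemma leaves implicit but which are in force throughout the paper). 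The only point worth flagging is that the existence of a smoothly embedded representative for an arbitrary class in $H_2(M;\ZZ)$, which you rightly identify as the one nontrivial input, is exactly the kind of fact one would want to either prove (your immersion-plus-resolution sketch is the right one) or avoid; given that, presenting the Wu-formula argument as the main proof and the surface argument as the geometric remark would arguably be the tidier arrangement.
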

\begin{corollary}\label{cor:spinc4}
If $M$ is a closed simply-connected four--manifold, then \spinc{} structures on $M$ are in a one-to-one correspondence with the elements $K\in H^2(M;\ZZ)$
such that $Q(x,x)\equiv\langle K,x\rangle\bmod 2$ for all $x\in H_2(M;\ZZ)$.
\end{corollary}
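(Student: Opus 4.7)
The plan is to combine Proposition~\ref{prop:charel} with Lemma~\ref{lem:char}, and to show that the condition $Q(x,x)\equiv\langle K,x\rangle\bmod 2$ precisely captures when $K$ is an integral lift of $w_2(M)$.

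First, I would use the hypothesis that $M$ is simply connected to reduce to Proposition~\ref{prop:charel}. By the problem stated immediately before the corollary, $H^2(M;\ZZ)$ has no $2$-torsion when $M$ is simply connected, so Proposition~\ref{prop:charel} says that $c_1\colon\Spin^c(M)\to\cL_M$ is a bijection, where $\cL_M\subset H^2(M;\ZZ)$ is the set of integral lifts of $w_2(M)$. It therefore suffices to show that $\cL_M$ coincides with the set
$$
\cK_M=\{K\in H^2(M;\ZZ) : Q(x,x)\equiv\langle K,x\rangle\bmod 2\ \text{for all}\ x\in H_2(M;\ZZ)\}.
$$

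Second, I would reinterpret membership in $\cL_M$ via the mod~$2$ evaluation pairing. A class $K\in H^2(M;\ZZ)$ lies in $\cL_M$ iff its reduction $K\bmod 2\in H^2(M;\ZZ_2)$ equals $w_2(M)$. Since $\pi_1(M)=1$ one has $H_1(M;\ZZ)=0$, so $H_2(M;\ZZ)$ is free and $H^2(M;\ZZ_2)\cong\Hom(H_2(M;\ZZ_2),\ZZ_2)$ by universal coefficients, with $H_2(M;\ZZ_2)$ generated by the reductions of integral classes. Thus $K\bmod 2=w_2(M)$ in $H^2(M;\ZZ_2)$ if and only if $\langle K,x\rangle\equiv\langle w_2(M),x\rangle\bmod 2$ for every $x\in H_2(M;\ZZ)$.

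Finally, Lemma~\ref{lem:char} provides $\langle w_2(M),x\rangle\equiv Q(x,x)\bmod 2$ for every such $x$, so the condition from the previous paragraph becomes exactly $Q(x,x)\equiv\langle K,x\rangle\bmod 2$. This identifies $\cL_M$ with $\cK_M$, and transporting along the bijection $c_1$ gives the corollary.

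The argument is essentially a chain of formal identifications, so there is no serious obstacle; the only delicate point is making sure one is allowed to test the equality $K\bmod 2=w_2(M)$ by pairing against integral homology classes only. This is where the simple connectivity of $M$ is really used, as it guarantees that $H_2(M;\ZZ)$ is free and surjects onto $H_2(M;\ZZ_2)$, so that the mod~$2$ pairing is detected on integral classes.
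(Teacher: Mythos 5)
Your argument is correct and is exactly the derivation the paper intends: the corollary is stated without proof as an immediate consequence of Proposition~\ref{prop:charel} (applicable since simple connectivity rules out $2$-torsion in $H^2(M;\ZZ)$) combined with Lemma~\ref{lem:char}, and your careful justification that equality with $w_2(M)$ in $H^2(M;\ZZ_2)$ can be tested by pairing against integral classes (using $H_1(M;\ZZ)=0$) fills in the only point the paper leaves implicit.
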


\subsection{Heegaard diagrams} \label{Heegard_diagrams}

A \emph{genus $g$ handlebody} $U$ is the boundary connected sum of $g$ copies of a solid torus $D^2\times S^1$. In other words, it is a three--manifold
 diffeomorphic to a regular neighborhood of a bouquet of $g$ circles in $\RR^3$. 
The boundary of $U$ is an oriented surface $\gS$ of genus $g$.

\begin{definition}[Heegaard decomposition]
	Let $M$ be a closed, oriented, connected three--manifold. A \emph{Heegaard decomposition} is a presentation of $M$ as a union $U_0 \cup_\gS U_1$, where $U_0$ and $U_1$ are handlebodies and $\gS$ is a closed, connected surface.
\end{definition}

\begin{problem}
	Show that the only manifold admitting a Heegaard decomposition of genus $0$ is $S^3$.
\end{problem}

\begin{example}
If $U_0$ and $U_1$ are two solid tori glued along their boundary, then $M$ is either $S^3$, $S^2 \times S^1$ or a lens space.

To see this, denote by $m_i$ and $l_i$ the meridian and the longitude of the solid torus $U_i$, $i=1,2$. In order to glue the two tori we need to determine which curve on the torus $\partial U_0$ will be the meridian of $\partial U_1$, that is, $m_1 = p m_0 + q l_0$ for some $p,q \in \ZZ$. Since $m_1$ is a closed curve, $\gcd(p,q)=1$. We consider two cases: if $q=0$, then $p=1$, and we identify $m_0$ with $m_1$ and $l_0$ with $l_1$. The resulting three--manifold is $S^2 \times S^1$. For the case $q \neq 0$ we will show that the construction above is equivalent to the usual construction of a lens space defined as the quotient of $S^3$ by an action of $\ZZ_q$. In order to do that, consider $S^3$ as a subset of $\CC^2$ obtained by gluing two solid tori $U_0 = \{(z_1, z_2) \in \CC^2 : |z_1|^2 + |z_2|^2 = 1, |z_1|^2 \ge \frac{1}{2} \ge |z_2|^2 \}, U_1 = \{(z_1, z_2) \in \CC^2 : |z_1|^2 + |z_2|^2 = 1, |z_2|^2 \ge \frac{1}{2} \ge |z_1|^2 \}$ along the torus $\Sigma = \{(z_1, z_2) \in \CC^2 : |z_1|^2 = |z_2|^2 = \frac{1}{2}\}$.
Observe that each of these sets is preserved by an action of $\ZZ_q$ given by $[1] \cdot (z_1, z_2) = (e^{2\pi i/q} \cdot z_1, e^{2 \pi i p/q}\cdot z_2)$, and the orbits $U_0 / \ZZ_q, U_1 / \ZZ_q$ are again solid tori. Finally, the quotient $\Sigma / \ZZ_q$ is a torus.
Upon closer examination of the way these two quotient tori are glued under this action, one may notice that the meridian $m_1$ of $U_1 / \ZZ_q$ is mapped exactly to the curve $p \, m_0 + q \, l_0$ on $U_0 / \ZZ_q$; see e.g. \cite{Rolfsen} for the details.
\end{example}

\begin{theorem}\label{thm:admitsheegaard}
	Each three--manifold $M$ admits a Heegaard decomposition. 
\end{theorem}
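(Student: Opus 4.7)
The plan is to invoke Morse theory on the closed, smooth, oriented three-manifold $M$. First I would choose a Morse function $f\colon M\to\RR$, which exists on any closed smooth manifold by a standard transversality argument. By the classical rearrangement and cancellation lemmas I would then arrange $f$ to be \emph{self-indexing} with a single critical point of index $0$ and a single critical point of index $3$; the key input here is that, because $M$ is connected, any extra index-$0$ critical point can be cancelled against a transverse index-$1$ critical point along a gradient flow line, and the dual argument handles index-$3$ points.

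With such $f\colon M\to[0,3]$ in hand, the plan is to set $U_0=f^{-1}([0,3/2])$, $U_1=f^{-1}([3/2,3])$ and $\gS=f^{-1}(3/2)$. By the regular value theorem $\gS$ is a closed oriented surface, and $M=U_0\cup_\gS U_1$ by construction. The crucial remaining step is to identify each $U_i$ as a handlebody: $U_0$ is built from a single $3$-ball (the unique $0$-handle) by successively attaching one $1$-handle for each index-$1$ critical point of $f$, hence is diffeomorphic to a regular neighborhood of a wedge of $k_1$ circles in $\RR^3$, i.e. a handlebody of genus $k_1$. Symmetrically, applying the same argument to $3-f$, whose index-$1$ critical points are the index-$2$ critical points of $f$, identifies $U_1$ as a handlebody of genus $k_2$. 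A comparison of Euler characteristics on the common boundary $\gS$ then forces $k_1=k_2$, yielding the desired Heegaard decomposition.

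The genuine difficulties are all deferred to classical background: the existence of Morse functions on smooth manifolds and the handle cancellation lemma, for which I would point to Milnor's lectures on the $h$-cobordism theorem. An alternative route, which sidesteps the cancellation arguments entirely, is to take a smooth triangulation of $M$ (which exists by Moise's theorem), let $U_0$ be a closed regular neighborhood of its $1$-skeleton, and observe that the closure of the complement deformation retracts onto the $1$-skeleton of the dual cell decomposition; both regular neighborhoods are manifestly handlebodies, so the decomposition $M=U_0\cup U_1$ again does the job.
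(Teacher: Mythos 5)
Your proposal is correct and follows essentially the same route as the paper: choose a self-indexing Morse function with a single minimum and a single maximum, split $M$ along the level set $F^{-1}(3/2)$, and identify the two pieces as handlebodies via their handle decompositions. The triangulation alternative you mention is a valid classical variant, but your main argument coincides with the paper's proof.
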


\begin{proof}[Sketch of proof]
Let $F\colon M\to[0,3]$ be a self--indexing Morse function, that is, a Morse function such that the critical levels of index $k$ are all at the level set $F^{-1}(k)$. (Such a function exists by \cite{Milnor-cob}.) Using an argument of \cite{Milnor-cob}, we might and actually will assume that $F$ has only one minimum and only one maximum. Define $U_0=F^{-1}[0,3/2]$, $U_1=F^{-1}[3/2,3]$ and $\gS=F^{-1}(3/2)$. As $F$ has only one minimum and one maximum, all of the three 
spaces $U_0,U_1$ and $\gS$ are connected. In particular, $\gS$ is a closed connected surface. The genus $g(\gS)$ is equal to the number of critical points $F$ of index $1$. By construction, $U_0$ and $U_1$ are genus $g$ handlebodies. This shows the existence of a Heegaard decomposition.
\end{proof}

A Heegaard decomposition is definitely not unique. One of the
methods of obtaining a new Heegaard decomposition from another one is the following.

Given a Heegaard decomposition $M = U_0 \cup_\gS U_1$ of genus $g$, choose two points in $\gS$ and connect them by an unknotted arc $\ggm$ in $U_1$. Let $U'_0$ be the union of $U_0$ and a small tubular neighborhood $N$ of $\ggm$. Similarly, let $U'_1 = U_1 \setminus N$. The new decomposition $M = U'_0 \cup_{\gS'} U'_1$ is called the \emph{stabilization} of $M = U_0 \cup_\gS U_1$. Clearly $g(\gS') = g(\gS) + 1$. Stabilizations and destabilizations will be discussed in a greater detail below (see Theorem~\ref{thm:handleslide}).

In fact any two Heegaard decompositions are related by stabilizations and destabilizations (a precise statement is given
in Theorem~\ref{thm:handleslide} below). This can be seen using Cerf theory \cite{Cerf}.
Any two Morse functions $F_0$ and $F_1$ on $M$ can be connected by a path $F_t$, $t\in[0,1]$ in the space of all smooth functions from $M$ to $\RR$
in such a way that for all but finitely many values $t\in[0,1]$, $F_t$ is a Morse function and there is a finite number of special values $t_1,\ldots,t_n$ at which a cancellation or a creation of a pair of critical points occurs. A more detailed analysis reveals 
that stabilizations and destabilizations of Heegaard diagrams correspond to creations, respectively, cancellations, of pairs of critical points of index $1$ and $2$. We omit the details, referring to \cite{Cerf}. An interested reader might find helpful a detailed exposition of
the subject in \cite{JT}.

\begin{problem}
	Construct explicitly a Heegaard decomposition of $S^3$ of an arbitrary genus $g$.
\end{problem}

Theorem~\ref{thm:admitsheegaard} allows us to think of a three--manifold $Y$ as a pair of two handlebodies $U_0$ and $U_1$ glued along their boundaries via a homeomorphism $\phi\colon\partial U_0\to\partial U_1$. As isotopic homeomorphisms $\phi$ give rise to homeomorphic manifolds, 
in general, $\phi$ is an element of the mapping class group of $\partial U_0$, and
elements in mapping class groups are rather hard to deal with. Luckily, there is a more geometric point of view of a Heegaard decomposition. Suppose that $F$ is a Morse function on $Y$ such that $F^{-1}[0,3/2]=U_0$ and $F^{-1}[3/2,3]=U_1$. Let $\gS=F^{-1}(3/2)=\partial U_0=\partial U_1$. Choose a Riemannian metric on $M$ and consider the gradient $\nabla F$. Critical points of $F$ correspond to stationary points of the vector field $\nabla F$ and the Morse condition means that the stationary points are hyperbolic, hence the stable and unstable manifolds are well defined. (We refer to \cite{GH} for more details on stable and unstable manifolds.) Moreover, the Morse index of $F$ gives precise information about the dimensions
of the stable and unstable manifolds  given by the  stationary points of $\nabla F$. Each index $1$ critical point  of $F$ has  a $2$-dimensional unstable manifold of $\nabla F$. Likewise, each index $2$ critical point of $F$ has a $2$-dimensional stable manifold of $\nabla F$. The unstable manifold of a critical point of index $1$ intersects $\gS$ along a simple closed curve and the stable manifold of a critical point of index $2$ intersects $\gS$ along a simple closed curve.

If the genus of the Heegaard decomposition is $g$, the above procedure yields precisely $g$ simple closed curves on $\gS$ obtained as intersections of unstable manifolds of critical points of index $1$ with $\gS$, and $g$ simple closed curves obtained as intersections of stable manifolds of critical points of index $2$ with $\gS$. Call the first set of curves $\alpha_1,\ldots,\alpha_g$ and the second set $\beta_1,\ldots,\beta_g$. We will often call these curves $\alpha$--curves and $\beta$--curves. By construction both the
$\alpha$--curves and the $\beta$--curves are pairwise disjoint. If $\nabla F$ satisfies the
Morse--Smale condition, then the $\alpha$--curves intersect the 
$\beta$--curves transversally.

\begin{problem}
Prove that each of the $\alpha$ curves constructed above is homologically trivial in $U_0$ and each of the $\beta$--curves is homologically trivial in $U_1$.

Show even more, namely, that the curves $\alpha_1,\ldots,\alpha_g$ span $\ker H_1(\Sigma;\ZZ)\to H_1(U_0;\ZZ)$ and that a similar statement holds for the $\beta$--curves.
\end{problem}

The last problem leads to the following definition:

\begin{definition}[Heegaard diagram]
	Let $Y = U_0 \cup_\gS U_1$ be a Heegaard decomposition of a three--manifold $Y$, and let $g$ be the genus of $\gS$. A \emph{Heegaard diagram} is a triple $(\gS, \bfga, \bfgb)$, where $\bfga$ and $\bfgb$ are unordered collections of $g$ simple closed curves $\ga_1, \ldots, \ga_g$ and $\gb_1, \ldots, \gb_g$, such that:
	
\begin{itemize}
	\item[$\bullet$] $\ga_i \cap \ga_j = \gb_i \cap \gb_j = \emptyset$ if $i \neq j$.
	
	\item[$\bullet$] The curves $\{ \ga_1, \ldots, \ga_g \}$ form a basis of $\ker \big( H_1(\gS;\ZZ) \rightarrow H_1(U_0;\ZZ) \big)$ and $\{ \gb_1, \ldots, \gb_g \}$ form a basis of $\ker \big( H_1(\gS;\ZZ) \rightarrow H_1(U_1;\ZZ) \big)$.
\end{itemize}

\end{definition}

\begin{problem}
	Consider $\gS\times[0,1]$. Thicken all the $\alpha$--curves on $\gS\times\{1\}$ to obtain pairwise disjoint annuli $A_1,\ldots,A_g\subset \gS\times\{1\}$. Set $A=A_1\cup\ldots\cup A_g$. Define \[H=\gS\times[0,1]\cup_{A} \bigcup_{j=1}^g D_j,\] where $D_j=D\times I$ is a disk $D$ cross the interval $I$, glued to $\gS\times\{1\}$ along $A_j$. Prove that $\partial H$ is a disjoint union of $\gS\times\{0\}$ and a two--sphere.

Use this problem to explicitly reconstruct a three--manifold from $\gS$ and the collection of $\alpha$-- and $\beta$--curves. 
\end{problem}
The approach to three--manifolds via Heegaard diagrams allows us to obtain a combinatorial approach to studying three--manifolds. Heegaard Floer theory can be regarded as a way
of extracting information about the three--manifold from the combinatorics of a Heegaard diagram.

Before we go further, we need to understand how a Heegaard diagram depends on the choice of the Morse function $F$.
\begin{remark}
If the Heegaard diagram is built from the vector field $\nabla F$, that is, the $\alpha$--curves and the $\beta$--curves
are the intersections of the unstable and stable manifolds with $\gS$, then the Heegaard diagram depends also on the Riemannian metric
used to define the vector field $\nabla F$.
\end{remark} 
\begin{definition}
	Two Heegaard diagrams $(\gS, \bfga, \bfgb)$, $(\gS', \bfga', \bfgb')$ are \emph{diffeomorphic} if there is an orientation--preserving diffeomorphism of $\gS$ to $\gS'$ that carries $\bfga$ to $\bfga'$ and $\bfgb$ to $\bfgb'$.
\end{definition}  

\begin{definition}[Handlesliding]
	Let $U$ be a handlebody and denote by $\bfgc = \{ \ggm_1, \ldots, \ggm_g \}$ a set of attaching circles for $U$. Let $\ggm_i, \ggm_j \in \bfgc$ with $i \neq j$. We say that $\ggm'_i$ is obtained from \emph{handlesliding $\ggm_i$ over $\ggm_j$} if $\ggm'_i$ is any simple closed curve which is disjoint from the $\ggm_1, \ldots, \ggm_g$, and the curves $\ggm'_i, \ggm_i, \ggm_j$ bound a pair of pants in $\gS$ (see Figure \ref{fig:handlesliding}). In that case, the set $\bfgc' = \{ \ggm_1, \ldots, \ggm_{i-1}, \ggm_{i}', \ggm_{i+1},\ldots, \ggm_g \}$ (with $\ggm_{i}$ replaced by $\ggm_{i}'$) 
is also a set of attaching circles for $U$.
\end{definition}

\begin{figure}[h]
  \centering
  \begin{tikzpicture}[y=0.80pt, x=0.80pt, yscale=-1.000000, xscale=1.000000, inner sep=0pt, outer sep=0pt]
  \path[draw=black,line join=miter,line cap=butt,even odd rule,line width=0.451pt]
    (33.5401,44.5472) .. controls (33.5401,44.5472) and (71.0762,16.1629) ..
    (101.2000,16.3556) .. controls (137.3810,16.5870) and (139.2959,17.6708) ..
    (168.8600,44.5472) .. controls (183.3303,57.7020) and (218.7609,59.0175) ..
    (236.5199,44.5472) .. controls (266.6223,18.2379) and (268.9243,16.2875) ..
    (304.1799,16.3556) .. controls (340.3583,16.1037) and (371.8398,44.5472) ..
    (371.8398,44.5472);
  \path[draw=black,line join=miter,line cap=butt,even odd rule,line width=0.451pt]
    (33.5401,100.6265) .. controls (33.5401,100.6265) and (71.0762,129.0108) ..
    (101.2000,128.8182) .. controls (137.3810,128.5868) and (139.2959,127.5030) ..
    (168.8600,100.6265) .. controls (183.3303,87.4717) and (218.7609,86.1562) ..
    (236.5199,100.6265) .. controls (266.6223,126.9358) and (268.9243,128.8862) ..
    (304.1799,128.8182) .. controls (340.3583,129.0701) and (371.8398,100.6265) ..
    (371.8398,100.6265);
  \path[draw=black,line join=miter,line cap=butt,even odd rule,line width=0.451pt]
    (67.5109,63.7093) .. controls (67.5109,63.7093) and (76.6910,76.8749) ..
    (101.3409,77.8051) .. controls (126.7403,78.2086) and (135.1709,63.7093) ..
    (135.1709,63.7093);
  \path[draw=black,line join=miter,line cap=butt,even odd rule,line width=0.451pt]
    (75.6177,70.5716) .. controls (75.6177,70.5716) and (90.1784,62.5374) ..
    (101.5541,62.4940) .. controls (112.1315,62.4537) and (128.0152,70.6368) ..
    (128.0152,70.6368);
  \path[draw=black,line join=miter,line cap=butt,even odd rule,line width=0.451pt]
    (265.4260,63.7093) .. controls (265.4260,63.7093) and (274.6060,76.8749) ..
    (299.2559,77.8051) .. controls (324.6554,78.2086) and (333.0859,63.7093) ..
    (333.0859,63.7093);
  \path[draw=black,line join=miter,line cap=butt,even odd rule,line width=0.451pt]
    (273.1840,70.4923) .. controls (273.1840,70.4923) and (288.0934,62.5374) ..
    (299.4691,62.4940) .. controls (310.0466,62.4537) and (326.3762,70.3760) ..
    (326.3762,70.3760);
  \path[draw=black,line join=miter,line cap=butt,even odd rule,line width=0.451pt]
    (105.3961,16.3959) .. controls (105.5680,16.2404) and (110.6006,17.9888) ..
    (113.6064,25.5947) .. controls (117.4039,35.2040) and (117.3268,42.2774) ..
    (114.3164,51.6559) .. controls (112.0622,58.6784) and (106.6902,63.1189) ..
    (105.9005,62.8353);
  \path[draw=black,dash pattern=on 0.45pt off 0.45pt,line join=miter,line
    cap=butt,miter limit=4.00,even odd rule,line width=0.451pt] (106.3750,16.4458)
    .. controls (106.1976,16.2910) and (101.0021,18.0316) .. (97.8991,25.6035) ..
    controls (93.9788,35.1697) and (94.0584,42.2115) .. (97.1662,51.5481) ..
    controls (99.4933,58.5392) and (105.0390,62.9598) .. (105.8543,62.6775);
  \path[draw=black,line join=miter,line cap=butt,even odd rule,line width=0.451pt]
    (295.3567,16.4067) .. controls (295.5296,16.2516) and (300.5920,17.9954) ..
    (303.6155,25.5814) .. controls (307.4354,35.1653) and (307.3579,42.2202) ..
    (304.3297,51.5741) .. controls (302.0622,58.5781) and (296.6584,63.0069) ..
    (295.8641,62.7241);
  \path[draw=black,dash pattern=on 0.45pt off 0.45pt,line join=miter,line
    cap=butt,miter limit=4.00,even odd rule,line width=0.451pt] (296.3137,16.4130)
    .. controls (296.1376,16.2584) and (290.9818,17.9961) .. (287.9024,25.5552) ..
    controls (284.0121,35.1053) and (284.0910,42.1353) .. (287.1751,51.4561) ..
    controls (289.4844,58.4354) and (294.9879,62.8485) .. (295.7969,62.5667);
  \path[draw=black,line join=miter,line cap=butt,even odd rule,line width=0.451pt]
    (129.1814,69.9110) .. controls (129.1814,69.9110) and (131.1622,64.7002) ..
    (133.4932,62.3088) .. controls (136.6278,59.0929) and (141.7109,58.3536) ..
    (146.1547,57.7046) .. controls (151.6963,56.8954) and (157.4453,57.5441) ..
    (162.9271,58.6912) .. controls (169.2754,60.0197) and (174.8011,64.2567) ..
    (181.1794,65.4331) .. controls (195.9496,68.1572) and (211.4073,68.3203) ..
    (226.2346,65.9264) .. controls (233.5586,64.7439) and (240.1480,60.7261) ..
    (247.2823,58.6912) .. controls (252.6141,57.1705) and (254.9736,56.9180) ..
    (261.3415,58.2801) .. controls (264.1732,58.8859) and (266.7787,59.7428) ..
    (269.2790,62.8810) .. controls (270.3624,64.2409) and (271.6867,69.4646) ..
    (271.6867,69.4646);
  \path[draw=black,dash pattern=on 0.45pt off 0.45pt,line join=miter,line
    cap=butt,miter limit=4.00,even odd rule,line width=0.451pt] (129.2369,69.8529)
    .. controls (129.2369,69.8529) and (129.6153,72.8890) .. (131.1911,74.9703) ..
    controls (133.2963,77.7511) and (134.8677,78.1711) .. (140.0706,79.5745) ..
    controls (145.2735,80.9779) and (152.4552,82.2845) .. (159.7147,81.6193) ..
    controls (164.6384,81.1680) and (169.3040,78.8081) .. (174.2731,78.0946) ..
    controls (182.6041,76.8984) and (191.0165,75.8111) .. (199.4317,75.9569) ..
    controls (209.0911,76.1244) and (207.3361,75.8209) .. (225.5769,77.1080) ..
    controls (234.4650,77.7351) and (241.5648,81.7227) .. (249.4200,81.8766) ..
    controls (253.6762,81.9600) and (258.8990,82.3339) .. (262.7184,80.4540) ..
    controls (265.6323,79.0197) and (266.3990,79.0069) .. (268.6255,76.4311) ..
    controls (270.0991,74.7265) and (271.6187,69.3986) .. (271.6187,69.3986);
  \path[fill=black,line join=miter,line cap=butt,line width=0.800pt]
    (119.3427,41.6682) node[above right] {$\gamma_i$};
  \path[fill=black,line join=miter,line cap=butt,line width=0.800pt]
    (200,67) node[above right] {$\gamma_i'$};
  \path[fill=black,line join=miter,line cap=butt,line width=0.800pt]
    (310.0966,41.6682) node[above right] {$\gamma_j$};
  \end{tikzpicture}
  \caption{Handlesliding $\ggm_i$ over $\ggm_j$.}
  \label{fig:handlesliding}
\end{figure}

The following result is classical, we refer to \cite[Proposition 2.2]{os-threemanifold} for a proof. One can find a detailed discussion in \cite{JT} as well.
\begin{theorem}\label{thm:handleslide}
Two Heegaard diagrams $(\gS,\bfga,\bfgb)$ and $(\gS',\bfga',\bfgb')$ represent the same three--manifold if and only if  they are diffeomorphic after a finite sequence  of the following moves:

\begin{enumerate} \setlength\itemsep{1em}
	 \item \textbf{Isotopy.} Two Heegaard diagrams $(\gS,\bfga,\bfgb)$ and $(\gS',\bfga',\bfgb')$ are isotopic if $\gS$ and $\gS'$ are of the same genus and there are two one--parameter families $\bfga_t$ and $\bfgb_t$ of $g$--tuples of curves, moving by isotopies so that, for each $t$, both the $\bfga_t$ and the $\bfgb_t$ are $g$--tuples of smoothly embedded, pairwise disjoint curves such that $(\bfga_0, \bfgb_0) = (\bfga, \bfgb), (\bfga_1, \bfgb_1) = (\bfga', \bfgb')$.
	
	 \item \textbf{Stabilization.} We say that the diagram $(\gS',\bfga',\bfgb')$ is obtained from $(\gS,\bfga,\bfgb)$ by stabilization if $\gS' \cong \gS \, \# \, T^2$ (connected sum) and $\bfga' = \{ \ga_1, \ldots, \ga_g, \ga_{g+1}\}$, $\bfgb' = \{ \gb_1, \ldots, \gb_g, \gb_{g+1}\}$, where $\ga_{g+1}$, $\gb_{g+1}$ is a pair of curves in $T^2$ which meet transversally in a single point. 
	
	\item \textbf{Destabilization.} It is an inverse move to a stabilization.
	
	 \item \textbf{Handleslide.} We say that the diagram $(\gS',\bfga',\bfgb')$ is obtained from $(\gS,\bfga,\bfgb)$ by a handleslide if $\gS$ and $\gS'$ are of the same genus and either $\bfga=\bfga'$ and $\bfgb'$ is obtained from $\bfgb$ by a handleslide, or $\bfgb'=\bfgb$ and $\bfga'$ is obtained
from $\bfga$ by a handleslide. 
\end{enumerate}

\end{theorem}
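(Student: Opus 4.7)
\medskip

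\noindent\emph{Proof proposal.} The plan is to treat the two implications separately. For the ``if'' direction I would argue, move by move, that none of the four operations changes the diffeomorphism type of $Y$. Isotopy is immediate, because isotopic attaching data produce diffeomorphic handlebody gluings. A stabilization amounts to enlarging $U_0$ by a small tubular neighborhood of an unknotted arc $\gamma\subset U_1$ with endpoints on $\Sigma$: the new $\alpha_{g+1}$ bounds a disk in the enlarged $U_0$, the new $\beta_{g+1}$ is a meridian of that tube, and the pair intersects once transversally. One checks that this operation is the same as forming the connected sum of the original Heegaard splitting with the genus one splitting of $S^3$, and hence leaves $Y$ unchanged; destabilization is the inverse. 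A handleslide $\alpha_i\leadsto\alpha_i'$ replaces a disk system in $U_0$ by an isotopic one (in $U_0$), because $\alpha_i,\alpha_j,\alpha_i'$ cobound a pair of pants in $\Sigma$ which, together with disks bounding $\alpha_i$ and $\alpha_j$ in $U_0$, produces a disk bounded by $\alpha_i'$.

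The ``only if'' direction is the substantive content, and the natural approach is via Cerf theory, following the strategy alluded to between Theorems~\ref{thm:admitsheegaard} and \ref{thm:handleslide}. Choose self-indexing Morse functions $F_0,F_1:Y\to[0,3]$ together with gradient-like vector fields producing the two given Heegaard diagrams $(\Sigma,\bfga,\bfgb)$ and $(\Sigma',\bfga',\bfgb')$. Join them by a generic one-parameter family $\{F_t\}_{t\in[0,1]}$ of smooth functions. By Cerf's theorem \cite{Cerf}, after a small perturbation $F_t$ is Morse for all but finitely many $t$, and the non-Morse values are of two elementary types: (a) birth/death of a pair of critical points whose indices differ by one, and (b) an independent-trajectory crossing in which two critical values of the same index coincide and a flowline between them appears.

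Next I would translate these elementary events into Heegaard moves. By a preliminary rearrangement of $F_t$ (using the standard reordering techniques of \cite{Milnor-cob}) one may assume that throughout the path there is exactly one index~$0$ and one index~$3$ critical point, so that all births and deaths involve a cancelling pair of indices $(1,2)$; such a birth/death corresponds precisely to a (de)stabilization of the Heegaard diagram. A crossing between two index~$1$ critical values produces a handleslide among the $\alpha$-curves, and a crossing between two index~$2$ critical values produces a handleslide among the $\beta$-curves. For values of $t$ where $F_t$ is Morse, small variations of $F_t$ and of the chosen metric move the curves $\bfga(t)$ and $\bfgb(t)$ only by isotopies in $\Sigma$, which is the first type of move in the theorem. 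Concatenating the finite sequence of moves obtained at the special values of $t$ produces the desired sequence connecting $(\Sigma,\bfga,\bfgb)$ to $(\Sigma',\bfga',\bfgb')$.

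The main technical obstacle, and the step I would devote the most care to, is justifying that the generic path $F_t$ can be arranged to have only the two elementary types of non-Morse behaviour listed above, and that births and deaths can be restricted to index~$(1,2)$ pairs; this requires both a transversality argument in the space of smooth functions on $Y$ and a handle-rearrangement argument to push index~$0$ births immediately into an index~$1$ critical point (and dually for index~$3$). Once this reduction is in place, the identification of a Cerf move with the corresponding Heegaard move is a local calculation in a neighborhood of the relevant critical points, and the rest of the argument is bookkeeping. For the detailed implementation I would follow \cite[Proposition 2.2]{os-threemanifold} and the exposition in \cite{JT}.
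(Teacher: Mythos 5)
Your proposal follows essentially the same route as the paper's own (sketched) argument: both use Cerf theory to join two Morse functions by a generic path, identifying births/deaths of index-$(1,2)$ pairs with (de)stabilizations and the remaining codimension-one events with handleslides, and both defer the detailed implementation to \cite{JT} and \cite[Proposition 2.2]{os-threemanifold}. The only point to sharpen is that handleslides are triggered by the appearance of a gradient flowline between two critical points of the same index --- that is, by the failure of the Morse--Smale condition for $\nabla F_t$, as the paper emphasizes --- rather than by a crossing of critical values per se.
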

\begin{proof}[Idea of proof]
One of the methods of proving, or at least understanding, the result, is to use Cerf theory again. Namely, choose a Riemannian metric
on a three-manifold $Y$ and suppose $F_0$ and $F_1$ are two different Morse functions on $Y$ having a single minimum. We connect
$F_0$ and $F_1$ by a generic path $F_t$ in the space of smooth functions on $Y$ as we did above. This time, however, we take into
account not only situations, where $F_t$ ceases to be a Morse function (which correspond to births/deaths of critical points),
but also situations, where $\nabla F_t$ ceases to be a Morse--Smale flow. 
These situations correspond precisely to the handle slides. A very detailed discussion is included in \cite{JT}; the proof
of Theorem~\ref{thm:handleslide} in \cite{os-threemanifold} does not appeal to Cerf theory.
\end{proof}

In Heegaard Floer theory, we will need to add an extra structure on Heegaard diagrams.
\begin{definition}[Pointed Heegaard diagram]
A \emph{pointed Heegaard diagram} is a quadruple $(\gS, \bfga, \bfgb, z)$, where $z \in \gS \setminus (\bfga \cup \bfgb)$. 
\end{definition}

\subsection{Symmetric products} \label{Symmetric_products}

Let $(\gS, \bfga, \bfgb, z)$ be a pointed Heegaard diagram. Let us consider the symmetric product
\[ 
\Sym^g(\gS) = \overbrace{\gS \times \ldots \times \gS}^{g} / S_g,
\]
where $S_g$ is the symmetric group on $g$ letters.
In other words, $\Sym^g(\gS)$ consists of unordered $g$-tuples of points in $\gS$ where we also allow repeated points. Observe that $\Sym^g(\gS)$ is a manifold.
\begin{problem}
Prove that $\pi_1(\Sym^g(\gS))$ is abelian.
\end{problem}

\begin{problem}\label{prob:h1}
Let $i\colon H_1(\gS; \ZZ)\to H_1(\Sym^g(\gS); \ZZ)$ be a map induced by the inclusion $\gS\times\{*\}\times\dots\times\{*\}$ to $\Sym^g(\gS)$.
On the other hand, observe that a curve in $\Sym^g(\gS)$ in a general position corresponds to a map from a $g$--fold cover of $S^1$ to $\gS$ and in this
way we might define a map $j\colon H_1(\Sym^g(\gS); \ZZ)\to H_1(\gS; \ZZ)$. Show that the two maps $i$ and $j$ are inverse to each other.
\end{problem}
\begin{proposition}[see \expandafter{\cite[Proposition 2.7]{os-threemanifold}}]\label{prop:g2}
Let $g>2$, then $\pi_2(\Sym^g(\gS))=\ZZ$ and $\pi_1(\Sym^g(\gS))$ on $\pi_2(\Sym^g(\gS))$ is trivial.
\end{proposition}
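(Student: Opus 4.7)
The plan is to study the Abel-Jacobi map $u\colon \Sym^g(\gS)\to J(\gS)$ from the symmetric product to the Jacobian variety of $\gS$, analyze its fibers via Riemann-Roch, and reduce the homotopy computation on $\Sym^g(\gS)$ to a homology computation on its universal cover.

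First I reduce to the universal cover. By Problem~\ref{prob:h1} and the preceding problem, $\pi_1(\Sym^g(\gS))$ is abelian and isomorphic to $H_1(\gS;\ZZ)\cong \ZZ^{2g}$, with the Abel-Jacobi map $u$ inducing this isomorphism on $\pi_1$. Since $J(\gS)=\CC^g/\ZZ^{2g}$ is a $K(\ZZ^{2g},1)$ with universal cover $\CC^g$, I obtain a lift $\tilde u\colon \widetilde{\Sym^g(\gS)}\to\CC^g$ between simply connected spaces. Hurewicz then yields $\pi_2(\Sym^g(\gS))\cong H_2(\widetilde{\Sym^g(\gS)};\ZZ)$, so it suffices to compute this $H_2$.

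Next I analyze the fibers of $u$. A fiber over $[D]\in J(\gS)$ is the complete linear system $|D|\cong \PP^{h^0(D)-1}$, and Riemann-Roch gives $h^0(D)=1+h^0(K-D)$; hence generic fibers are points and positive-dimensional fibers occur precisely over the Brill-Noether locus $W^1_g\subset J(\gS)$. For $g\ge 2$, Brill-Noether theory guarantees that $W^1_g$ is non-empty of complex codimension $2$ (real codimension $4$), with a generic $\PP^1$-fiber above it. I would then run a Mayer-Vietoris decomposition of $\widetilde{\Sym^g(\gS)}=U\cup V$, with $V$ a tubular neighborhood of the exceptional set $\tilde u^{-1}(\widetilde{W^1_g})$ and $U$ its complement. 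The complement $U$ is biholomorphic to $\CC^g\setminus\widetilde{W^1_g}$, which has trivial $H_2$: since $\CC^g$ is contractible, the Thom isomorphism for the pair $(\CC^g,\CC^g\setminus \widetilde{W^1_g})$ shows that removing a real-codimension-$4$ submanifold only contributes to $H_3$ (linking $3$-spheres). The neighborhood $V$ is, modulo corrections from higher Brill-Noether strata, a $\PP^1$-bundle over the connected lift $\widetilde{W^1_g}$, contributing a single $\ZZ$ to $H_2$ from the fiber class. Combining, $H_2(\widetilde{\Sym^g(\gS)};\ZZ)\cong\ZZ$, generated by a line in an exceptional $\PP^1$.

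The triviality of the $\pi_1$-action then follows because deck transformations permute the exceptional $\PP^1$-fibers sitting over the various $\ZZ^{2g}$-translates of $W^1_g$ in $\CC^g$, but all such $\PP^1$s are freely homotopic in $\widetilde{\Sym^g(\gS)}$ (slide the base point along a path in $\widetilde{W^1_g}$), and so represent the same generator. The hardest part I foresee is controlling the contributions from deeper Brill-Noether strata $W^r_g$ for $r\ge 2$, where the fibers become higher-dimensional projective spaces with a priori additional $H_2$-classes; one must verify that every line class arising from these strata is homologous to the generating $\PP^1$-class. This is where the hypothesis $g>2$ enters concretely, ensuring a sufficiently rich and well-behaved stratification so that the exceptional locus is connected through its codimension-$2$ stratum.
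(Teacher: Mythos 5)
The paper gives no proof of this statement---it is a straight citation of \cite[Proposition 2.7]{os-threemanifold}---so I am judging your argument on its own terms. Your strategy (pull back the universal cover along the Abel--Jacobi map $u\colon\Sym^g(\gS)\to J(\gS)$, apply Hurewicz, and locate all of $H_2$ of the cover in the exceptional fibres of $u$) is a legitimate route to the result, and you have correctly identified the decisive point: for $g>2$ the lift $\widetilde{W^1_g}\subset\CC^g$ is connected, because $\pi_1(\Sym^{g-2}(\gS))\to\pi_1(J(\gS))$ is onto, so all exceptional $\PP^1$'s are homologous and the deck group fixes the generator; for $g=2$ the locus $W^1_2$ is a single point whose lift is a discrete lattice orbit, and the argument collapses (compare Remark~\ref{rem:g=2}). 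Two small simplifications: you do not need Brill--Noether existence theory, since Riemann--Roch gives $W^1_g=\kappa-W_{g-2}$, the residuation of the birational image of $\Sym^{g-2}(\gS)$, which is nonempty of dimension exactly $g-2$ for every curve of genus $g\ge 2$; and the deeper strata $W^r_g$, $r\ge2$, have complex codimension at least $4$ and can be excised harmlessly at the outset.

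The genuine gap is in the Mayer--Vietoris step. The exceptional set $E=\tilde u^{-1}(\widetilde{W^1_g})$ is a \emph{divisor} (a $\PP^1$-bundle over a codimension-two base), and $H_2(E)$ is not just $\ZZ$: it surjects onto $H_2(\widetilde{W^1_g})$, which is essentially $\pi_2$ of a smaller symmetric product and has no reason to vanish. Asserting that $V$ ``contributes a single $\ZZ$ from the fibre class'' therefore skips the entire content of the computation. What actually saves the argument is the term you dropped, $H_2(U\cap V)$: since $U\cap V$ is the normal circle bundle of $E$, whose Euler class evaluates to $-1$ on each fibre (transverse slices of $u$ are blow-downs of $(-1)$-spheres), the Gysin sequence shows that the image of $H_2(U\cap V)\to H_2(E)$ is precisely the kernel of evaluation against that Euler class; hence $H_2(\widetilde{\Sym^g(\gS)})\cong\operatorname{coker}\bigl(H_2(U\cap V)\to H_2(E)\bigr)\cong\ZZ$, generated by the fibre, and one also needs the companion check that $H_1(U\cap V)\to H_1(U)\oplus H_1(V)$ is injective to get surjectivity of $H_2(E)\to H_2(\widetilde{\Sym^g(\gS)})$. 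Without these steps the claimed answer $\ZZ$ is not established. If you prefer to avoid this bookkeeping altogether, there is a shorter classical route: the divisor $\Sym^k(\gS)\subset\Sym^{k+1}(\gS)$ (adding a fixed point) is ample, so the Lefschetz hyperplane theorem gives $\pi_2(\Sym^k(\gS))\cong\pi_2(\Sym^{k+1}(\gS))$ for $k\ge 3$, reducing to $k\ge 2g-1$, where $\Sym^k(\gS)\to J(\gS)$ is a $\PP^{k-g}$-bundle with connected structure group and the conclusion is immediate; there the hypothesis $g>2$ enters simply as $g\ge 3$.
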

\begin{remark}\label{rem:g=2}
For $g=2$ we still have $\pi_2(\Sym^g(\gS))=\ZZ$, but the action of $\pi_1(\Sym^g(\gS))$ is no longer trivial, this poses minor problems, one avoids them
by requiring $g>2$.
\end{remark}
The manifold $\Sym^g(\gS)$ inherits a complex structure and a symplectic structure from $\gS$. Let $J$ denote this complex structure on $\Sym^g(\gS)$. Consider the products
\[
\TT_\ga = \ga_1 \times \ldots \times \ga_g / S_g
\]
and
\[
\TT_\gb = \gb_1 \times \ldots \times \gb_g / S_g.
\]
\begin{problem}
Show that $\TT_\ga$ and $\TT_\gb$ are totally real, that is, at each point $x\in\TT_\ga$ we have $T_x\TT_\ga\cap JT_x\TT_\ga=\{0\}$.
\end{problem}
\begin{remark}
The fact that $\TT_\ga$ and $\TT_\gb$ are totally real might make one think that Heegaard Floer theory is a Lagrangian Floer theory of 
the intersections of $\TT_\ga$ and $\TT_\gb$. While this was generally believed since the dawn of Heegaard Floer theory,
the details were worked out only a few years later by Perutz \cite{Per}.
\end{remark}
\begin{problem}\label{prob:intersect}
Show that there is a $1-1$ correspondence between points $\bfx \in \TT_\ga \cap \TT_\gb$ and $g$-tuples of points $(x_1, \ldots, x_g) \in \gS \times \ldots \times \gS$ such that there exists a permutation $\gs \in S_g$ and $x_i \in \ga_i \cap \gb_{\gs(i)}$. 
\end{problem}

\begin{problem}
Show that if each of the $\ga$--curves is transverse to each of the $\gb$--curves, then also $\TT_\ga$ intersects $\TT_\gb$ transversally.
\end{problem}

\begin{problem}
Let $T_\ga$ be the image of $H_1(\TT_\ga; \ZZ)$ in $H_1(\Sym^g(\gS); \ZZ)$, let also $T_\gb$ be the image of $H_1(\TT_\gb; \ZZ)$ in $H_1(\Sym^g(\gS); \ZZ)$.
Prove that
\[H_1(\Sym^g(\gS); \ZZ)/(T_\ga+T_\gb)\cong H_1(\gS; \ZZ)/([\ga_1],\ldots,[\gb_g])\cong H_1(Y; \ZZ).\]
\end{problem}

Chose two paths $a$ and $b$, one belonging to $\TT_\ga$, the other belonging to $\TT_\gb$. Assume that they have the same endpoints 
$x,y\in\TT_\ga\cap\TT_\gb$. These two paths form a loop $\gamma\in\pi_1(\Sym^g(\gS))$.
\begin{problem}\label{prob:gammadoesnotdepend}
Prove that $\gamma$ depends only on $x$ and $y$ and not on $a$ and $b$.
\end{problem}
Taking the solution of Problem~\ref{prob:gammadoesnotdepend} for granted, with each pair of points $x,y\in\TT_{\ga}\cap\TT_{\gb}$ we
associate an element $\epsilon(x,y)\in H_1(Y; \ZZ)$.
\begin{problem}
Prove that $\epsilon$ is additive in the sense that $\epsilon(x,y)+\epsilon(y,z)=\epsilon(x,z)\in H_1(Y; \ZZ)$.
\end{problem}
There exists another description of the class $\epsilon(x,y)$ (the reader might want to look back to Section~\ref{section:spinc_structures} before reading
this paragraph). To begin with, choose a point $x\in\TT_{\ga}\cap\TT_{\gb}$.
Each such  point, by Problem~\ref{prob:intersect}, corresponds to a set of $g$--points $x_1,\ldots,x_g$, such that 
$x_i\in\ga_i\cap\gb_{\gs(i)}$, where $\gs$ is some permutation of the set $\{1,\ldots,g\}$. Each of the $x_i$ corresponds to a trajectory
$\gamma_i$ of the vector field $\nabla F$, which connects a critical point of index $1$ to a critical point of index $2$. There is 
also a unique trajectory $\gamma_z$ passing through the point $z$. It connects the critical point of index $0$ with the critical point of index $3$.
Take now small neighborhoods $U_1,\ldots,U_g,U_z$ of the trajectories $\gamma_1,\ldots,\gamma_g,\gamma_z$. Let $Y^o$ be the complement
$Y\setminus (U_1\cup\ldots\cup U_g\cup U_z)$. The vector field $\nabla F$ does not vanish on $Y^o$. The pair $(Y^o,\nabla F)$ defines then
a so--called smooth Euler structure on $Y$; see \cite{Tur97}.
By the result of Turaev, a smooth Euler structure corresponds to a \spinc{} structure
on $Y$ \cite[Proposition 2.7]{Tur97}. Call this structure $\sss_x$. Each \spinc{} structure has its Chern class $c_1\in H^2(Y;\ZZ)$, as was discussed in Section \ref{section:spinc_structures}.
\begin{proposition}[see \expandafter{\cite[Lemma 2.19]{os-threemanifold}}]
Given any two points $x,y\in\TT_\ga\cap\TT_\gb$, the difference $c_1(\sss_x)-c_1(\sss_y)$ is the Poincar\'e dual to $\epsilon(x,y)$.
\end{proposition}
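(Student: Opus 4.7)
The plan is to compare the two constructions of an $H_1(Y;\ZZ)$--valued difference on the set $\TT_\ga\cap\TT_\gb$: on the one hand $\epsilon(x,y)$, and on the other hand the class $\delta(x,y):=\mathrm{PD}^{-1}\bigl(c_1(\sss_x)-c_1(\sss_y)\bigr)\in H_1(Y;\ZZ)$. Both are easily seen to be additive in the cocycle sense, $\epsilon(x,y)+\epsilon(y,z)=\epsilon(x,z)$ (shown above) and the analogous identity for $\delta$ is immediate from the definition of $c_1$. So the problem reduces to the special case where $x$ and $y$ differ in only one slot, that is, after reindexing, $x=\{x_1,x_2,\dots,x_g\}$ and $y=\{y_1,x_2,\dots,x_g\}$ with $x_1,y_1\in\ga_i\cap\gb_j$ for some $i,j$. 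Indeed, any two intersection points in $\TT_\ga\cap\TT_\gb$ can be connected by a finite chain of such elementary moves, and by additivity it is enough to prove the formula for each step separately.

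For such an elementary pair $(x,y)$, I would compute $\epsilon(x,y)$ directly: pick a path $a\subset\ga_i$ from $x_1$ to $y_1$ and a path $b\subset\gb_j$ from $x_1$ to $y_1$, and keep the remaining $g-1$ points fixed. The corresponding paths in $\TT_\ga$ and $\TT_\gb$ are exactly the ones produced by inserting $a$ and $b$ in the first coordinate, so the resulting loop $\gamma=a*b^{-1}\subset\Sym^g(\gS)$ is, under the isomorphism of Problem~\ref{prob:h1}, exactly the $1$--cycle $a-b$ on $\gS$. Pushing into $Y$ gives $\epsilon(x,y)=[a-b]\in H_1(Y;\ZZ)$.

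Next I would compute $\delta(x,y)$ geometrically. The vector fields $v_x$ and $v_y$ used in the Turaev construction agree outside a neighborhood of the gradient flowlines through $x_1$ and $y_1$ (the other flowlines are the same). Connect these flowlines by a strip obtained by sweeping the flowline of $\nabla F$ through $a(t)$, $t\in[0,1]$, on the $U_0$--side, and by the flowline of $\nabla F$ through $b(t)$ on the $U_1$--side. This produces an embedded $2$--complex $\Sigma_{xy}\subset Y$ that bounds the symmetric difference of the tubular neighborhoods $U_1(x)$ and $U_1(y)$. By Turaev's description of the free transitive $H_1(Y;\ZZ)$--action on Euler structures \cite{Tur97}, the difference of the two Euler structures, and hence of the corresponding \spinc{} structures, is read off as the obstruction to extending the homotopy between $v_x$ and $v_y$ across this $2$--complex; a local model computation (swinging the flowline around a transverse intersection point of $\ga_i$ and $\gb_j$) shows that this obstruction is the Poincar\'e dual of the $1$--cycle $a-b$.

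The main obstacle is the last local computation: identifying the relative Euler class of the modification on the sweep $\Sigma_{xy}$ with $\mathrm{PD}(a-b)$. This is essentially a $4$--dimensional index calculation relating the winding of $\nabla F$ to the intersection of $a-b$ with a transverse $2$--disk, and requires a careful choice of trivialization. Once this step is carried out, combining it with the two previous steps gives $\delta(x,y)=[a-b]=\epsilon(x,y)$ for elementary moves, and additivity finishes the proof in general.
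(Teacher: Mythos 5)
Your overall strategy --- comparing the Turaev vector fields along the gradient flowlines and reading off the difference of Euler structures as the Poincar\'e dual of a $1$--cycle supported where the fields disagree --- is the right one, and is essentially how \cite[Lemma 2.19]{os-threemanifold} proceeds. However, your reduction step fails. An elementary move in your sense replaces $x_1$ by $y_1$ with \emph{both} points lying on the same $\ga_i$ and the same $\gb_j$; such a move therefore preserves the permutation $\gs$ with $x_i\in\ga_i\cap\gb_{\gs(i)}$ attached to the intersection point. Two points of $\TT_\ga\cap\TT_\gb$ inducing different permutations (which occur already for $g=2$) cannot be joined by any chain of your elementary moves, since every intermediate configuration must again occupy each $\ga$-- and each $\gb$--curve exactly once. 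The fix is to drop the reduction entirely: a path in $\TT_\ga$ from $x$ to $y$ is the same as a $1$--chain $a$ supported on $\ga_1\cup\ldots\cup\ga_g$ with $\partial a=y-x$ (viewing $x$ and $y$ as the $0$--chains $\sum_i x_i$ and $\sum_i y_i$), and similarly for $b$ supported on the $\gb$--curves; then $a-b$ is a $1$--cycle in $\gS$ representing $\epsilon(x,y)$ via Problem~\ref{prob:h1}, and your flowline comparison applies verbatim to the full collections of flowlines through the $x_i$ and through the $y_i$, whose difference is homologous in $Y$ to $a-b$.

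The second, more substantial issue is that the step you defer --- showing that the modification of $\nabla F$ along the swept $2$--complex changes the Euler structure by exactly $\mathrm{PD}[a-b]$ --- is not a routine verification to be postponed: it is the actual content of the proposition. Everything before it (additivity, the identification of $\epsilon(x,y)$ with $[a-b]$) is soft; the identification of the difference of \spinc{} structures with the Poincar\'e dual of the cycle of flowlines is precisely what Turaev's local computation in \cite{Tur97}, and hence \cite[Lemma 2.19]{os-threemanifold}, supplies. As written, your argument stops exactly where the real work begins. To close the gap you must either carry out that local obstruction computation (comparing the two non-vanishing extensions of the vector field over a tubular neighborhood of the relevant $1$--cycle and identifying the relative Euler class in $H^2(Y;\ZZ)$ with the Poincar\'e dual of the cycle) or cite it explicitly rather than listing it as an obstacle.
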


\subsection{The chain complex $\widehat{CF}$} \label{Chain_complexes1}
We will work mostly over $\ZZ_2$. For simplicity, unless specified otherwise, we will assume that $b_1(Y)=0$.

Let $(\gS,\bfga,\bfgb,z)$ be a pointed Heegaard diagram for $Y$. Assume that the $\bfga$ and $\bfgb$ curves intersect transversally. Then, the chain complex $\widehat{CF}$ is defined (over $\ZZ_2$) to be generated by the intersection points $\TT_\ga \cap \TT_\gb$. 
\begin{remark}\label{rem:admissible}
There are a few technical assumptions on the Heegaard diagram used in the construction of the chain complex. First of all, we usually
assume that $g>2$ (case $g=1$ is very special and also possible, see \cite[Remark 2.16]{os-threemanifold}); see Remark~\ref{rem:g=2} for the case
$g=2$. 

If $b_1(Y)>0$, one adds an extra assumption on the Heegard diagram, namely admissibility, see \cite[Section 5]{os-threemanifold}.
For example, this condition rules out a diagram for $S^2\times S^1$, where $\gS$ is a torus and the $\alpha$--curve and the $\beta$--curve
are parallel, so $\TT_\ga\cap\TT_\gb$ is empty. An admissible Heegaard diagram for $S^2\times S^1$ can be obtained by moving the $\beta$--curve
by an isotopy
in such a way that two intersection points with the $\alpha$--curve are created.
\end{remark}

We now define the differential $\partial$. Let $\bfx, \bfy \in \TT_\ga \cap \TT_\gb$ be two intersection points. 
Denote by $\pi_2(\bfx, \bfy)$ the set of relative homotopy classes of disks $\phi\colon D^2 \rightarrow \Sym^g(\gS)$ with $\phi(-1) = \bfx$, $\phi(1) = \bfy$, $\phi(\partial_+D^2) \subset \TT_\ga$ and $\phi(\partial_-D^2) \subset \TT_\gb$. 
Here $D^2$ is the unit disk in the complex plane, $\partial_\pm D^2$ is the part of the boundary having positive (respectively: negative) 
imaginary part.

\begin{problem}
Show that $\pi_2(\bfx,\bfy)$ can be non-empty only if $\epsilon(\bfx,\bfy)=0$.
\end{problem}
\begin{problem}
Show that $\pi_2(\bfx,\bfy)$ admits a multiplication defined as 
$$\pi_2(\bfx,\bfy)\star\pi_2(\bfy,\bfz)\to\pi_2(\bfx,\bfz)$$
Show that $\star$ is associative. Prove also that $\pi_2(\bfx,\bfx)$ is a group.
\end{problem}
\begin{problem}
Show that there is an action of $\pi_2(\Sym^g(\gS)) = \ZZ$ on each of the sets $\pi_2(\bfx,\bfy)$.
\end{problem}
\begin{problem}
Draw a standard $g=1$ Heegaard diagram for a lens space $L(p,q)$. Show that there are precisely $p$ intersection points of the $\ga$--curves with
$\gb$--curves and $\epsilon(\bfx,\bfy)\neq 0$ as long as $\bfx\neq\bfy$. 
\end{problem}

\begin{problem}\label{prob:holofix}
Write explicitly all holomorphic maps from $D^2$ to $D^2$ that fix $-1$ and $1$ and take $\partial_+D^2$ to $\partial_+D^2$. Show that
the space of these maps can be parametrized by $\RR$.
\end{problem}

Given $\phi \in \pi_2(\bfx, \bfy)$, a \emph{holomorphic} representative for $\phi$ is a map $u\colon D^2\to\Sym^g(\gS)$ 
in the homotopy class $\phi$ that is holomorphic. Recall that $\Sym^g(\gS)$ has a complex structure induced from $\gS$ and $D^2$ has a standard
complex structure.
\begin{remark}
For various genericity results, the complex structure on $\Sym^g(\gS)$ induced from a complex structure on $\gS$ might be too rigid and one often needs to consider almost
complex structures (that is, endomorphisms of the tangent bundle whose square is minus the identity) and pseudo-holomorphic maps instead
of holomorphic. We refer to \cite[Section 3.1]{os-threemanifold} for more details.
\end{remark}

We denote by $\cM(\phi)$ the space of holomorphic representatives of $\phi$. 
For any class $\phi\in\pi_2(\bfx,\bfy)$, there is an integer $\mu(\phi) \in \ZZ$ called the \emph{Maslov index}. 
A detailed definition of the Maslov index in Heegaard
Floer theory can be found in \cite[Section 4]{Lip}.
The Maslov index is the dimension of the moduli space of holomorphic representatives (provided the almost complex structure
is sufficiently generic).
By Problem~\ref{prob:holofix}, 
there is an action of $\RR$ on $\cM(\phi)$ given by the automorphisms of the domain $D^2$ that fix $1$ and $-1$. 
If $\phi$ is not the class of a constant map, and the complex structure on $\gS$ was generic,
then the quotient $\widehat{\cM}(\phi) = \cM(\phi) / \RR$ is a smooth manifold of dimension $\mu(\phi) - 1$; see \cite{RS93}. 
If additionally $\mu(\phi) = 1$, we define
\[\# \widehat{\cM}(\phi) \in \ZZ\]
to be the number of the elements in the quotient. 
\begin{remark}
In \cite[Section 3.6]{os-threemanifold} there is described a way to associate a sign to each element $\widehat{\cM}(\phi)$ as long as $\mu(\phi)=1$. This allows us to define
the differential in the Heegaard Floer theory over $\ZZ$. As we already mentioned above, we will mostly focus on the theory over $\ZZ_2$.
\end{remark}

The basepoint $z$ can be used to construct a codimension two submanifold (in the language of algebraic geometry: a divisor), 
$R_z := \gS\times\ldots\times\gS\times\{z\}\subset \Sym^g(\gS)$ (the product is formally defined in $\gS^{\times g}$, we project it
to $\Sym^g(\gS)$).
\begin{problem}
Observe that, by construction, $\TT_\ga$ and $\TT_\gb$ are disjoint from $R_z$. 
\end{problem}
Given intersection points $\bfx, \bfy \in \TT_\ga \cap \TT_\gb$ and a class $\phi \in \pi_2(\bfx, \bfy)$, we define $n_z(\phi)$ to be the intersection number between $\phi$ and $R_z$. 

The differential for $\widehat{CF}$ is then given by

\begin{equation} \label{diff:hat}
\partial \bfx = \sum_{\bfy \in \TT_\ga \cap \TT_\gb} \sum_{\substack{\phi \in \pi_2(\bfx, \bfy) \\ n_z(\phi)=0, \; \mu(\phi)=1}} 
\# \widehat{\cM}(\phi) \; \bfy.
\end{equation}
In a few words, the differential counts holomorphic disks between $\bfx$ and $\bfy$ which do not intersect the divisor $R_z$.

\begin{problem}
Show that for a lens space with a standard Heegaard diagram and $g=1$, $\partial\bfx=0$ for all $\bfx\in\TT_\ga\cap\TT_\gb$.
\end{problem}
\begin{problem}
Take the standard diagram for $S^2\times S^1$ with $g=1$. Move the $\ga$--curve so that it intersects the $\gb$--curve at precisely two
points $\bfx$ and $\bfy$. Calculate the differential and the homology groups 
(compare Remark~\ref{rem:admissible}).
\end{problem}
The following fact holds.
\begin{theorem}[see \expandafter{\cite[Theorem 4.1]{os-threemanifold}}]\label{thm:hfhat}
We have $\partial^2=0$. The homologies $\widehat{HF}(Y)$ are independent of the choice of the Heegaard diagram, and, therefore, are
invariants of the three--manifold $Y$.
\end{theorem}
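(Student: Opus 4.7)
The plan for $\partial^2=0$ is the standard Floer--theoretic argument via the ends of one--dimensional moduli spaces. Given $\bfx,\bfz\in\TT_\ga\cap\TT_\gb$ and a class $\phi\in\pi_2(\bfx,\bfz)$ with $\mu(\phi)=2$ and $n_z(\phi)=0$, the quotient $\widehat{\cM}(\phi)$ is a smooth 1--manifold. I would compactify it in the sense of Gromov and show that its only boundary points correspond to broken holomorphic strips $(\phi_1,\phi_2)$ splitting through some $\bfy\in\TT_\ga\cap\TT_\gb$, with $\mu(\phi_i)=1$ and $n_z(\phi_i)=0$. Summing the (even) count of boundary points over all such $\phi$ matches the coefficient of $\bfz$ in $\partial^2\bfx$, proving $\partial^2=0$ over $\ZZ_2$. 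The analytic inputs are Gromov compactness, the Ozsv\'ath--Szab\'o gluing theorem, and transversality for a generic almost complex structure. Sphere bubbling is ruled out because, by Proposition~\ref{prop:g2}, the generator of $\pi_2(\Sym^g(\gS))=\ZZ$ has positive intersection with the divisor $R_z$, contradicting $n_z(\phi)=0$; boundary bubbling is excluded because $\TT_\ga$ and $\TT_\gb$ are embedded totally real tori with no spurious low--index disks to split off.

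For the invariance statement, Theorem~\ref{thm:handleslide} reduces matters to showing independence under (i) isotopy of $\bfga,\bfgb$ and choice of almost complex structure, (ii) stabilization, and (iii) handleslides. Step (i) is handled by the standard continuation--map construction: a generic 1--parameter family of data (with $z$ kept off the curves) gives a chain map by counting index--zero pseudoholomorphic disks for the time--dependent Floer equation, and a generic 2--parameter family gives a chain homotopy between two such maps. Concatenating with the reverse path and contracting to a constant path yields a chain homotopy to the identity, so the continuation maps are quasi--isomorphisms.

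Stabilization is treated by stretching the neck at the connect--sum region with the extra $T^2$. The single intersection point $p=\ga_{g+1}\cap\gb_{g+1}$ is forced to appear in every new generator, and a neck--stretching and gluing argument puts holomorphic disks in the stabilized diagram into bijection with holomorphic disks in the original; this identifies $\widehat{CF}$ of the stabilized diagram canonically with that of the original.

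The handleslide invariance is the main obstacle, and the plan is to use the holomorphic triangle method. Introduce an auxiliary collection $\bfgc=\{\gamma_1,\dots,\gamma_g\}$ obtained as a small Hamiltonian perturbation of the handleslid collection $\bfgb'$, chosen so that each $\gamma_i$ meets $\gb_i$ transversally in two points; then $(\gS,\bfgb,\bfgc)$ is a standard Heegaard diagram for $\#^g(S^1\times S^2)$. Counting pseudoholomorphic triangles $\phi\colon\Delta\to\Sym^g(\gS)$ with boundary arcs on $\TT_\ga,\TT_\gb,\TT_\gamma$, satisfying $n_z(\phi)=0$ and the appropriate Maslov index condition, and contracting one vertex against a distinguished cycle $\Theta\in\widehat{CF}(\gS,\bfgb,\bfgc)$ supported on the ``top'' intersection points of the pairs--of--pants, defines a chain map $\Phi\colon\widehat{CF}(\gS,\bfga,\bfgb)\to\widehat{CF}(\gS,\bfga,\bfgc)$. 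A parallel construction using $(\gS,\bfgc,\bfgb)$ yields a map $\Psi$ in the opposite direction, and an associativity identity coming from the ends of one--dimensional moduli spaces of holomorphic quadrilaterals shows that $\Psi\circ\Phi$ and $\Phi\circ\Psi$ are chain homotopic to the respective identities. The hardest technical piece is the polygon--count machinery itself: setting up Fredholm theory, compactness and gluing for holomorphic $n$--gons, together with verifying that $\Theta$ represents the top generator of $\widehat{HF}(\#^g(S^1\times S^2))$, is the substantive analytic content that makes this step the bottleneck of the argument.
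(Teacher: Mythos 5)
The paper gives no proof of this theorem; it simply cites \cite[Theorem 4.1]{os-threemanifold}, and your outline is a faithful sketch of that original Ozsv\'ath--Szab\'o argument (ends of index-two moduli spaces for $\partial^2=0$, with sphere and boundary degenerations excluded by positivity of $n_z$; continuation maps for isotopy and almost complex structure, neck-stretching for stabilization, and the triangle/associativity machinery with the top class $\Theta$ of $\#^g(S^1\times S^2)$ for handleslides). The approach is essentially the same as the cited source, so there is nothing to compare beyond noting that the remaining work is the analytic content you already flag.
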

\begin{remark}\label{rem:based}
The words `independent of the choice' might have different meanings. Originally, in \cite{os-threemanifold}, 
it was proved that a change of the Heegaard diagrams as in Section~\ref{Heegard_diagrams} above
changes $\widehat{HF}(Y)$ by an isomorphism. Therefore, $\widehat{HF}(Y)$ was well defined up to isomorphism. In \cite{JT} 
Juh\'asz and Thurston showed more, namely the naturality of the Heegaard Floer theory. Naturality means that the Heegaard Floer theory
assigns a concrete group to each based\footnote{A \emph{based manifold} is a manifold with a choice of a base point.} three--dimensional manifold and each diffeomorphism of a based manifold induces an isomorphism of corresponding Heegaard Floer groups. This naturality property is proved for all flavors of the Heegaard Floer homology. It lies at the heart of
the involutive Floer theory as defined in \cite{HM} via the maps studied in detail in \cite{Sar,Zem1}; see also \cite{HMZ}.
\end{remark}
\begin{problem}
Prove that $\widehat{HF}(Y)$ splits as a direct sum $\widehat{HF}(Y,\sss)$ over all the \spinc{} structures of $Y$.
\end{problem}
\begin{problem}\label{prob:nontrivialp}
Prove that if $Y$ is a rational homology sphere, then 
$\widehat{HF}(Y,\sss)$ is non-trivial for any \spinc{} structure. In particular, $\rk\widehat{HF}(Y)\ge |H_1(Y;\ZZ)|$, where
$|\cdot|$ denotes the cardinality of a set.
\end{problem}
\begin{definition}[L--space]
A rational homology sphere is called an \emph{L--space} if $$\rk\widehat{HF}(Y)= |H_1(Y;\ZZ)|.$$
\end{definition}
\begin{problem}
Prove that all the lens spaces are L--spaces.
\end{problem}

\subsection{Complexes $CF^-$, $CF^+$ and $CF^\infty$} \label{section_complexes}
The complex structure on $\Sym^{g}(\gS)$ and the holomorphicity of the maps used in the definition of $\cM$ were used to give rigidity to the space $\cM$
(to make sure it has a finite dimension). 

The existence of this structure has one more consequence. Namely that the $n_z(\phi)$ defined above is always non--negative. We will define a new chain complex,
 where we count all the holomorphic
disks with $\mu(\phi)=1$, regardless of the value of $n_z(\phi)$. The chain complex $CF^-$ is generated by the intersection points
$\TT_\ga\cap\TT_\gb$, but this time not over $\ZZ_2$, but over the ring $\ZZ_2[U]$, where $U$ is a formal variable.
The differential for $CF^-$  is defined by

\begin{equation} \label{diff:minus}
\partial \bfx := \sum_{\bfy \in \TT_\ga \cap \TT_\gb} \sum_{\substack{\phi \in \pi_2(\bfx, \bfy) \\ \mu(\phi)=1}} \# \widehat{\cM}(\phi) \, U^{n_z(\phi)} \, \bfy.
\end{equation}

\begin{theorem}
We have $\partial^2=0$. The homology groups $HF^-(Y)$ do not depend on the choice of the Heegaard diagram.
\end{theorem}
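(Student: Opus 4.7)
The plan is to mimic the proof of Theorem~\ref{thm:hfhat}, with the only new ingredient being that powers of $U$ must be tracked through all the standard moduli-theoretic arguments. The crucial elementary fact is that $n_z$ is additive under juxtaposition of homotopy classes: if $\phi_1 \in \pi_2(\bfx, \bfy)$ and $\phi_2 \in \pi_2(\bfy, \bfz)$, then $n_z(\phi_1 * \phi_2) = n_z(\phi_1) + n_z(\phi_2)$. Consequently, products of the form $U^{n_z(\phi_1)} U^{n_z(\phi_2)}$ combine to $U^{n_z(\phi_1 * \phi_2)}$ as required for the degeneration arguments that underlie both $\partial^2 = 0$ and the construction of chain maps associated to Heegaard moves.

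To establish $\partial^2 = 0$, I would compute for each pair $\bfx, \bfz \in \TT_\ga \cap \TT_\gb$ and each $k \ge 0$ the coefficient of $U^k \bfz$ in $\partial^2 \bfx$. It is a sum over intermediate points $\bfy$ and pairs $\phi_1 \in \pi_2(\bfx,\bfy)$, $\phi_2 \in \pi_2(\bfy,\bfz)$ with $\mu(\phi_i)=1$ and $n_z(\phi_1) + n_z(\phi_2)=k$ of the products $\#\widehat{\cM}(\phi_1) \cdot \#\widehat{\cM}(\phi_2)$. By Gromov compactness applied to the space $\widehat{\cM}(\phi)$ for $\phi \in \pi_2(\bfx, \bfz)$ with $\mu(\phi)=2$ and $n_z(\phi)=k$, these products count exactly the ends of this 1-dimensional moduli space. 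Since a compact 1-manifold has an even number of boundary points, the coefficient vanishes mod 2. The assumption $g > 2$ together with the triviality of the $\pi_1$-action on $\pi_2$ noted in Proposition~\ref{prop:g2} rules out extra contributions from sphere bubbling in the compactification.

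Independence of the Heegaard diagram is proven by invoking Theorem~\ref{thm:handleslide}: it suffices to exhibit chain homotopy equivalences associated with isotopies, stabilizations and destabilizations, and handleslides, now over the ring $\ZZ_2[U]$ rather than over $\ZZ_2$. For isotopies, a standard continuation map between the two complexes does the job, with powers of $U$ tracking the $n_z$-values of the associated disks. For stabilization, generators of the new complex are naturally in bijection with generators of the old complex via the unique intersection point on the added torus summand, and holomorphic disks on the stabilized diagram reduce to holomorphic disks on the original one, giving an isomorphism of $\ZZ_2[U]$-complexes. Handleslide invariance is the most delicate step: one counts holomorphic triangles in a triple Heegaard diagram built from the original and handleslid $\ga$- or $\gb$-curves, producing a chain map whose quasi-isomorphism property is verified by a local model computation on the relevant genus-one handle, again with $U$-powers recorded by $n_z$.

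The main obstacle is the analytical content underpinning Gromov compactness and transversality: proving that the only codimension-one degenerations of the relevant 1-dimensional moduli spaces are broken strips, and that transversality can be achieved for a generic family of almost complex structures interpolating between the two diagrams. These technical inputs are black-boxed from \cite{os-threemanifold}; what I am sketching here is the structural bookkeeping that upgrades the $\widehat{CF}$ argument to the $CF^-$ setting, the essential point being the additivity of $n_z$ under concatenation.
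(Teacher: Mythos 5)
The paper does not actually prove this theorem; it is stated as a citation of Ozsv\'ath--Szab\'o (the argument lives in \cite{os-threemanifold}), so there is no in-paper proof to compare against. Your sketch follows exactly the route taken in that reference: additivity of $n_z$ under juxtaposition makes the $U$-powers compatible with the end-counting argument for $\partial^2=0$, and invariance is reduced via Theorem~\ref{thm:handleslide} to isotopies (continuation maps), stabilization (an identification of complexes after stretching the neck), and handleslides (holomorphic triangle counts in a triple diagram). This is the correct structure, and black-boxing the transversality and compactness statements is appropriate at the level of these notes.

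One point deserves more care than you give it. You present the passage from $\widehat{CF}$ to $CF^-$ as pure bookkeeping and name sphere bubbling as the only degeneration beyond strip breaking, ruled out by $g>2$ and Proposition~\ref{prop:g2}. But the condition $n_z(\phi)=0$ in the hat differential does real analytic work: it excludes \emph{boundary degenerations}, i.e.\ nonconstant holomorphic disks whose boundary lies entirely on $\TT_\ga$ or entirely on $\TT_\gb$, since any such disk has $n_z>0$. Once you count classes with $n_z(\phi)=k>0$, these can a priori appear as ends of the one-dimensional moduli space with $\mu=2$ and $n_z=k$, and one must separately show (as Ozsv\'ath--Szab\'o do, using $g>2$ and genericity of the almost complex structure) that they do not contribute in codimension one. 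Your closing paragraph's appeal to ``only broken strips appear'' formally subsumes this, but the claim that the only new ingredient is tracking powers of $U$ is not quite accurate: ruling out boundary degenerations is the one genuinely new analytic input in the minus theory. With that caveat noted, the sketch is sound.
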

Remark~\ref{rem:based}, explaining the meaning of `do not depend', still applies in the case of $HF^-$.
As before, the group $HF^-(Y)$ splits as a sum over the \spinc{} structures of $Y$. We also have the following fact, which is not very hard to prove.
\begin{proposition}
A three--manifold $Y$ is an L--space if and only if, for every $\sss$, $HF^-(Y,\sss)\cong\ZZ_2[U]$.
\end{proposition}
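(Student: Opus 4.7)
The main tool I would use is the short exact sequence of chain complexes
\[
0 \to CF^-(Y,\sss) \xrightarrow{U} CF^-(Y,\sss) \xrightarrow{\pi} \widehat{CF}(Y,\sss) \to 0,
\]
which identifies $\widehat{CF}$ with $CF^-/(U\cdot CF^-)$; indeed, setting $n_z(\phi)=0$ in \eqref{diff:minus} recovers the definition \eqref{diff:hat}. Passing to homology gives, in each \spinc{} structure, the long exact sequence
\[
\cdots \to HF^-(Y,\sss) \xrightarrow{U_*} HF^-(Y,\sss) \xrightarrow{\pi_*} \widehat{HF}(Y,\sss) \xrightarrow{\delta} HF^-(Y,\sss) \to \cdots,
\]
which will drive both implications.

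For the easy direction ($HF^-\cong\ZZ_2[U]$ implies L--space), I would observe that $U$ is injective on $\ZZ_2[U]$, so $\delta=0$ and $\widehat{HF}(Y,\sss)\cong\coker(U_*)=\ZZ_2[U]/U\cdot\ZZ_2[U]=\ZZ_2$. Summing over the $|H^2(Y;\ZZ)|=|H_1(Y;\ZZ)|$ \spinc{} structures (via Poincar\'e duality on the rational homology sphere $Y$) yields $\rk\widehat{HF}(Y)=|H_1(Y;\ZZ)|$, so $Y$ is an L--space.

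For the converse, if $Y$ is an L--space then Problem~\ref{prob:nontrivialp} together with the count of \spinc{} structures forces $\widehat{HF}(Y,\sss)\cong\ZZ_2$ for every $\sss$, and the long exact sequence reduces to an extension
\[
0 \to \coker(U_*) \to \widehat{HF}(Y,\sss) \to \ker(U_*) \to 0.
\]
The crucial input is that $CF^-$ is generated over $\ZZ_2[U]$ by the finitely many intersection points $\TT_\ga\cap\TT_\gb$, so $HF^-(Y,\sss)$ is bounded above in grading. This rules out $\ker(U_*)=\ZZ_2$: were $U_*$ surjective on a nonzero $HF^-(Y,\sss)$, iterating $y=U_*y_1=U_*^2 y_2=\cdots$ would produce classes of arbitrarily high grading. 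Hence $\ker(U_*)=0$ and $\coker(U_*)\cong\ZZ_2$.

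It remains to promote this to $HF^-(Y,\sss)\cong\ZZ_2[U]$. I would pick $x\in HF^-(Y,\sss)$ with $\pi_*(x)$ the generator of $\widehat{HF}(Y,\sss)=\ZZ_2$ and study the $\ZZ_2[U]$--module homomorphism $f\colon\ZZ_2[U]\to HF^-(Y,\sss)$, $U^n\mapsto U_*^n x$. Injectivity of $f$ follows from the injectivity of $U_*$ together with $\pi_*(x)\neq 0$: any relation $U_*^k q(U_*)x=0$ with $q(0)\neq 0$ would, after cancelling the injective $U_*^k$, push down via $\pi_*$ to give $\pi_*(x)=0$, a contradiction. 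Surjectivity is proved by induction on grading: given $y\in HF^-$, either $\pi_*(y)=0$, in which case $y\in\im U_*$ and $y=U_*y_1$ with $y_1$ of strictly higher grading, or $\pi_*(y)=\pi_*(x)$, in which case $y-x\in\im U_*$ gives $y=x+U_*y_1$ with $y_1$ again of higher grading; the upper bound on grading forces the iteration to terminate. The main technical obstacle is precisely controlling this iteration through the grading upper bound on $HF^-$ — the Nakayama--style step that replaces the missing finite--generation hypothesis on homology.
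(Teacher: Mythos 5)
Your proof is correct, and it is essentially the argument one would expect: the paper states this proposition without proof (remarking only that it is ``not very hard to prove''), and your route via the identification $\widehat{CF}(Y,\sss)\cong CF^-(Y,\sss)/U$, the resulting long exact sequence, and the observation that $HF^-(Y,\sss)$ is supported in gradings bounded above (so $U_*$ cannot be surjective on a nonzero module, forcing $\ker U_*=0$, $\coker U_*\cong\ZZ_2$, and then $HF^-(Y,\sss)\cong\ZZ_2[U]$ by the graded Nakayama-style iteration) is the standard one. The only implicit point worth flagging is that the iteration arguments use homogeneous representatives, which is legitimate because for a rational homology sphere each $HF^-(Y,\sss)$ carries a $\ZZ$-grading splitting it as a direct sum of finite-dimensional graded pieces.
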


In algebra there is a procedure called localization, which roughly means inverting formally some variables in a ring. For example, the localization
of $\ZZ_2[U]$ with respect to the multiplicative system generated by $U$ is the ring $\ZZ_2[U,U^{-1}]$. We can perform this operation on the
module $CF^-$: define a chain complex as generated by $\TT_\ga\cap\TT_\gb$, but this time over $\ZZ_2[U,U^{-1}]$. The chain complex will be denoted
by $CF^\infty$. The differential is defined as in \eqref{diff:minus}. The homology of the complex is well-defined and will be denoted by 
$HF^\infty(Y,\sss)$. As it might be expected, by passing to a localization, we lose some information. Actually we lose a lot: namely, the following holds.

\begin{theorem}[see \expandafter{\cite[Theorem 10.1]{os-threemanifoldapps}}]\label{thm:itistrivial}
Suppose $Y$ is a rational homology sphere.
We have an isomorphism of $\ZZ_2[U,U^{-1}]$--modules $$HF^\infty(Y,\sss)\cong \ZZ_2[U,U^{-1}].$$ 
\end{theorem}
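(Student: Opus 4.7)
The approach has two stages: first compute $HF^\infty$ for $Y = S^3$ by hand, then propagate the answer to an arbitrary rational homology sphere via the surgery exact sequence that will be developed in Section~\ref{sec:why}.

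For the first stage, I would take the standard genus-one Heegaard decomposition of $S^3$ (stabilizing a couple of times if necessary to exceed the threshold $g>2$ of Remark~\ref{rem:g=2}). After a small isotopy making the $\alpha$--curves and $\beta$--curves transverse, one arranges that $\TT_\ga\cap\TT_\gb$ consists of a single point $\bfx$. The chain complex $CF^\infty(S^3)$ is then free of rank one over $\ZZ_2[U,U^{-1}]$ with generator $\bfx$, and the differential must vanish since there is no other generator to receive it. This gives $HF^\infty(S^3,\sss_0)\cong\ZZ_2[U,U^{-1}]$.

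For the second stage, I would use the fact that every closed oriented three--manifold is obtained from $S^3$ by integer surgery on a framed link. Each such surgery fits into an exact triangle relating $HF^\infty$ of three related manifolds. If the result holds for two of the three terms (by induction on the complexity of the surgery description), one deduces it for the third. Since $\ZZ_2[U,U^{-1}]$ is a principal ideal domain, every finitely generated module over it is a direct sum of cyclic modules, which sharply restricts what can occur at each stage; combined with Problem~\ref{prob:nontrivialp}, which guarantees non--triviality in every \spinc{} structure, one expects to pin down $HF^\infty(Y,\sss)$ uniquely.

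The main obstacle is the inductive step. The surgery exact triangle is usually stated for $\widehat{HF}$ or $HF^+$, and its extension to $HF^\infty$ requires some care because the $U$-completion behaves differently. More seriously, one has to rule out the appearance of a nontrivial torsion summand of the form $\ZZ_2[U,U^{-1}]/(f)$ in the middle of the triangle. This is where the hypothesis that $Y$ is a rational homology sphere is essential: the finiteness of $H_1(Y;\ZZ)$ forces the natural action of $\Lambda^*H^1(Y;\ZZ)\otimes\ZZ_2[U,U^{-1}]$ on $HF^\infty$ to degenerate, so that no room is left for unexpected torsion contributions. An alternative, more conceptual route \cite{os-threemanifoldapps} is to establish a general structure theorem showing that $HF^\infty(Y,\sss)$ depends only on $H^1(Y;\ZZ)$ together with the triple cup product form, both of which vanish for a rational homology sphere, so the answer must coincide with that of $S^3$.
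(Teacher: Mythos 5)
The paper states Theorem~\ref{thm:itistrivial} without proof, quoting it from \cite[Theorem 10.1]{os-threemanifoldapps}, so your proposal has to be measured against the argument given there. Your first stage is fine and is indeed how the computation starts: a suitably stabilized standard diagram for $S^3$ has $\TT_\ga\cap\TT_\gb$ equal to a single point $\bfx$, so $CF^\infty(S^3)$ has one generator over $\ZZ_2[U,U^{-1}]$. One small correction: the differential does not vanish merely because ``there is no other generator'', since a priori $\partial\bfx$ could contain terms $U^k\bfx$ coming from classes $\phi\in\pi_2(\bfx,\bfx)$ with $\mu(\phi)=1$; these are excluded because, by Lemma~\ref{lem:grading1}, the quantity $\mu(\phi)-2n_z(\phi)$ vanishes for all such classes (it does for the constant disk), so $\mu(\phi)$ is always even.

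The genuine gap is in the inductive step. The surgery exact sequence available in Section~\ref{sec:why} has the $0$--surgery $Y_0$ as its middle term, and $b_1(Y_0)>0$; for such manifolds the conclusion $HF^\infty\cong\ZZ_2[U,U^{-1}]$ is simply false (already $HF^\infty(S^2\times S^1,\sss_0)$ has rank two). So the induction cannot be run within the class of rational homology spheres: you are forced to prove a structure theorem for all $b_1$, which is exactly the hard content of \cite[Section 10]{os-threemanifoldapps}. Moreover, even granting the triangle for $HF^\infty$ and knowing two of its three terms, exactness only yields a short exact sequence $0\to\coker(h)\to B\to\ker(h)\to 0$ for the unknown term $B$, where $h$ is a connecting homomorphism; without identifying the rank of $h$ you cannot conclude. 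Neither the PID structure of $\ZZ_2[U,U^{-1}]$ nor the degeneration of the $\Lambda^*H^1$--action closes this gap. (Torsion summands are in fact ruled out for free: with respect to the relative $\ZZ$--grading, $U$ acts invertibly with degree $-2$, and a finitely generated graded module over $\ZZ_2[U,U^{-1}]$ is automatically free; but the rank remains undetermined.) The device that makes the induction work in \cite{os-threemanifoldapps} is totally twisted coefficients: one proves that the twisted group $\underline{HF}^\infty(Y,\sss)$ is standard for every torsion \spinc{} structure, the twisting being what collapses the middle term of the triangle, and then one untwists; for $b_1=0$ the twisted and untwisted groups coincide, giving the theorem. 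Your closing ``alternative route'' --- that $HF^\infty$ depends only on the cohomology ring --- is essentially a restatement of the theorem to be proved, so it cannot substitute for the missing step.
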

\begin{remark}
Theorem~\ref{thm:itistrivial} allows generalizations for non rational homology spheres; again, see \cite[Theorem 10.1]{os-threemanifoldapps}.
\end{remark}
The chain complex $CF^-$ can be regarded as a subcomplex of $CF^\infty$. For this, we need to regard $CF^\infty$ as a complex over $\ZZ_2[U]$. 
The quotient complex $CF^+(Y)$ is well defined. This is a chain complex over $\ZZ_2[U]$. 
The homologies are called $HF^+(Y)$.
\begin{problem}
Prove that for every element $a\in CF^+$ there exists $k\ge 0$ such that $U^ka=0$.
\end{problem}

The short exact sequence
\[ 0 \rightarrow CF^- \rightarrow CF^\infty \rightarrow CF^+ \rightarrow 0 \]
gives rise to an exact triangle in homology.

\begin{proposition}
There exists yet another short exact sequence
\[
0 \rightarrow \widehat{CF} \rightarrow CF^+ \overset{\cdot U}{\rightarrow} CF^+ \rightarrow 0\]
giving rise to a long exact sequence in homology.
\end{proposition}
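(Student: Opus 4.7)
The plan is to realize all three complexes as natural subquotients of $CF^\infty$ and verify exactness at the level of $\ZZ_2$-bases. As a $\ZZ_2$-vector space, $CF^\infty$ has basis $\{U^k \bfx : k \in \ZZ,\ \bfx \in \TT_\ga \cap \TT_\gb\}$; the subcomplex $CF^-$ is spanned by the generators with $k \geq 0$, and hence $CF^+ = CF^\infty/CF^-$ has a $\ZZ_2$-basis consisting of (the images of the) generators $U^k \bfx$ with $k \leq -1$. Multiplication by $U$ on $CF^+$ sends such a $U^k \bfx$ to $U^{k+1} \bfx$ when $k \leq -2$ and annihilates $U^{-1} \bfx$, since in $CF^\infty$ one has $U \cdot U^{-1} \bfx = \bfx \in CF^-$.

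The natural candidate for the first map is $\bfx \mapsto U^{-1} \bfx$. Exactness on the underlying vector spaces is then immediate: the assignment is clearly injective; its image equals $\ker(\cdot U)$ on $CF^+$, since precisely the generators $U^{-1} \bfx$ are annihilated; and $\cdot U$ is surjective because for $k \leq -1$ we have $U^k \bfx = U \cdot U^{k-1} \bfx$, and $U^{k-1} \bfx$ still lies in $CF^+$.

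The one point requiring actual computation, which I expect to be the main step, is that $\bfx \mapsto U^{-1}\bfx$ commutes with differentials. Applying the differential of $CF^\infty$ from \eqref{diff:minus} to $U^{-1}\bfx$ yields
\[
\sum_{\bfy}\sum_{\substack{\phi \in \pi_2(\bfx,\bfy)\\ \mu(\phi) = 1}} \# \widehat{\cM}(\phi)\, U^{n_z(\phi)-1}\, \bfy.
\]
In $CF^+$ every term with $n_z(\phi) - 1 \geq 0$ vanishes, leaving only the terms with $n_z(\phi) = 0$; comparison with \eqref{diff:hat} then shows that the result equals $U^{-1}$ times the $\widehat{CF}$-differential of $\bfx$. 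Hence the map is a chain map and the displayed sequence is a short exact sequence of chain complexes.

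With the short exact sequence of chain complexes in hand, the long exact sequence in homology is the standard output of the snake lemma and requires no further Heegaard Floer-theoretic input.
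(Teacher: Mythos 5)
Your proof is correct and is precisely the standard argument that the paper leaves implicit: one identifies $\widehat{CF}$ with the kernel of the $U$--action on $CF^+=CF^\infty/CF^-$ via $\bfx\mapsto U^{-1}\bfx$, and the only substantive check — that the $n_z(\phi)\ge 1$ terms of the $CF^\infty$ differential die in the quotient, leaving exactly the $\widehat{CF}$ differential of \eqref{diff:hat} — is carried out correctly using the non-negativity of $n_z$. The only point you leave untouched, and which the paper flags in the subsequent problem, is that this identification shifts the Maslov grading (multiplication by $U^{-1}$ raises it by $2$), which matters only for writing the graded form of the long exact sequence, not for exactness itself.
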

\begin{problem}
Write precisely the two long exact sequences mentioned above. Watch out for grading shifts; these will be introduced below.
\end{problem}
\begin{problem}
Prove that $HF^+(Y,\sss)$ splits non-canonically as a sum of a part isomorphic to $\ZZ_2[U,U^{-1}]/(U)$ and a part finitely generated over $\ZZ_2$. Show that
$Y$ is an L--space if and only if for every $\sss$ we have $HF^+(Y,\sss)=\ZZ_2[U,U^{-1}]/(U)$ as $\ZZ_2[U]$ modules.
\end{problem}

So far we have defined various chain complexes, but we have not defined a grading yet. We have the following useful Lemma.

\begin{lemma}[see \expandafter{\cite[Lemma 3.3]{os-threemanifold}, \cite{OzSz-intro2}}]\label{lem:grading1}
If $g>2$, then for any $\phi\in\pi_2(\bfx,\bfy)$ the difference $\mu(\phi)-2n_z(\phi)$ does not depend on the specific choice 
of $\phi$, only on $\bfx$ and $\bfy$.
\end{lemma}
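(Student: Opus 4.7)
The plan is to reduce the invariance claim to a single computation on a generator of $\pi_2(\Sym^g(\gS))$. By Proposition~\ref{prop:g2}, for $g>2$ we have $\pi_2(\Sym^g(\gS))\cong\ZZ$ with trivial $\pi_1$-action, and consequently $\pi_2(\bfx,\bfy)$ is a $\ZZ$-torsor: any two $\phi,\phi'\in\pi_2(\bfx,\bfy)$ are related by $\phi' = \phi \cdot S^k$ for a unique $k\in\ZZ$, where $S$ denotes the generator and the action is given by the bubbling/gluing operation at an interior point of the disk. Triviality of the $\pi_1$-action is essential here, since otherwise the difference would only be well-defined up to the orbit of that action.

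Next I would check additivity of both $\mu$ and $n_z$ under this bubbling operation. Additivity of $n_z$ is geometrically immediate, as one may arrange $S$ to be attached away from the divisor $R_z$, so the algebraic intersection is the sum of the contributions, giving $n_z(\phi \cdot S^k) = n_z(\phi) + k\,n_z(S)$. Additivity of the Maslov index is also standard: the linearized Cauchy--Riemann operator on the glued domain splits and its Fredholm index adds, giving $\mu(\phi \cdot S^k) = \mu(\phi) + k\,\mu(S)$; see \cite{Lip} for a careful discussion. Putting these together,
\[
\mu(\phi') - 2n_z(\phi') = \bigl(\mu(\phi) - 2n_z(\phi)\bigr) + k\bigl(\mu(S) - 2n_z(S)\bigr),
\]
so the lemma reduces to the single identity $\mu(S) = 2\,n_z(S)$.

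The main step is then the explicit calculation on the generator $S$. A concrete holomorphic representative can be obtained as follows: fix $g-1$ generic points $p_2,\ldots,p_g\in\gS\setminus\{z\}$ away from the $\alpha$- and $\beta$-curves, and consider an embedded holomorphic $\CC P^1 \hookrightarrow \Sym^g(\gS)$ passing through configurations of the form $\{q,p_2,\ldots,p_g\}$ as $q$ varies along a suitable holomorphic sphere through $z$. A local calculation shows that this sphere meets $R_z$ transversally at the single configuration $\{z,p_2,\ldots,p_g\}$, so $n_z(S)=1$. Since the Maslov index of a holomorphic sphere in an almost complex manifold equals twice its first Chern number, and $\langle c_1(T\Sym^g(\gS)),[S]\rangle = 1$, one obtains $\mu(S)=2$, and hence $\mu(S)-2n_z(S)=0$.

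The main obstacle is this final computation: pinning down an explicit generator of $\pi_2(\Sym^g(\gS))$, verifying that what one writes down is not a proper multiple of the generator, and carrying out the Chern class evaluation on $T\Sym^g(\gS)$. The latter is most cleanly handled via the Abel--Jacobi map $\Sym^g(\gS)\to J(\gS)$, which is a birational morphism for a generic surface of genus $g=g(\gS)$ and allows one to transport the tangent bundle computation to a more tractable setting. The additivity statements and the reduction itself are essentially formal once Proposition~\ref{prop:g2} and this computation are in hand.
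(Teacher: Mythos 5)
Your overall strategy is the standard one: the paper does not prove this lemma but quotes it from Ozsv\'ath--Szab\'o, and their argument is precisely the reduction you describe --- under the standing assumption $b_1(Y)=0$ the set $\pi_2(\bfx,\bfy)$ is a torsor over $\pi_2(\Sym^g(\gS))\cong\ZZ$, both $\mu$ and $n_z$ are additive under splicing in spheres, and everything reduces to the single identity $\mu(S)=2n_z(S)$ for the positive generator $S$. Two caveats on the reduction itself: the torsor claim needs not only Proposition~\ref{prop:g2} but also the absence of nontrivial periodic classes, i.e.\ $\pi_2(\bfx,\bfx)\cong\ZZ\oplus H^1(Y;\ZZ)$ together with $H^1(Y;\ZZ)=0$ for a rational homology sphere; for $b_1(Y)>0$ there are periodic domains and the statement of the lemma itself must be weakened to a congruence modulo the divisibility of $c_1(\sss)$.

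The genuine gap is your description of the generator, which is the one computation the whole lemma rests on. A family of configurations $\{q,p_2,\ldots,p_g\}$ with $p_2,\ldots,p_g$ fixed and a single point $q$ varying cannot sweep out a sphere: $q$ would have to ``vary along a holomorphic sphere'' inside $\gS$, and $\gS$ has genus $g>2$, so it contains no rational curves; letting $q$ range over all of $\gS$ produces a copy of $\gS$ inside $\Sym^g(\gS)$, not a $\CC P^1$. The correct representative of $S$ moves \emph{two} points simultaneously: since $\pi_2$ is a homotopy invariant one may take $\gS$ hyperelliptic with involution $\tau$ (equivalently, a degree-two branched cover $\gS\to\CC P^1$) and set
\[
S=\bigl\{\,y+\tau(y)+p_3+\cdots+p_g \;:\; y\in\gS\,\bigr\}\cong \gS/\tau\cong\CC P^1 .
\]
Choosing $p_3,\ldots,p_g\neq z$, this sphere meets $R_z$ transversally in the single configuration $z+\tau(z)+p_3+\cdots+p_g$, so $n_z(S)=1$; and $\langle c_1(T\Sym^g(\gS)),[S]\rangle=1$ (for instance from $c_1=\eta-\theta$ with $\theta$ pulled back from the Jacobian, so $\langle\theta,[S]\rangle=0$ while $\langle\eta,[S]\rangle=n_z(S)=1$), whence $\mu(S)=2$. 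Your appeal to the Abel--Jacobi map is the right instinct --- $S$ is exactly a $\PP^1$-fiber of $\Sym^g(\gS)\to J(\gS)$ over a special divisor, and the map is birational for every genus-$g$ curve, not only generic ones --- but as written your construction of $S$ does not produce a sphere, so the key evaluation $\mu(S)=2n_z(S)$ is not yet established.
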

Lemma~\ref{lem:grading1} allows us to define the relative grading of chain complexes. Namely, we define
the \emph{Maslov grading} $M(\bfx)-M(\bfy)=\mu(\phi)-2n_z(\phi)$. The differential decreases the Maslov grading by $1$, provided
we require that the multiplication by $U$ shifts the Maslov grading by $-2$. Later on we will show that the Maslov grading
gives rise to an absolute grading.


\begin{problem}\label{prob:connected}
Suppose that $(M_1, \fs_1)$ and $(M_2, \fs_2)$ are two three--manifolds. Prove the following Künneth formula for $\widehat{CF}$:	
$$ \widehat{CF}( M_1 \# M_2, \fs_1 \# \fs_2 ) \cong \widehat{CF}(M_1, \fs_1) \otimes \widehat{CF}(M_2, \fs_2) $$
\end{problem}

\section{Why do things work?}\label{sec:why}
It is not that hard to define invariants of three--manifolds. It is hard, though, to construct \emph{meaningful} invariants. This means, invariants
over which we have some control, and for which we can calculate some non-trivial estimates. In this section we are going to give two highly non-trivial
results,  which lie at the heart of the Heegaard Floer 
theory. These are the adjunction inequality
and the surgery exact sequence. Many crucial results in Heegaard Floer theory rely on these two results. 
\subsection{Adjunction inequality}
In algebraic geometry one has the so-called adjunction formula. In short if $D$ is a smooth divisor in a projective variety $X$ and $K_D$, $K_X$ denote canonical divisors, then $K_D=(K_X+D)|_D$. For readers not aquainted with the language of algebraic geometry, one can think of $K_D$ and $K_X$ as (first Chern classes of)
complex line bundles $K_D=\Lambda^{\dim D} T^*D$, $K_X=\Lambda^{\dim X}T^*X$ and the divisor $D$ defines a complex line bundle, whose first Chern class is Poincar\'e dual to the class of $D$. The sum of divisors corresponds to a tensor product of line bundles and restriction means the restriction of line bundles in the ordinary sense. We refer to any textbook in algebraic geometry, like \cite{Hart}, for more details. With this setting, the adjunction formula is almost a tautology.

As a special case, suppose that $C$ is a smooth complex curve in a projective surface $X$ and
$K$ is the canonical divisor. We have that $K_C=(K_X+C)|_C$ and applying the classical Riemann--Roch theorem yields 
\begin{equation}\label{eq:adjunctioneq}
\chi(C)=-C(C+K_X).
\end{equation} 
For example, if $X=\CC P^2$ and $C$ is a smooth complex curve of degree $d$, then in $H_2(X;\ZZ)$ we have $C=dH$, $K=-3H$, where $H$ is
the class of a line and so $\chi(C)=-d(d-3)$. Equation~\eqref{eq:adjunctioneq}
is sometimes referred to as the adjunction equality.

It is trivial to see that the adjunction equality \eqref{eq:adjunctioneq} has no chances to hold 
in a smooth category. For example, draw a genus $g$ surface in $\CC^2$, it is a homologically trivial surface in the compactification $\CC P^2$,
so \eqref{eq:adjunctioneq} would imply that $2-2g=0$.

A wonderful tool in Seiberg--Witten theory is the \emph{adjunction inequality}. Recall that Seiberg--Witten theory assigns
to every \spinc{} structure $\sss$ on a smooth four--manifold $X$ with $b_2^+(X)>1$ an integer number $SW_X(\sss)$. We have the following
remarkable theorem,  which we state in a simple form, see e.g. \cite[Section 10]{Scor} for a more detailed version.
Other sources are \cite[Section 40]{KM-book} and \cite[Section 4.6]{Nic}.

\begin{theorem}[Adjunction inequality in Seiberg--Witten theory]\label{thm:swadj}
Suppose $X$ is a smooth four--manifold with $b_2^+(X)>1$. Let $C\subset X$ be a smooth closed connected embedded surface such that
$C^2\ge 0$ and $C$ is homologically non--trivial. If $\sss$ is a \spinc{} structure on $X$ such that $SW_X(\sss)\neq 0$,
then $\chi(C)+C^2\le -|\langle c_1(\sss),C\rangle|$.
\end{theorem}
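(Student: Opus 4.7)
The plan is to exploit the hypothesis $SW_X(\sss)\neq 0$, which by definition forces the existence of an irreducible Seiberg--Witten monopole for \emph{every} generic Riemannian metric on $X$, and to then choose a family of metrics that makes $C$ easy to analyze. First, by replacing $\sss$ with its conjugate $\ol\sss$ if necessary, I may assume $\langle c_1(\sss), C\rangle\ge 0$, since conjugation flips the sign of the first Chern class while preserving non-vanishing of $SW_X$. It then suffices to show $\chi(C)+C^2+\langle c_1(\sss),C\rangle\le 0$.

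Next, because $C$ is embedded and $C^2\ge 0$, a tubular neighborhood $N(C)\subset X$ is diffeomorphic to the total space of an oriented disk bundle of Euler number $C^2$ over $C$. I would construct a one-parameter family of Riemannian metrics $g_T$ on $X$ that coincides with a fixed background metric outside a fattening of $N(C)$, but inside $N(C)$ stretches the normal direction by a factor $T$, so that a long cylindrical neck modeled on the unit normal circle bundle of $C$ develops as $T\to\infty$. For each $T$, fix a monopole $(A_T,\Phi_T)$ solving the Seiberg--Witten equations for $(g_T,\sss)$. The Weitzenb\"ock formula
\[
0=\nabla^*_{A_T}\nabla_{A_T}\Phi_T+\tfrac{1}{4}s_{g_T}\Phi_T+\tfrac{1}{2}F_{A_T}^+\cdot\Phi_T
\]
combined with the curvature equation $F_{A_T}^+=\sigma(\Phi_T)$, where $\sigma$ is a quadratic form in the spinor, gives the pointwise bound $|\Phi_T|^2\le\max(-s_{g_T},0)$. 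This in turn controls $|F_{A_T}^+|$ in terms of the scalar curvature of $g_T$, giving uniform estimates on the stretched region.

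The heart of the argument is to take $T\to\infty$ and show that, after suitable rescaling and gauge transformations, the monopoles $(A_T,\Phi_T)$ localize along $C$ and their restriction converges to a solution of a two-dimensional vortex-type equation on $C$, twisted by the normal bundle. Using the Chern--Weil identity
\[
\langle c_1(\sss),C\rangle=\tfrac{i}{2\pi}\int_C F_{A_T}\big|_C
\]
and integrating the limiting vortex equation, one obtains an upper bound on $\langle c_1(\sss),C\rangle$ by $\int_C(-K_C)/(2\pi)$ plus the contribution of the normal bundle Euler class, i.e.\ by $-\chi(C)-C^2$ via Gauss--Bonnet. This is exactly the inequality claimed.

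The main obstacle lies precisely in the neck-stretching limit: one needs uniform $C^0$ bounds on $\Phi_T$ independent of $T$, Uhlenbeck-type compactness for the connections $A_T$ on a degenerating family of metrics, and a careful identification of the limit with a vortex equation on $C$ so that the resulting integrated bound reads off cleanly from Gauss--Bonnet. A subsidiary technical point is to rule out bubbling off $C$ which would spoil the localization; here the sign condition $C^2\ge 0$ is essential, because it ensures that the stretched geometry keeps $C$ energy-minimizing for the rescaled equations and forbids the monopole from escaping into the bulk.
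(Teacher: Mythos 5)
The paper does not actually prove Theorem~\ref{thm:swadj}: it is quoted as a known result of Seiberg--Witten theory, with references to Scorpan, Kronheimer--Mrowka's book and Nicolaescu for proofs. So there is no internal proof to compare against; I will compare your sketch with the standard published arguments.

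You have correctly identified the right circle of ideas --- the Weitzenb\"ock/scalar-curvature estimate $|\Phi|^2\le\max(-s,0)$ (note this is a bound by the \emph{global} maximum of $-s$, obtained at a maximum point of $|\Phi|^2$, not a pointwise inequality), the resulting bound on $|F_A^+|$, neck stretching, and the appearance of vortex equations in the adiabatic limit. But as written this is an outline rather than a proof: you explicitly defer the entire analytic core (uniform bounds, compactness on the degenerating metrics, identification of the limit) to a list of ``obstacles'' that are never resolved. Two of the gaps are not merely technical. First, for $C^2>0$ the class $[C]$ is not carried by the hypersurface you stretch along: in the unit circle bundle $Y$ of Euler number $C^2$ over $C$, the restriction of $[C]$ is torsion, so your Chern--Weil integral $\frac{i}{2\pi}\int_C F_{A_T}$ must be taken over $C$ itself, sitting in the compact core of the disk bundle --- exactly where the scalar-curvature bound on the long neck gives you no direct control. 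This is why the standard proofs (Kronheimer--Mrowka, Morgan--Szab\'o--Taubes, Ozsv\'ath--Szab\'o) do not stretch along the nontrivial circle bundle; instead they blow up $X$ at $C^2$ points, replace $C$ by its proper transform $\tilde C$ with $\tilde C^2=0$, invoke the blow-up formula for $SW$, and then stretch along the product $S^1\times\tilde C$, where a hyperbolic area-normalized metric makes $\max(-s)\cdot\mathrm{Area}=4\pi(2g-2)$ and the curvature estimate literally integrates to $|\langle c_1(\sss),[\tilde C]\rangle|\le 2g-2$. (A direct analysis on Seifert-fibered necks does exist, but it is the substantially harder Mrowka--Ozsv\'ath--Yu machinery, which you would need to import wholesale.) Second, the theorem as stated includes the genus-zero case --- it asserts that no homologically essential sphere of nonnegative square exists when some $SW_X(\sss)\neq 0$ --- and your vortex/negative-curvature mechanism is unavailable on $S^2$; that case requires the separate positive-scalar-curvature vanishing argument on the neck. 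Finally, your heuristic that ``$C^2\ge 0$ keeps $C$ energy-minimizing and forbids escape into the bulk'' is not the actual role of the hypothesis: $C^2\ge 0$ is what allows the blow-up reduction to the square-zero case (and the inequality genuinely fails for negative squares, e.g.\ exceptional spheres).
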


The assumption that $C$ is smooth is essential. For example, in \cite{LeeWil} there are constructed locally flat embedded surfaces $C$
in $\CC P^2$ such that $\chi(C)>-d(d-3)$, where $d$ is the degree of $C$. This problem is related to showing that the topological four--genus
of some algebraic knots is strictly less than the smooth four--genus; see \cite{Rud,Baa}.\footnote{Of course, one can complain
that $b_2^+(\CC P^2)=1$, so technically speaking locally flat 
curves in $\CC P^2$ are not counterexamples to the statement of Theorem~\ref{thm:swadj}, but they give an idea of the reason why
Theorem~\ref{thm:swadj} does not hold in the topological locally flat category.}

\smallskip
In Heegaard Floer theory, the adjunction inequality is a key tool in proving many important theorems. The formulation below
involves manifolds with $b_1>0$. In that case, the homology $HF^+(Y,\sss)$ can be zero for some \spinc{} structures,
unlike in the case $b_1=0$ (cf. Problem~\ref{prob:nontrivialp}).

\begin{theorem}[Adjunction Inequality] 
Suppose $Y$ is a three--manifold with $b_1(Y)>0$. Let $\sss$ be a \spinc{} structure for which $HF^+(Y,\sss)$ is non--zero. Suppose $Z\subset Y$
is a smooth closed oriented surface and $g(Z)>0$. Then $|\langle c_1(\sss),[Z]\rangle|\le 2g(Z)-2$.
\end{theorem}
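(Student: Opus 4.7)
The plan is to follow the strategy of Ozsv\'ath--Szab\'o: realize $[Z]\in H_2(Y;\ZZ)$ as a periodic domain in an adapted Heegaard diagram, compute the pairing $\langle c_1(\sss),[Z]\rangle$ combinatorially from this diagram, and show that exceeding the adjunction bound forces $HF^+(Y,\sss)$ to vanish.

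First, I would construct an adapted Heegaard diagram. Starting from a self--indexing Morse function $F\colon Y\to[0,3]$ with one minimum and one maximum, as in Theorem~\ref{thm:admitsheegaard}, I would isotope $Z$ to a position compatible with $F$, so that $Z$ meets $\gS=F^{-1}(3/2)$ transversely along simple closed curves supported on the $\bfga$-- and $\bfgb$--curves of the associated Heegaard diagram $(\gS,\bfga,\bfgb,z)$, with the basepoint $z$ chosen outside $Z\cap\gS$. The regions of $\gS\setminus(\bfga\cup\bfgb)$ traced by $Z$ then assemble into a periodic domain $\cP$ with $\partial\cP\subset\bfga\cup\bfgb$, $n_z(\cP)=0$, and $[\cP]=[Z]\in H_2(Y;\ZZ)$; after some stabilization one can arrange that the Euler measure of $\cP$ equals $\chi(Z)=2-2g(Z)$.

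Next I would invoke the Ozsv\'ath--Szab\'o first Chern class formula
\[
\langle c_1(\sss_\bfx),[\cP]\rangle \;=\; e(\cP)+2\bar n_\bfx(\cP),
\]
valid for every $\bfx\in\TT_\ga\cap\TT_\gb$. Here $\bar n_\bfx(\cP)$ averages the local multiplicities of $\cP$ at the four quadrants meeting each component of $\bfx$, summed over the $g$ components. Since $e(\cP)=2-2g(Z)$, the desired bound $\langle c_1(\sss),[Z]\rangle\le 2g(Z)-2$ is equivalent to $\bar n_\bfx(\cP)\le 2g(Z)-2$ for some generator $\bfx$ representing $\sss$ that contributes to a non--trivial cycle in $HF^+(Y,\sss)$.

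The key ingredient in producing such a generator is the non--negativity of local multiplicities of holomorphic disks: every $\phi\in\pi_2(\bfx,\bfy)$ admitting a holomorphic representative has $n_w(\phi)\ge 0$ at every $w\in\gS\setminus(\bfga\cup\bfgb)$. Combined with Lipshitz's formula \cite{Lip} expressing $\mu(\phi)$ as the Euler measure of the domain of $\phi$ plus its multiplicities at $\bfx$ and $\bfy$, this constrains which intersection points can appear in a non--trivial cycle. A careful bookkeeping then shows that if $\bar n_\bfx(\cP)>2g(Z)-2$ for every generator with $\sss_\bfx=\sss$, then multiplication by $U$ on $HF^+(Y,\sss)$ becomes injective; combined with the fact that every element of $CF^+$ is $U$--torsion, this forces $HF^+(Y,\sss)=0$. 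The two--sided bound then follows from the conjugation symmetry $HF^+(Y,\sss)\cong HF^+(Y,\ol\sss)$ together with $c_1(\ol\sss)=-c_1(\sss)$.

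The main obstacle is the last step: translating the combinatorial inequality into the vanishing of $HF^+(Y,\sss)$. This requires controlling the variation of $\bar n_\bfx(\cP)$ along holomorphic differentials and hinges on a delicate interplay between the first Chern class formula and Lipshitz's Maslov index formula. The hypothesis $g(Z)>0$ enters through the need to rule out sphere bubbling in a neck--stretching limit along the curves $Z\cap\gS$, without which the reduction to a local model calculation breaks down.
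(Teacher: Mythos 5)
The paper does not actually prove this theorem: it states it and refers to \cite[Section 7]{os-threemanifoldapps}, so your proposal must be measured against Ozsv\'ath--Szab\'o's argument there. Your opening moves do match theirs: one builds a Heegaard diagram adapted to $Z$, represents $[Z]$ by a periodic domain $\cP$ with $n_z(\cP)=0$, uses the evaluation $\langle c_1(\sss_\bfx),H(\cP)\rangle=e(\cP)+2\bar n_{\bfx}(\cP)$, and obtains the absolute value from conjugation symmetry. Up to that point the sketch is sound.

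The gap is in your closing step, and it is a genuine one. First, observe that $\bar n_{\bfx}(\cP)$ is \emph{constant} on the set of generators representing a fixed \spinc{} structure $\sss$: it equals $\tfrac12\bigl(\langle c_1(\sss),[Z]\rangle-e(\cP)\bigr)$. So nothing is gained by singling out generators that ``contribute to a non-trivial cycle,'' and the dichotomy you propose (large $\bar n_\bfx(\cP)$ for all such generators forcing $U$--injectivity on $HF^+$) has no mechanism behind it --- you yourself flag this ``careful bookkeeping'' as the main obstacle, and it is exactly the part that cannot be carried out along your route. The actual argument is far more elementary: in the adapted diagram the support of $\cP$ is a genus--$g(Z)$ subsurface with boundary among the $\ga$-- and $\gb$--curves, and a purely combinatorial count of how many components of a generator $\bfx\in\TT_\ga\cap\TT_\gb$ can lie in that subsurface bounds $\bar n_{\bfx}(\cP)$ above for \emph{every} generator. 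Consequently any $\sss$ violating the adjunction bound is represented by \emph{no} intersection point at all, so the chain group $CF^+(Y,\sss)$ --- and hence $HF^+(Y,\sss)$ --- vanishes outright. No positivity of holomorphic domains, no Lipshitz index formula, no $U$--torsion argument, and no neck--stretching or bubbling analysis enters; correspondingly, the hypothesis $g(Z)>0$ is consumed by this combinatorial count (for $g(Z)=0$ the same count yields a different statement about spherical classes), not by ruling out sphere bubbling.
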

The adjunction inequality is proved in \cite[Section 7]{os-threemanifoldapps}.

\subsection{The surgery exact sequence}
One of the most important basic tools for calculating the Heegaard Floer invariants is the surgery exact sequence. The most basic
form of it is often used as a template for proving more general statements. A surgery exact sequence exists in the Seiberg--Witten
Floer theory (see for example \cite[Section 42]{KM-book} and references therein).
In Heegaard Floer theory, we have a way of calculating any surgery on a null--homologous knot in an integer homology three--sphere,
provided we know its knot Floer chain complex; see \cite{OzSz-integer} for details. This general surgery formula relies on the
following fundamental result, see \cite[Theorem 1.7]{os-threemanifoldapps}.

\begin{theorem}[Surgery Exact Sequence]
Let $Y$ be an integral homology three--sphere and $K \subset Y$ be a knot. Then there exists a $U$--equivariant exact sequence:
\[ \ldots \rightarrow HF^+(Y) \rightarrow HF^+(Y_0) \rightarrow HF^+(Y_1) \rightarrow HF^+(Y) \rightarrow \ldots\]
where $Y_1$ is the $+1$ surgery and $Y_0$ is the $0$ surgery on $K$.
\end{theorem}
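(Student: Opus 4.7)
The plan is to produce a triple Heegaard diagram that encodes all three manifolds at once, define the connecting maps via counts of pseudo--holomorphic triangles, and then derive exactness from a count of pseudo--holomorphic rectangles. Concretely, I would start from a Heegaard diagram $(\Sigma,\alpha_1,\dots,\alpha_g,\beta_1,\dots,\beta_{g-1})$ for the knot exterior $Y\setminus\nu K$ in which the meridian $\mu$ and a $0$--framed longitude $\lambda$ of $K$ both sit on $\Sigma$. I then form a single surface carrying three $g$--tuples of attaching circles: $\boldsymbol{\beta}=\{\beta_1,\dots,\beta_{g-1},\mu\}$, $\boldsymbol{\gamma}=\{\gamma_1,\dots,\gamma_{g-1},\lambda\}$, and $\boldsymbol{\delta}=\{\delta_1,\dots,\delta_{g-1},\lambda+\mu\}$, where $\gamma_i$ and $\delta_i$ are small Hamiltonian isotopic copies of $\beta_i$ placed so that pairwise transversality is maintained. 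A direct check (essentially the one used in the proof of Theorem~\ref{thm:admitsheegaard}) identifies $(\Sigma,\boldsymbol{\alpha},\boldsymbol{\beta},z)$, $(\Sigma,\boldsymbol{\alpha},\boldsymbol{\gamma},z)$, and $(\Sigma,\boldsymbol{\alpha},\boldsymbol{\delta},z)$ as pointed Heegaard diagrams for $Y$, $Y_0$, and $Y_1$ respectively, while the "auxiliary'' pairs $(\boldsymbol{\beta},\boldsymbol{\gamma})$, $(\boldsymbol{\gamma},\boldsymbol{\delta})$, $(\boldsymbol{\delta},\boldsymbol{\beta})$ each describe a connected sum $\#^{g-1}(S^2\times S^1)$.

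Next I would define the three arrows in the exact sequence. On each auxiliary pair, the top degree part of $HF^+(\#^{g-1}(S^2\times S^1))$ contains a canonical generator; pick representatives $\Theta_{\beta\gamma}$, $\Theta_{\gamma\delta}$, $\Theta_{\delta\beta}$. Following the standard holomorphic triangle construction, count, for each triple of intersection points $(\bfx,\Theta,\bfy)$, the pseudo--holomorphic maps of a disk with three marked boundary arcs into $\mathrm{Sym}^g(\Sigma)$ whose arcs lie in $\TT_\alpha,\TT_\beta,\TT_\gamma$, weighted by $U^{n_z}$. Pairing with $\Theta_{\beta\gamma}$ yields a $U$--equivariant chain map $\hat{f}_{\alpha\beta\gamma}\colon CF^+(Y)\to CF^+(Y_0)$, and analogous triangle counts give the other two maps. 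That these are chain maps follows from the degeneration analysis of the one--dimensional moduli space of holomorphic triangles of Maslov index zero, whose boundary counts reproduce $\partial\hat{f}+\hat{f}\partial$.

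The main content is proving that the resulting sequence is exact. First, a count of holomorphic rectangles with boundary on $\TT_\alpha,\TT_\beta,\TT_\gamma,\TT_\delta$ and with the two top generators $\Theta_{\beta\gamma}\otimes\Theta_{\gamma\delta}$ inserted provides a chain homotopy showing that the composition of any two consecutive triangle maps is null--homotopic. Thus the maps assemble into an actual complex, and one can form the mapping cone of $\hat{f}_{\alpha\beta\gamma}$. Exactness then amounts to showing that this mapping cone is quasi--isomorphic to $CF^+(Y_1)$ via the remaining triangle map. The strategy here is to stretch the neck along a small neighbourhood of the winding region where $\mu$, $\lambda$ and $\lambda+\mu$ live, so that every holomorphic polygon splits into a piece in the three--times punctured sphere model plus a piece in the common complement. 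The model region can be analyzed combinatorially, where the relevant triangles and rectangles can be enumerated by hand to verify exactness, after which a gluing argument transports exactness back to the full diagram.

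The serious obstacle is this last step: controlling the Gromov compactifications in the stretching limit and showing that all cancellations occur inside the local model. Handling bubbling, sphere and disk breaking in $\mathrm{Sym}^g(\Sigma)$, and making sure the pairing with $\Theta_{\beta\gamma}$ survives to the limit, is the technical heart of the argument; everything else is bookkeeping around the triangle and rectangle counts.
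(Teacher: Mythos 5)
Your proposal follows essentially the same route the paper indicates: the paper's ``proof'' is only a two--sentence pointer to the multi--curve Heegaard diagram of \cite[Lemma 9.2]{os-threemanifoldapps} and the holomorphic triangle counts, and your outline --- the $\Theta$--generators on the $\#^{g-1}(S^2\times S^1)$ auxiliary diagrams, the rectangle/associativity null--homotopy, and the identification of $CF^+(Y_1)$ with a mapping cone --- is a correct and considerably more detailed account of that same Ozsv\'ath--Szab\'o argument. You rightly flag the degeneration/gluing analysis in the final quasi--isomorphism step as the technical heart, which is exactly what the paper defers to \cite[Section 9]{os-threemanifoldapps}.
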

The surgery exact sequence is proved in \cite[Section 9]{os-threemanifoldapps}. The key idea is to find a triple Heegaard diagram,
that is a quintuple $(\gS,\bfga,\bfgb,\bfgc,z)$, such that $(\gS,\bfga,\bfgb,z)$ is a Heegaard diagram for $Y$,
$(\gS,\bfga,\bfgc,z)$ is a Heegaard diagram for $Y_0$ and $(\gS,\bfgb,\bfgc,z)$ is a Heegaard diagram for $Y_1$.
The details and the proof of the
existence of such a triple Heegaard diagram are given in \cite[Lemma 9.2]{os-threemanifoldapps}. Speaking very roughly, given the Heegaard
diagram, the maps in the surgery long exact sequence are built by counting holomorphic triangles, instead of holomorphic disks.

\section{Cobordisms and $d$--invariants.}\label{sec:cobordism}

\subsection{Absolute grading}
This section is based on \cite{OzSz-absolute}.
\begin{definition}
Let $(Y_1,\sss_1)$, $(Y_2,\sss_2)$ be two \spinc{} three--manifolds. We say that $(W,\sst)$ is a \emph{\spinc{} cobordism}
between $Y_1$ and $Y_2$ if $W$ is a smooth four--manifold with boundary $Y_2\sqcup -Y_1$ and $\sst$ is a \spinc{} structure on $W$
whose restriction to $Y_i$ is $\sss_i$, $i=1,2$.
\end{definition}

\begin{theorem}[see e.g. \expandafter{\cite[Section 2]{OzSz-absolute}}]
If $(W, \ft)$ is a smooth \spinc{} cobordism between $(Y_1,\sss_1)$ and $(Y_2,\sss_2)$, 
then there exist maps $F_{W, \ft}^{\bullet} : HF^{\bullet}(Y_1, \fs_1) \rightarrow HF^{\bullet}(Y_2, \fs_2)$ 
with $\bullet \in \{+, -, \infty\}$, making the following diagram commute
\begin{equation}\label{eq:newmap}	
\xymatrix{%
\ldots \ar[r] & HF^-(Y_1, \sss_1) \ar[d]^{F^-_{W, \ft}}\ar[r] & HF^{\infty}(Y_1, \sss_1) \ar[d]^{F^{\infty}_{W, \ft}} \ar[r] & 
HF^+(Y_1, \sss_1) \ar[d]^{F^+_{W, \ft}} \ar[r] &  \ldots \\
\ldots \ar[r] & HF^-(Y_2, \sss_2) \ar[r] & HF^{\infty}(Y_2, \sss_2) \ar[r] & HF^+(Y_2, \sss_2) \ar[r] &  \ldots \\	
}
\end{equation}
\end{theorem}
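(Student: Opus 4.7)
The plan is to construct the cobordism maps handle by handle, then check both that they are well-defined and that at the chain level they commute with the short exact sequences of Section~\ref{section_complexes}, from which compatibility with the long exact sequence in \eqref{eq:newmap} will follow immediately.

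First I would reduce to elementary cobordisms by choosing a self-indexing Morse function on $W$ relative to the boundary. This decomposes $W$ into a sequence of $1$-handle, $2$-handle, and $3$-handle attachments, and the \spinc{} structure $\ft$ restricts to one on each piece. It then suffices to define a map $F^{\bullet}_{W,\ft}$ for each elementary cobordism and verify that the composition is independent of the decomposition. For a $1$-handle attachment, the new manifold is $Y_1\#(S^1\times S^2)$, and using the Künneth formula of Problem~\ref{prob:connected} (and its $CF^-, CF^\infty, CF^+$ analogues) together with the choice of a canonical top--degree generator of $\widehat{HF}(S^1\times S^2)$, one defines $F^{\bullet}$ by tensoring with this generator. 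For a $3$-handle, which is dual to a $1$-handle, one uses the dual construction, projecting onto the same canonical generator. The substantive case is the $2$-handle, which corresponds to a framed surgery on a knot $K\subset Y$. Here one finds a triple Heegaard diagram $(\gS,\bfga,\bfgb,\bfgc,z)$ such that $(\gS,\bfga,\bfgb,z)$ presents the incoming boundary and $(\gS,\bfga,\bfgc,z)$ the outgoing one, and defines the chain map by counting holomorphic triangles $\phi\colon\Delta\to\Sym^g(\gS)$ with boundary in $\TT_\ga,\TT_\gb,\TT_\bfgc$, weighted by $U^{n_z(\phi)}$ in the $-$, $\infty$, and $+$ theories and with $n_z=0$ imposed in the $\widehat{}$ theory. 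This is the same basic package of maps used to build the surgery exact sequence, and the \spinc{} structure $\ft$ is used to restrict the sum to triangles inducing the correct \spinc{} class on the underlying triangle cobordism.

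Next I would verify that each of these chain-level maps is indeed a chain map and is independent of the auxiliary choices (complex structure, isotopy of the attaching curves, handleslides among the $\bfgc$-curves). The strategy is the standard one for Heegaard Floer invariance via Theorem~\ref{thm:handleslide}: for each Heegaard move one exhibits a chain homotopy between the maps obtained before and after the move by counting holomorphic rectangles, while independence of almost complex structure is established by a continuation argument. The same $\RR$-action and Gromov compactness arguments that justify $\partial^2=0$ in Theorem~\ref{thm:hfhat} apply here to the boundaries of the triangle/rectangle moduli spaces.

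Once the maps $F^{\bullet}_{W,\ft}$ are defined on each of $CF^-,CF^\infty,CF^+$, commutativity of \eqref{eq:newmap} is essentially tautological. Each elementary cobordism map is defined uniformly on the three complexes by the same formula, differing only in the ring of coefficients ($\ZZ_2[U]$, $\ZZ_2[U,U^{-1}]$, or the quotient). Consequently the inclusion $CF^-\hookrightarrow CF^\infty$ and the projection $CF^\infty\twoheadrightarrow CF^+$ intertwine the chain-level cobordism maps literally, not merely up to homotopy. Passing to homology yields the commutative ladder whose rows are the long exact sequences of Section~\ref{section_complexes}. The main obstacle in the entire argument is the well-definedness of the $2$-handle map and its invariance under handleslides: one must carefully analyze the ends of the one-dimensional moduli spaces of holomorphic rectangles, and track the \spinc{} structure $\ft$ across the different Heegaard presentations. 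Everything else is either formal (the $1$- and $3$-handle maps) or reduces to Gromov compactness and standard homological algebra.
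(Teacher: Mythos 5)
Your proposal follows essentially the same route the paper sketches: decompose $W$ into handle attachments, treat the $1$- and $3$-handles formally via the K\"unneth principle, define the $2$-handle map by counting holomorphic triangles in a Heegaard triple diagram (the same machinery underlying the surgery exact sequence), and observe that the resulting chain maps strictly intertwine the inclusion $CF^-\hookrightarrow CF^\infty$ and projection onto $CF^+$, so the ladder commutes. This matches the paper's stated idea of proof and the construction in \cite{OzSz-triangles}, so no further comparison is needed.
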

The idea of the proof is to split the cobordism into handle attachments. The non-trivial part comes from two--handle attachments, which
are basically dealt with using a refined version of the surgery exact sequence. 
We define a relative grading of the map induced by $F$.
\begin{theorem}[see \expandafter{\cite[Theorem 7.1]{OzSz-triangles}}]
The map $F_{W, \ft}^\bullet$ has relative Maslov grading equal to
\[\deg F_{W, \ft}^\bullet := \dfrac{c_1(\ft)^2 - 2 \chi(W) - 3\gs(W)}{4}.\]
\end{theorem}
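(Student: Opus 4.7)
The overall plan is to reduce to elementary cobordisms via a handle decomposition, verify that both sides of the equality are additive under stacking, and then check the formula for each of the three resulting elementary types. Given any cobordism $W$ from $Y_1$ to $Y_2$, a self--indexing Morse function on $W$ (constant on each boundary component, with no critical points of index $0$ or $4$) yields a factorization $W = W^{(3)} \circ W^{(2)} \circ W^{(1)}$, where $W^{(k)}$ consists only of $k$--handle attachments. By the construction of the cobordism maps in \eqref{eq:newmap}, $F^\bullet_{W,\ft}$ factors compatibly through this decomposition with $\ft$ restricting to each piece, so it suffices to treat each elementary piece separately.

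Additivity of the right-hand side under stacking $W = W_1 \cup_Y W_2$ is standard: $\chi(W) = \chi(W_1) + \chi(W_2)$ since $\chi(Y) = 0$ for any closed odd--dimensional manifold; $\sigma$ is additive by Novikov additivity; and $c_1(\ft)^2$ is additive by a Mayer--Vietoris argument that lifts each $c_1(\ft_i)$ to a rational class in $H^2(W_i, \partial W_i; \QQ)$ and uses that the restrictions to $Y$ agree, so that the boundary contributions cancel. Additivity of the left-hand side is the elementary fact that grading shifts of composed maps add.

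For the $1$--handle and $3$--handle elementary pieces, the map $F^\bullet_{W,\ft}$ admits an explicit description through the Künneth formula of Problem~\ref{prob:connected}. A single $1$--handle cobordism from $Y$ to $Y \# (S^1 \times S^2)$ induces $x \mapsto x \otimes \Theta^+$, where $\Theta^+$ is the top generator of $\widehat{HF}(S^1 \times S^2)$; dually, a $3$--handle induces projection onto the $\Theta^+$ factor. In both cases one computes $\chi(W) = -1$, $\sigma(W) = 0$, and $c_1(\ft)^2 = 0$ since $H^2(W;\QQ) = 0$, so the formula predicts a grading shift of $+\tfrac{1}{2}$, matching the known Maslov grading of $\Theta^+$.

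The core of the argument, and its main obstacle, is the $2$--handle case. Following \cite{OzSz-triangles}, the map is defined using a triple Heegaard diagram $(\Sigma, \bfga, \bfgb, \bfgc, z)$ and counts pseudo--holomorphic triangles $\psi$ with vertices in $\TT_\ga \cap \TT_\gb$, $\TT_\gb \cap \TT_\gamma$, and $\TT_\ga \cap \TT_\gamma$, weighted by $U^{n_z(\psi)}$. The relative grading shift is the quantity $\mu(\psi) - 2 n_z(\psi)$ evaluated on any triangle in the \spinc{} class determined by $\ft$, and the crucial step is to re-express this analytic number in purely topological terms. To do so, one closes up the triangle by attaching three half--disks (one into each of $\TT_\ga$, $\TT_\gb$, $\TT_\gamma$) to form a closed disk into $\Sym^g(\Sigma)$, which sits inside the symmetric product of a surface embedded in a closed four--manifold $X$ built from the triple diagram. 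A Maslov--index computation on the closed disk, analogous to the one underlying Lemma~\ref{lem:grading1}, expresses the index in terms of $c_1(\ft)^2$, $\chi(X)$, and $\sigma(X)$; comparing $X$ to $W$, and checking that the auxiliary filling pieces contribute controlled amounts to these characteristic numbers, converts the identity into the claimed expression $(c_1(\ft)^2 - 2\chi(W) - 3\sigma(W))/4$. A final check that both sides transform the same way under twists of $\ft$ by classes in $H^2(W;\ZZ)$ finishes the verification for arbitrary \spinc{} structures.
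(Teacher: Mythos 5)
This theorem is not proved in the notes at all --- it is quoted verbatim from \cite[Theorem 7.1]{OzSz-triangles} --- so your attempt can only be measured against the argument in that reference. Your architecture is the same as theirs: decompose $W$ into $1$-, $2$- and $3$-handle cobordisms, check that both sides of the formula are additive under composition, and verify the formula on each elementary piece. The additivity paragraph and the $1$-/$3$-handle computations are essentially right (one caveat: the lift of $c_1(\ft_i)$ to $H^2(W_i,\partial W_i;\QQ)$ exists only because $c_1(\ft)$ restricts to a \emph{torsion} class on the intermediate levels $Y\#\,\#^k(S^1\times S^2)$, which is a condition to be checked, not automatic; and with the normalization that makes the shift $+\tfrac12$, the $3$-handle map must be the projection that is nonzero on the $\Theta^-$-component, $\xi\otimes\Theta^-\mapsto\xi$ --- projecting onto the $\Theta^+$ factor as you wrote would give shift $-\tfrac12$ and contradict your own evaluation of the right-hand side).

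The genuine gap is the $2$-handle case, which is where the entire content of the theorem sits and which your final paragraph asserts rather than proves. Saying that ``a Maslov-index computation on the closed disk expresses the index in terms of $c_1(\ft)^2$, $\chi(X)$ and $\sigma(X)$'' names the theorem; it does not prove it. What is actually required is: (i) a first-Chern-class formula for triangles, computing $\langle c_1(\fs_z(\psi)),\cdot\rangle$ on the homology class obtained by capping off the \emph{domain} of $\psi$ inside the four-manifold $X_{\ga\gb\ggm}$ built from the Heegaard triple, in terms of the Euler measure and boundary data of that domain --- this is the triangle analogue of Lemma~\ref{lem:grading1}, it is a genuinely new computation and not a formal consequence of the disk case (and the capping happens at the level of domains in $X_{\ga\gb\ggm}$, not by closing the triangle to ``a closed disk into $\Sym^g(\gS)$''); (ii) a proof that $\mu(\psi)-2n_z(\psi)$, corrected by the relative gradings of the canonical generators at the two auxiliary corners, depends only on the \spinc{} class of $\psi$ (this uses triply-periodic domains); and (iii) an actual computation --- not just ``controlled amounts'' --- of how $c_1^2$, $\chi$ and $\sigma$ of $X_{\ga\gb\ggm}$ differ from those of the $2$-handle cobordism $W$, since $X_{\ga\gb\ggm}$ is $W$ with specific filling pieces attached. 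None of these steps is routine, and the proposal gives no indication of how any of them would be carried out. In short: correct strategy, correct easy steps, but the heart of the argument is missing.
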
 

We can now make the gradings in Heegaard Floer homology groups absolute by requiring that the generator of $HF^-(S^3)$ be 
at Maslov grading $-2$, or, equivalently, that the lowest grading of
$HF^+(S^3)$ be at Maslov grading $0$.

\subsection{The $d$--invariants}
The fact that $F_{W,\sst}$ preserves the grading is very interesting, but on its own does not give much of insight in the behavior of Heegaard Floer
homology under cobordisms. A reader with some experience in Khovanov homology surely knows that the map in Khovanov homology induced
by a knot cobordism has a fixed grading, but we do not know much more about this map; even the question whether it is non-trivial is not 
well understood.

Luckily, in the Heegaard Floer case, we have the following crucial fact.

\begin{theorem}[see \expandafter{\cite[Proof of Theorem 9.1]{OzSz-absolute}}]\label{thm:zerononzero}
If $W$ has negative definite intersection form, and $Y_1,Y_2$ are rational homology spheres, then $F^{\infty}_{W,\sst}$ is an isomorphism. On the
contrary, if $b_2^+(W)>0$, then $F^\infty_{W,\sst}$ is the zero map.
\end{theorem}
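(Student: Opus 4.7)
The plan is to exploit the rigid structure of $HF^\infty$ provided by Theorem~\ref{thm:itistrivial} and combine it with a handle--by--handle analysis of the cobordism $W$, together with the two main tools from Section~\ref{sec:why}, namely the surgery exact sequence and the adjunction inequality.

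For the first claim, since $Y_1$ and $Y_2$ are rational homology spheres, Theorem~\ref{thm:itistrivial} identifies both $HF^\infty(Y_i,\sss_i)$ with $\ZZ_2[U,U^{-1}]$ as graded $\ZZ_2[U,U^{-1}]$--modules. The map $F^\infty_{W,\sst}$ is $U$--equivariant and homogeneous of Maslov degree $\frac{c_1(\sst)^2-2\chi(W)-3\sigma(W)}{4}$, so it is automatically either zero or an isomorphism, and it suffices to exhibit a single element on which it does not vanish. I would decompose $W$ into elementary handle cobordisms; the $1$-- and $3$--handles are treated by a Künneth argument analogous to Problem~\ref{prob:connected}, since at the level of $HF^\infty$ they correspond to connect--summing with $S^1\times S^2$, an operation one can check does not kill $HF^\infty$. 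This reduces the problem to a cobordism built entirely from $2$--handle attachments, whereupon I would induct on $b_2(W)$. A single $2$--handle attached along a framed knot $K\subset Y'$ in an intermediate three--manifold fits into the surgery exact triangle, whose third term involves the $0$--surgery $Y'_0$ with $b_1>0$. The negative definiteness of the intersection form forces the $2$--handle framing to be suitably negative, and combined with the adjunction inequality applied to the surface capping off the core disk in $Y'_0$, the grading bookkeeping shows that the component of the triangle map coming from $Y'_0$ vanishes in the relevant Maslov grading. This forces the $2$--handle cobordism map to be an isomorphism, and composition closes the induction.

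For the second claim, the hypothesis $b_2^+(W)>0$ allows me to choose a smoothly embedded closed oriented surface $\Sigma\subset\inte(W)$ with $[\Sigma]^2\ge 0$ representing a non--trivial homology class (by picking a representative of a class on which the intersection form is positive and tubing off excess intersection points). I would then cut $W$ along a bicollared three--manifold $Y'$ so that $\Sigma$ lies entirely inside one piece, say $W_1$, and apply the composition law for cobordism maps
\[
F^\infty_{W,\sst}=\sum_{(\sst_1,\sst_2)} F^\infty_{W_2,\sst_2}\circ F^\infty_{W_1,\sst_1},
\]
where the sum is over pairs of \spinc{} structures restricting to $\sst$. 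Tensoring $\sst_1$ with powers of a line bundle dual to $[\Sigma]$ produces infinitely many distinct candidates with the same boundary restriction; the adjunction inequality applied to $\Sigma\subset W_1$ constrains $F^\infty_{W_1,\sst_1}$ to vanish except for $\sst_1$ with $|\langle c_1(\sst_1),[\Sigma]\rangle|\le 2g(\Sigma)-2$, a finite set, but the composition sum must be independent of the choice of cut and of the shifting line bundle. Reconciling these forces every individual term to be zero, giving $F^\infty_{W,\sst}=0$.

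The main obstacle I expect is the adjunction step for the second claim: the inequality as stated in Section~\ref{sec:why} is phrased for $HF^+$, and to turn it into an effective vanishing tool for cobordism maps on $HF^\infty$ one must chase it through the long exact sequence relating $\widehat{HF}$, $HF^+$, $HF^-$, and $HF^\infty$, keeping careful track of absolute gradings and of which line--bundle twists produce genuinely distinct \spinc{} structures. The rest of the argument is a rather clean interplay between the surgery exact triangle, the Künneth reduction, and the algebraic rigidity of $HF^\infty \cong \ZZ_2[U,U^{-1}]$.
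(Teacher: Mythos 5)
Your skeleton --- reduce to elementary handle cobordisms, note that a graded $\ZZ_2[U,U^{-1}]$--equivariant map between copies of $\ZZ_2[U,U^{-1}]$ is either zero or invertible, attack the two--handles via the surgery triangle, and prove the vanishing statement by cutting the cobordism --- is the right architecture, and it is essentially that of the cited proof in \cite{OzSz-absolute} (the paper itself gives no proof of this theorem). The ``zero or isomorphism'' reduction via Theorem~\ref{thm:itistrivial} is correct and genuinely useful. But the step carrying all the weight in the first claim is not established. The maps in the surgery exact triangle are sums of cobordism maps over \emph{all} \spinc{} structures on the associated two--handle cobordisms, so the triangle does not isolate the single map $F^\infty_{W,\sst}$ you need; the third term involves $HF^\infty(Y'_0)$, whose torsion \spinc{} summand is isomorphic to $\ZZ_2[U,U^{-1}]^2\neq 0$ and is untouched by the adjunction inequality (which only excludes non--torsion \spinc{} structures with large evaluation of $c_1$ against the capped Seifert surface, and which constrains \spinc{} structures, not Maslov gradings); and even with one corner of an exact triangle under control, exactness does not upgrade an adjacent map to an isomorphism without also controlling the maps into and out of that corner. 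This is exactly where \cite{OzSz-absolute} has to work hard: the argument there runs through the integer--surgeries sequence for $HF^{\pm}$ in sufficiently large (respectively negative) absolute gradings, where the contribution of $Y'_0$ can be bounded, before passing to $HF^\infty$.

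The second claim is where the proposal genuinely breaks. First, the composition law is quoted backwards: the correct statement is $F^\infty_{W_2,\sst_2}\circ F^\infty_{W_1,\sst_1}=\sum_{\sst}F^\infty_{W,\sst}$, the sum over $\sst$ restricting to $\sst_1$ and $\sst_2$; vanishing of $F^\infty_{W_1,\sst_1}$ therefore controls only a \emph{sum} of the maps you want, unless the cut is made along a rational homology sphere. Second, the ``adjunction inequality applied to $\Sigma\subset W_1$'' is a four--dimensional vanishing statement for cobordism maps; what Section~\ref{sec:why} provides concerns surfaces inside a \emph{three}--manifold and $HF^+$, and the four--dimensional version is essentially equivalent to the theorem being proved, so it cannot be invoked. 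Third, and most seriously, ``reconciling these forces every individual term to be zero'' is not an argument: knowing that all but finitely many twists of $\sst_1$ give the zero map, and that the answer is independent of the choices made, does not force the finitely many surviving terms to vanish (the analogous finiteness of basic classes in Seiberg--Witten theory certainly does not make the invariants vanish). The missing idea is the actual mechanism: one arranges the cut $W=W_1\cup_{Y'}W_2$ so that $\sst$ restricts to a \emph{non--torsion} \spinc{} structure on $Y'$ (for instance by making $Y'$ contain a surface against which $c_1(\sst)$ evaluates non--trivially), and uses that $HF^\infty(Y',\sss)=0$ for non--torsion $\sss$ because $HF^+(Y',\sss)$ is then finitely generated; the case where $c_1(\sst)$ pairs trivially with every positive--square class needs a separate argument through the surgery triangle for a positively framed two--handle. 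Neither ingredient appears in the proposal.
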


\begin{definition}
Let $(Y,\sss)$ be a rational homology three--sphere. The \emph{$d$--invariant} or the \emph{correction term} $d(Y,\sss)$ is defined
as the minimal absolute grading of a non-trivial element $x\in HF^+(Y,\sss)$ which is in the image of $HF^\infty(Y,\sss)$.
\end{definition}

Let $(W, \ft)$ be a $\Spin^c$ cobordism between $(Y_1, \sss_1)$ and $(Y_2, \sss_2)$.
The main result  related to the $d$--invariants is the following.
\begin{theorem}[see \expandafter{\cite[proof of Theorem 9.9]{OzSz-absolute}}]\label{thm:dinvariants}
Suppose $(W,\sst)$ is a \spinc{} cobordism between rational homology spheres $(Y_1,\sss_1)$ and $(Y_2,\sss_2)$. If $b_2^+(W)=0$, then
\begin{equation}\label{eq:maininequality}
d(Y_2,\sss_2)-d(Y_1,\sss_1)\ge\frac14(c_1(\sst)^2-2\chi(W)-3\sigma(W)).
\end{equation}
\end{theorem}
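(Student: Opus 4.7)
The plan is to argue by contradiction, leveraging the rigidity provided by Theorem~\ref{thm:zerononzero} together with the commutative diagram \eqref{eq:newmap} and the structural description of $HF^\infty$ from Theorem~\ref{thm:itistrivial}. Throughout, let $\Delta:=\tfrac14(c_1(\sst)^2-2\chi(W)-3\sigma(W))$ and write $\pi_i\colon HF^\infty(Y_i,\sss_i)\to HF^+(Y_i,\sss_i)$ for the natural maps appearing in \eqref{eq:newmap}.

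First I would check that $F^\infty_{W,\sst}$ is a grading-shifting isomorphism with shift $\Delta$. Since $Y_1$ and $Y_2$ are rational homology spheres, the long exact sequence of the pair $(W,\partial W)$ combined with Poincar\'e--Lefschetz duality shows that $H_2(W;\QQ)\to H_2(W,\partial W;\QQ)$ is an isomorphism, so the intersection form on $H_2(W;\ZZ)/\text{torsion}$ is non-degenerate; together with $b_2^+(W)=0$ this forces the form to be negative definite, and Theorem~\ref{thm:zerononzero} yields the isomorphism claim, while the grading shift is recorded in the formula just after diagram \eqref{eq:newmap}.

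Next, suppose for contradiction that $d(Y_2,\sss_2)<d(Y_1,\sss_1)+\Delta$. By Theorem~\ref{thm:itistrivial}, $HF^\infty(Y_i,\sss_i)\cong\ZZ_2[U,U^{-1}]$, so non-zero elements exist at every grading in the relevant arithmetic progression, and since $F^\infty_{W,\sst}$ is a grading-shifting isomorphism, $d(Y_2,\sss_2)-\Delta$ is a valid grading in $HF^\infty(Y_1,\sss_1)$; pick a non-zero $\xi$ of that grading. From $HF^\infty\cong\ZZ_2[U,U^{-1}]$ and the definition of the $d$-invariant, a non-zero element of $HF^\infty(Y_i,\sss_i)$ has non-zero image under $\pi_i$ if and only if its Maslov grading is at least $d(Y_i,\sss_i)$. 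Hence $\pi_1(\xi)=0$, because its grading $d(Y_2,\sss_2)-\Delta$ is strictly less than $d(Y_1,\sss_1)$ by assumption, whereas $F^\infty_{W,\sst}(\xi)\in HF^\infty(Y_2,\sss_2)$ is non-zero (as $F^\infty_{W,\sst}$ is an isomorphism) and sits at grading exactly $d(Y_2,\sss_2)$, so $\pi_2(F^\infty_{W,\sst}(\xi))\neq 0$.

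Finally, commutativity of diagram \eqref{eq:newmap} gives
\[\pi_2(F^\infty_{W,\sst}(\xi))=F^+_{W,\sst}(\pi_1(\xi))=F^+_{W,\sst}(0)=0,\]
contradicting the previous step. Hence $d(Y_2,\sss_2)\geq d(Y_1,\sss_1)+\Delta$, which is precisely \eqref{eq:maininequality}. The main technical obstacle is the verification in the first paragraph that $b_2^+(W)=0$ together with rational homology sphere boundaries suffices to make the intersection form of $W$ negative definite (so that Theorem~\ref{thm:zerononzero} applies); the rest of the argument is a diagram chase exploiting the very rigid $\ZZ_2[U,U^{-1}]$-structure of $HF^\infty$ for rational homology spheres.
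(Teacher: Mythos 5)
Your proof is correct and follows exactly the route the paper intends: the text leaves this statement as a problem to be solved using the commutative diagram \eqref{eq:newmap} together with Theorem~\ref{thm:zerononzero}, which is precisely your diagram chase. Your preliminary verification that $b_2^+(W)=0$ together with rational homology sphere boundary forces the intersection form of $W$ to be negative definite (so that Theorem~\ref{thm:zerononzero} applies), and your characterization of $\ker\bigl(HF^\infty(Y_i,\sss_i)\to HF^+(Y_i,\sss_i)\bigr)$ via the $\ZZ_2[U,U^{-1}]$--module structure, are both sound and match the argument in the proof of Theorem 9.9 of \cite{OzSz-absolute}.
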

\begin{problem}
Using \eqref{eq:newmap} and Theorem~\ref{thm:zerononzero}, prove Theorem~\ref{thm:dinvariants}.
\end{problem}

The $d$--invariants are strong enough to prove Donaldson's diagonalization theorem via Elkies' theorem; see \cite[Section 9]{OzSz-absolute}.
A version of $d$--invariants for manifolds with $b_1>0$, whose rudiments were established in \cite{OzSz-absolute},
and which was developed in full details in \cite{levine-ruberman},
can be used to reprove the 
Kronheimer-Mrowka result on the smooth four--genus of torus knots.
We refer again to \cite[Section 9]{OzSz-absolute}.

We gather now a few facts about the $d$--invariant, the first one is proved in \cite[Theorem 4.3]{OzSz-absolute}, 
while the second is proved in \cite[Proposition 4.2]{OzSz-absolute}.
\begin{proposition}\ \label{prop:additive}
\begin{itemize}
\item The $d$--invariant is additive. That is, if $(Y_1,\sss_1)$ and $(Y_2,\sss_2)$ are two rational homology three--spheres, then
$d(Y_1\# Y_2,\sss_1\#\sss_2)=d(Y_1,\sss_1)+d(Y_2,\sss_2)$.
\item Let $(Y,\sss)$ be a rational homology three--sphere. Then $d(-Y,\sss)=-d(Y,\sss)$. 
\end{itemize}
\end{proposition}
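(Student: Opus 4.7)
\medskip
\noindent\textbf{Proof plan.}

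\emph{Additivity.}
My plan is to upgrade the Künneth formula of Problem~\ref{prob:connected} from $\widehat{CF}$ to the other flavors of Heegaard Floer homology, tracking absolute gradings. Choose pointed Heegaard diagrams $(\gS_i,\bfga_i,\bfgb_i,z_i)$ for $(Y_i,\sss_i)$, and form a connect-sum Heegaard diagram for $Y_1\#Y_2$ by joining $\gS_1$ and $\gS_2$ with a thin tube at the basepoints, the new basepoint $z$ being placed inside the tube. Intersection points of $\TT_{\bfga}\cap\TT_{\bfgb}$ split canonically as ordered pairs, and a neck-stretching argument identifies Maslov-index-$1$ holomorphic disks on the connect-sum diagram with disks pulled back from a single factor. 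This yields chain-level isomorphisms
\[
CF^\bullet(Y_1\#Y_2,\sss_1\#\sss_2)\;\cong\;CF^\bullet(Y_1,\sss_1)\otimes CF^\bullet(Y_2,\sss_2)
\]
of absolutely $\ZZ$-graded complexes over the appropriate coefficient ring, for $\bullet\in\{-,\infty\}$, and for $\bullet=+$ as a derived consequence. By Theorem~\ref{thm:itistrivial}, $HF^\infty(Y_i,\sss_i)\cong\ZZ_2[U,U^{-1}]$ is free over $\ZZ_2[U,U^{-1}]$, so the homological Künneth gives $HF^\infty(Y_1\#Y_2,\sss_1\#\sss_2)\cong\ZZ_2[U,U^{-1}]$ with $\ZZ$-grading shifted by the sum of the two single-factor shifts. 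The map $HF^\infty\to HF^+$ intertwines with this Künneth identification, so its image in $HF^+(Y_1\#Y_2,\sss_1\#\sss_2)$ is a tower whose bottom grading is the sum of the bottoms of the two individual towers. Since $d$ is by definition this bottom grading, the first bullet follows.

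\emph{Orientation reversal.}
My plan is first to prove $d(Y\#(-Y),\sss\#\sss)=0$ by an explicit cobordism, and then deduce the full statement from additivity via
\[
0=d(Y\#(-Y),\sss\#\sss)=d(Y,\sss)+d(-Y,\sss).
\]
To construct the cobordism, let $Y^o:=Y\setminus\inte B^3$. A Mayer--Vietoris computation for $Y=Y^o\cup B^3$ shows that $Y^o$ is a rational homology $3$-ball, hence $W_0:=Y^o\times I$ is a rational homology $4$-ball, with $\partial W_0=Y^o\cup_{S^2}(-Y^o)\cong Y\#(-Y)$. The product \spinc{} structure on $Y\times I$ induced from $\sss$ restricts to $W_0\subset Y\times I$ with boundary value $\sss\#\sss$. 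Excising an open $4$-ball from $\inte W_0$ yields a \spinc{} cobordism $(W,\sst)$ from $(S^3,\sss_0)$ to $(Y\#(-Y),\sss\#\sss)$ with $\chi(W)=0$, $\sigma(W)=0$, $b_2^+(W)=0$, and $c_1(\sst)^2=0$ (as $H^2(W;\QQ)=0$). Theorem~\ref{thm:dinvariants} applied to $(W,\sst)$ gives $d(Y\#(-Y),\sss\#\sss)\ge 0$; applied to the reverse cobordism $(-W,\sst)$, which still satisfies $b_2^+(-W)=0$ with the same numerical invariants, it gives the opposite inequality. Hence $d(Y\#(-Y),\sss\#\sss)=0$, and the second bullet follows.

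\emph{Main obstacle.}
The technically heaviest step is the Künneth formula at the $CF^-/CF^\infty$ level with absolute gradings preserved: while the connect-sum of Heegaard diagrams is geometrically transparent, identifying the holomorphic-disk moduli in $\Sym^{g_1+g_2}(\gS_1\#\gS_2)$ with the tensor product of the two single-factor moduli requires a careful neck-stretching analysis. Once this Künneth statement is granted, both bullets reduce to Theorem~\ref{thm:dinvariants} together with straightforward grading bookkeeping.
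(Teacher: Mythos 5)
Your proposal is essentially correct, but its logical architecture is the reverse of the one the paper points to (namely \cite[Theorem 4.3, Proposition 4.2]{OzSz-absolute}). There, the identity $d(-Y,\sss)=-d(Y,\sss)$ is proved \emph{first}, by a duality argument identifying $HF^+_k(Y,\sss)$ with $HF^-_{-k}(-Y,\sss)$ --- this is the step the paper warns is ``more difficult than one could expect'' --- and additivity is then obtained from the K\"unneth formula as \emph{two} inequalities, one of which is extracted by applying the other to $-Y_1$ and $-Y_2$. You instead prove additivity outright and then derive orientation reversal from the observation that $Y\#(-Y)=\partial(Y^o\times I)$ bounds a rational homology ball over which $\sss\#\sss$ extends, so that Theorem~\ref{thm:dinvariants}, applied to the cobordism and its reverse, forces $d(Y\#(-Y),\sss\#\sss)=0$; this is exactly Corollary~\ref{cor:rational} (which you re-prove without invoking the proposition, avoiding the circularity the paper's own ordering would create) and it completely sidesteps the duality argument. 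What your route buys is a cheaper proof of the second bullet; what it costs is that the first bullet must now be an honest equality rather than one inequality, so the burden shifts to the K\"unneth step. That step does close over $\ZZ_2$: since $\ZZ_2[U]$ is a PID and the torsion of $HF^-(Y_i,\sss_i)$ is $U$-power torsion, the K\"unneth Tor-term is invisible to the free part, whence the free part of $HF^-(Y_1\#Y_2)$ is the tensor product of the two free towers and $d$ (which is the top grading of that free part, shifted by $2$) adds on the nose. Two points deserve more care than your sketch gives them: (i) your phrase ``the map $HF^\infty\to HF^+$ intertwines with this K\"unneth identification'' should be routed through $HF^-$ --- the image of $HF^\infty\to HF^+$ is the cokernel of $HF^-\to HF^\infty$, and $CF^+$ itself does not satisfy a naive tensor-product K\"unneth formula; and (ii) the chain-level isomorphism is not literally grading-preserving with the paper's normalization (the generator of $HF^-(S^3)$ sits in degree $-2$, so the $CF^-$ K\"unneth map carries a universal shift of $+2$), though since the shift is universal the test case $S^3\#S^3=S^3$ pins it down and additivity of $d$ survives.
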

The first part of the proposition follows essentially from the K\"unneth principle (with some technical problems in homological algebra). 
However, the second part is more difficult than one could expect.

Using second part of Proposition~\ref{prop:additive} together with Theorem~\ref{thm:dinvariants} we obtain the following result.
\begin{corollary}\label{cor:rational}
If $(Y, \sss)$ bounds a rational homology ball $W$ (that is, if $H_k(W \, ; \, \QQ) = 0$ for $k \geq 1$) and the \spinc{} 
structure $\sss$ extends over $W$, then~$d(Y, \sss) = 0$.
\end{corollary}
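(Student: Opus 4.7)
The plan is to turn the rational homology ball $W$ into a \spinc{} cobordism between $S^3$ and $Y$, then apply Theorem~\ref{thm:dinvariants} twice (once for $W$, once for $-W$) to sandwich $d(Y,\sss)$ between $0$ and $0$.

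More precisely, let $\sst$ be a \spinc{} structure on $W$ with $\sst|_Y=\sss$. Removing an open ball from the interior of $W$ produces a cobordism $W'\colon S^3\to Y$, which inherits a \spinc{} structure (still denoted $\sst$) that restricts on $S^3$ to the unique \spinc{} structure there. I would first verify the key numerical inputs: since $W$ is a rational homology ball, $\chi(W')=0$ and $\sigma(W')=0$; moreover $H^2(W';\QQ)=0$, so $c_1(\sst)$ is a torsion class and hence the rational intersection pairing (and therefore $c_1(\sst)^2\in\QQ$ computed via the pairing $H^2(W',\partial W')\otimes H^2(W')\to\QQ$) vanishes. In particular $b_2^+(W')=0$, so Theorem~\ref{thm:dinvariants} applies and yields
\[
d(Y,\sss)-d(S^3,\sss_0)\ \ge\ \tfrac14\bigl(c_1(\sst)^2-2\chi(W')-3\sigma(W')\bigr)=0.
\]
Since $d(S^3)=0$ by the normalization of the absolute grading, this gives $d(Y,\sss)\ge 0$.

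For the reverse inequality, I would reverse the orientation: $-W$ is again a rational homology ball, its boundary is $-Y$, and the \spinc{} structure still extends. Repeating the same computation (all three invariants $\chi$, $\sigma$, $c_1(\sst)^2$ are again $0$) and again applying Theorem~\ref{thm:dinvariants} gives $d(-Y,\sss)\ge 0$. Combining with the second part of Proposition~\ref{prop:additive}, which states $d(-Y,\sss)=-d(Y,\sss)$, yields $d(Y,\sss)\le 0$, and the two bounds together give $d(Y,\sss)=0$.

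The only subtle point is the vanishing of $c_1(\sst)^2$: the square of a cohomology class on a manifold with boundary is computed via a lift to relative cohomology, and one has to argue that no matter which lift one chooses the rational pairing is zero, which follows because the free part of $H_2(W;\ZZ)$ is trivial. Everything else is a bookkeeping exercise with Euler characteristic and signature of a ball-with-hole, together with the basic fact $d(S^3)=0$ from the normalization of the absolute Maslov grading.
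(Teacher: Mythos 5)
Your proof is correct and follows exactly the route the paper indicates (it states the corollary as a consequence of Theorem~\ref{thm:dinvariants} combined with the second part of Proposition~\ref{prop:additive}, leaving the details as a problem): puncture $W$ to get a cobordism from $S^3$ with $\chi=\sigma=c_1^2=0$, apply the inequality to both $W$ and $-W$, and use $d(-Y,\sss)=-d(Y,\sss)$. Your care with the vanishing of $c_1(\sst)^2$ via the rational pairing is exactly the right point to check, and nothing is missing.
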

\begin{problem}
Prove Corollary~\ref{cor:rational}.
\end{problem}
We will be able to calculate the $d$--invariants for a large class of three--manifolds using Heegaard Floer homology for knots.
This theory, usually called knot Floer theory, will be discussed in the next section.

\begin{problem}
Drill two balls from $\CC P^2$ so as to obtain a cobordism between two copies of $S^3$. Find all \spinc{} structures on the cobordism
that extend the \spinc{} structure on $S^3$ (use Corollary~\ref{cor:spinc4}). 
Use this example to show that Theorem~\ref{thm:dinvariants} dramatically fails if $b_2^+(W)>0$.
\end{problem}

\section{Heegaard Floer homology for knots}\label{sec:knots}
There is a variant of Heegaard Floer homology for knots and links. We will focus on
knots in $S^3$, although a significant part of the results carries through to null-homologous knots in rational homology spheres.
The case of links, though, does not seem to be more complicated at the beginning,
but there are surprisingly many highly non-trivial technical problems, e.g. if one tries to establish a surgery formula. The reader
with some experience in link theory might think that Heegaard Floer homology for links is more complicated than for knots in a similar manner
as Blanchfield forms for links are way more complicated than for knots.

\subsection{Heegaard diagrams and knots}\label{sec:heegaardknots}

Suppose $Y$ is a three--manifold and $(\gS,\bfga,\bfgb)$ is a Heegaard diagram for $Y$. Choose \emph{two} base points $z$ and $w$ in 
$\gS\setminus(\bfga\cup\bfgb)$. Such quintuple $(\gS,\bfga,\bfgb,z,w)$ is called a \emph{doubly pointed Heegaard diagram}.

Given a doubly pointed Heegaard diagram $(\gS,\bfga,\bfgb,z,w)$ we not only recover the manifold $Y$, but we obtain a way to encode
a knot in $Y$. To this end, suppose the Heegaard decomposition is $Y=U_0\cup_{\gS} U_1$.
Connect points $w$ and $z$ by two curves $a \subset \gS \setminus \{\alpha_1, \ldots, \alpha_g\}$, $b \subset \gS \setminus \{\beta_1, \ldots, \beta_g\}$, and then push $a$ into $U_0$ and $b$ into $U_1$. These two curves together result in a knot $K \subset Y$.

\begin{problem}
Prove that the isotopy type of $K$ does not depend on the actual choice of the curves $a$ and $b$.
\end{problem}

Conversely, a knot $K \subset Y$ determines a doubly pointed Heegaard diagram $(\gS, \bfga, \bfgb, w, z)$. We focus on the case $Y=S^3$.
Take a \emph{bridge presentation} of $K$, i.e., its projection with a division of $K$ into $2g+2$ segments (for some $g\ge 0$) $a_1, \ldots, a_{g+1}, b_1, \ldots, b_{g+1} \subset K$, such that all the crossings are only between segments $a_i$ and $b_j$ and in such a way that, for every intersection, $a_i$ always goes transversely over $b_j$ (see Figure \ref{fig:bridge_presentation_eight}).

\begin{figure}
  \centering
  \begin{tikzpicture}[y=0.80pt, x=0.80pt, yscale=-1.000000, xscale=1.000000, inner sep=0pt, outer sep=0pt]
  \path[draw=black,line join=miter,line cap=butt,even odd rule,line width=0.800pt]
    (60.0000,21.3622) -- (310.5000,21.3622);
  \path[draw=black,line join=miter,line cap=butt,even odd rule,line width=0.800pt]
    (60.0000,41.3622) -- (300.0000,41.3622);
  \path[draw=black,line join=miter,line cap=butt,even odd rule,line width=0.800pt]
    (89.5000,61.3622) -- (300.0000,61.3622);
  \path[draw=black,line join=miter,line cap=butt,even odd rule,line width=0.800pt]
    (100.0000,81.3622) -- (300.0000,81.3622);
  \path[draw=black,line join=miter,line cap=butt,even odd rule,line width=0.800pt]
    (100.0000,101.3622) -- (300.0000,101.3622);
  \path[draw=black,line join=miter,line cap=butt,even odd rule,line width=0.800pt]
    (100.0000,121.3622) -- (310.5000,121.3622);
  \path[draw=black,line join=miter,line cap=butt,even odd rule,line width=0.800pt]
    (100.0000,141.3622) -- (340.0000,141.3622);
  \path[draw=black,line join=miter,line cap=butt,even odd rule,line width=0.800pt]
    (89.5000,161.3622) -- (340.0000,161.3622);
  \path[draw=black,line join=miter,line cap=butt,even odd rule,line width=0.800pt]
    (60.0000,181.3622) -- (340.0000,181.3622);
  \path[draw=black,line join=miter,line cap=butt,even odd rule,line width=0.800pt]
    (60.0000,201.3622) -- (340.0000,201.3622);
  \path[draw=black,line join=miter,line cap=butt,even odd rule,line width=0.800pt]
    (90.0000,61.3622) -- (90.0000,161.3622);
  \path[draw=black,line join=miter,line cap=butt,even odd rule,line width=0.800pt]
    (310.0000,21.3622) -- (310.0000,121.3622);
  \path[draw=black,line join=miter,line cap=butt,even odd rule,line width=0.800pt]
    (60.0000,81.3622) -- (80.0000,81.3622);
  \path[draw=black,line join=miter,line cap=butt,even odd rule,line width=0.800pt]
    (60.0000,101.3622) -- (72.7022,101.3622) -- (80.0000,101.3622);
  \path[draw=black,line join=miter,line cap=butt,even odd rule,line width=0.800pt]
    (60.0000,121.3622) -- (80.0000,121.3622);
  \path[draw=black,line join=miter,line cap=butt,even odd rule,line width=0.800pt]
    (60.0000,141.3622) -- (80.0000,141.3622);
  \path[draw=black,line join=miter,line cap=butt,even odd rule,line width=0.800pt]
    (320.0000,41.3622) -- (340.0000,41.3622);
  \path[draw=black,line join=miter,line cap=butt,even odd rule,line width=0.800pt]
    (320.0000,61.3622) -- (340.0000,61.3622);
  \path[draw=black,line join=miter,line cap=butt,even odd rule,line width=0.800pt]
    (320.0000,81.3622) -- (340.0000,81.3622);
  \path[draw=black,line join=miter,line cap=butt,even odd rule,line width=0.800pt]
    (320.0000,101.3622) -- (340.0000,101.3622);
  \path[draw=black,line join=miter,miter limit=4.00,line width=0.800pt]
    (60.0000,81.3622)arc(90.000:150.000:19.499706 and
    20.000)arc(150.000:210.000:19.499706 and 20.000)arc(210.000:270.000:19.499706
    and 20.000);
  \path[draw=black,line join=miter,miter limit=4.00,line width=0.800pt]
    (60.0000,181.3622)arc(90.000:150.000:19.499706 and
    20.000)arc(150.000:210.000:19.499706 and 20.000)arc(210.000:270.000:19.499706
    and 20.000);
  \path[draw=black,line join=miter,miter limit=4.00,line width=0.800pt]
    (60.0000,101.3622)arc(90.000:150.000:39.499996 and
    40.000)arc(150.000:210.000:39.499996 and 40.000)arc(210.000:270.000:39.499996
    and 40.000);
  \path[draw=black,line join=miter,miter limit=4.00,line width=0.800pt]
    (60.0000,201.3622)arc(90.000:150.000:39.499996 and
    40.000)arc(150.000:210.000:39.499996 and 40.000)arc(210.000:270.000:39.499996
    and 40.000);
  \path[draw=black,line join=miter,miter limit=4.00,line width=0.800pt]
    (340.0000,101.3622)arc(-90.000:0.000:19.500000 and
    20.000)arc(0.000:90.000:19.500000 and 20.000);
  \path[draw=black,line join=miter,miter limit=4.00,line width=0.800pt]
    (340.0000,81.3621)arc(-90.000:0.000:39.500004 and
    40.000)arc(-0.000:90.000:39.500004 and 40.000);
  \path[draw=black,line join=miter,miter limit=4.00,line width=0.800pt]
    (340.0000,61.3622)arc(-90.000:0.000:59.500004 and
    60.000)arc(0.000:90.000:59.500004 and 60.000);
  \path[draw=black,line join=miter,miter limit=4.00,line width=0.800pt]
    (340.0000,41.3622)arc(270.000:360.000:79.500008 and
    80.000)arc(-0.000:90.000:79.500008 and 80.000);
  \path[fill=blue,line join=miter,line cap=butt,line width=0.800pt]
    (75,77) node[above right] {$a_1$};
  \path[fill=blue,line join=miter,line cap=butt,line width=0.800pt]
    (293,37) node[above right] {$a_2$};
  \path[fill=blue,line join=miter,line cap=butt,line width=0.800pt]
    (184,17) node[above right] {$b_1$};
  \path[fill=blue,line join=miter,line cap=butt,line width=0.800pt]
    (184,117) node[above right] {$b_2$};
  \end{tikzpicture}
  \caption{A bridge presentation of a figure-eight knot.}
  \label{fig:bridge_presentation_eight}
\end{figure}

Consider the plane with this projection as $\{z=0\} \subset \RR^3$ and add to it a point at infinity, so that we may consider it as a subset of a $2$-sphere $S^2 \subset S^3$.
Let us define $\beta_1, \ldots, \beta_g$ as boundaries of some small pairwise non-intersecting tubular neighborhoods of $b_1, \ldots, b_g$ in this sphere.
Now attach to the resulting sphere $g+1$ handles at the endpoints of segments $a_1, \ldots, a_{g+1}$ in such a way that $\beta_1, \ldots, \beta_{g}$ 
encircle attaching discs of handles $a_1, \ldots, a_{g}$ respectively. We imagine these handles as sitting above the plane, i.e., as subsets of $\{z \ge 0\} \subset \RR^3$. By this construction we clearly obtain a genus $g+1$ surface $\gS$.
We define the remaining $\beta_{g+1}$ curve as a meridian of the handle corresponding to the curve $a_{g+1}$.
Finally, define the loops $\alpha_1, \ldots, \alpha_{g+1}$ as curves going along these attached handles and connected at the ends via the remaining parts of $a_1, \ldots, a_{g+1}$, 
respectively.
We arrange all the intersections to be transversal.
This is the \emph{stabilized Heegaard diagram $(\Sigma, \bfga, \bfgb)$} associated to the knot $K$; see Figure~\ref{fig:stabilized_eight}.
\begin{problem}
Show that the stabilized Heegaard diagram $(\Sigma, \bfga, \bfgb)$ constructed above represents $S^3$.
\end{problem}
\begin{figure}
  \centering
\begin{tikzpicture}[y=0.80pt, x=0.80pt, yscale=-1.000000, xscale=1.000000, inner sep=0pt, outer sep=0pt]

  \definecolor{fig52color1}{RGB}{150,0,0}
  \definecolor{fig52color2}{RGB}{0,0,255}
  
  \path[draw=fig52color1,line join=miter,miter limit=4.00,line width=0.800pt]
    (60.0000,91.3622)arc(90.000:150.000:29.500315 and
    30.000)arc(150.000:210.000:29.500315 and 30.000)arc(210.000:270.000:29.500315
    and 30.000);
  \path[draw=fig52color1,line join=miter,miter limit=4.00,line width=0.800pt]
    (340.0000,111.3622)arc(-90.000:0.000:9.500000 and
    9.750)arc(0.000:90.000:9.500000 and 9.750);
  \path[draw=black,line join=miter,miter limit=4.00,line width=0.800pt]
    (90.0000,61.3622) circle (0.1270cm);
  \path[draw=black,line join=miter,miter limit=4.00,line width=0.800pt]
    (90.0000,161.3621) circle (0.1270cm);
  \path[draw=fig52color1,line join=miter,line cap=butt,even odd rule,line
    width=0.800pt] (90.0000,51.3622) -- (340.0000,51.3622);
  \path[draw=fig52color1,line join=miter,line cap=butt,even odd rule,line
    width=0.800pt] (90.0000,71.3622) -- (340.0000,71.3622);
  \path[draw=fig52color1,line join=miter,miter limit=4.00,line width=0.800pt]
    (90.0010,71.3622)arc(90.000:150.000:9.499697 and
    10.000)arc(150.000:210.000:9.499697 and 10.000)arc(210.000:270.000:9.499697
    and 10.000);
  \path[draw=fig52color1,line join=miter,miter limit=4.00,line width=0.800pt]
    (340.0000,51.3622)arc(270.000:360.000:69.500000 and
    69.750)arc(-0.000:90.000:69.500000 and 69.750);
  \path[draw=fig52color1,line join=miter,miter limit=4.00,line width=0.800pt]
    (340.0000,71.3622)arc(270.000:360.000:49.500000 and
    49.750)arc(0.000:90.000:49.500000 and 49.750);
  \path[draw=fig52color1,line join=miter,line cap=butt,even odd rule,line
    width=0.800pt] (60.0000,170.8622) -- (340.0000,170.8622);
  \path[draw=fig52color1,line join=miter,line cap=butt,even odd rule,line
    width=0.800pt] (60.0000,190.8622) -- (340.0000,190.8622);
  \path[draw=fig52color1,line join=miter,miter limit=4.00,line width=0.800pt]
    (60.0008,170.8622)arc(90.000:150.000:9.500000 and
    10.000)arc(150.000:210.000:9.500000 and 10.000)arc(210.000:270.000:9.500000
    and 10.000);
  \path[draw=fig52color1,line join=miter,line cap=butt,even odd rule,line
    width=0.800pt] (60.0000,150.8622) -- (340.0000,150.8622);
  \path[draw=fig52color1,line join=miter,miter limit=4.00,line width=0.800pt]
    (60.0123,190.8622)arc(90.000:150.000:29.512169 and
    30.000)arc(150.000:210.000:29.512169 and 30.000)arc(210.000:270.000:29.512169
    and 30.000);
  \path[draw=fig52color1,line join=miter,line cap=butt,even odd rule,line
    width=0.800pt] (60.0000,130.8622) -- (340.0000,130.8622);
  \path[draw=fig52color1,line join=miter,line cap=butt,even odd rule,line
    width=0.800pt] (60.0000,111.3622) -- (340.0000,111.3622);
  \path[draw=fig52color1,line join=miter,line cap=butt,even odd rule,line
    width=0.800pt] (60.0000,91.3622) -- (340.0000,91.3622);
  \path[draw=fig52color1,line join=miter,miter limit=4.00,line width=0.800pt]
    (340.0000,91.3622)arc(270.000:360.000:29.500000 and
    29.750)arc(0.000:90.000:29.500000 and 29.750);
  \path[draw=fig52color1,line join=miter,line cap=butt,even odd rule,line
    width=0.800pt] (60.0000,11.3622) -- (315.5000,11.3622);
  \path[draw=fig52color1,line join=miter,line cap=butt,even odd rule,line
    width=0.800pt] (60.0000,31.3622) -- (315.6000,31.3622);
  \path[draw=fig52color1,line join=miter,miter limit=4.00,line width=0.800pt]
    (59.9966,111.3622)arc(90.000:150.000:49.496902 and
    50.000)arc(150.000:210.000:49.496902 and 50.000)arc(210.000:270.000:49.496902
    and 50.000);
  \path[draw=fig52color1,line join=miter,miter limit=4.00,line width=0.800pt]
    (315.5000,11.3532)arc(-90.000:0.000:9.500000 and
    10.000)arc(0.000:90.000:9.500000 and 10.000);
  \path[draw=fig52color2,line join=miter,line cap=butt,even odd rule,line
    width=0.800pt] (90.0000,65.8622) -- (90.0000,156.3622);
  \path[draw=black,line join=miter,miter limit=4.00,line width=0.800pt]
    (310.2861,21.2622) circle (0.1270cm);
  \path[draw=black,line join=miter,miter limit=4.00,line width=0.800pt]
    (310.2861,121.2622) circle (0.1270cm);
  \path[draw=fig52color2,line join=miter,line cap=butt,even odd rule,line
    width=0.800pt] (310.2861,25.7623) -- (310.2861,116.2623);
  \path[fill=fig52color2,line join=miter,line cap=butt,line width=0.800pt]
    (291,47.1907) node[above right] (text6083-0-1-5) {\color{blue} $\alpha_2$};
  \path[draw=fig52color1,line join=miter,miter limit=4.00,line width=0.800pt]
    (310.2805,21.2038) circle (0.1976cm);
  \path[fill=fig52color1,line join=miter,line cap=butt,line width=0.800pt]
    (287.7012,27.4912) node[above right] (text6083-0-6) {\color{fig52color1} $\beta_2$};
  \path[fill=fig52color1,line join=miter,line cap=butt,line width=0.800pt]
    (110,69) node[above right] {\color{fig52color1} $\beta_1$};
  \path[fill=blue,line join=miter,line cap=butt,line width=0.800pt]
    (72,87) node[above right] {\color{fig52color2} $\alpha_1$};
\end{tikzpicture}
  \caption{A stabilized diagram $(\Sigma, \bfga, \bfgb)$ associated to a figure-eight knot bridge presentation from Figure \ref{fig:bridge_presentation_eight}. Empty circles at the endpoints of $\alpha_i$ denote the disks where the handles are attached.}
  \label{fig:stabilized_eight}
\end{figure}

For the construction of a chain complex associated to a knot $K$ we need to introduce basepoints.
They are obtained by destabilizing the diagram $(\Sigma, \bfga, \bfgb)$ (cf. Theorem~\ref{thm:handleslide}).
Namely, we forget about the curves $\alpha_{g+1}, \beta_{g+1}$, and remove the handle associated to the curve $a_{g+1}$, defining points $w, z$ as the endpoints of $a_{g+1}$.
This results is a \emph{destabilized Heegaard diagram $(\Sigma, \bfga, \bfgb, w, z)$}, where $\Sigma$ is now a surface of a genus $g$, and $\bfga = \{\alpha_1, \ldots, \alpha_{g}\}$, $\bfgb = \{\beta_1, \ldots, \beta_{g}\}$; see Figure~\ref{fig:destabilized_eight}.

\begin{figure}
  \centering
  \begin{tikzpicture}[y=0.80pt, x=0.80pt, yscale=-1.000000, xscale=1.000000, inner sep=0pt, outer sep=0pt]

  \definecolor{fig53color1}{RGB}{150,0,0}
  \definecolor{fig53color2}{RGB}{0,0,255}
  
  \path[draw=fig53color1,line join=miter,miter limit=4.00,line width=0.800pt]
    (60.0000,91.3622)arc(90.000:150.000:29.500315 and
    30.000)arc(150.000:210.000:29.500315 and 30.000)arc(210.000:270.000:29.500315
    and 30.000);
  \path[draw=fig53color1,line join=miter,miter limit=4.00,line width=0.800pt]
    (340.0000,111.3622)arc(-90.000:0.000:9.500000 and
    9.750)arc(0.000:90.000:9.500000 and 9.750);
  \path[draw=black,line join=miter,miter limit=4.00,line width=0.800pt]
    (90.0000,61.3622) circle (0.1270cm);
  \path[draw=black,line join=miter,miter limit=4.00,line width=0.800pt]
    (90.0000,161.3621) circle (0.1270cm);
  \path[draw=fig53color1,line join=miter,line cap=butt,even odd rule,line
    width=0.800pt] (90.0000,51.3622) -- (340.0000,51.3622);
  \path[draw=fig53color1,line join=miter,line cap=butt,even odd rule,line
    width=0.800pt] (90.0000,71.3622) -- (340.0000,71.3622);
  \path[draw=fig53color1,line join=miter,miter limit=4.00,line width=0.800pt]
    (90.0010,71.3622)arc(90.000:150.000:9.499697 and
    10.000)arc(150.000:210.000:9.499697 and 10.000)arc(210.000:270.000:9.499697
    and 10.000);
  \path[draw=fig53color1,line join=miter,miter limit=4.00,line width=0.800pt]
    (340.0000,51.3622)arc(270.000:360.000:69.500000 and
    69.750)arc(-0.000:90.000:69.500000 and 69.750);
  \path[draw=fig53color1,line join=miter,miter limit=4.00,line width=0.800pt]
    (340.0000,71.3622)arc(270.000:360.000:49.500000 and
    49.750)arc(0.000:90.000:49.500000 and 49.750);
  \path[draw=fig53color1,line join=miter,line cap=butt,even odd rule,line
    width=0.800pt] (60.0000,170.8622) -- (340.0000,170.8622);
  \path[draw=fig53color1,line join=miter,line cap=butt,even odd rule,line
    width=0.800pt] (60.0000,190.8622) -- (340.0000,190.8622);
  \path[draw=fig53color1,line join=miter,miter limit=4.00,line width=0.800pt]
    (60.0008,170.8622)arc(90.000:150.000:9.500000 and
    10.000)arc(150.000:210.000:9.500000 and 10.000)arc(210.000:270.000:9.500000
    and 10.000);
  \path[draw=fig53color1,line join=miter,line cap=butt,even odd rule,line
    width=0.800pt] (60.0000,150.8622) -- (340.0000,150.8622);
  \path[draw=fig53color1,line join=miter,miter limit=4.00,line width=0.800pt]
    (60.0123,190.8622)arc(90.000:150.000:29.512169 and
    30.000)arc(150.000:210.000:29.512169 and 30.000)arc(210.000:270.000:29.512169
    and 30.000);
  \path[draw=fig53color1,line join=miter,line cap=butt,even odd rule,line
    width=0.800pt] (60.0000,130.8622) -- (340.0000,130.8622);
  \path[draw=fig53color1,line join=miter,line cap=butt,even odd rule,line
    width=0.800pt] (60.0000,111.3622) -- (340.0000,111.3622);
  \path[draw=fig53color1,line join=miter,line cap=butt,even odd rule,line
    width=0.800pt] (60.0000,91.3622) -- (340.0000,91.3622);
  \path[draw=fig53color1,line join=miter,miter limit=4.00,line width=0.800pt]
    (340.0000,91.3622)arc(270.000:360.000:29.500000 and
    29.750)arc(0.000:90.000:29.500000 and 29.750);
  \path[draw=fig53color1,line join=miter,line cap=butt,even odd rule,line
    width=0.800pt] (60.0000,11.3622) -- (315.5000,11.3622);
  \path[draw=fig53color1,line join=miter,line cap=butt,even odd rule,line
    width=0.800pt] (60.0000,31.3622) -- (315.6000,31.3622);
  \path[draw=fig53color1,line join=miter,miter limit=4.00,line width=0.800pt]
    (59.9966,111.3622)arc(90.000:150.000:49.496902 and
    50.000)arc(150.000:210.000:49.496902 and 50.000)arc(210.000:270.000:49.496902
    and 50.000);
  \path[draw=fig53color1,line join=miter,miter limit=4.00,line width=0.800pt]
    (315.5000,11.3532)arc(-90.000:0.000:9.500000 and
    10.000)arc(0.000:90.000:9.500000 and 10.000);
  \path[draw=fig53color2,line join=miter,line cap=butt,even odd rule,line
    width=0.800pt] (90.0000,65.8622) -- (90.0000,156.3622);
    
  \path[fill=black,dash pattern=on 1.60pt off 1.60pt,line join=miter,miter
    limit=4.00,even odd rule,line width=1.600pt] (90.0000,71.3622) circle
    (0.0564cm);
  \path[fill=black,dash pattern=on 1.60pt off 1.60pt,line join=miter,miter
    limit=4.00,even odd rule,line width=1.600pt] (90.0000,91.3622) circle
    (0.0564cm);
  \path[fill=black,dash pattern=on 1.60pt off 1.60pt,line join=miter,miter
    limit=4.00,even odd rule,line width=1.600pt] (90.0000,111.3622) circle
    (0.0564cm);
  \path[fill=black,dash pattern=on 1.60pt off 1.60pt,line join=miter,miter
    limit=4.00,even odd rule,line width=1.600pt] (90.0000,130.8622) circle
    (0.0564cm);
  \path[fill=black,dash pattern=on 1.60pt off 1.60pt,line join=miter,miter
    limit=4.00,even odd rule,line width=1.600pt] (90.0000,150.8622) circle
    (0.0564cm);
  \path[fill=black,line join=miter,line cap=butt,line width=0.800pt]
    (304.8851,24.7312) node[above right] {$w$};
  \fill[color=black] (300,22) circle (2);
  \path[fill=black,line join=miter,line cap=butt,line width=0.800pt]
    (304.7799,124.8092) node[above right] {$z$};
  \fill[color=black] (300,122) circle (2);
  \path[fill=red,line join=miter,line cap=butt,line width=0.800pt]
    (110,69) node[above right] {\color{fig53color1} $\beta_1$};
  \path[fill=blue,line join=miter,line cap=butt,line width=0.800pt]
    (72,87) node[above right] {\color{fig53color2} $\alpha_1$};
  \path[fill=black,line join=miter,line cap=butt,line width=0.800pt]
    (96,83) node[above right] {$x_1$};
  \path[fill=black,line join=miter,line cap=butt,line width=0.800pt]
    (96,103) node[above right] {$x_2$};
  \path[fill=black,line join=miter,line cap=butt,line width=0.800pt]
    (96,123) node[above right] {$x_3$};
  \path[fill=black,line join=miter,line cap=butt,line width=0.800pt]
    (96,143) node[above right] {$x_4$};
  \path[fill=black,line join=miter,line cap=butt,line width=0.800pt]
    (96,163) node[above right] {$x_5$};
\end{tikzpicture}
  \caption{A destabilized version of the Heegaard diagram \ref{fig:stabilized_eight}, with the intersection points $\TT_\alpha \cap \TT_\beta$ depicted.}
  \label{fig:destabilized_eight}
\end{figure}

\begin{problem}
At the beginning of Section~\ref{sec:heegaardknots} we described a recipe for obtaining a knot from a doubly pointed
Heegaard diagram and later we sketched
a way to obtain a doubly pointed
Heegaard diagram from a knot. Show that if one starts with an arbitrary knot $K\subset S^3$, passes to a Heegaard diagram and
then recovers a knot $K'$ from the Heegaard diagram, then $K'$ is isotopic to $K$.
\end{problem}

\begin{remark}
For simplicity we described a construction of a doubly pointed Heegaard diagram from a knot in $S^3$. We refer to \cite[Section 2.2]{OzSz-knot}
for a construction of Heegaard diagrams for a knot in an arbitrary three--manifold. 
\end{remark}
\subsection{The hat chain complex associated to a doubly pointed Heegaard diagram}\label{sec:hatknot}

Consider a doubly pointed Heegaard diagram $(\gS,\bfga,\bfgb,z,w)$ representing $(Y,K)$. Let $g=g(\gS)$. We define real $g$--dimensional
tori $\TT_{\ga},\TT_{\gb}\subset\Sym^g(\gS)$ as in Section~\ref{Symmetric_products} above. Moreover, let $R_z,R_w\subset\Sym^{g}(\gS)$ be given by
$(\{z\}\times\gS\times\ldots\times\gS)/S_g$ and $(\{w\}\times\gS\times\ldots\times\gS)/S_g$.

The chain complex $\widehat{CFK}(Y,K)$ is generated by the intersection points $\TT_\ga\cap\TT_\gb$. For any pair
$\bfx,\bfy\in\TT_\ga\cap\TT_\gb$ and $\phi\in\pi_2(\bfx,\bfy)$ we define the \emph{relative Maslov grading}
\[M(\bfx)-M(\bfy)=\mu(\phi)-2n_w(\phi),\]
where $n_w(\phi)$ is the intersection index of $\phi$ and $R_w$. Likewise, we define the \emph{relative Alexander grading}
\begin{equation}\label{eq:A}
A(\bfx)-A(\bfy)=n_z(\phi)-n_w(\phi).
\end{equation}
Various aspects on the Alexander grading are elaborated in \cite[Section 4]{Ras03}.
If $Y=S^3$, there is a way of fixing the Maslov grading, so that it becomes an absolute grading (over $\ZZ$). We refer
to \cite[Section 3.4]{Man}
for more details.
\begin{proposition}[see \expandafter{\cite[Section 1.1]{OzSz-knot}}]\label{prop:alexander_conway_poly}
If $Y=S^3$, then there exists a way of assigning the absolute Alexander grading $A(\bfx)$ in such a way that \eqref{eq:A} holds and moreover
\[\sum_{\bfx\in\TT_\ga\cap\TT_\gb}(-1)^{M(\bfx)}t^{A(\bfx)}=\Delta_K(t),\]
where $\Delta_K(t)$ is the symmetrized Alexander polynomial of the knot $K$.
\end{proposition}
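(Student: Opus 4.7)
The plan is to handle absolute normalization and the Euler characteristic computation separately.

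First, I would upgrade the relative Alexander grading to an absolute one via a \spinc{}-theoretic construction modeled on Section~\ref{Symmetric_products}. Each intersection point $\bfx=(x_1,\ldots,x_g)$ gives rise to a \spinc{} structure $\sss_w(\bfx)$ on the $0$-surgery $S^3_0(K)$: one runs the Turaev construction from Section~\ref{Symmetric_products} using the gradient trajectories through $x_1,\ldots,x_g$ together with the trajectory through $w$. Picking any Seifert surface $F$ for $K$ and capping it off in $S^3_0(K)$ to obtain a closed surface $\hat F$, I define
\[
A(\bfx):=\tfrac12\langle c_1(\sss_w(\bfx)),[\hat F]\rangle.
\]
To verify \eqref{eq:A}, one argues exactly as for $\epsilon(\bfx,\bfy)$ in Section~\ref{Symmetric_products}: any $\phi\in\pi_2(\bfx,\bfy)$ changes the associated \spinc{} structure by a class Poincar\'e dual to $(n_z(\phi)-n_w(\phi))[\gm]$, where $\gm$ is the meridian of $K$. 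Since $[\hat F]\cdot[\gm]=1$ in $S^3_0(K)$, this gives $A(\bfx)-A(\bfy)=n_z(\phi)-n_w(\phi)$.

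For the Euler characteristic, set $P(t):=\sum_{\bfx}(-1)^{M(\bfx)}\, t^{A(\bfx)}$. This polynomial depends only on the Heegaard data, and I would identify it with the Reidemeister--Turaev torsion of $S^3\setminus\nu(K)$, which classically equals $\Delta_K(t)$. Concretely, lift the $\ga$- and $\gb$-curves to the infinite cyclic cover $\widetilde{S^3\setminus\nu(K)}$ and assemble the resulting intersection numbers into a $g\times g$ matrix over $\ZZ[t,t^{-1}]$; a Mayer--Vietoris computation identifies the torsion with
\[
\det\bigl[\langle\widetilde\ga_i,\widetilde\gb_j\rangle\bigr]_{1\le i,j\le g}.
\]
Expanding the determinant as a signed sum over permutations $\gs\in S_g$ and invoking Problem~\ref{prob:intersect}, the term indexed by $\gs$ is a sum over intersection points $\bfx=(x_1,\ldots,x_g)$ with $x_i\in\ga_i\cap\gb_{\gs(i)}$, with sign $\operatorname{sgn}(\gs)=(-1)^{M(\bfx)}$ and $t$-weight equal to $t^{A(\bfx)}$. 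Summing yields $P(t)$.

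The subtle step is the absolute normalization, since Reidemeister torsion is a priori well defined only up to a unit of $\ZZ[t,t^{-1}]$. I would pin it down using the conjugation symmetry of $\widehat{CFK}$: swapping the roles of $w$ and $z$ (together with reversing the orientation of $\gS$) defines a chain isomorphism sending bidegrees $(M,A)\mapsto(M-2A,-A)$, so that $P(t)=P(t^{-1})$. Since the symmetrized Alexander polynomial is characterized among its $\pm t^{\ZZ}$-multiples by the combination of palindromicity and $\Delta_K(1)=\pm 1$, this rigidity fixes $P(t)=\Delta_K(t)$. The main obstacle is exactly this palindromic-normalization check: it requires showing that the \spinc{}-defined $A(\bfx)$ is compatible with the $w\leftrightarrow z$ swap on the level of absolute gradings, which rests on the conjugation invariance of the \spinc{}-to-characteristic-element correspondence from Section~\ref{section:spinc_structures}.
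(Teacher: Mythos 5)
The paper does not prove this proposition; it quotes it from \cite[Section 1.1]{OzSz-knot}, so there is no internal argument to compare against. Your outline reconstructs what is essentially the standard proof from the cited source (absolute grading via \spinc{} structures on the zero--surgery, graded Euler characteristic identified with a torsion/Fox--calculus determinant, normalization pinned down by conjugation symmetry), and the architecture is sound. Two points need repair. First, the construction of $\sss_w(\bfx)$: removing neighborhoods of the trajectories through $x_1,\ldots,x_g$ and through $w$ alone produces an Euler structure on $S^3$ itself, hence the unique \spinc{} structure on $S^3$ and no information. To land in $\Spin^c(S^3_0(K))$ you must use \emph{both} basepoints --- the flowlines through $w$ and through $z$ are precisely the two arcs whose union is $K$, and it is only after deleting a neighborhood of all of $K$ together with the $x_i$--trajectories that the non-vanishing vector field determines a relative \spinc{} structure on the knot complement, equivalently a \spinc{} structure on $S^3_0(K)$. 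With that correction your formula $A(\bfx)=\tfrac12\langle c_1(\sss_{w,z}(\bfx)),[\hat F]\rangle$ and the verification of \eqref{eq:A} are the standard ones. Second, the sign in the determinant expansion is not $\operatorname{sgn}(\gs)$ alone: each entry $\langle\widetilde\ga_i,\widetilde\gb_j\rangle$ is already a signed count, so the monomial attached to $\bfx$ carries $\operatorname{sgn}(\gs)\prod_i\epsilon(x_i)$, where $\epsilon(x_i)$ is the local intersection sign, and it is this product --- not $\operatorname{sgn}(\gs)$ --- that equals $(-1)^{M(\bfx)}$ up to one overall sign, which is then fixed by $P(1)=\chi(\widehat{CF}(S^3))=1$. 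Your final normalization step is the right one: palindromicity kills the $t^{k}$ ambiguity of the torsion and $P(1)=1$ fixes the sign, which together upgrade ``Alexander polynomial up to units'' to the symmetrized $\Delta_K$.
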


Now we come to a potential source of confusion, because there are two choices of a differential in $\widehat{CFK}(Y,K)$.
We can either set:
\begin{align*}
\partial_{grad} \bfx& = \sum_{\bfy \in \TT_\ga \cap \TT_\gb} \sum_{\substack{\phi \in \pi_2(\bfx, \bfy) \\ n_z(\phi)=n_w(\phi)=0\\ \mu(\phi)=1}} 
\# \widehat{\cM}(\phi) \; \bfy,\\
\intertext{or}
\partial_{fil} \bfx& = \sum_{\bfy \in \TT_\ga \cap \TT_\gb} \sum_{\substack{\phi \in \pi_2(\bfx, \bfy) \\ n_w(\phi)=0\\ \mu(\phi)=1}} 
\# \widehat{\cM}(\phi) \; \bfy.
\end{align*}
\begin{problem}
Prove that $\partial_{grad}$ preserves the Alexander grading, while $\partial_{fil}$ is a filtered map with respect
to the Alexander grading.
\end{problem}
The map $\partial_{fil}$ is the differential in the complex $\widehat{CF}(Y)$, hence 
$\partial_{fil}^2=0$ (by Theorem~\ref{thm:hfhat}), and $\partial_{grad}$ is a part of $\partial_{fil}$ that preserves the Alexander grading,
we have that $\partial_{grad}^2=0$; compare \cite[Section 4.4]{Ras03}.
\begin{problem}
Show also that the homology of $(\widehat{CFK}(Y,K),\partial_{fil})$ is isomorphic to $\widehat{HF}(Y)$.
\end{problem}
\begin{definition}
The homology of the complex $(\widehat{CFK}(Y,K),\partial_{grad})$ is called the \emph{hat knot Floer homology} and denoted $\widehat{HFK}(Y,K)$.
\end{definition}
From the point of view of homological algebra, if we have a filtered complex, like in our case $(\widehat{CFK}(Y,K),\partial_{fil})$, we
can associate with it a graded complex, whose underlying space is isomorphic (at least if the complex is defined over a field),
and with the differential consisting only of the graded part. In our case this is $(\widehat{CFK}(Y,K),\partial_{grad})$. There
is a spectral sequence whose first page is the homology of the graded part, which abuts (under some finiteness assumptions on the complex,
which are satisfied in Heegaard Floer theory) to the homology of the filtered complex. This spectral sequence is used in \cite{OzSz-knot}
to define an important knot invariant, called the $\tau$-invariant. We will not discuss it here.

\begin{theorem}[see \expandafter{\cite[Corollary 3.2]{OzSz-knot}, \cite[Theorem 1]{Ras03}}]

The homology $\widehat{HFK}(Y,K)$ is a knot invariant. Moreover, 

$$\sum_{a}(-1)^{M(a)}t^{A(a)}=\Delta_K(t),$$
where the sum is taken over a graded basis of $\widehat{HFK}(Y,K)$.
\end{theorem}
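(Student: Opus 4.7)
The plan is to establish the two assertions separately, both by reducing to machinery already developed in the excerpt. For invariance, I would argue that any two doubly pointed Heegaard diagrams $(\gS, \bfga, \bfgb, w, z)$ and $(\gS', \bfga', \bfgb', w', z')$ representing the same pair $(Y, K)$ are connected by a finite sequence of isotopies, handleslides, and stabilizations performed \emph{in the complement of the two basepoints}. This is the pointed analogue of Theorem~\ref{thm:handleslide}: run the Cerf-theoretic argument sketched there, but after removing tubular neighborhoods of the two arcs in $U_0$ and $U_1$ connecting the preimages of $w$ and $z$, so that neither $w$ nor $z$ is ever crossed by a curve during the family of Morse/Morse--Smale data. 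Then I would check, imitating the proof of Theorem~\ref{thm:hfhat}, that each such basepoint-preserving move induces a chain map of $(\widehat{CFK}(Y,K), \partial_{fil})$ which is a quasi-isomorphism and respects the Alexander filtration coming from $n_z - n_w$. Passing to the associated graded — which identifies $\partial_{grad}$ as the Alexander-grading-preserving part of $\partial_{fil}$ — yields a quasi-isomorphism of $(\widehat{CFK}(Y,K), \partial_{grad})$, so $\widehat{HFK}(Y,K)$ is a knot invariant.

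For the Euler characteristic identity, the key structural fact is that $\partial_{grad}$ preserves the Alexander grading and shifts the Maslov grading by $-1$, so on each fixed Alexander level it is a differential on a finite-dimensional bigraded $\ZZ_2$-vector space. Therefore, working level by level in $A$, the two-variable Euler characteristic
\[
\sum_{\bfx \in \TT_\ga \cap \TT_\gb} (-1)^{M(\bfx)} t^{A(\bfx)}
\]
of the chain complex equals the corresponding bigraded Euler characteristic of its homology $\widehat{HFK}(Y,K)$ computed over any homogeneous basis. By Proposition~\ref{prop:alexander_conway_poly} the chain-level sum equals $\Delta_K(t)$, yielding the desired formula.

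The main obstacle, as in all invariance proofs in Heegaard Floer theory, is the first step. One has to verify two separate things: first, at the topological level, that a pair of pointed Heegaard diagrams for the same based knot really can be joined by a sequence of basepoint-avoiding moves — this requires the pointed refinement of Cerf's theorem, tracking how the arcs $a$ and $b$ defining the knot from Section~\ref{sec:heegaardknots} survive births, deaths, and handleslides; second, at the analytic level, that the triangle maps implementing handleslides refine to chain maps which count only those pseudo-holomorphic triangles with $n_w = 0$, and are filtered with respect to $n_z$. The latter point is a refinement of the holomorphic-triangle arguments behind the surgery exact sequence, and is where the bulk of the technical work lies; granted that refinement, the rest of the argument is formal and the Euler-characteristic computation is essentially immediate from Proposition~\ref{prop:alexander_conway_poly}.
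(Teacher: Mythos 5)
The paper itself does not prove this theorem; it only cites \cite[Corollary 3.2]{OzSz-knot} and \cite[Theorem 1]{Ras03}, so your sketch must be judged against the standard argument in those sources, which it follows in outline. Your Euler--characteristic step is fine: $\partial_{grad}$ preserves $A$ and drops $M$ by one, so the bigraded Euler characteristic is unchanged on passing to homology level by level in $A$, and Proposition~\ref{prop:alexander_conway_poly} identifies the chain--level sum with $\Delta_K(t)$. (Note that this proposition, and hence the Alexander--polynomial clause, is only stated for $Y=S^3$; for general $Y$ you should only claim invariance.)

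The genuine gap is in the invariance argument. You assert that each basepoint--preserving Heegaard move induces a chain map of $(\widehat{CFK},\partial_{fil})$ that is a quasi-isomorphism and respects the Alexander filtration, and that ``passing to the associated graded'' then yields a quasi-isomorphism of $(\widehat{CFK},\partial_{grad})$. That implication is false: a filtered map which is a quasi-isomorphism need not induce a quasi-isomorphism on associated graded complexes. For instance, let $C$ be generated over $\ZZ_2$ by $x,y$ with $\partial x=y$ and filtration levels $A(x)=1$, $A(y)=0$; the zero map $C\to 0$ is a filtered quasi-isomorphism, yet $H_*(\mathrm{gr}\,C)\cong\ZZ_2^2\neq 0$. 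What the Heegaard--move arguments actually produce, and what you need, is a \emph{filtered chain homotopy equivalence}: a filtered map admitting a filtered homotopy inverse with filtered homotopies. Such a map does induce a chain homotopy equivalence, hence an isomorphism on homology, of the associated graded complexes. This is precisely the distinction the paper draws when it records (for $CFK^-$) that the \emph{filtered chain homotopy type} is the knot invariant. So the analytic input must be strengthened: the continuation, triangle, and stabilization maps have to be shown to be filtered up to filtered homotopy, not merely to be filtered quasi-isomorphisms. With that strengthening the rest of your outline goes through.
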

One of the consequences of the adjunction inequality is the following result; see \cite[Theorem 5.1]{OzSz-knot}.
\begin{theorem}[Adjunction inequality in $\widehat{HFK}(Y,K)$]
Suppose that $K\subset Y$ is a null-homologous knot. Suppose $\sss$ is such that $\widehat{HFK}(Y,K,\sss)\neq 0$. Then for every
Seifert surface $F$ for $K$ of genus $g>0$ we have
\[\left|\langle c_1(\sss),F\rangle\right|\le 2g(F).\]
\end{theorem}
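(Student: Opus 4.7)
The plan is to reduce this to the closed--manifold adjunction inequality of Section~\ref{sec:why} applied to the $0$--surgery $Y_0 = Y_0(K)$. Since $K$ is null--homologous, $H_1(Y_0;\ZZ)\cong H_1(Y;\ZZ)\oplus\ZZ$ and in particular $b_1(Y_0)\ge 1$. Capping the Seifert surface $F$ with the meridian disk $D$ of the surgery solid torus yields a closed oriented surface $\widehat{F}=F\cup D\subset Y_0$ of genus $g(\widehat F)=g(F)=g>0$. The $2$--handle cobordism $W$ from $Y$ to $Y_0$, obtained by attaching a $2$--handle along $K$ with framing $0$, will be the key bridge: $\widehat F$ extends to a closed surface $\widehat F'$ in $W$ built from $F$ and the core of the attached handle.

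For a null--homologous knot, the \spinc{} structures on $(Y,K)$ that index the summands of $\widehat{HFK}$ are in affine bijection with $\Spin^c(Y)\oplus\ZZ$, the $\ZZ$--coordinate recording the Alexander grading. Each such $\sss$ admits two extensions $\sst_{+},\sst_{-}\in\Spin^c(Y_0)$ across $W$, and a direct Chern--class computation on $W$ using Lemma~\ref{lem:char} together with the self--intersection $[\widehat F']^2=0$ gives the shift
\[
\langle c_1(\sst_{\pm}),[\widehat{F}]\rangle \;=\; \langle c_1(\sss),[F]\rangle \pm 1.
\]

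Next I would invoke the surgery--theoretic identification linking knot Floer homology to $HF^{+}$ of the $0$--surgery: if $\widehat{HFK}(Y,K,\sss)\ne 0$, then at least one of $HF^{+}(Y_0,\sst_{+})$ or $HF^{+}(Y_0,\sst_{-})$ is non--zero. The cleanest route runs via the large positive surgery formula $\widehat{HF}(Y_N(K),\sss_N)\cong\widehat{HFK}(Y,K,\sss)$ valid for $N\gg|A(\sss)|$, combined with an iterated application of the surgery exact sequence of Section~\ref{sec:why} descending from $Y_N$ to $Y_0$, which propagates the non--vanishing into a \spinc{} summand of $HF^{+}(Y_0)$ extending $\sss$ across $W$.

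Finally, apply the closed adjunction inequality of Section~\ref{sec:why} to $\widehat F\subset Y_0$, valid because $b_1(Y_0)>0$ and $g>0$: whichever of $\sst_{\pm}$ has non--vanishing $HF^{+}$ satisfies $|\langle c_1(\sst_{\pm}),[\widehat{F}]\rangle|\le 2g-2$. Substituting the shift formula gives $|\langle c_1(\sss),[F]\rangle|\le 2g-1\le 2g(F)$, as claimed. The hardest step is the middle one: keeping track of \spinc{} structures and Alexander gradings through the surgery long exact sequence, and matching them correctly across the cobordism $W$---this is where the bulk of the work in \cite[Section 5]{OzSz-knot} is concentrated; the Chern--class bookkeeping of the first step is routine, and the final invocation of the closed adjunction inequality is immediate from Section~\ref{sec:why}.
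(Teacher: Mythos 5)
The paper does not prove this statement; it only cites \cite[Theorem 5.1]{OzSz-knot}, so the comparison is against that source. Your high-level plan --- cap off $F$ inside the $0$--surgery $Y_0$ and appeal to the closed-manifold adjunction inequality --- is the natural first guess and matches the spirit of the paper's remark, but the two steps that carry all the weight do not go through as written. First, the claimed identification $\widehat{HF}(Y_N(K),\sss_N)\cong\widehat{HFK}(Y,K,\sss)$ is not what the large surgery theorem says: it identifies $\widehat{HF}(Y_N,\sss_N)$ with the homology of the ``hook'' subquotient $\widehat{A}_\sss$ of $CFK^\infty$, which is in general very different from the associated graded piece $\widehat{HFK}(Y,K,\sss)$ (they agree essentially only at the extremal Alexander grading). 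Second, and more seriously, exact triangles do not ``propagate non-vanishing'': a nonzero group in one corner of the surgery triangle gives no lower bound on either of the other two corners, so iterating the exact sequence from $Y_N$ down to $Y_0$ cannot by itself produce a \spinc{} summand of $HF^+(Y_0)$ that is nonzero. The only clean statement of this type in the literature, $\widehat{HFK}(Y,K,g)\cong HF^+(Y_0,g-1)$ from \cite{OzSz-genus}, holds at the top grading only, which is exactly the grading where the adjunction inequality is vacuous. There is also a bookkeeping inconsistency: since $[\widehat F]$ has square zero in the two-handle cobordism, Lemma~\ref{lem:char} forces $\langle c_1(\sst),[\widehat F]\rangle$ to be even for every \spinc{} structure $\sst$ on $Y_0$, so your shift formula $\langle c_1(\sst_\pm),[\widehat F]\rangle=\langle c_1(\sss),[F]\rangle\pm 1$ cannot be correct with the normalization used in the statement (where the bound $2g(F)$ is even); in the standard convention $\sss$ is a relative \spinc{} structure already identified with an element of $\Spin^c(Y_0)$, and $\langle c_1(\sss),[F]\rangle$ means the (even) evaluation on $[\widehat F]$.

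The actual proof in \cite[Section 5]{OzSz-knot} does not deduce the knot statement from the closed one by surgery arguments at all. It is a direct chain-level argument modeled on the proof of the closed adjunction inequality: one chooses a doubly pointed Heegaard diagram adapted to the Seifert surface, so that $\widehat F$ is visible as a (relative) periodic domain, and computes $\langle c_1(\underline{\sss}(\bfx)),[\widehat F]\rangle$ for every generator $\bfx$ via the Euler-measure formula, obtaining the bound $2g(F)$ (rather than $2g(F)-2$, the discrepancy coming precisely from the two basepoints) for every relative \spinc{} structure that supports a generator. If you want to salvage your route, the step you must supply is a genuine structural relation between $\widehat{HFK}(Y,K,j)$ and $HF^+(Y_0,\cdot)$ valid at all gradings $j$, and no such relation of the simple form you invoke is available.
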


The knot Floer homologies have two wonderful properties. The first one was proved in \cite{OzSz-genus}, the second one is proved in \cite{Gigi,Ni}.
\begin{theorem}\label{thm:giggini} The following two properties hold: \

\begin{itemize}
\item If $K$ is a knot in $S^3$, then $\widehat{HFK}$ detects the three-genus. More precisely, for a knot $K \subset S^3$, 
$$ g_3(K)=\max_a\colon\widehat{HFK}_*(K,a)\neq 0. $$ 
\item $\widehat{HFK}$ detects fibredness. That is, for a null-homologous knot $K$ in a closed, oriented,
connected $3$--manifold, $K$ is fibered if and only if
$$\rk\widehat{HFK}_{*}(K,g_3(K))=1.$$ 
\end{itemize}
Here $\widehat{HFK}_*(K,a)$ denotes the part of $\widehat{HFK}$ with the Alexander grading $a$.
\end{theorem}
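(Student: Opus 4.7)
The two statements require rather different techniques, but both ultimately depend on sutured manifold theory and its Floer-theoretic refinement (sutured Floer homology), which is beyond the scope of these notes. My plan is to treat each property in turn, indicating what can be done with tools from the paper and what needs to be imported from outside.

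\textbf{Genus detection.} The inequality $\max\{a : \widehat{HFK}_*(K,a) \neq 0\} \le g_3(K)$ is essentially a direct consequence of the adjunction inequality in $\widehat{HFK}$ stated just above. Indeed, if $F$ is a minimal genus Seifert surface for $K$ and $\sss$ is a $\Spin^c$ structure with $\widehat{HFK}(S^3,K,\sss) \neq 0$, then $|\langle c_1(\sss),[F]\rangle| \le 2g_3(K)$; under the correspondence between $\Spin^c$ structures and Alexander grading (twice the grading equals the evaluation of $c_1$ on $F$), this is exactly the sought inequality. The genuine content is the opposite bound: one must exhibit a non-zero class in the top Alexander grading. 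The strategy, due to Ozsv\'ath--Szab\'o, is to cut the knot complement along a minimal genus Seifert surface $F$ to obtain a balanced sutured manifold $Y(F)$ and to identify the top Alexander grading summand $\widehat{HFK}(K, g_3(K))$ with the sutured Floer homology $SFH(Y(F))$. One then invokes Gabai's theorem that minimal genus Seifert surfaces yield taut sutured manifolds together with Juh\'asz' non-vanishing theorem, which says $SFH$ of a taut balanced sutured manifold is non-trivial.

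\textbf{Fibredness detection.} The easy direction (fibered implies rank one) can be carried out by choosing a Heegaard diagram adapted to the open book decomposition induced by the fibration: in the top Alexander grading one finds a single generator determined by the monodromy, and grading considerations force the differentials to vanish there, yielding $\rk \widehat{HFK}_*(K, g_3(K)) = 1$. The hard direction (rank one implies fibered), due to Ghiggini in genus one and Ni in general, is the main obstacle. The approach is to argue contrapositively in the sutured picture: being fibered is equivalent to the sutured manifold $Y(F)$ of a minimal genus Seifert surface being a product. If $Y(F)$ is taut but not a product, Gabai's sutured hierarchy, combined with a careful analysis of how $SFH$ changes under sutured decomposition (Juh\'asz' decomposition formula), forces $\rk SFH(Y(F)) \ge 2$. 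Transporting this back through the identification $SFH(Y(F)) \cong \widehat{HFK}(K, g_3(K))$ contradicts the rank-one hypothesis.

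The principal obstacles are therefore not Heegaard-Floer-theoretic but geometric-topological: one needs Gabai's sutured hierarchy theorem and the behaviour of $SFH$ under sutured decomposition. Once these are in place, the two statements follow from the adjunction inequality and from the identification of $\widehat{HFK}$ in the extremal Alexander grading with an $SFH$ invariant of the complement cut along a Seifert surface.
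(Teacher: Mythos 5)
The paper does not prove this theorem: it is quoted as a black box, with the genus statement attributed to \cite{OzSz-genus} and the fibredness statement to \cite{Gigi,Ni}, so there is no in-paper argument to compare against line by line. Your outline is a correct and coherent sketch of a \emph{valid} proof, but it is worth noting that it is not the proof of the cited references; it is the later, streamlined route through sutured Floer homology due to Juh\'asz. The original Ozsv\'ath--Szab\'o proof of genus detection predates $SFH$ entirely: it combines Gabai's theorem that $0$--surgery along a minimal genus Seifert surface carries a taut foliation, the Eliashberg--Thurston perturbation of that foliation to a tight contact structure, a symplectic semi-filling, and the non-vanishing of $HF^+$ of symplectically fillable manifolds, then transports the resulting non-trivial class into the top Alexander grading of $\widehat{HFK}$ via the relation between knot Floer homology and the Floer homology of large and zero surgeries. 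Similarly, Ghiggini's and Ni's arguments for fibredness run through Gabai's sutured hierarchy and contact-geometric input rather than through Juh\'asz's decomposition formula for $SFH$ (which reproves their result). Your route buys conceptual economy --- one non-vanishing theorem and one decomposition formula do all the work --- at the cost of importing the full $SFH$ package; the original route stays within the closed-manifold theory developed in these notes but needs substantial foliation and contact topology. Two small points to tighten: the adjunction inequality as stated in the notes assumes the Seifert surface has genus $g>0$, so the genus-zero case (the unknot) needs the separate computation $\widehat{HFK}(U)\cong\ZZ_2$ concentrated in Alexander grading $0$; and the ``easy'' direction of fibredness (fibered implies rank one) is itself a theorem of Ozsv\'ath--Szab\'o requiring a genuinely adapted Heegaard diagram coming from the open book, not just grading considerations, so your one-sentence dismissal there is doing more work than it admits.
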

\begin{remark}
The fact that $\widehat{HFK}$ detects the three-genus of a knot, can be generalized for null-homologous knots in rational
homology three-spheres, where
the notion of the three-genus is replaced by the Thurston norm; see \cite[Section 1]{OzSz-genus} and \cite{Ni-thurston}.
The fibreness part works for arbitrary null-homologous knots in arbitrary closed three-manifold; see \cite{Ni}. 
\end{remark}

\subsection{The complexes $CFK^-$ and $CFK^\infty$}\label{sec:cfkinf}
The chain complex $CFK^-$ is built in an analogous way, although some subtleties arise. The generators are again
intersection points $\TT_\ga\cap\TT_\gb$, the complex is defined over $\ZZ_2[U]$ and the Maslov and Alexander gradings are as above.
The multiplication by $U$ by definition decreases the Alexander grading by $1$.
The differential is the following
\begin{equation} \label{diff:minus2}
\partial \bfx := \sum_{\bfy \in \TT_\ga \cap \TT_\gb} \sum_{\substack{\phi \in \pi_2(\bfx, \bfy) \\ \mu(\phi)=1}} \# \widehat{\cM}(\phi) \, U^{n_w(\phi)} \, \bfy.
\end{equation}
The only difference with respect to \eqref{diff:minus} is that in the exponent of $U$ we have $n_w$ and not $n_z$. In the sense of 
Section~\ref{sec:hatknot}, the differential should be called $\partial_{fil}$. If we take the graded differential, that is,
the one that does not count discs crossing the first base point (that is, one adds the condition $n_z(\phi)=0$ in the sum in \eqref{diff:minus2}), 
we will get a graded chain complex.
In \cite[Section 3.4]{Man} this complex is denoted $gCFK^-$ and the homology is $HFK^-$. 

Unlike in the hat version, we are not as much interested in the graded complex as in the filtered one, that is, the one with the differential given by \eqref{diff:minus2}. Even though the homologies of complexes $CFK^-(Y,K)$ and $CF^-(Y)$ are the same, there is a substantial difference between $CFK^-(Y,K)$ and $CF^-(Y)$. Namely, in $CFK^-(Y,K)$ we have the Alexander grading. The differential does not necessarily preserve the grading, but as multiplication by $U$ drops the Alexander grading by $1$, we will obtain that the differential never increases the grading.
\begin{problem}
Check that the last statement is true.
\end{problem}
This means that $CFK^-(Y,K)$ is a filtered chain complex over $\ZZ_2[U]$, or a bifiltered chain complex over $\ZZ_2$ (with the other filtration given by powers of $U$, we will explain this in a while). This filtration is independent of the choice of the Heegaard
diagram, in fact we have the following fact; see \cite{OzSz-knot,Ras03}.
\begin{theorem} 
The filtered chain homotopy type of $CFK^-(Y,K)$ is an invariant of the isotopy type of the knot.
\end{theorem}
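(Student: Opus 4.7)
The plan is to reduce to a Reidemeister-style moves theorem for doubly pointed Heegaard diagrams, then show each move induces a filtered chain homotopy equivalence of $CFK^{-}$ complexes, with the filtration being respected because the basepoints are untouched.

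First I would establish the doubly pointed analogue of Theorem~\ref{thm:handleslide}: two doubly pointed Heegaard diagrams $(\gS,\bfga,\bfgb,w,z)$ and $(\gS',\bfga',\bfgb',w',z')$ represent the same pair $(Y,K)$ if and only if they are related, up to diffeomorphism, by a finite sequence of (a) isotopies of the $\bfga$ and $\bfgb$ curves in the complement of $\{w,z\}$, (b) handleslides among $\bfga$ curves and among $\bfgb$ curves, where the isotopy in the handleslide never crosses $w$ or $z$, and (c) index $(1,2)$ stabilizations/destabilizations supported away from the basepoints. The forward direction follows by retracing the construction of Section~\ref{sec:heegaardknots}: the knot $K$ is recovered from the arc connecting $w$ and $z$ pushed into the two handlebodies, so any move that avoids $\{w,z\}$ preserves the isotopy class of $K$. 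The converse is a Cerf-theoretic argument analogous to the one sketched for Theorem~\ref{thm:handleslide}, but now run on Morse functions on $Y\setminus K$ adapted to a tubular neighborhood of $K$; the fact that the basepoints live on either side of the arc representing $K$ ensures that the paths of Morse functions can be chosen so that no critical point, handleslide, or isotopy crosses $w$ or $z$.

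Next, for each move I would construct the induced chain map between the $CFK^{-}$ complexes and verify it is a filtered chain homotopy equivalence. For an isotopy avoiding $\{w,z\}$ one uses a continuation map built from a generic path of almost complex structures (and small Hamiltonian perturbations when intersection points are created or cancelled by handleslides of bigons), with each disk weighted by $U^{n_w(\phi)}$ exactly as in \eqref{diff:minus2}. For a handleslide one uses the standard triangle map associated to a triple diagram $(\gS,\bfga,\bfga',\bfgb)$ where $\bfga'$ is obtained from $\bfga$ by the handleslide, weighted again by $U^{n_w}$. For stabilization one checks directly, using the model calculation in the genus-one summand, that the new complex is isomorphic to the old one, with the isomorphism preserving the $U$-action and the counts $n_z-n_w$. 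In each case the chain homotopy inverse and the homotopies themselves come from counts of holomorphic triangles and rectangles; the standard Ozsv\'ath--Szab\'o machinery of \cite[Sections 7--9]{os-threemanifold} applies verbatim to the $w$-pointed complex, giving a chain homotopy equivalence after forgetting $z$.

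The crucial additional point, which is the core of the statement, is that these maps respect the Alexander filtration. This is where the hypothesis that the moves avoid the second basepoint $z$ is used: every holomorphic disk, triangle, or rectangle $\phi$ counted in the construction has a well-defined intersection number $n_z(\phi)\ge 0$ with $R_z$, and the contribution of $\phi$ to the induced map between generators drops the Alexander grading by $n_z(\phi)-n_w(\phi)$, once one remembers that multiplication by $U$ drops the Alexander grading by $1$. Since $n_z\ge 0$ and the $U^{n_w}$ weight is already accounted for, every such contribution is non-increasing in the Alexander filtration, so the induced map is filtered. The same argument applied to the chain homotopies shows that the chain homotopies are also filtered, and hence the composition being the identity plus a filtered boundary gives a genuine filtered chain homotopy equivalence.

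The main obstacle I anticipate is the verification that all the triangle and rectangle counts entering the invariance proof remain well defined and give filtered maps. Concretely, one must arrange admissibility of the triple and quadruple Heegaard diagrams used so that, for each pair of intersection points and each Alexander grading, only finitely many homotopy classes contribute; without this the filtered map is not even well defined. For $Y$ a rational homology sphere (and in particular $Y=S^3$) this is automatic, but for a general $Y$ one needs the strong admissibility conditions of \cite[Section 5]{os-threemanifold} adapted to the doubly pointed setting, and one must check that these admissibility conditions can be maintained throughout each sequence of moves. Once this bookkeeping is in place, the filtered chain homotopy equivalence follows by assembling the pieces described above.
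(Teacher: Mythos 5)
The paper does not actually prove this theorem; it quotes it from \cite{OzSz-knot} and \cite{Ras03}, and your outline is essentially the argument given in those sources: a doubly pointed analogue of Theorem~\ref{thm:handleslide} with all moves supported away from $w$ and $z$, followed by the observation that the resulting continuation, triangle and stabilization maps (and their chain homotopies) shift the Alexander filtration by $-n_z(\phi)\le 0$ once the $U^{n_w(\phi)}$ weights are taken into account. Your outline is correct and matches the standard approach, with the Cerf-theoretic step (connecting Morse functions compatible with the knot through paths whose critical phenomena avoid the two distinguished flowlines) being the one place where the assertion substitutes for the real work done in \cite[Section 3]{OzSz-knot}.
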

As it might be expected, the filtered chain homotopy type of $CFK^-(Y,K)$ contains much more information than just the homology of
the chain complex. The famous saying of Andrew Ranicki, one of the inventors of algebraic surgery theory:
\begin{motto}[Ranicki]
\emph{``Chain complexes are good, homologies are bad''}
\end{motto}
\noindent is very true also in Heegaard Floer theory.

As in Section~\ref{section_complexes} above, we can invert formally the variable $U$ to obtain another chain complex, called $CFK^\infty$. 
Here we give a slightly different point of view of this object.

Consider a chain complex whose generators are triples $[\bfx,i,j]$ such that $i,j\in\ZZ$ and $A(\bfx)=j-i$. The triple $[\bfx,i,j]$ will
correspond to the generator $U^{-i}\bfx$. The differential is as in \eqref{diff:minus2}.
\begin{problem}\label{problem:cfkinfty_differential}
Show that with this notation the definition in \eqref{diff:minus2} boils down to 
\begin{equation}\label{eq:boilsdown}
\partial[\bfx,i,j]=\sum_{\bfy \in \TT_\ga \cap \TT_\gb}\sum_{\substack{\phi \in \pi_2(\bfx, \bfy)\\\mu(\phi)=1}} \#\widehat{\cM}(\phi)[\bfy,i-n_w(\phi),j-n_z(\phi)].
\end{equation}
\end{problem}
The chain complex with such a differential is denoted by $CFK^\infty(Y,K)$. The homology is clearly $HFK^\infty(Y,K)\cong HFK^\infty(Y)$.
The chain complex admits an action of $U$, namely $U[\bfx,i,j]=[\bfx,i-1,j-1]$. One of the advantages of \eqref{eq:boilsdown}
over \eqref{diff:minus2} is that the symmetry between the first and the second filtration levels is clearly seen in \eqref{eq:boilsdown}.
This symmetry is a generalization of the symmetry of the Alexander polynomial of a knot.
\begin{problem}
Prove that the subcomplex $CFK^\infty(Y,K)\{i\le 0\}$ is
the chain complex $CFK^-(Y,K)$.
\end{problem}
\begin{remark}
Sometimes one considers $CFK^-=CFK^\infty(Y,K)\{i<0\}$, instead of $CFK^\infty(Y,K)\{i\le 0\}$. 
This does not affect the isomorphism type of the relatively graded complex.
\end{remark}
The definition of $CFK^\infty$ via $[\bfx,i,j]$ allows us to present it graphically. Namely, for any element $[\bfx,i,j]$ we
can put a dot in a plane with coordinates $(i,j)$. The arrows denote differentials, often one draws only
an edge, the direction of an arrow can be determined by the fact that the differential does not increase any of the two filtration levels. 
The Maslov grading is usually
not presented, or denoted near the dots, if necessary.

One of the features of the chain complex $CFK^\infty$ is its behavior under connected sums, which is an analogue of Problem~\ref{prob:connected}.
\begin{proposition}\label{prop:connectedknots}
Suppose $K_1,K_2$ are two knots in $S^3$. Then 
$$ CFK^\infty(K_1\# K_2)\cong CFK^\infty(K_1)\otimes CFK^\infty(K_2) $$ 
where ``$\cong$'' denotes a bifiltered chain homotopy equivalence. The tensor product is taken over the ring $\ZZ_2[U,U^{-1}]$.
\end{proposition}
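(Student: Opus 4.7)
The plan is to adapt the standard Künneth argument from Problem~\ref{prob:connected}, but carry along both base points and both filtration indices. First, construct a suitable Heegaard diagram for the connected sum. Given doubly pointed diagrams $(\gS_i,\bfga_i,\bfgb_i,w_i,z_i)$ for $(S^3,K_i)$, $i=1,2$, form $\gS=\gS_1\# \gS_2$ by removing small disks near $z_1\in\gS_1$ and $w_2\in\gS_2$ and gluing the resulting boundary circles by a thin cylindrical neck $C$. Let $\bfga=\bfga_1\cup\bfga_2$ and $\bfgb=\bfgb_1\cup\bfgb_2$, and take the remaining points $w:=w_1$ and $z:=z_2$ as basepoints. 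A standard check shows that $(\gS,\bfga,\bfgb,w,z)$ is a doubly pointed Heegaard diagram for $K_1\# K_2\subset S^3$: the arc in $U_0$ joining $w$ to $z$ is obtained by concatenating arcs from the two summands through the neck, and similarly for the arc in $U_1$.

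Second, identify generators. Because $\bfga_1$ and $\bfgb_2$ are disjoint (they lie in different summands), and likewise $\bfga_2$ and $\bfgb_1$, every intersection point of $\TT_\bfga$ with $\TT_\bfgb$ has the form $\bfx_1\times\bfx_2$ with $\bfx_i\in\TT_{\bfga_i}\cap\TT_{\bfgb_i}$. This gives a canonical bijection
\[
\TT_\bfga\cap\TT_\bfgb\;\longleftrightarrow\;\bigl(\TT_{\bfga_1}\cap\TT_{\bfgb_1}\bigr)\times\bigl(\TT_{\bfga_2}\cap\TT_{\bfgb_2}\bigr),
\]
and consequently an isomorphism of $\ZZ_2[U,U^{-1}]$-modules $CFK^\infty(K_1\# K_2)\cong CFK^\infty(K_1)\otimes CFK^\infty(K_2)$ (up to a relative shift in the $(i,j)$ bigrading, which is compatible with additivity of the Alexander and Maslov gradings and matches the product formula $\gD_{K_1\# K_2}(t)=\gD_{K_1}(t)\gD_{K_2}(t)$, cf. Proposition~\ref{prop:alexander_conway_poly}).

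Third, show that the differentials agree. This is the step where the main analytic content sits. One stretches the neck $C$: take a family of almost complex structures $J_T$ on $\Sym^{g_1+g_2}(\gS)$ with a neck of length $T\to\infty$. Gromov compactness together with the observation that the divisors $R_w$ and $R_z$ lie entirely on the $\gS_1$ and $\gS_2$ sides respectively forces any limit of $J_T$-holomorphic disks $\phi$ connecting $\bfx_1\times\bfx_2$ to $\bfy_1\times\bfy_2$ to split as a pair $(\phi_1,\phi_2)$ with $\phi_i\in\pi_2(\bfx_i,\bfy_i)$ in $\Sym^{g_i}(\gS_i)$. A gluing argument in the reverse direction shows every such pair arises. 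The Maslov indices and the local multiplicities at the basepoints add: $\mu(\phi)=\mu(\phi_1)+\mu(\phi_2)$, $n_w(\phi)=n_{w_1}(\phi_1)$ and $n_z(\phi)=n_{z_2}(\phi_2)$; in particular if $\mu(\phi)=1$, exactly one of $\mu(\phi_i)$ equals $1$ while the other is zero (a constant disk), and the signed count $\#\widehat{\cM}(\phi)$ becomes the corresponding product count. Plugging this into formula~\eqref{eq:boilsdown} realizes $\partial$ on the left hand side as $\partial_1\otimes\Id+\Id\otimes\partial_2$, i.e. the tensor-product differential.

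The main obstacle is the neck-stretching/gluing argument in the third step: one must verify that the limits actually split cleanly and that no ``broken'' contributions cross the neck in a way that would spoil the product formula. This is handled exactly as in \cite[Section 6]{os-threemanifold} for the un-knotted version, and since the two basepoints are sequestered on opposite sides of $C$, the same compactness applies simultaneously to the $n_w$- and $n_z$-counts. Once this is in place, the bifiltered chain homotopy equivalence is automatic, as the identification of generators respects both filtrations and the differential is the tensor product differential on the nose.
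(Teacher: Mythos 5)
The paper never actually proves Proposition~\ref{prop:connectedknots}: the proof is delegated to the Problem immediately following it, whose suggested route is to band-sum knot diagrams, control the bridge presentation, and run the construction of Section~\ref{sec:heegaardknots}; the real argument is in \cite{OzSz-knot}. So there is no in-text proof to compare against. Your route is the standard Ozsv\'ath--Szab\'o one (connected sum of the doubly pointed diagrams at $z_1$ and $w_2$, product decomposition of $\TT_\ga\cap\TT_\gb$, degeneration of the symmetric product), and your first two steps are correct, including the choice of which two basepoints to consume, which does yield $K_1\# K_2$ with compatible orientations and additive Maslov/Alexander gradings.

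The soft spot is the justification in your third step. That $R_w$ and $R_z$ sit on opposite sides of the neck only controls how $n_w$ and $n_z$ decompose; it neither forces limits of $J_T$-holomorphic disks to split as plain pairs nor prevents index-one disks from crossing the neck. The neck sits at $z_1\sim w_2$, which are \emph{not} basepoints of the new diagram, and $CFK^\infty$ counts disks regardless of their multiplicity there; so classes with $n_{z_1}(\phi_1)=n_{w_2}(\phi_2)=k>0$ are a priori in play, and their limits are \emph{matched} pairs (the degree-$k$ divisors $u_1^{-1}(R_{z_1})$ and $u_2^{-1}(R_{w_2})$ in the domain must coincide), not products. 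What actually saves the argument is a dimension count: additivity of the Maslov index together with the codimension-$2k$ matching condition gives the matched moduli space negative expected dimension for a total index-one class as soon as $k\ge 1$, so generically only $k=0$ classes contribute; and for those, it is genericity (not index additivity alone) that forces one factor to be constant and the other to have $\mu=1$ --- index additivity by itself allows $\mu(\phi_1)=3$, $\mu(\phi_2)=-2$. This localization is precisely the content of the K\"unneth argument for the $\pm/\infty$ flavors of the connected sum formula in \cite{os-threemanifoldapps}; the hat-version argument you cite, where the basepoint sits in the neck and forbids crossings outright, is exactly the easy case that does not cover what you need here. With the reference corrected and the matched-pair dimension count made explicit, your proof is complete and agrees with the intended one.
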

\begin{problem}
Take two knots $K_1$ and $K_2$. Draw a knot diagram for $K_1$ and $K_2$ and connect them by a band to obtain a knot diagram
for $K_1\#K_2$; try to control the bridge presentation.
Using Section~\ref{sec:heegaardknots} calculate $CFK^\infty(K_1\# K_2)$ and prove as much as you can of Proposition~\ref{prop:connectedknots}
(existence of maps, gradings, filtrations, etc).
\end{problem}
\begin{example}
	Let us revisit the example of a figure-eight knot. The underlying surface of the Heegaard diagram $(\Sigma, \bfga, \bfgb, w, z)$ 
(see Figure~\ref{fig:destabilized_eight}) is of genus $1$, thus its universal cover is $\CC$. Therefore, by combining this fact with the Riemann mapping theorem, we get that if there exists a topological disk $\phi \in \pi_2(\bfx, \bfy)$, then it is uniquely represented by a holomorphic disk. Using this fact it is straightforward to find all holomorphic disks as in a Figure~\ref{fig:eight_heegaard_disks}.
	
\begin{figure}
\begin{tikzpicture}[y=0.80pt, x=0.80pt, yscale=-1.000000, xscale=1.000000, inner sep=0pt, outer sep=0pt]

  \definecolor{fig54color1}{RGB}{150,0,0}
  \definecolor{fig54color2}{RGB}{0,0,255}
  \definecolor{color1}{RGB}{246,150,255}
  \definecolor{color3}{RGB}{255,194,150}
  \definecolor{color2}{RGB}{159,255,150}
  \definecolor{color4}{RGB}{150,211,255}
  
  \path[draw=fig54color1,line join=miter,miter limit=4.00,line width=0.800pt]
    (60.0000,91.3622)arc(90.000:150.000:29.500315 and
    30.000)arc(150.000:210.000:29.500315 and 30.000)arc(210.000:270.000:29.500315
    and 30.000);
  \path[draw=fig54color1,line join=miter,miter limit=4.00,line width=0.800pt]
    (340.0000,111.3622)arc(-90.000:0.000:9.500000 and
    9.750)arc(0.000:90.000:9.500000 and 9.750);
  \path[draw=black,line join=miter,miter limit=4.00,line width=0.800pt]
    (90.0000,61.3622) circle (0.1270cm);
  \path[draw=black,line join=miter,miter limit=4.00,line width=0.800pt]
    (90.0000,161.3621) circle (0.1270cm);
  \path[draw=fig54color1,line join=miter,line cap=butt,even odd rule,line
    width=0.800pt] (90.0000,51.3622) -- (340.0000,51.3622);
  \path[draw=fig54color1,line join=miter,line cap=butt,even odd rule,line
    width=0.800pt] (90.0000,71.3622) -- (340.0000,71.3622);
  \path[draw=fig54color1,line join=miter,miter limit=4.00,line width=0.800pt]
    (90.0010,71.3622)arc(90.000:150.000:9.499697 and
    10.000)arc(150.000:210.000:9.499697 and 10.000)arc(210.000:270.000:9.499697
    and 10.000);
  \path[draw=fig54color1,line join=miter,miter limit=4.00,line width=0.800pt]
    (340.0000,51.3622)arc(270.000:360.000:69.500000 and
    69.750)arc(-0.000:90.000:69.500000 and 69.750);
  \path[draw=fig54color1,line join=miter,miter limit=4.00,line width=0.800pt]
    (340.0000,71.3622)arc(270.000:360.000:49.500000 and
    49.750)arc(0.000:90.000:49.500000 and 49.750);
  \path[draw=fig54color1,line join=miter,line cap=butt,even odd rule,line
    width=0.800pt] (60.0000,170.8622) -- (340.0000,170.8622);
  \path[draw=fig54color1,line join=miter,line cap=butt,even odd rule,line
    width=0.800pt] (60.0000,190.8622) -- (340.0000,190.8622);
  \path[draw=fig54color1,line join=miter,miter limit=4.00,line width=0.800pt]
    (60.0008,170.8622)arc(90.000:150.000:9.500000 and
    10.000)arc(150.000:210.000:9.500000 and 10.000)arc(210.000:270.000:9.500000
    and 10.000);
  \path[draw=fig54color1,line join=miter,line cap=butt,even odd rule,line
    width=0.800pt] (60.0000,150.8622) -- (340.0000,150.8622);
  \path[draw=fig54color1,line join=miter,miter limit=4.00,line width=0.800pt]
    (60.0123,190.8622)arc(90.000:150.000:29.512169 and
    30.000)arc(150.000:210.000:29.512169 and 30.000)arc(210.000:270.000:29.512169
    and 30.000);
  \path[draw=fig54color1,line join=miter,line cap=butt,even odd rule,line
    width=0.800pt] (60.0000,130.8622) -- (340.0000,130.8622);
  \path[draw=fig54color1,line join=miter,line cap=butt,even odd rule,line
    width=0.800pt] (60.0000,111.3622) -- (340.0000,111.3622);
  \path[draw=fig54color1,line join=miter,line cap=butt,even odd rule,line
    width=0.800pt] (60.0000,91.3622) -- (340.0000,91.3622);
  \path[draw=fig54color1,line join=miter,miter limit=4.00,line width=0.800pt]
    (340.0000,91.3622)arc(270.000:360.000:29.500000 and
    29.750)arc(0.000:90.000:29.500000 and 29.750);
  \path[draw=fig54color1,line join=miter,line cap=butt,even odd rule,line
    width=0.800pt] (60.0000,11.3622) -- (315.5000,11.3622);
  \path[draw=fig54color1,line join=miter,line cap=butt,even odd rule,line
    width=0.800pt] (60.0000,31.3622) -- (315.6000,31.3622);
  \path[draw=fig54color1,line join=miter,miter limit=4.00,line width=0.800pt]
    (59.9966,111.3622)arc(90.000:150.000:49.496902 and
    50.000)arc(150.000:210.000:49.496902 and 50.000)arc(210.000:270.000:49.496902
    and 50.000);
  \path[draw=fig54color1,line join=miter,miter limit=4.00,line width=0.800pt]
    (315.5000,11.3532)arc(-90.000:0.000:9.500000 and
    10.000)arc(0.000:90.000:9.500000 and 10.000);
  \path[draw=blue,line join=miter,line cap=butt,even odd rule,line
    width=0.800pt] (90.0000,65.8622) -- (90.0000,156.3622);
  \path[fill=color2,line join=miter,line cap=butt,line width=0.800pt]
    (90.5613,140.9530) -- (90.5613,131.9200) -- (216.4031,131.7664) .. controls
    (339.3414,131.6163) and (342.2973,131.5868) .. (344.5110,130.4882) .. controls
    (347.7061,128.9026) and (350.1192,124.9353) .. (350.1192,121.2680) .. controls
    (350.1192,115.9745) and (347.0600,112.1339) .. (341.8951,110.9433) .. controls
    (340.2039,110.5535) and (296.9102,110.3468) .. (214.9449,110.3374) --
    (90.5613,110.3231) -- (90.5613,101.2824) -- (90.5613,92.2416) --
    (217.2780,92.2485) .. controls (355.2654,92.2560) and (346.5503,92.0466) ..
    (353.4160,95.5203) .. controls (355.1532,96.3992) and (358.0902,98.6200) ..
    (359.9427,100.4554) .. controls (370.6502,111.0637) and (371.6346,127.5468) ..
    (362.2650,139.3409) .. controls (360.0101,142.1793) and (354.9571,146.1405) ..
    (351.9064,147.4613) .. controls (345.8796,150.0706) and (351.1711,149.9717) ..
    (217.2780,149.9790) -- (90.5613,149.9860) -- (90.5613,140.9530) -- cycle;
  \path[fill=color2,line join=miter,line cap=butt,line width=0.800pt]
    (55.5647,110.2454) .. controls (35.7444,108.5716) and (19.2370,94.8463) ..
    (13.2170,75.0350) .. controls (11.7937,70.3510) and (11.6219,68.8744) ..
    (11.6219,61.3280) .. controls (11.6219,53.8204) and (11.7975,52.2935) ..
    (13.1857,47.7333) .. controls (14.0459,44.9078) and (15.4389,41.2332) ..
    (16.2814,39.5674) .. controls (23.1487,25.9892) and (35.6757,16.2601) ..
    (50.2768,13.1648) .. controls (53.4835,12.4851) and (75.6204,12.3514) ..
    (186.3627,12.3429) -- (318.6190,12.3327) -- (320.9482,14.4371) .. controls
    (325.1877,18.2672) and (325.4048,23.5665) .. (321.4941,27.7591) --
    (319.2894,30.1227) -- (186.9896,30.4143) -- (54.6898,30.7060) --
    (50.8110,32.0237) .. controls (29.3243,39.3237) and (22.7710,66.7596) ..
    (38.6497,82.9384) .. controls (42.2444,86.6011) and (45.5599,88.7152) ..
    (50.6534,90.5925) .. controls (53.8538,91.7720) and (55.3974,91.8826) ..
    (71.7506,92.1046) -- (89.3947,92.3441) -- (89.3947,101.3144) --
    (89.3947,110.2848) -- (74.6670,110.4498) .. controls (66.5667,110.5406) and
    (57.9707,110.4488) .. (55.5647,110.2454) -- cycle;
  \path[fill=color3,line join=miter,line cap=butt,line width=0.800pt]
    (90.5613,121.1177) -- (90.5613,112.0730) -- (216.1115,112.0739) .. controls
    (334.4768,112.0748) and (341.7779,112.1332) .. (343.6916,113.0947) .. controls
    (346.0617,114.2855) and (348.1863,117.2117) .. (348.7109,120.0079) .. controls
    (349.1710,122.4604) and (347.1014,126.9070) .. (344.6812,128.6661) --
    (343.0344,129.8629) -- (216.7978,130.0127) -- (90.5613,130.1625) --
    (90.5613,121.1177) -- cycle;
  \path[draw=color4,line join=miter,line cap=butt,even odd rule,line
    width=0.800pt] (95.0000,92.3622) -- (95.0000,149.8622);
  \path[draw=color4,line join=miter,line cap=butt,even odd rule,line
    width=0.800pt] (100.0000,92.3622) -- (100.0000,149.8622);
  \path[draw=color4,line join=miter,line cap=butt,even odd rule,line
    width=0.800pt] (105.5000,92.3622) -- (105.5000,149.8622);
  \path[draw=color4,line join=miter,line cap=butt,even odd rule,line
    width=0.800pt] (110.5000,92.3622) -- (110.5000,149.8622);
  \path[draw=color4,line join=miter,line cap=butt,even odd rule,line
    width=0.800pt] (115.5000,92.3622) -- (115.5000,149.8622);
  \path[draw=color4,line join=miter,line cap=butt,even odd rule,line
    width=0.800pt] (120.5000,92.3622) -- (120.5000,149.8622);
  \path[draw=color4,line join=miter,line cap=butt,even odd rule,line
    width=0.800pt] (126.0000,92.3622) -- (126.0000,149.8622);
  \path[draw=color4,line join=miter,line cap=butt,even odd rule,line
    width=0.800pt] (131.0000,92.3622) -- (131.0000,149.8622);
  \path[draw=color4,line join=miter,line cap=butt,even odd rule,line
    width=0.800pt] (135.5000,92.3622) -- (135.5000,149.8622);
  \path[draw=color4,line join=miter,line cap=butt,even odd rule,line
    width=0.800pt] (140.5000,92.3622) -- (140.5000,149.8622);
  \path[draw=color4,line join=miter,line cap=butt,even odd rule,line
    width=0.800pt] (146.0000,92.3622) -- (146.0000,149.8622);
  \path[draw=color4,line join=miter,line cap=butt,even odd rule,line
    width=0.800pt] (151.0000,92.3622) -- (151.0000,149.8622);
  \path[draw=color4,line join=miter,line cap=butt,even odd rule,line
    width=0.800pt] (156.0000,92.3622) -- (156.0000,149.8622);
  \path[draw=color4,line join=miter,line cap=butt,even odd rule,line
    width=0.800pt] (161.0000,92.3622) -- (161.0000,149.8622);
  \path[draw=color4,line join=miter,line cap=butt,even odd rule,line
    width=0.800pt] (166.5000,92.3622) -- (166.5000,149.8622);
  \path[draw=color4,line join=miter,line cap=butt,even odd rule,line
    width=0.800pt] (171.5000,92.3622) -- (171.5000,149.8622);
  \path[draw=color4,line join=miter,line cap=butt,even odd rule,line
    width=0.800pt] (175.5000,92.3622) -- (175.5000,149.8622);
  \path[draw=color4,line join=miter,line cap=butt,even odd rule,line
    width=0.800pt] (180.5000,92.3622) -- (180.5000,149.8622);
  \path[draw=color4,line join=miter,line cap=butt,even odd rule,line
    width=0.800pt] (186.0000,92.3622) -- (186.0000,149.8622);
  \path[draw=color4,line join=miter,line cap=butt,even odd rule,line
    width=0.800pt] (191.0000,92.3622) -- (191.0000,149.8622);
  \path[draw=color4,line join=miter,line cap=butt,even odd rule,line
    width=0.800pt] (196.0000,92.3622) -- (196.0000,149.8622);
  \path[draw=color4,line join=miter,line cap=butt,even odd rule,line
    width=0.800pt] (201.0000,92.3622) -- (201.0000,149.8622);
  \path[draw=color4,line join=miter,line cap=butt,even odd rule,line
    width=0.800pt] (206.5000,92.3622) -- (206.5000,149.8622);
  \path[draw=color4,line join=miter,line cap=butt,even odd rule,line
    width=0.800pt] (211.5000,92.3622) -- (211.5000,149.8622);
  \path[draw=color4,line join=miter,line cap=butt,even odd rule,line
    width=0.800pt] (216.0000,92.3622) -- (216.0000,149.8622);
  \path[draw=color4,line join=miter,line cap=butt,even odd rule,line
    width=0.800pt] (221.0000,92.3622) -- (221.0000,149.8622);
  \path[draw=color4,line join=miter,line cap=butt,even odd rule,line
    width=0.800pt] (226.5000,92.3622) -- (226.5000,149.8622);
  \path[draw=color4,line join=miter,line cap=butt,even odd rule,line
    width=0.800pt] (231.5000,92.3622) -- (231.5000,149.8622);
  \path[draw=color4,line join=miter,line cap=butt,even odd rule,line
    width=0.800pt] (236.5000,92.3622) -- (236.5000,149.8622);
  \path[draw=color4,line join=miter,line cap=butt,even odd rule,line
    width=0.800pt] (241.5000,92.3622) -- (241.5000,149.8622);
  \path[draw=color4,line join=miter,line cap=butt,even odd rule,line
    width=0.800pt] (247.0000,92.3622) -- (247.0000,149.8622);
  \path[draw=color4,line join=miter,line cap=butt,even odd rule,line
    width=0.800pt] (252.0000,92.3622) -- (252.0000,149.8622);
  \path[draw=color4,line join=miter,line cap=butt,even odd rule,line
    width=0.800pt] (257.0000,92.3622) -- (257.0000,149.8622);
  \path[draw=color4,line join=miter,line cap=butt,even odd rule,line
    width=0.800pt] (262.0000,92.3622) -- (262.0000,149.8622);
  \path[draw=color4,line join=miter,line cap=butt,even odd rule,line
    width=0.800pt] (267.5000,92.3622) -- (267.5000,149.8622);
  \path[draw=color4,line join=miter,line cap=butt,even odd rule,line
    width=0.800pt] (272.5000,92.3622) -- (272.5000,149.8622);
  \path[draw=color4,line join=miter,line cap=butt,even odd rule,line
    width=0.800pt] (277.5000,92.3622) -- (277.5000,149.8622);
  \path[draw=color4,line join=miter,line cap=butt,even odd rule,line
    width=0.800pt] (282.5000,92.3622) -- (282.5000,149.8622);
  \path[draw=color4,line join=miter,line cap=butt,even odd rule,line
    width=0.800pt] (288.0000,92.3622) -- (288.0000,149.8622);
  \path[draw=color4,line join=miter,line cap=butt,even odd rule,line
    width=0.800pt] (293.0000,92.3622) -- (293.0000,149.8622);
  \path[draw=color4,line join=miter,line cap=butt,even odd rule,line
    width=0.800pt] (297.5000,92.3622) -- (297.5000,149.8622);
  \path[draw=color4,line join=miter,line cap=butt,even odd rule,line
    width=0.800pt] (302.5000,92.3622) -- (302.5000,149.8622);
  \path[draw=color4,line join=miter,line cap=butt,even odd rule,line
    width=0.800pt] (308.0000,92.3622) -- (308.0000,149.8622);
  \path[draw=color4,line join=miter,line cap=butt,even odd rule,line
    width=0.800pt] (313.0000,92.3622) -- (313.0000,149.8622);
  \path[draw=color4,line join=miter,line cap=butt,even odd rule,line
    width=0.800pt] (318.0000,92.3622) -- (318.0000,149.8622);
  \path[draw=color4,line join=miter,line cap=butt,even odd rule,line
    width=0.800pt] (323.0000,92.3622) -- (323.0000,149.8622);
  \path[draw=color4,line join=miter,line cap=butt,even odd rule,line
    width=0.800pt] (328.5000,92.3622) -- (328.5000,149.8622);
  \path[draw=color4,line join=miter,line cap=butt,even odd rule,line
    width=0.800pt] (333.5000,92.3622) -- (333.5000,149.8622);
  \path[draw=color4,line join=miter,line cap=butt,even odd rule,line
    width=0.800pt] (339.0000,92.3622) -- (339.0000,149.8622);
  \path[draw=color4,line join=miter,line cap=butt,even odd rule,line
    width=0.800pt] (344.0000,92.3622) -- (344.0000,149.8622);
  \path[draw=color4,line join=miter,line cap=butt,even odd rule,line
    width=0.800pt] (349.0000,93.6622) -- (349.0000,148.3622);
  \path[draw=color4,line join=miter,line cap=butt,even odd rule,line
    width=0.800pt] (354.5000,96.3622) -- (354.5000,145.8622);
  \path[draw=color4,line join=miter,line cap=butt,even odd rule,line
    width=0.800pt] (359.5000,100.4262) -- (359.5000,141.8622);
  \path[draw=color4,line join=miter,line cap=butt,even odd rule,line
    width=0.800pt] (364.5000,106.7622) -- (364.5000,135.3662);
  \path[draw=color1,line join=miter,line cap=butt,even odd rule,line
    width=0.800pt] (52.9000,12.7622) -- (52.9000,31.0622);
  \path[draw=color1,line join=miter,line cap=butt,even odd rule,line
    width=0.800pt] (57.9000,12.0622) -- (57.9000,30.3622);
  \path[draw=color1,line join=miter,line cap=butt,even odd rule,line
    width=0.800pt] (63.4000,12.0622) -- (63.4000,30.3622);
  \path[draw=color1,line join=miter,line cap=butt,even odd rule,line
    width=0.800pt] (68.4000,12.0622) -- (68.4000,30.3622);
  \path[draw=color1,line join=miter,line cap=butt,even odd rule,line
    width=0.800pt] (73.4000,12.0622) -- (73.4000,30.3622);
  \path[draw=color1,line join=miter,line cap=butt,even odd rule,line
    width=0.800pt] (78.4000,12.0622) -- (78.4000,30.3622);
  \path[draw=color1,line join=miter,line cap=butt,even odd rule,line
    width=0.800pt] (83.9000,12.0622) -- (83.9000,30.3622);
  \path[draw=color1,line join=miter,line cap=butt,even odd rule,line
    width=0.800pt] (88.9000,12.0622) -- (88.9000,30.3622);
  \path[draw=color1,line join=miter,line cap=butt,even odd rule,line
    width=0.800pt] (93.4000,12.0622) -- (93.4000,30.3622);
  \path[draw=color1,line join=miter,line cap=butt,even odd rule,line
    width=0.800pt] (98.4000,12.0622) -- (98.4000,30.3622);
  \path[draw=color1,line join=miter,line cap=butt,even odd rule,line
    width=0.800pt] (103.9000,12.0622) -- (103.9000,30.3622);
  \path[draw=color1,line join=miter,line cap=butt,even odd rule,line
    width=0.800pt] (108.9000,12.0622) -- (108.9000,30.3622);
  \path[draw=color1,line join=miter,line cap=butt,even odd rule,line
    width=0.800pt] (113.9000,12.0622) -- (113.9000,30.3622);
  \path[draw=color1,line join=miter,line cap=butt,even odd rule,line
    width=0.800pt] (118.9000,12.0622) -- (118.9000,30.3622);
  \path[draw=color1,line join=miter,line cap=butt,even odd rule,line
    width=0.800pt] (124.4000,12.0622) -- (124.4000,30.3622);
  \path[draw=color1,line join=miter,line cap=butt,even odd rule,line
    width=0.800pt] (129.4000,12.0622) -- (129.4000,30.3622);
  \path[draw=color1,line join=miter,line cap=butt,even odd rule,line
    width=0.800pt] (133.4000,12.0622) -- (133.4000,30.3622);
  \path[draw=color1,line join=miter,line cap=butt,even odd rule,line
    width=0.800pt] (138.4000,12.0622) -- (138.4000,30.3622);
  \path[draw=color1,line join=miter,line cap=butt,even odd rule,line
    width=0.800pt] (143.9000,12.0622) -- (143.9000,30.3622);
  \path[draw=color1,line join=miter,line cap=butt,even odd rule,line
    width=0.800pt] (148.9000,12.0622) -- (148.9000,30.3622);
  \path[draw=color1,line join=miter,line cap=butt,even odd rule,line
    width=0.800pt] (153.9000,12.0622) -- (153.9000,30.3622);
  \path[draw=color1,line join=miter,line cap=butt,even odd rule,line
    width=0.800pt] (158.9000,12.0622) -- (158.9000,30.3622);
  \path[draw=color1,line join=miter,line cap=butt,even odd rule,line
    width=0.800pt] (164.4000,12.0622) -- (164.4000,30.3622);
  \path[draw=color1,line join=miter,line cap=butt,even odd rule,line
    width=0.800pt] (169.4000,12.0622) -- (169.4000,30.3622);
  \path[draw=color1,line join=miter,line cap=butt,even odd rule,line
    width=0.800pt] (173.9000,12.0622) -- (173.9000,30.3622);
  \path[draw=color1,line join=miter,line cap=butt,even odd rule,line
    width=0.800pt] (178.9000,12.0622) -- (178.9000,30.3622);
  \path[draw=color1,line join=miter,line cap=butt,even odd rule,line
    width=0.800pt] (184.4000,12.0622) -- (184.4000,30.3622);
  \path[draw=color1,line join=miter,line cap=butt,even odd rule,line
    width=0.800pt] (189.4000,12.0622) -- (189.4000,30.3622);
  \path[draw=color1,line join=miter,line cap=butt,even odd rule,line
    width=0.800pt] (194.4000,12.0622) -- (194.4000,30.3622);
  \path[draw=color1,line join=miter,line cap=butt,even odd rule,line
    width=0.800pt] (199.4000,12.0622) -- (199.4000,30.3622);
  \path[draw=color1,line join=miter,line cap=butt,even odd rule,line
    width=0.800pt] (204.9000,12.0622) -- (204.9000,30.3622);
  \path[draw=color1,line join=miter,line cap=butt,even odd rule,line
    width=0.800pt] (209.9000,12.0622) -- (209.9000,30.3622);
  \path[draw=color1,line join=miter,line cap=butt,even odd rule,line
    width=0.800pt] (214.9000,12.0622) -- (214.9000,30.3622);
  \path[draw=color1,line join=miter,line cap=butt,even odd rule,line
    width=0.800pt] (219.9000,12.0622) -- (219.9000,30.3622);
  \path[draw=color1,line join=miter,line cap=butt,even odd rule,line
    width=0.800pt] (225.4000,12.0622) -- (225.4000,30.3622);
  \path[draw=color1,line join=miter,line cap=butt,even odd rule,line
    width=0.800pt] (230.4000,12.0622) -- (230.4000,30.3622);
  \path[draw=color1,line join=miter,line cap=butt,even odd rule,line
    width=0.800pt] (235.4000,12.0622) -- (235.4000,30.3622);
  \path[draw=color1,line join=miter,line cap=butt,even odd rule,line
    width=0.800pt] (240.4000,12.0622) -- (240.4000,30.3622);
  \path[draw=color1,line join=miter,line cap=butt,even odd rule,line
    width=0.800pt] (245.9000,12.0622) -- (245.9000,30.3622);
  \path[draw=color1,line join=miter,line cap=butt,even odd rule,line
    width=0.800pt] (250.9000,12.0622) -- (250.9000,30.3622);
  \path[draw=color1,line join=miter,line cap=butt,even odd rule,line
    width=0.800pt] (255.4000,12.0622) -- (255.4000,30.3622);
  \path[draw=color1,line join=miter,line cap=butt,even odd rule,line
    width=0.800pt] (260.4000,12.0622) -- (260.4000,30.3622);
  \path[draw=color1,line join=miter,line cap=butt,even odd rule,line
    width=0.800pt] (265.9000,12.0622) -- (265.9000,30.3622);
  \path[draw=color1,line join=miter,line cap=butt,even odd rule,line
    width=0.800pt] (270.9000,12.0622) -- (270.9000,30.3622);
  \path[draw=color1,line join=miter,line cap=butt,even odd rule,line
    width=0.800pt] (275.9000,12.0622) -- (275.9000,30.3622);
  \path[draw=color1,line join=miter,line cap=butt,even odd rule,line
    width=0.800pt] (280.9000,12.0622) -- (280.9000,30.3622);
  \path[draw=color1,line join=miter,line cap=butt,even odd rule,line
    width=0.800pt] (286.4000,12.0622) -- (286.4000,30.3622);
  \path[draw=color1,line join=miter,line cap=butt,even odd rule,line
    width=0.800pt] (291.4000,12.0622) -- (291.4000,30.3622);
  \path[draw=color1,line join=miter,line cap=butt,even odd rule,line
    width=0.800pt] (296.9000,12.0622) -- (296.9000,30.3622);
  \path[draw=color1,line join=miter,line cap=butt,even odd rule,line
    width=0.800pt] (301.9000,12.0622) -- (301.9000,30.3622);
  \path[draw=color1,line join=miter,line cap=butt,even odd rule,line
    width=0.800pt] (306.9000,12.0622) -- (306.9000,30.3622);
  \path[draw=color1,line join=miter,line cap=butt,even odd rule,line
    width=0.800pt] (312.4000,12.0622) -- (312.4000,30.3622);
  \path[draw=color1,line join=miter,line cap=butt,even odd rule,line
    width=0.800pt] (317.3999,12.0622) -- (317.3999,30.3622);
  \path[draw=color1,line join=miter,line cap=butt,even odd rule,line
    width=0.800pt] (322.4000,16.6452) -- (322.4000,25.7487);
  \path[draw=color1,line join=miter,line cap=butt,even odd rule,line
    width=0.800pt] (47.9000,13.9382) -- (47.9000,32.2382);
  \path[draw=color1,line join=miter,line cap=butt,even odd rule,line
    width=0.800pt] (42.9000,15.5152) -- (42.9000,35.0152);
  \path[draw=color1,line join=miter,line cap=butt,even odd rule,line
    width=0.800pt] (37.9000,18.0312) -- (37.9000,39.5312);
  \path[draw=color1,line join=miter,line cap=butt,even odd rule,line
    width=0.800pt] (32.9000,20.5662) -- (32.9000,46.3662);
  \path[draw=color1,line join=miter,line cap=butt,even odd rule,line
    width=0.800pt] (27.9000,24.9932) -- (27.9000,97.1932);
  \path[draw=color1,line join=miter,line cap=butt,even odd rule,line
    width=0.800pt] (22.9000,30.2102) -- (22.9000,92.4102);
  \path[draw=color1,line join=miter,line cap=butt,even odd rule,line
    width=0.800pt] (17.9000,37.3422) -- (17.9000,85.4422);
  \path[draw=color1,line join=miter,line cap=butt,even odd rule,line
    width=0.800pt] (12.9000,50.7612) -- (12.9000,71.2612);
  \path[draw=color1,line join=miter,line cap=butt,even odd rule,line
    width=0.800pt] (52.9000,109.6622) -- (52.9000,91.3622);
  \path[draw=color1,line join=miter,line cap=butt,even odd rule,line
    width=0.800pt] (57.9000,110.3622) -- (57.9000,92.0622);
  \path[draw=color1,line join=miter,line cap=butt,even odd rule,line
    width=0.800pt] (63.4000,110.3622) -- (63.4000,92.0622);
  \path[draw=color1,line join=miter,line cap=butt,even odd rule,line
    width=0.800pt] (68.4000,110.3622) -- (68.4000,92.0622);
  \path[draw=color1,line join=miter,line cap=butt,even odd rule,line
    width=0.800pt] (73.4000,110.3622) -- (73.4000,92.0622);
  \path[draw=color1,line join=miter,line cap=butt,even odd rule,line
    width=0.800pt] (78.4000,110.3622) -- (78.4000,92.0622);
  \path[draw=color1,line join=miter,line cap=butt,even odd rule,line
    width=0.800pt] (83.9000,110.3622) -- (83.9000,92.0622);
  \path[draw=color1,line join=miter,line cap=butt,even odd rule,line
    width=0.800pt] (47.9000,108.4862) -- (47.9000,90.1862);
  \path[draw=color1,line join=miter,line cap=butt,even odd rule,line
    width=0.800pt] (42.9000,106.9092) -- (42.9000,87.4092);
  \path[draw=color1,line join=miter,line cap=butt,even odd rule,line
    width=0.800pt] (37.9000,104.3932) -- (37.9000,82.8932);
  \path[draw=color1,line join=miter,line cap=butt,even odd rule,line
    width=0.800pt] (32.9000,101.8582) -- (32.9000,76.0582);
   
  \path[fill=black,dash pattern=on 1.60pt off 1.60pt,line join=miter,miter
    limit=4.00,even odd rule,line width=1.600pt] (90.0000,71.3622) circle
    (0.0564cm);
  \path[fill=black,dash pattern=on 1.60pt off 1.60pt,line join=miter,miter
    limit=4.00,even odd rule,line width=1.600pt] (90.0000,91.3622) circle
    (0.0564cm);
  \path[fill=black,dash pattern=on 1.60pt off 1.60pt,line join=miter,miter
    limit=4.00,even odd rule,line width=1.600pt] (90.0000,111.3622) circle
    (0.0564cm);
  \path[fill=black,dash pattern=on 1.60pt off 1.60pt,line join=miter,miter
    limit=4.00,even odd rule,line width=1.600pt] (90.0000,130.8622) circle
    (0.0564cm);
  \path[fill=black,dash pattern=on 1.60pt off 1.60pt,line join=miter,miter
    limit=4.00,even odd rule,line width=1.600pt] (90.0000,150.8622) circle
    (0.0564cm);
  \path[fill=black,line join=miter,line cap=butt,line width=0.800pt]
    (304.8851,24.7312) node[above right] {$w$};
  \fill[color=black] (300,22) circle(2);
  \fill[color=black] (300,122) circle (2);
  \path[fill=black,line join=miter,line cap=butt,line width=0.800pt]
    (304.7799,124.8092) node[above right] {$z$};
  \path[fill=fig54color1,line join=miter,line cap=butt,line width=0.800pt]
    (110,69) node[above right] {\color{fig54color1} $\beta_1$};
  \path[fill=blue,line join=miter,line cap=butt,line width=0.800pt]
    (72,87) node[above right] {\color{fig54color2} $\alpha_1$};
  \path[fill=black,line join=miter,line cap=butt,line width=0.800pt]
    (96,83) node[above right] {$x_1$};
  \path[fill=black,line join=miter,line cap=butt,line width=0.800pt]
    (96,103) node[above right] {$x_2$};
  \path[fill=black,line join=miter,line cap=butt,line width=0.800pt]
    (96,123) node[above right] {$x_3$};
  \path[fill=black,line join=miter,line cap=butt,line width=0.800pt]
    (96,143) node[above right] {$x_4$};
  \path[fill=black,line join=miter,line cap=butt,line width=0.800pt]
    (96,163) node[above right] {$x_5$};
\end{tikzpicture}
  \caption{Holomorphic disks in a Heegaard diagram \ref{fig:destabilized_eight}. Two disks at infinity that connect pairs $(x_2, x_1)$ and $(x_4,x_1)$, and pass through points $z$ and $w$ respectively, are not shown.}
  \label{fig:eight_heegaard_disks}
\end{figure}
	
	From the same diagram we may find some of the relative Alexander gradings according to \eqref{eq:A}. This, together with Proposition~\ref{prop:alexander_conway_poly}, gives us a way to determine the absolute Alexander gradings $A(x_1), A(x_3), A(x_5) = 0, A(x_2) = 1, A(x_4) = -1$.
	We can also read off the differentials from Figure~\ref{fig:eight_heegaard_disks}, according to Problem \ref{problem:cfkinfty_differential}; the nontrivial ones are 
$\partial x_2 =x_1 + x_5$, $\partial x_4= U x_1 + Ux_5$, $\partial x_3=Ux_2+x_4$. In the chain complex $CFK^\infty$ this means that
$\partial[x_2, i, i + 1] = [x_1, i, i] + [x_5, i, i], \partial[x_4, i, i - 1] = [x_1, i - 1, i - 1] + [x_5, i - 1, i - 1], \partial[x_3, i, i] = [x_2, i - 1, i] + [x_4, i, i -1]$ for $i \in \ZZ$. For convenience let us change variables, setting $x_1' := x_1 + x_5$.
	
	The complex $CFK^\infty(S^3, 4_1)$, spanned by the elements $[x_1', i, i]$ and $[x_k, i, i + A(x_k)]$, where $k=2,\ldots,5$, is depicted in Figure~\ref{fig:CFKinfty4_1}.
	
\begin{figure}
\centering
\begin{tikzpicture}
\draw [step=1,dotted,color=black!40] (-2.5,-2.5) grid (2.5,2.5);
\draw[thin,->] (-2.75,0) -- (2.75,0);
\draw[thin, ->] (0,-2.75) -- (0,2.75);
\path (0 cm, 2.75 cm) node[above left] {$j$};
\path (2.75 cm, 0 cm) node[below right] {$i$};

\fill[black] (0 cm, 0 cm) circle (0.075);
\path (0 cm, 0 cm) node[below right] {$x_3$};
\fill[black] (-2 cm, -2 cm) circle (0.075);
\path (-2 cm, -2 cm) node[below right] {$Ux_3$};
\fill[black] (2 cm, 2 cm) circle (0.075);
\path (2 cm, 2 cm) node[below right] {$U^{-1}x_3$};

\fill[black] (-0.25 cm, -0.25 cm) circle (0.075);
\path (-0.25 cm, -0.25 cm) node[below left] {$x_5$};
\fill[black] (-2.25 cm, -2.25 cm) circle (0.075);
\path (-2.25 cm, -2.25 cm) node[below left] {$Ux_5$};
\fill[black] (1.75 cm, 1.75 cm) circle (0.075);
\path (1.75 cm, 1.75 cm) node[below left] {$U^{-1}x_5$};

\fill[black] (0.25 cm, 0.25 cm) circle (0.075);
\path (0.25 cm, 0.25 cm) node[above right] {$x_1'$};
\fill[black] (-1.75 cm, -1.75 cm) circle (0.075);
\path (-1.75 cm, -1.75 cm) node[above right] {$Ux_1'$};
\fill[black] (2.25 cm, 2.25 cm) circle (0.075);
\path (2.25 cm, 2.25 cm) node[above right] {$U^{-1}x_1'$};


\fill[black] (0.25 cm, 2 cm) circle (0.075);
\path (0.25 cm, 2 cm) node[above] {$x_2$};
\fill[black] (-1.75 cm, 0 cm) circle (0.075);
\path (-1.75 cm, 0 cm) node[above] {$Ux_2$};

\fill[black] (2 cm, 0.25 cm) circle (0.075);
\path (2 cm, 0.25 cm) node[right] {$U^{-1}x_4$};
\fill[black] (0 cm, -1.75 cm) circle (0.075);
\path (0 cm, -1.75 cm) node[right] {$x_4$};

\draw[very thick]  (0.25 cm, 2 cm) -- (0.25 cm , 0.25 cm);
\draw[very thick]  (2 cm, 0.25 cm) -- (0.25 cm , 0.25 cm);
\draw[very thick]  (2 cm, 2 cm) -- (2 cm , 0.25 cm);
\draw[very thick]  (2 cm, 2 cm) -- (0.25 cm , 2 cm);

\draw[very thick]  (0 cm , 0 cm) -- (0 cm, -1.75 cm);
\draw[very thick]  (0 cm , 0 cm) -- (-1.75 cm, -0 cm);
\draw[very thick]  (-1.75 cm , 0 cm) -- (-1.75 cm, -1.75 cm);
\draw[very thick]  (0 cm , -1.75 cm) -- (-1.75 cm, -1.75 cm);

\draw[very thick]  (-2 cm , -2 cm) -- (-2 cm, -2.5 cm);
\draw[very thick]  (-2 cm , -2 cm) -- (-2.5 cm, -2 cm);
\draw[very thick]  (2.25 cm , 2.25 cm) -- (2.25 cm, 2.50 cm);
\draw[very thick]  (2.25 cm , 2.25 cm) -- (2.50 cm, 2.25 cm);
\end{tikzpicture}
  \caption{A complex representing $CFK^\infty(S^3, 4_1)$. Note that the elements $U^{-1}x_1',U^{-1}x_3, U^{-1}x_5$ are in the same bifiltration level $(i,j) = (1,1)$, likewise their images under the endomorphism $U$.}
  \label{fig:CFKinfty4_1}
\end{figure}
\end{example}

\begin{example}\label{ex:sumoftrefoils}
Similarly, one can compute a complex $CFK^\infty(S^3, 3_1)$ and then use the K\"unneth formula (see Proposition~\ref{prop:connectedknots}) to obtain a full complex $CFK^\infty(S^3, 3_1 \# 3_1)$ of the connected sum of two copies of trefoils. Figure~\ref{fig:CFKtrefoils}, after tensoring with $\ZZ_2[U, U^{-1}]$, presents the result after a change of basis.
\begin{figure}
\centering
\begin{tikzpicture}
\draw [step=1,dotted,color=black!40] (-0.5,-0.5) grid (2.5,2.5);
\draw[thin,->] (-0.75,0) -- (2.75,0);
\draw[thin, ->] (0,-0.75) -- (0,2.75);
\path (0 cm, 2.75 cm) node[above left] {$j$};
\path (2.75 cm, 0 cm) node[below right] {$i$};

\fill[black] (2 cm, 2 cm) circle (0.075);
\fill[black] (1.25 cm, 2 cm) circle (0.075);
\fill[black] (2 cm, 1.25 cm) circle (0.075);
\fill[black] (1.25 cm, 1.25 cm) circle (0.075);

\draw[very thick]  (2 cm, 2 cm) -- (1.25 cm , 2 cm);
\draw[very thick]  (2 cm, 2 cm) -- (2 cm , 1.25 cm);
\draw[very thick]  (1.25 cm , 2 cm) -- (1.25 cm, 1.25 cm);
\draw[very thick]  (2 cm , 1.25 cm) -- (1.25 cm, 1.25 cm);

\fill[black] (0 cm, 2 cm) circle (0.075);
\fill[black] (1 cm, 2 cm) circle (0.075);
\fill[black] (1 cm, 1 cm) circle (0.075);
\fill[black] (2 cm, 1 cm) circle (0.075);
\fill[black] (2 cm, 0 cm) circle (0.075);

\draw[very thick]  (1 cm, 2 cm) -- (0 cm , 2 cm);
\draw[very thick]  (1 cm, 2 cm) -- (1 cm , 1 cm);
\draw[very thick]  (2 cm, 1 cm) -- (1 cm , 1 cm);
\draw[very thick]  (2 cm, 1 cm) -- (2 cm , 0 cm);
\end{tikzpicture}
  \caption{Tensoring this complex with $\ZZ_2[U, U^{-1}]$ results in the complex $CFK^\infty(S^3, 3_1 \# 3_1)$.}
  \label{fig:CFKtrefoils}
\end{figure}\end{example}
\begin{problem}
Calculate Example~\ref{ex:sumoftrefoils} by yourself.
\end{problem}
Even though the homology of $CFK^\infty(Y,K)$ is not very interesting, the bifiltered chain homotopy type of the complex contains a lot of
information about the knot. An important example of a piece of information contained in the chain complex $CFK^\infty(Y,K)$ that is lost when passing
to homology is given below.

\subsection{The $V_m$ invariants}\label{sec:Vm}

Let $K\subset S^3$ be a knot. For any $m\in \ZZ$ let $CFK^\infty(i<0,j<m)$ be the subcomplex of $CFK^\infty$ generated by elements at bifiltration
level $(i,j)$, where $i<0$ and $j<m$. Let $A_m^+$ be the quotient complex $CFK^\infty/CFK^\infty(i<0,j<m)$.
\begin{remark}
Sometimes one writes that $A_m^+$ is a complex generated by elements at filtration level $(i,j)$, where $i\ge 0$ or $j\ge m$, and if a differential
of an element leads out of $A_m^+$ we set it to be zero. This might be
sometimes convenient but is not very rigorous, because it suggests that $A_m^+$ is a subcomplex of $CFK^\infty$, while it is not. If an element
$x\in CFK^\infty$ is at filtration level $i\ge 0$ or $j\ge m$, and $\partial x=y$ with $y\in CFK^\infty(i<0,j<m)$, then $\partial x=0$ in $A_m^+$
by defintion.
\end{remark}
\begin{definition}
The $V_m$ invariant of a knot $K$ is minus one half of 
the minimal grading of a cycle $x\in A_m^+$, which is non-trivial in homology and such that for any $k\ge 0$ there
exists $y_k\in A_m^+$ such that $U^ky_k=x$.
\end{definition}
\begin{remark}
The notation $V_m$ for these invariants is taken from \cite{NiWu}. In the original source, that is, Rasmussen's thesis \cite{Ras03}, a related invariant $h_k$ was studied.
\end{remark}
\begin{problem}
Find a relation between $V_m$ and the invariant $h_k$ defined in Section 7.2 of the Rasmussen's thesis.
\end{problem}
\begin{problem}
Notice that $V_m\le V_{m-1}$. Prove that $V_{m-1}\le V_m+1$.
\end{problem}
\begin{problem}
Calculate $V_m$ for the sum of two trefoils and for the figure-eight knot. Observe that the `squares' in both chain complexes
do not contribute to $V_m$.
\end{problem}
\begin{proposition}\label{prop:vmisinvariant}
The number $V_m$ is a concordance invariant.
\end{proposition}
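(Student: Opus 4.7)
The plan is to deduce concordance invariance of $V_m$ from the behavior of $d$--invariants under rational homology cobordism (Theorem~\ref{thm:dinvariants}), via the large surgery interpretation of $V_m$. Concretely, for a knot $K\subset S^3$ and an integer $n$ sufficiently large compared to $m$, the $d$--invariants of the $n$--surgery $S^3_n(K)$ in appropriately labeled \spinc{} structures $\sss_m$ are related to $V_m(K)$ by a formula of the shape
\[
d(S^3_n(K),\sss_m)=d(S^3_n(U),\sss_m)-2V_m(K),
\]
where $U$ is the unknot; this identification follows from the large surgery formula of Ozsv\'ath--Szab\'o (see \cite{OzSz-integer} and also Rasmussen's thesis \cite{Ras03}). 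In particular, once this formula is in hand, it suffices to show that for any two concordant knots $K_0,K_1$ and for all large $n$, the corresponding $d$--invariants of $S^3_n(K_0)$ and $S^3_n(K_1)$ agree in matching \spinc{} structures.

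To establish the latter, suppose $A\subset S^3\times [0,1]$ is a smoothly embedded concordance annulus with boundary $K_0\sqcup(-K_1)$. Performing $n$--framed surgery on the interior of $A$ (with the Seifert framing pushed off from the two boundary knots) yields a smooth four--dimensional cobordism $W$ from $S^3_n(K_0)$ to $S^3_n(K_1)$. The key topological observation, which I would verify via a Mayer--Vietoris computation, is that $W$ is a rational homology cobordism: $H_*(W;\QQ)\cong H_*(S^1;\QQ)\oplus H_*(\text{pt};\QQ)$ shifted appropriately, so that $b_2^+(W)=b_2^-(W)=0$. Moreover each \spinc{} structure $\sss_m$ on $S^3_n(K_0)$ extends across $W$ to a \spinc{} structure $\sst$ restricting to the analogous $\sss_m$ on $S^3_n(K_1)$, essentially because the relative second cohomology of $W$ is torsion and the first Chern class computation on both ends matches.

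Applying Theorem~\ref{thm:dinvariants} to $W$, I get
\[
d(S^3_n(K_1),\sss_m)-d(S^3_n(K_0),\sss_m)\ge\tfrac14\bigl(c_1(\sst)^2-2\chi(W)-3\sigma(W)\bigr).
\]
Because $W$ is a rational homology cobordism we have $\chi(W)=0$, $\sigma(W)=0$, and $c_1(\sst)^2=0$ (its square pairs only through $H^2(W;\QQ)$, which is spanned by torsion classes plus the $S^1$ direction that is null-square), so the right--hand side vanishes. Reversing $W$ gives the reverse inequality, so $d(S^3_n(K_0),\sss_m)=d(S^3_n(K_1),\sss_m)$.

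Combining the two ingredients yields $V_m(K_0)=V_m(K_1)$. The main obstacle in this plan is the technical input of the large surgery formula identifying $V_m$ with a $d$--invariant; this is a nontrivial theorem that is typically invoked as a black box, and verifying the \spinc{} bookkeeping (which $\sss_m$ on $S^3_n(K_0)$ corresponds to which on $S^3_n(K_1)$ across the surgered concordance) requires some care. Once that is granted, the cobordism/$d$--invariant argument is clean and is essentially a direct application of Theorem~\ref{thm:dinvariants} together with Proposition~\ref{prop:additive}.
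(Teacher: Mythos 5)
Your proposal is correct and follows essentially the same route as the paper: the paper also deduces concordance invariance from the Large Surgery Theorem (Theorem~\ref{thm:lst}) combined with the homology cobordism between $S^3_q(K)$ and $S^3_q(K')$ built from the concordance annulus (Lemma~\ref{lem:concordance}) and the two-sided application of Theorem~\ref{thm:dinvariants}. Your formula $d(S^3_n(K),\sss_m)=d(S^3_n(U),\sss_m)-2V_m(K)$ is just the paper's $d(S^3_q(K),\sss_m)=\frac{(q-2m)^2-q}{4q}-2V_m(K)$ with the lens-space correction term written as $d(S^3_n(U),\sss_m)$, and the surgered-annulus cobordism is the same manifold the paper constructs by attaching a two-handle and deleting a neighborhood of the annulus union the core.
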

A proof using the large surgery formula is given as Problem~\ref{probl:concordance}. The original proof of \cite{Ras03} uses a different approach.

\subsection{Large integer surgeries}
There is a general way for calculating $HF^+$ (and so the $d$--invariants) of surgeries on a knot in $S^3$, 
see for instance \cite{OzSz-integer}, once the
chain complex $CFK^\infty(K)$ is known. Notice that knowing only $HFK^-$ or $\widehat{HFK}$ is usually not enough; recall Ranicki's motto.   
The general formula simplifies a lot, when the surgery coefficient is a large positive integer. Before we begin,
we need to show a useful way of enumerating \spinc{} structures on surgeries on a knot in $S^3$. The following result can be found in 
\cite[Lemma 7.10]{OzSz-absolute}.

\begin{proposition}\label{prop:unique}
Let $q>0$ be an integer and consider a knot $K\subset S^3$. Let $Y=S^3_q(K)$ and let $W$ be a four--dimensional handlebody obtained by gluing
a two--handle to the ball $B^4$ along a product neighborhood of $K$ with framing $q$, so that $\partial W=Y$. Let $F\subset W$
be a closed surface obtained by capping a Seifert surface for $K$ by the core of the two--handle.

For any integer $m\in[-q/2,q/2)$ there exists a unique \spinc{} structure $\sss_m$ on $Y$ characterized by the fact that it extends
to a \spinc{} structure $\sst_m$ on $W$ with the property that $\langle c_1(\sst_m),F\rangle+q=2m$.
\end{proposition}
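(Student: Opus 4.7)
The plan is to derive the result directly from Corollary~\ref{cor:spinc4} applied to $W$ and then to restrict to $Y$.

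First I would observe that the attaching circle $K \subset S^3 = \partial B^4$ of the $2$-handle is null-homotopic in $B^4$, so $W$ is simply-connected (in fact $W \simeq S^2$). Moreover $H_2(W;\ZZ) \cong \ZZ$ is generated by the capped-off Seifert surface $[F]$, and since the $2$-handle is attached with framing $q$, the self-intersection satisfies $[F]\cdot[F] = q$. The intersection form on $H_2(W;\ZZ)$ is thus $(x) \mapsto q x^2$.

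By Corollary~\ref{cor:spinc4}, the map $\sst \mapsto \langle c_1(\sst), F\rangle$ identifies $\Spin^c(W)$ with the set of characteristic integers, i.e.\ those $k \in H^2(W;\ZZ) \cong \ZZ$ satisfying $q x^2 \equiv k x \pmod 2$ for all $x \in \ZZ$; equivalently, $k \equiv q \pmod 2$. Since $2m - q$ satisfies this congruence for every $m \in \ZZ$, there exists a unique $\sst_m \in \Spin^c(W)$ with $\langle c_1(\sst_m), F\rangle + q = 2m$. Setting $\sss_m := \sst_m|_Y$ yields existence. For uniqueness, if $\sss$ is any Spin$^c$ structure on $Y$ that extends to some $\sst \in \Spin^c(W)$ with $\langle c_1(\sst), F\rangle + q = 2m$, then by uniqueness of the characteristic element on $W$ we must have $\sst = \sst_m$, whence $\sss = \sst|_Y = \sss_m$.

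I expect no serious obstacle in the argument, which is essentially bookkeeping once one has Corollary~\ref{cor:spinc4}; the only point that deserves careful verification is the identification $[F]\cdot[F] = q$, so that the characteristic congruence on $k$ becomes $k \equiv q \pmod 2$ and $2m-q$ is automatically admissible. As a supplementary observation, not needed for the statement itself but justifying the choice of range $[-q/2, q/2)$, one may note that the long exact sequence of the pair $(W,Y)$ together with $H^1(Y;\ZZ)=0$ (as $Y$ is a rational homology sphere) and $H^3(W,Y;\ZZ) \cong H_1(W;\ZZ) = 0$ collapses to
\[ 0 \to H^2(W,Y;\ZZ) \to H^2(W;\ZZ) \to H^2(Y;\ZZ) \to 0, \]
which under Poincaré–Lefschetz duality is $0 \to \ZZ \xrightarrow{\cdot q} \ZZ \to \ZZ/q \to 0$; since $c_1$ changes by $2a$ under the $H^2(W;\ZZ)$-action $\sst \mapsto \sst + a$, the map $m \mapsto \sss_m$ descends to a bijection between $[-q/2,q/2)\cap\ZZ$ and $\Spin^c(Y)$.
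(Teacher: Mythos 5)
Your argument is correct, and in fact the paper offers no proof of this proposition at all --- it is quoted from \cite[Lemma 7.10]{OzSz-absolute} --- so your write-up supplies the standard argument that the citation hides: identify $H^2(W;\ZZ)\cong\ZZ$ via evaluation on $[F]$ with $[F]\cdot[F]=q$, observe that the characteristic condition is $k\equiv q\pmod 2$, and note that $2m-q$ always satisfies it, with uniqueness of $\sst_m$ forcing uniqueness of $\sss_m$. The only point to tidy up is that you invoke Corollary~\ref{cor:spinc4}, which the paper states for \emph{closed} simply-connected four--manifolds, whereas $W$ has boundary; what you actually need is the content of Proposition~\ref{prop:charel} (that $c_1$ is a bijection onto integral lifts of $w_2$ when $H^2$ has no $2$-torsion) together with Lemma~\ref{lem:char}, both of which do hold for a compact simply-connected $W$ with boundary since $H^2(W;\ZZ)\cong\Hom(H_2(W;\ZZ),\ZZ)$ is torsion-free here. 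Your supplementary remark on the exact sequence $0\to H^2(W,Y;\ZZ)\to H^2(W;\ZZ)\to H^2(Y;\ZZ)\to 0$ correctly explains why the range $[-q/2,q/2)$ enumerates $\Spin^c(Y)$ bijectively, which is exactly the content of the Problem the authors pose immediately after the proposition.
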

\begin{problem}\
\begin{itemize}
\item Prove that the definition of $\sss_m$ does not depend on the choice of the Seifert surface used to construct $F$.
\item Explain the action of $H^2(Y;\ZZ)$ on the set of the \spinc{} structures under the identification in Proposition~\ref{prop:unique}.
\end{itemize}
\end{problem}

Now we are ready to state the Large Surgery Theorem.
\begin{theorem}[see \expandafter{\cite[Theorem 4.4]{OzSz-knot}}]\label{thm:lst}
Suppose that $K\subset S^3$ and $q\ge 2g_3(K)-1$. Then for any \spinc{} structure $\sss_m$ (with $m\in[-q/2,q/2)\cap\ZZ$), we have
an isomorphism between $A_m^+$ and $HF^+(S^3_q(K),\sss_m)$. The isomorphism changes the Maslov grading by $\frac{(q-2m)^2-q}{4q}$. In particular, we have $d(S^3_q(K),\sss_m)=\frac{(q-2m)^2-q}{4q}-2V_m(K)$.
\end{theorem}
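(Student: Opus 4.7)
The plan is to build an explicit Heegaard diagram for $S^3_q(K)$ from a doubly pointed Heegaard diagram $(\Sigma, \bfga, \bfgb, w, z)$ for $(S^3, K)$ by leaving $\bfga$ and $\beta_1, \dots, \beta_{g-1}$ untouched, while replacing the meridional curve $\mu = \beta_g$ of $K$ by a curve $\gamma_q$ representing $q\mu + \lambda$ for a longitude $\lambda$ drawn on $\Sigma$. For large $q$, $\gamma_q$ winds $q$ times in a thin strip around $\mu$, and each winding strand acquires a full set of transverse intersections with the nearby segments of $\bfga$. From this local picture I read off a bijection between $\TT_\ga \cap \TT_{\gamma_q}$ and pairs $(\bfx, k)$ with $\bfx \in \TT_\ga \cap \TT_\gb$ and $k \in \{0, 1, \dots, q-1\}$, where $k$ labels the winding copy.

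Next, I would apply Proposition \ref{prop:unique} to identify $\Spin^c$ structures. A local computation on the winding strip (using the Euler--structure description and tracking how the gradient trajectory through the intersection $(\bfx, k)$ threads through the $k$-th winding) shows that $(\bfx, k)$ represents $\sss_m$ for $m \equiv A(\bfx) + k \pmod{q}$, up to the chosen normalization. Consequently, for each $m \in [-q/2, q/2)$ the generators of $CF^+(S^3_q(K), \sss_m)$ match canonically with the generators of $A_m^+$: the pair $(\bfx, k)$ corresponds to $U^{-i}\bfx$ at bifiltration $(i, i + A(\bfx))$ lying in the region $\{i \ge 0\} \cup \{j \ge m\}$ that defines $A_m^+$.

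The central step is matching the differentials. A holomorphic disk $\phi \in \pi_2((\bfx, k), (\bfy, k'))$ decomposes as an ambient disk in $\Sym^g(\Sigma)$ of multiplicities $n_w(\phi), n_z(\phi)$ together with a winding--region contribution that shifts the Spin$^c$ index. Tracking multiplicities through the winding strip, the relative Alexander grading gives $A(\bfx) - A(\bfy) = n_z(\phi) - n_w(\phi)$ as in \eqref{eq:A}, while preservation of the Spin$^c$ structure forces $k - k' \equiv n_z(\phi) - n_w(\phi) \pmod{q}$. The hypothesis $q \ge 2g_3(K) - 1$, combined with the genus bound from Theorem \ref{thm:giggini} that forces $|A(\bfx)| \le g_3(K)$ for every generator of $CFK^\infty$, yields $|n_z - n_w| < q$ on any contributing disk; this forces $k = k'$, and with that constraint imposed, \eqref{diff:minus2} translates term by term into the induced differential on $A_m^+$. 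This gives the claimed isomorphism of relatively graded $\ZZ_2[U]$--chain complexes.

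To pin down the absolute Maslov shift, I would appeal to the Spin$^c$ cobordism $(W, \sst_m)$ from $S^3$ to $S^3_q(K)$ obtained by attaching a two-handle along $K$ with framing $q$; here $H_2(W) = \ZZ\langle F \rangle$, $[F]^2 = q$, and $\langle c_1(\sst_m), F \rangle = 2m - q$ by Proposition \ref{prop:unique}. The induced cobordism map raises the Maslov grading by $(c_1(\sst_m)^2 - 2\chi(W) - 3\sigma(W))/4$, and comparing with the baseline case $K = \mathrm{unknot}$, where $S^3_q(K) = L(q,1)$ and $V_m \equiv 0$, fixes the overall constant so that the shift comes out to $\frac{(q-2m)^2 - q}{4q}$. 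The $d$--invariant formula then follows at once: the bottom of the surviving $U$--tower in $A_m^+$ sits at Maslov grading $-2V_m(K)$ by the very definition of $V_m$, and adding the shift gives $d(S^3_q(K), \sss_m) = \frac{(q-2m)^2 - q}{4q} - 2V_m(K)$. The most delicate step is the suppression of disks with $k \neq k'$; the rest of the argument is a careful bookkeeping of multiplicities, gradings, and the winding construction.
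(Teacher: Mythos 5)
The paper does not actually prove this statement --- it defers to \cite[Theorem 4.4]{OzSz-knot} --- and your sketch follows the same winding--region strategy as that source: replace the meridian $\beta_g$ by a curve in the class $q\mu+\lambda$, identify the generators supported in the winding region with pairs $(\bfx,k)$, sort them into \spinc{} structures, match differentials, and fix the absolute grading via the two--handle cobordism normalized on the unknot (where $S^3_q(U)=L(q,1)$ and $V_m\equiv 0$). The architecture is right, but the central step contains a genuine error. The conclusion ``this forces $k=k'$'' cannot be correct: within a fixed $\sss_m$, each $\bfx\in\TT_\ga\cap\TT_\gb$ contributes a generator $x_{k(\bfx,m)}$ with $k(\bfx,m)$ determined by $m$ and $A(\bfx)$, so a differential from $(\bfx,k)$ to $(\bfy,k')$ generically has $k\neq k'$ whenever $A(\bfx)\neq A(\bfy)$. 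What the hypothesis $q\ge 2g_3(K)-1$ actually buys is that the congruence $k-k'\equiv n_z(\phi)-n_w(\phi)\pmod q$ is promoted to an equality --- no domain can wrap all the way around the winding region --- so that a domain on the surgered diagram determines, and is determined by, a domain on the original diagram with prescribed $(n_w,n_z)$. If you literally impose $k=k'$ you retain only the terms with $n_w(\phi)=n_z(\phi)$ and you identify $CF^+(S^3_q(K),\sss_m)$ with an associated graded object of $A_m^+$ rather than with $A_m^+$ itself, which would compute the wrong homology.

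Beyond this, matching domains is not the same as matching differentials: you must show that the counts $\#\widehat{\cM}(\phi)$ of holomorphic representatives agree for corresponding classes, which in \cite{OzSz-knot} requires an explicit analysis of the winding region (the extra pieces of domain there are small embedded polygons with unique holomorphic representatives, after suitably stretching the diagram); ``tracking multiplicities'' only handles the combinatorics. Two smaller points. The bound $|A(\bfx)|\le g_3(K)$ for \emph{generators} does not follow from Theorem~\ref{thm:giggini}, which is a statement about homology; you need a chain--level adjunction argument or a reduced model of the filtered complex supported in Alexander gradings $[-g,g]$. And you should account for the intersection points of $\gamma_q$ with the $\alpha$--curves \emph{outside} the winding region, coming from the longitude $\lambda$: these must be shown not to occur in the \spinc{} structures under consideration, or be eliminated by further winding. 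Your normalization of the absolute shift against the unknot is a legitimate way to pin down the constant, granted the identification of complexes.
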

As a corollary we give a proof of the concordance invariance of $V_m$. Suppose $K$ is concordant to $K'$. Let $m\in\ZZ$ and choose
a sufficiently large integer $q$, in particular we require that 
$q\ge \mathrm{max}\{2g_3(K)-1,2g_3(K')-1,2|m|+1\}$. The $d$--invariants of $q$--surgery on $K$ and $K'$ are given by Theorem~\ref{thm:lst}, therefore 
the invariance of $V_m$ under a concordance follows from the following fact.
\begin{lemma}\label{lem:concordance}
Suppose $K$ is concordant to $K'$ and $q>0$. Then there exists a four--manifold $W$ whose boundary is $S^3_q(K')\sqcup -S^3_q(K)$
and such that the inclusions $S^3_q(K)\hookrightarrow W$, $S^3_q(K')\hookrightarrow W$ induce isomorphisms on $\ZZ$ homology. Moreover,
for any integer $m\in[-q/2,q/2)$ there exists a \spinc{} structure $\sst_m$ on $W$ extending the \spinc{} structures $\sss_m$ on both
sides of the boundary.
\end{lemma}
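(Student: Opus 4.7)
The natural construction is to perform $q$-surgery along the concordance. Let $C \subset S^3 \times [0,1]$ be a smoothly embedded annulus with $\partial C = (-K) \sqcup K'$, where $K \subset S^3 \times \{0\}$ and $K' \subset S^3 \times \{1\}$. The normal bundle of $C$ is trivial ($C$ is orientable and homotopy equivalent to $S^1$), and one checks that there is a framing of $C$ restricting to the Seifert framing on both $K$ and $K'$; this follows from the fact that the Seifert surfaces $F_K, F_{K'}$ become homologous in $S^3\times[0,1]\setminus\nu(C)$, pinning down the framing coherently. Using this framing to identify $\nu(C)\cong D^2\times S^1\times[0,1]$, set
\[W \;=\; \bigl((S^3\times[0,1])\setminus\operatorname{int}\nu(C)\bigr)\;\cup_{T^2\times[0,1]}\;\bigl(S^1\times D^2\times[0,1]\bigr),\]
where the gluing identifies the meridian of the solid torus with $l+qm$ on each slice of $T^2\times[0,1]$ (here $l,m$ are the longitude and meridian of $C$). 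Slicing at $t=0$ and $t=1$ performs the $q$-surgery, so $\partial W = -S^3_q(K) \sqcup S^3_q(K')$.

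For the homological claim, I would run Mayer--Vietoris on $W=A\cup B$ with $A=(S^3\times[0,1])\setminus\operatorname{int}\nu(C)$, $B=S^1\times D^2\times[0,1]$, and $A\cap B\simeq T^2$. The concordance complement $A$ has the integral homology of $S^1$ (generated by the meridian of $K$), and $B\simeq S^1$. In bases $\{m,l\}$ of $H_1(T^2)$ and the meridian generators of $H_1(A)\oplus H_1(B)$, the Mayer--Vietoris map has matrix $\begin{pmatrix}1 & 0\\ -1 & q\end{pmatrix}$: $m$ is the generator in $A$ and is identified with the longitudinal generator of $B$, while $l$ vanishes in $A$ (it bounds a Seifert surface) and equals $-qm$ in $B$ (since $l+qm$ bounds the meridian disk of the glued solid torus). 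This matrix has determinant $q$, giving $H_2(W)=0$ and $H_1(W)=\ZZ/q$; combined with $H_0(W)=\ZZ$ and $H_3(W)=\ZZ$, this matches $H_*(S^3_q(K);\ZZ)$, and checking on generators shows both boundary inclusions induce isomorphisms.

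For the $\Spin^c$ extension, I would cap $W$ by the two-handlebodies $V_1, -V_2$ of Proposition~\ref{prop:unique} to form a closed four-manifold $X$, and construct a closed oriented surface $F\subset X$ by gluing the core disks of the 2-handles, the Seifert surfaces $F_K, F_{K'}$, and the concordance $C$ (assembling because the Seifert framings extend over $C$). On each cap $V_i$ the Spin$^c$ structure $\sst_m$ extending $\sss_m$ is uniquely characterized by $\langle c_1(\sst_m),F_K\rangle + q = 2m$ (and the analogue for $F_{K'}$); since the evaluations on the two pieces of $F$ agree, a Spin$^c$ structure on $X$ with these boundary values exists, and restricting to $W$ produces $\sst_m$.

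The main obstacle is the framing/Spin$^c$ compatibility: one must be careful to check both that the Seifert framings on $K$ and $K'$ are induced by a common framing of $C$, and that the parameter $m$ translates consistently across the concordance when evaluated on $F$. Both rest on the same geometric input, namely that Seifert surfaces for $K$ and $K'$ become homologous in the concordance complement, but getting orientations, signs, and the action of $H_1(\partial W)$ on Spin$^c$ structures aligned requires some care. The rest is a routine Mayer--Vietoris/obstruction-theory computation.
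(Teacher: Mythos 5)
Your construction---excising a tubular neighborhood of the concordance annulus from $S^3\times[0,1]$ and regluing a $q$-framed $S^1\times D^2\times[0,1]$---produces the same manifold that the paper builds by attaching a $q$-framed two-handle to $S^3\times[0,1]$ along $K'$ and then deleting a product neighborhood of the concordance capped off by the core of that handle, so the proposal is correct and takes essentially the same approach. The paper leaves the homological and \spinc{} verifications as an exercise, and your Mayer--Vietoris computation and the evaluation-on-a-capped-surface argument are the intended ones (compare \cite{BL,BoGo}).
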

\begin{problem}
Consider the following construction. Let $A\subset S^3\times[0,1]$ be a concordance between $K$ and $K'$. Glue a two--handle to $S^3\times[0,1]$
along a product neighborhood of $K'\subset S^3\times\{1\}$ with framing $q$. Denote by $W'$ the resulting four--manifold. Let $P\subset W'$ 
be the union of $A$ and the core of the two handle and let $N$ be a product neighborhood of $P$ in $W'$. Show that $W=\overline{W'\setminus N}$
has all the properties stated in Lemma~\ref{lem:concordance}. See \cite{BoGo} for a generalization of this construction.
\end{problem}
\begin{problem}\label{probl:concordance}
Conclude the proof of concordance invariance of $V_m$.
\end{problem}

\begin{problem}
Let $x_1,\ldots,x_n$ be all the chains of $CFK^\infty(K)$, which are cycles and which are at grading $0$. Prove that
\[V_m(K)=\min_{k=1,\ldots,n}\max(i(x_k),j(x_k)-m),\]
where $i(x),j(x)$ denote the $i$--th and the $j$--th bifiltration levels as described in Section~\ref{sec:cfkinf} above.
\end{problem}
\begin{problem}
Show by means of an example, that $V_m$ are in general not additive, that is, $V_m(K\# K')$ is not always equal to $V_m(K)+V_m(K')$.
\end{problem}
\begin{problem}
Show that for all $k,m\in\ZZ$ $V_m(K\#K')\le V_k(K)+V_{m-k}(K')$.
\end{problem}

\subsection{L--space knots}\label{sec:Lspace}
We will now introduce a class of knots for which the chain complex $CFK^\infty$ is especially easy to describe.

\begin{definition}
A knot $K\subset S^3$ is called an \emph{L--space} knot (sometimes called a positive L--space knot), if there exists a coefficient $q>0$
such that $S^3_q(K)$ is an L--space.
\end{definition}
The notion of an L--space knot was introduced in \cite{OzSz-lspace} in the context of the Berge conjecture, which predicts the list of all possible knots in $S^3$ such that a surgery on these knots with some coefficient gives a lens space. The notion of an L--space knot turns out to be very useful also for studying singularities of plane curves.
\begin{example}
By the result of Moser \cite[Proposition 3.2]{Mos}, if $|pqr - s| = 1$, then the $s/r$--surgery on a positive torus knot $T(p,q)$ is the lens space $L(|s|, rq^2)$. Therefore, every positive torus knot is an L--space knot.
\end{example}
We have the following properties of L--space knots.
\begin{lemma}\label{lem:propertiesofLspace}\
\begin{itemize}
\setlength{\itemsep}{1ex}
\item[(a)] L--space knots are prime. A connected sum of two non-trivial knots is never an L--space knot (see \cite{Krc}).
\item[(b)] If $K$ is an L--space knot, then $S^3_q(K)$ is an L--space if and only if $q\ge 2g_3(K)-1$ (see \cite[Proposition 9.6]{OzSz11} and \cite{Hom0}).
\item[(c)] L--space knots are quasipositive (see \cite{hedden1}).
\item[(d)] L--space knots are fibered.
\item[(e)] For an L--space knot $K$ we have $g_3(K)=g_4(K)$ (see \cite{OzSz-lspace} and \cite{hedden1}).
\item[(f)] For any $i\in\ZZ$ we have $\rk\widehat{HFK}(K,i)\le 1$ (see \cite{OzSz-lspace}).
\end{itemize}
\end{lemma}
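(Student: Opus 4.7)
The plan is to establish (f) first, since it is the structural backbone of the lemma: once we know that $\widehat{HFK}(K,i)$ has rank at most one in each Alexander grading, the chain complex $CFK^\infty(K)$ collapses, after an appropriate change of basis, to a staircase with exactly one generator in each occupied bifiltration slot, and the remaining statements follow by inspection or short appeals to external results. To prove (f), I would fix $q$ large enough that $S^3_q(K)$ is an L--space and $q\ge 2g_3(K)-1$. The Large Surgery Theorem~\ref{thm:lst} identifies $A_m^+$ with $HF^+(S^3_q(K),\sss_m)$ for every $m\in[-q/2,q/2)\cap\ZZ$, and the L--space hypothesis forces each such $HF^+$ to be isomorphic to $\ZZ_2[U,U^{-1}]/(U)$. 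Comparing the quotient complexes $A_m^+$ for consecutive $m$ (which differ by a single bifiltration column or row) and requiring each resulting homology to be a single $U$--tower pins down the staircase form of $CFK^\infty(K)$; a rank count in each Alexander grading then yields (f).

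The staircase description immediately gives (d): the top non--zero Alexander grading of $\widehat{HFK}(K,\cdot)$ has rank one, so the second criterion in Theorem~\ref{thm:giggini} applies and $K$ is fibered. For (e), the $\tau$--invariant (the Alexander level at which a generator of $\widehat{HF}(S^3)$ first appears in the filtered complex) equals $g_3(K)$ for a staircase whose top grading has rank one; since $\tau$ is a smooth concordance invariant bounded above by $g_4$, and trivially $g_4\le g_3$, we obtain $g_4(K)=g_3(K)$. Property (c) then follows from Hedden's theorem in \cite{hedden1}: a fibered knot with $\tau=g_3$ is strongly quasipositive, hence quasipositive.

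For (b), the ``if'' direction is an iterated use of the surgery exact sequence, starting from one L--space slope and ascending integer by integer, combined with tracking $d$--invariants via the staircase so as to pin down the threshold at $2g_3(K)-1$; the ``only if'' direction uses the explicit formulas of Theorem~\ref{thm:lst} to show that if $q<2g_3(K)-1$, then some $A_m^+$ is forced to have rank greater than one over $\ZZ_2[U,U^{-1}]/(U)$, so $S^3_q(K)$ cannot be an L--space. For (a), the hat analogue of Proposition~\ref{prop:connectedknots} gives a K\"unneth decomposition $\widehat{HFK}(K_1\#K_2,s)\cong\bigoplus_{s_1+s_2=s}\widehat{HFK}(K_1,s_1)\otimes\widehat{HFK}(K_2,s_2)$; if both $K_1,K_2$ were non--trivial L--space knots, each has a staircase with at least two non--zero Alexander levels, and a direct bookkeeping exhibits some total grading $s$ where the right hand side contains two independent generators, contradicting (f) applied to $K_1\#K_2$.

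The main obstacle is the step inside (f) where the pointwise isomorphism $A_m^+\cong\ZZ_2[U,U^{-1}]/(U)$ must be promoted to a global statement about the bifiltered chain homotopy type of $CFK^\infty(K)$: one has to rule out the possibility that the $A_m^+$ for varying $m$ are individually simple but assemble into a complex more complicated than a staircase, and this rigidity is precisely what drives every other item on the list. A secondary non--trivial input is Hedden's theorem used in (c), which lies outside the Heegaard Floer package developed in the notes and must be cited as a black box.
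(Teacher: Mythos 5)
Your derivations of (d), (e) and (c) from the staircase structure are exactly what the notes intend: (d) is the remark following the lemma (rank one in the top Alexander grading plus Theorem~\ref{thm:giggini}), and (e)/(c) via $\tau(K)=g_3(K)$ and Hedden's theorem is the content of the problem stated immediately afterwards. Your sketch of (f) is the standard Ozsv\'ath--Szab\'o argument, and you correctly isolate its hard step. Bear in mind, though, that the paper does not prove (a), (b) or (f) at all --- it cites \cite{Krc}, \cite{OzSz11,Hom0} and \cite{OzSz-lspace} respectively --- so on those items you are reproving external results rather than reconstructing anything in the text, and two of those reproofs have genuine gaps.

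For (a), your K\"unneth bookkeeping assumes that both summands $K_1$ and $K_2$ are L--space knots, but the statement is that a connected sum of two \emph{arbitrary} non-trivial knots is never an L--space knot. Being a summand of an L--space knot does not a priori make a knot an L--space knot (fiberedness of the summands is all you get for free), so your argument proves strictly less than (a); closing this gap is precisely why Krcatovich introduces the reduced knot Floer complex in \cite{Krc}, where the tensor product of the reduced complexes of two non-trivial knots is shown to be incompatible with the L--space condition without any hypothesis on the individual summands. For (b), the ``only if'' direction cannot be run through Theorem~\ref{thm:lst} as you propose: that theorem is only stated, and only true in that form, for $q\ge 2g_3(K)-1$, so it says nothing about $A_m^+$ versus $HF^+(S^3_q(K),\sss_m)$ in the range $0<q<2g_3(K)-1$ you need to obstruct. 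Showing that such small surgeries are never L--spaces requires the general integer/rational surgery mapping cone formula of \cite{OzSz-integer,OzSz11} (or the rank inequality it implies), which is exactly the input supplied by the cited \cite[Proposition 9.6]{OzSz11} and \cite{Hom0}.
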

\begin{remark}
Fiberedness of a knot admitting a lens space surgery was known to experts before the Heegaard Floer times, \cite{OzSz-lspace} contains an explicit proof.
The proof for general L-space knots follows from the explicit description of the fact that $\rk\widehat{HFK}(K,i)\le 1$ together with the result of \cite{Gigi,Ni} 
(Theorem~\ref{thm:giggini} of the present article).
\end{remark}
\begin{problem}
Prove that the $\tau$ invariant (see \cite{OS-fourball}) of an $L$--space knot is equal to its three--genus. Notice that this proves (e). Refer to a result of
Hedden \cite{hedden1} to prove (c).
\end{problem}
\begin{remark}
It is easy to find a positive knot which is not an L--space knot: take the connected sum of two trefoils. 
There are positive knots (even fibred positive knots) which are not even concordant to a connected sum of any number of L--space knots; see \cite{BH,FK}.
\end{remark}
We will now present an algorithm for describing the $CFK^\infty$ complex of an L--space knot based on the Alexander polynomial. The
algorithm was first described by Peters \cite{Pts}, nowadays it is widely used.

Suppose $K$ is an L--space knot of genus $g$. Let $\Delta$ be the Alexander polynomial for $K$, which we normalize in such a way that 
$\Delta(t^{-1})=\Delta(t)$. It was showed in \cite{OzSz-lspace} that $\Delta$ has the following form.
\[\Delta(t)=t^{n_0}-t^{n_1}+\ldots-t^{n_{2k-1}}+t^{n_{2k}},\]
where $n_0>n_1>\ldots>n_{2k}$ and $n_0=-n_{2k}=g$. Set 
\begin{equation*}
	\left\{
	\begin{aligned}
		& m_0 = 0 \\
		& m_{2i-1} = m_{2i} \qquad 1 \leq i \leq k \\
		& m_{2i+1} = m_{2i} + (n_{2i} - n_{2i+1}) \qquad 0 \leq i \leq k - 1
	\end{aligned} \right.
\end{equation*}

\begin{problem}
Show that $m_{2k}=g$.
\end{problem}

We will construct now an abstract chain complex over $\ZZ_2$ from the numbers $n_i$ and $m_i$. The chain complex will be graded and doubly
filtered. The construction is as follows.

For any $i=0,\ldots,k$ we place a generator $x_i$ with (Maslov) grading $0$ at bifiltration level $(m_{2k-2i},m_{2i})$
(in the notation of Section \ref{sec:cfkinf} it is $[x_i,m_{2k-2i},m_{2i}]$). We set $\partial x_i=0$.
For any $i=0,\ldots,k-1$ we place a generator $y_i$ with (Maslov) grading $1$ at bifiltration level $(m_{2k-2i-1},m_{2i+1})$
(that is, $[y_i,m_{2k-2i-1},m_{2i+1}]$). We set 
$\partial y_i=x_i+x_{i+1}$.
\begin{example}
For a torus knot $T(3,4)$ we have $\Delta=t^3-t^2+1-t^{-2}+t^{-3}$ so $m_0=0$, $m_1=1$, $m_2=1$, $m_3=3$, $m_4=3$. The $x$--generators
are at bifiltration levels $(0,3)$, $(1,1)$ and $(3,0)$, while the $y$--generators are at bifiltration level $(1,3)$ and $(3,1)$;
see Figure~\ref{fig:T34}.
\begin{figure}
\centering
\begin{tikzpicture}
\draw [step=1,dotted,color=black!40] (-0.5,-0.5) grid (3.5,3.5);
\draw[thin,->] (-0.75,0) -- (3.75,0);
\draw[thin, ->] (0,-0.75) -- (0,3.75);
\path (0 cm, 3.75 cm) node[above left] {$j$};
\path (3.75 cm, 0 cm) node[below right] {$i$};

\fill[black] (0 cm, 3 cm) circle (0.075);
\path (0 cm, 3 cm) node[above left] {$x_2$};

\fill[black] (1 cm, 3 cm) circle (0.075);
\path (1 cm, 3 cm) node[above left] {$y_1$};

\fill[black] (1 cm, 1 cm) circle (0.075);
\path (1 cm, 1 cm) node[above left] {$x_1$};

\fill[black] (3 cm, 1 cm) circle (0.075);
\path (3 cm, 1 cm) node[above left] {$y_0$};

\fill[black] (3 cm, 0 cm) circle (0.075);
\path (3 cm, 0 cm) node[above left] {$x_0$};

\draw[very thick]  (1 cm , 3 cm) -- (0 cm, 3 cm);
\draw[very thick]  (1 cm , 3 cm) -- (1 cm , 1 cm);
\draw[very thick]  (3 cm, 1 cm) -- (3 cm , 0 cm);
\draw[very thick]  (3 cm , 1 cm) -- (1 cm , 1 cm);

\end{tikzpicture}
  \caption{A staircase complex of a torus knot $T(3,4)$.}
  \label{fig:T34}
\end{figure}
\end{example}
\begin{definition}
The chain complex obtained in this way is called the \emph{staircase complex} associated with an L--space knot $K$ and it is denoted
$St(K)$.
\end{definition}
The staircase complex will now be tensored by $\ZZ_2[U,U^{-1}]$, where $U$ is a formal variable. 
We write $St(K)\otimes_{\ZZ_2}\ZZ_2[U,U^{-1}]$ for the product.
It is generated by elements $U^jx_i$ and $U^jy_i$, $j\in\ZZ$. The grading and the filtration levels are defined by requiring that multiplication
by $U$ changes the (Maslov) grading by $-2$ and each of the filtration levels by $-1$, exactly as the action of $U$ on the knot Floer
chain complexes. The following result was described in a paper by Peters \cite{Pts} (see also \cite{NiWu}), but the idea that
the Alexander polynomial determines the complex $CFK^\infty$ can be traced back to \cite{OzSz-lspace}.
\begin{proposition}\label{prop:staircasecomplex}
Let $K$ be an L--space knot.
The chain complex $St(K)\otimes_{\ZZ_2}\ZZ_2[U,U^{-1}]$ is bifiltered chain homotopy equivalent to $CFK^\infty(K)$.
\end{proposition}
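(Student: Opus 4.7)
The plan is to replace $CFK^\infty(K)$, within its bifiltered chain homotopy class, by a small model of rank $2k+1$ over $\ZZ_2[U,U^{-1}]$ and then identify that model with $St(K) \otimes \ZZ_2[U,U^{-1}]$. The proof divides naturally into three steps: size reduction, placement of generators, and identification of differentials. For the first step, I would use Lemma~\ref{lem:propertiesofLspace}(f), which gives $\rk \widehat{HFK}(K,i) \le 1$ for every $i$, together with Proposition~\ref{prop:alexander_conway_poly}, which identifies the graded Euler characteristic of $\widehat{HFK}(K)$ with $\Delta_K(t) = \sum_{i=0}^{2k}(-1)^i t^{n_i}$. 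The only way these two facts are compatible is $\widehat{HFK}(K,n_j) \cong \ZZ_2$ for $j=0,\dots,2k$ and zero at every other Alexander grading, with the surviving generator at $n_j$ of Maslov parity $j \bmod 2$. A standard cancellation lemma in homological algebra then lets me shrink $CFK^\infty(K)$ to a free $\ZZ_2[U,U^{-1}]$-module with generators $g_0,\dots,g_{2k}$, where $g_j$ sits at Alexander grading $n_j$ with Maslov parity $j \bmod 2$; after normalizing so that $g_0$ has Maslov grading $0$, each $g_{2i}$ sits at grading $0$ and each $g_{2i+1}$ at grading $1$.

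Next comes placement. Write $g_j$ at bifiltration $(p_j,q_j)$ with $q_j - p_j = n_j$; I want to prove $(p_{2i},q_{2i}) = (m_{2k-2i},m_{2i})$ and the analogous identity for odd indices. The $(i,j) \leftrightarrow (j,i)$ symmetry of $CFK^\infty$, which reflects $\Delta(t) = \Delta(t^{-1})$, pairs $g_j$ with $g_{2k-j}$ and yields $(p_{2k-j},q_{2k-j}) = (q_j,p_j)$. The remaining rigidity comes from the large surgery theorem: by Lemma~\ref{lem:propertiesofLspace}(b), $S^3_q(K)$ is an L-space for every $q \ge 2g_3(K)-1$, so Theorem~\ref{thm:lst} gives $H_*(A_m^+) \cong \cT^+ := \ZZ_2[U,U^{-1}]/U\ZZ_2[U]$ for every integer $m$. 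Requiring that every quotient $A_m^+$ of the reduced complex have single-tower homology is extremely restrictive and forces the $(p_j,q_j)$'s to climb in a strictly monotone staircase whose consecutive horizontal and vertical steps equal the numbers $n_{2i} - n_{2i+1}$, exactly the recursion defining the $m_j$'s.

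Finally, differentials. With all positions fixed and Maslov gradings differing by one, each $\partial g_{2i+1}$ must be a $\ZZ_2$-linear combination of $g_{2i}$ and $g_{2i+2}$, with no $U$-powers allowed, because no other even generator lies at a bifiltration level compatible with a nonzero incoming differential from $g_{2i+1}$. Testing the constraint $H_*(A_m^+) \cong \cT^+$ at the critical values of $m$, namely those where exactly one of $g_{2i}, g_{2i+2}$ survives the quotient, rules out every coefficient combination except $\partial g_{2i+1} = g_{2i} + g_{2i+2}$, which is the staircase differential. I expect the main obstacle to be the bookkeeping in the placement step. A clean organization is induction on $k$, stripping off the outermost step of the staircase by examining $A_m^+$ at the extreme values $m \approx \pm g_3(K)$, where only a handful of the $g_j$'s survive and the constraints imposed by $H_*(A_m^+) \cong \cT^+$ become immediate; the inductive step then reduces to the statement of the proposition for a chain complex of lower complexity.
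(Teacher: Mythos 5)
The paper does not actually prove Proposition~\ref{prop:staircasecomplex}; it defers to Peters \cite{Pts}, Ni--Wu \cite{NiWu} and, for the underlying idea, to \cite{OzSz-lspace}. Your outline is essentially the argument found in those sources: Lemma~\ref{lem:propertiesofLspace}(f) plus the Euler characteristic identification of Proposition~\ref{prop:alexander_conway_poly} pins down the associated graded object, a reduction over the field $\ZZ_2$ replaces $CFK^\infty(K)$ by a bifiltered model with one $\ZZ_2[U,U^{-1}]$--generator per nonzero coefficient of $\Delta_K$, and the fact that \emph{every} large surgery is an L--space (Lemma~\ref{lem:propertiesofLspace}(b) together with Theorem~\ref{thm:lst}) forces $H_*(A_m^+)$ to be a single $U$--tower for every $m$, which is the rigidity that produces the staircase. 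So the route is the right one, and the normalization issues (choosing $U$--power representatives so that the positions $(p_j,q_j)$ are honest invariants with $q_j-p_j=n_j$) are handled correctly.

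There is, however, one concrete omission. You analyze only $\partial g_{2i+1}$ and tacitly assume $\partial g_{2i}=0$. After your Maslov normalization, nothing a priori prevents $\partial g_{2i}$ from being a $\ZZ_2$--combination of the elements $U g_{2l+1}$, which sit in grading $-1$; this is exactly what happens in the ``box'' summands of $CFK^\infty(S^3,4_1)$ and $CFK^\infty(S^3,3_1\#3_1)$ in Figures~\ref{fig:CFKinfty4_1} and~\ref{fig:CFKtrefoils}. Neither $\partial^2=0$ nor the computation $H_*(CFK^\infty)\cong\ZZ_2[U,U^{-1}]$ rules such components out, since a box is acyclic over $\ZZ_2[U,U^{-1}]$; what kills them is that a box forces a finite $U$--torsion summand in $H_*(A_m^+)$ for suitable $m$, contradicting the single-tower conclusion of Theorem~\ref{thm:lst}. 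This is the same tool you already invoke, but the case must be stated and dispatched. Relatedly, the ``placement'' step cannot be carried out before the differentials are known, since $H_*(A_m^+)$ depends on both; the simultaneous induction you sketch in the last paragraph (stripping the outermost step by examining $A_m^+$ for $m$ near $\pm g_3(K)$, using the symmetry $(p_{2k-j},q_{2k-j})=(q_j,p_j)$) is the correct way to organize this, and with the even-generator differentials included in that induction the proof closes up and agrees with \cite{Pts,NiWu}.
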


\begin{problem}\label{prob:sumconvolution}
Prove that if $K$ and $K'$ are L--space knots, then we have $V_m(K\#K')=\min_{k\in\ZZ} (V_k(K)+V_{m-k}(K'))$. 
Show that the same holds if $K$ and $K'$ are connected sums of L--space knots.
\end{problem}
\section{Cuspidal singularities}\label{sec:cuspidal}
The scenery changes for a while. We need to recall a few facts from singularity theory.

\subsection{Links of singular points}

Consider a complex curve $C$ in some connected, open set $\Omega\subset\CC^2$. Suppose $C$ is defined as a zero set $F^{-1}(0)$, where $F\colon\Omega\to\CC$ is a holomorphic function. We will assume that $F$ is \emph{reduced}, which might be interpreted as requiring that the gradient of $F$ does not vanish identically on any open subset of $C$.

\begin{problem}
The rigorous definition of `reduced' reads that $F$ is not divisible (in the ring of holomorphic functions $\cO(\Omega)$) by any square of
a non-invertible element. Prove that the two definitions are equivalent. 
\end{problem}

\begin{definition}\label{def:singularpoint}
A point $z\in C$ is called \emph{singular} if $\nabla F(z)=0$.
\end{definition}

\begin{problem}
Prove that if $z\in C$ is a singular point and $F$ is reduced, then $z$ is \emph{isolated}, that is, there is no sequence 
$z_n\in C \setminus \{z\}$ of singular points converging to $z$.
\end{problem}

By Tougeron's theorem, see \cite[Section 2.1]{Zo}, 
any isolated singular point is finitely presented. That is, for each singular point $z$ there is a local analytic
change of coordinates, which transforms $C$ to a $F_{fin}^{-1}(0)$, where $F_{fin}$ is a Taylor expansion of $F$ at $z$ of sufficiently high
order (the original Tougeron theorem says that the order equal to the Milnor number plus one will do, but for some specific singularities
a lower order expansion may be sufficient).

Let $z\in C$ be a singular point. Take a ball $B\subset\Omega$ with center $z$ of sufficiently small radius.

\begin{definition}
The intersection $\partial B\cap C\subset\partial B$ is called the \emph{link of singularity}.
\end{definition}

\begin{problem}
Prove that the isotopy type of the link of singularity is independent of the radius of the curve, once the starting curve is sufficiently small.

Hint. The distance function to the singular point is a Morse function when restricted to $C$. Try showing that the restriction
has no critical points on $C$ near $z$, except for $z$ itself. See also \cite{Milnor-cob}.
\end{problem} 
\begin{problem}
Prove that $C\cap B$ is homeomorphic to the cone over the link $C\cap\partial B$.
\end{problem}

\begin{definition}
The \emph{number of branches} of $C$ at the singular point is the number of connected components of $B\cap C\setminus\{z\}$. A singular point
is called \emph{cuspidal} if $C$ has precisely one branch.
\end{definition}
Two singular points $(C,z)$ and $(C',z')$ are analytically equivalent if there exists a biholomorphic map of neighborhoods of $z$ and $z'$
in $\CC^2$, which takes locally $C$ to $C'$. In general, analytic equivalence is a surprisingly complicated notion.
There is a coarser equivalence, which proves very useful.
\begin{definition}
Two singular points $(C,z)$ and $(C',z')$ are called \emph{topologically equivalent} if there exist small balls $B,B'\subset\CC^2$
with centers $z$ and $z'$ and a homeomorphism $h\colon B\to B'$ that takes $C\cap B$ to $C'\cap B'$.
\end{definition}
\begin{problem}
Show that two singular points are topologically equivalent if and only if their links are isotopic.
\end{problem}
The two notions of equivalence give rise to notions of analytic and topological invariants of singular points. These
are quantities associated with a singular points which are preserved under an analytic (respectively: topological) equivalence. The distinction
can be 
quite subtle. For example, the Milnor number $\mu=\dim_{\CC}\cO_z/(\frac{\partial F}{\partial x},\frac{\partial F}{\partial y})$ (here
$\cO_z$ is the local ring and we consider its quotient over by an ideal generated by $\frac{\partial F}{\partial x}$ and 
$\frac{\partial F}{\partial y}$)
is a topological invariant. For a cuspidal singularity $\mu$ is equal to twice the genus of the link and a slightly more complicated formula
calculates the Milnor number from the genera of the components of the link and the linking numbers of the components; see 
\cite[Section 10]{Milnor-sing}.

On the other hand, the Tjurina number, $\tau=\dim_{\CC}\cO_z/(F,\frac{\partial F}{\partial x},\frac{\partial F}{\partial y})$, whose
definition looks very similar to $\mu$, is \emph{not} a topological invariant; see \cite[Section I.1.2]{GLS}.
\begin{problem}
Show that if $F$ is quasihomogeneous, then $\tau=\mu$.
\end{problem}
\begin{problem}
Play around with some examples of $F$ using your favorite computer algebra system (\texttt{sage}, \texttt{macaulay}, \texttt{singular}) and find examples
of singularities which have the same topological type but different Tjurina numbers. 

Hint. Take $F=x^p-y^q$ with $p,q$ coprime
and try adding to it terms of weighted degree greater than $pq$, where $x$ has degree $q$ and $y$ has degree $p$.
\end{problem}

To conclude the section we list a few different objects related to a singular point that have (almost) the same meaning.
\begin{itemize}
\item The Milnor number $\mu$ defined as above. By the celebrated Milnor's theorem, the map $z\mapsto F(z)/|F(z)|$ from 
$\partial B\setminus (C\cap\partial B)$ to $S^1$ is a locally trivial fibration, whose fiber has homotopy type of a wedge of $\mu$ copies
of $S^1$.
\item The $\delta$--invariant, whose original definition is algebraic; see \cite[Section I.3.4]{GLS}. For a singular point with $r$ branches we have
that $2\delta=\mu+r-1$, a formula proved by Milnor in \cite[Section 10]{Milnor-sing}.
\item The genus of the link $g_3(C\cap\partial B)$ is equal to half the Milnor number if the link has one branch. By Kronheimer--Mrowka's
result, the three--genus is also equal to the smooth four--genus of the link.
\end{itemize}
\begin{problem}
Establish an explicit relation between $g_3(C\cap\partial B)$ and the $\delta$--invariant for a singular point with arbitrarily many branches.
The algebraic definition of the $\delta$--invariant is given in \cite[Section I.3.4]{GLS} or in \cite[Section 10]{Milnor-sing}.
\end{problem}
\subsection{Topological classification of cuspidal singular points}
For completeness we recall a topological classification of cuspidal singular points. For us it is convenient to write the classification
in terms of a so-called \emph{characteristic sequence}. A characteristic sequence is a finite sequence of numbers $(p;q_1,q_2,\ldots,q_m)$
with $p>1$, $p<q_1<\ldots<q_m$.
These numbers satisfy the following relation. Set $r_0=p$, $r_{i+1}=\gcd(r_i,q_{i+1})$. We require that the sequence $r_i$ be strictly
decreasing and $r_m=1$. To each characteristic sequence we can associate a model singular point on a curve, which is locally parametrized
as
\begin{align*} x(t)&=t^p\\ y(t)&=t^{q_1}+t^{q_2}+\ldots+t^{q_m}.\end{align*}
\begin{theorem}
The characteristic sequence is a complete invariant of the topological type of cuspidal singular points. That is, any cuspidal singular point
is topologically equivalent to precisely one model singularity.
\end{theorem}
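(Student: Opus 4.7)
The plan is to combine Newton--Puiseux parametrizations with the classical identification of the link of a cuspidal singularity as an iterated torus knot.

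First I would invoke the Newton--Puiseux theorem to parametrize the cuspidal singular point by $(x(t),y(t))=(t^p,\psi(t))$, where $p>1$ is the multiplicity and $\psi(t)\in\CC\{t\}$ is a convergent power series with $\ord\psi>p$. From this expansion the characteristic sequence is extracted inductively: let $q_1$ be the smallest exponent appearing in $\psi$ not divisible by $p$, set $r_1=\gcd(p,q_1)$; then let $q_2$ be the smallest exponent in $\psi$ not divisible by $r_1$, set $r_2=\gcd(r_1,q_2)$; continue until the first index $m$ with $r_m=1$. This produces a well-defined sequence $(p;q_1,\ldots,q_m)$ satisfying the conditions in the definition.

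For existence of a topological model I would show that every monomial of $\psi$ whose exponent lies in the semigroup generated by the already-chosen characteristic exponents can be eliminated by a biholomorphic change of coordinates in $(\CC^2,0)$ together with a holomorphic reparametrization of $t$. This is a standard level-by-level application of the implicit function theorem: at each scale $r_i$ one kills the monomials whose exponents lie in the subsemigroup $\langle p,q_1,\ldots,q_i\rangle$. Iterating, the original singularity becomes analytically equivalent to the model parametrization $(t^p,t^{q_1}+\ldots+t^{q_m})$, hence \emph{a fortiori} topologically equivalent.

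For uniqueness I would identify the link. Following the classical construction of Brauner (see for instance \cite{EN}), the link of the model singularity $(t^p,t^{q_1}+\ldots+t^{q_m})$ is the iterated torus knot $T(p_1,a_1;\ldots;p_m,a_m)$ with $p_i=r_{i-1}/r_i$ and $a_i$ given by an explicit recursion in terms of $q_i$, $p_i$ and the earlier parameters. The cabling data of an iterated torus knot is in turn an isotopy invariant, extracted from the JSJ decomposition of the knot exterior: each cabling level corresponds to a canonical splitting torus separating Seifert-fibered pieces. Hence $(p;q_1,\ldots,q_m)$ is recovered from the isotopy type of the link, so two cuspidal singularities are topologically equivalent if and only if their characteristic sequences coincide. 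The principal obstacle in this programme is precisely this last step: the isotopy classification of iterated torus knots by their cabling data is an intrinsically three-dimensional statement whose proof requires JSJ theory or a direct Seifert-fibered analysis, neither of which is developed in these notes; a self-contained exposition appears in \cite{EN}.
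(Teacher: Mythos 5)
The paper itself offers no proof of this theorem: it is recalled as classical and deferred to \cite{EN}, \cite{BK} and \cite{Wa}, so your sketch can only be measured against the standard argument it is reconstructing. Its overall architecture --- Newton--Puiseux expansion, inductive extraction of the characteristic exponents, identification of the link as an iterated torus knot, and recovery of the cabling data from the isotopy class of the link --- is indeed the classical route, and the uniqueness half is sound modulo the three--dimensional input you explicitly flag (JSJ/Waldhausen theory; alternatively, within the class of algebraic knots the Alexander polynomial already suffices, since by Theorem~\ref{thm:semalex} it determines the semigroup and hence the characteristic sequence).

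The genuine gap is in the existence half. You claim that after killing, level by level, the monomials whose exponents lie in the semigroup generated by the characteristic exponents already found, the singularity becomes \emph{analytically} equivalent to the model $(t^p,t^{q_1}+\cdots+t^{q_m})$. This fails twice over. First, the procedure does not remove everything: there are in general exponents that are neither characteristic nor in $\langle p,q_1,\ldots,q_i\rangle$ (for the sequence $(4;6,7)$ the exponent $9$ is of this kind), so the output of your reduction is not the model. Second, and more fundamentally, no analytic reduction to a single normal form can exist: by Zariski's work on moduli of plane branches, an equisingularity class generically contains continuous families of pairwise analytically inequivalent branches. The paper already contains the contradiction --- it points out that the Tjurina number $\tau$ is not a topological invariant (with the hint $F=x^p-y^q$ plus higher weighted--order terms, i.e.\ cuspidal examples), whereas your claim would force every cuspidal singularity in a fixed topological class to be analytically equivalent to its model and hence to have the model's $\tau$. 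What is true, and what the classical proof of Brauner, Burau and Zariski actually establishes (see \cite{BK} or \cite{Wa}), is only that the \emph{link} does not change when the coefficients of the non--characteristic terms are deformed to zero; proving that isotopy is where the real content of the existence half lies, and it cannot be replaced by an implicit--function--theorem change of coordinates.
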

The number $m$ is called the \emph{length} of the characteristic sequence. There are several alternative ways of encoding a characteristic
sequence. For example, there are so--called Newton pairs, and Puiseux or characteristic pairs (both Newton pairs and Puiseux pairs
might have slightly different meaning), which are sequences of pairs of integers. The quantity $m$ is also the length of such a sequence
and so we will often refer to $m$ as the number of Puiseux pairs. Note however, that the multiplicity sequence, see \cite{BK}, might be much
longer than the characteristic sequence.

The isotopy class of the link of singularity is also an invariant of topological type, and in fact, it is also a complete invariant. 
There is an explicit algorithm for determining the link from the characteristic sequence; see
\cite{EN}.
We record one basic example for future reference.
\begin{example}
If $m=1$, the characteristic sequence is $(p;q)$ for some coprime integers with $0<p<q$. The link of singularity
is the torus knot $T(p,q)$.
\end{example}

\subsection{Semigroup of a singular point}
Let $(C,z)$ be a singular point of a plane curve. For any complex polynomial $G$, which does not vanish on any of the components of $C$ containing $z$
(component in the analytic sense), we can define the local intersection index $C\cdot_z G^{-1}(0)$. 
\begin{example}
Suppose $z$ is cuspidal. By the Puiseux theorem there exists a local parametrization $t\mapsto (x(t),y(t))$ of $C$ near $z$, such that
$z=(x(0),y(0))$. Then the local intersection index is the order at $t=0$ of the map $t\mapsto G(x(t),y(t))$.
\end{example}
\begin{problem}
Suppose $z=(0,0)$ and $C=\{F\equiv 0\}$ with $F=x^p-y^q$. Show that a number $l\ge 0$ can be obtained as $C\cdot_z G^{-1}(0)$ if and only if
$l$ can be presented as $ip+jq$, where $i,j\ge 0$ are integers. For $l=ip+jq$ write explicitly a polynomial $G$ such that 
$C\cdot_z G^{-1}(0)=l$.
\end{problem}
We have the following notion.
\begin{definition}
The \emph{semigroup} of a singular point $S(z)$ is a semigroup of $\ZZ_{\ge 0}$ whose elements are local intersection indices $C\cdot_z G^{-1}(0)$
as $G$ ranges through all the polynomials $\CC[x,y]$ that do not vanish on any of the components of $C$ containing $z$. By convention, zero is always considered as an element of $S(z)$: it corresponds
to a polynomial $G$ that does not vanish at $z$.
\end{definition}
\begin{problem}
Show that $S$ is in fact a semigroup.
\end{problem}
\begin{problem}
Show that the smallest non--zero element of the semigroup is the multiplicity of a singular point.
\end{problem}
The notion of the semigroup as defined here is useful mostly for cuspidal singular points. If $z$ has $r>1$ branches, it might be more
natural to consider a semigroup of $\ZZ^r$, whose elements are vectors formed by local intersection indices with the branches. There is
a significant difference between the cuspidal and non-cuspidal case. In the present notes we focus mostly on the cuspidal case.
\begin{theorem}[see e.g. \expandafter{\cite[Chapter 4]{Wa}}]\label{thm:semigroup}
The semigroup of a cuspidal singular point $z$ has the following properties.
\begin{itemize}
\item The \emph{gap set} $G:=\ZZ_{\ge 0}\setminus S$ has cardinality $\mu/2$. Here $\mu$ is the Milnor number.
\item The maximal element of $G$ is equal to $\mu-1$.
\item The semigroup has the following symmetry property: for any $x\in\ZZ$, either $x\in S$, or $2g-1-x\in S$, but never both.
\end{itemize}
\end{theorem}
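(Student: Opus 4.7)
The plan is to work inside the local ring $\cO_z$ of $C$ at $z$ and its normalization. Since $z$ is cuspidal, there is a single analytic branch, so by Newton--Puiseux there is a local parametrization $\phi\colon(\CC,0)\to(C,z)$ injective off $0$, yielding a $\CC$-algebra inclusion $\phi^*\colon\cO_z\hookrightarrow\CC\{t\}=\widetilde\cO_z$. Composing with the $t$-adic order gives a valuation $v$ on $\cO_z\setminus\{0\}$, and directly from the definition of local intersection index one has $v(f)=C\cdot_z f^{-1}(0)$. Hence $S=v(\cO_z\setminus\{0\})\cup\{0\}$, and $S$ is a numerical semigroup since the conductor of $\cO_z$ in $\widetilde\cO_z$ is nontrivial.

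For item (1), I would filter $\cO_z\subset\widetilde\cO_z$ by $v$: the $n$-th graded piece has $\CC$-dimension one in $\widetilde\cO_z$ and one in $\cO_z$ precisely when $n\in S$. Consequently $|G|=\dim_\CC\widetilde\cO_z/\cO_z=\delta$, the algebraic $\delta$-invariant. The Milnor formula $2\delta=\mu+r-1$ (with $r=1$) recalled in Section~\ref{sec:cuspidal} then gives $|G|=\mu/2=g$. For item (2), once the symmetry in (3) is established, the involution $x\mapsto c-1-x$ bijects gaps with non-gaps in $[0,c-1]$, forcing $c=2g$ and hence $\max G=c-1=2g-1=\mu-1$.

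The essential point is the symmetry in item (3), which reflects the Gorenstein property of $\cO_z=\CC\{x,y\}/(F)$: as a hypersurface, it is a complete intersection and hence Gorenstein. I would invoke the theorem of Kunz that a one-dimensional analytically irreducible local domain is Gorenstein if and only if its value semigroup is symmetric in the sense $x\in S\iff c-1-x\notin S$. Combined with $c=2g$, this yields (3); the cases $x\notin[0,2g-1]$ are trivial since then one of $x,\,2g-1-x$ is negative and the other lies above the conductor. The main obstacle is a self-contained proof of Kunz's criterion, which I would derive from duality: Grothendieck local duality for the Cohen--Macaulay ring $\cO_z$ produces a perfect residue pairing
\[
\cO_z\times\omega_{\cO_z}\longrightarrow\CC,\qquad (f,\eta)\longmapsto\operatorname{Res}_{t=0}(\phi^*f\cdot\eta),
\]
compatible with the valuation filtration, between the quotients of $\cO_z$ and those of the Rosenlicht dualizing module $\omega_{\cO_z}\subset\CC\{t\}\,dt$. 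The Gorenstein hypothesis makes $\omega_{\cO_z}$ a rank-one free $\cO_z$-module whose value set is a shift of $S$; matching one-dimensional graded pieces through the pairing then forces $n\in S$ if and only if $c-1-n\notin S$.
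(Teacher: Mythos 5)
Your argument is correct, but it follows a genuinely different route from the one the paper points to. The paper gives no proof of Theorem~\ref{thm:semigroup}: it cites \cite[Chapter 4]{Wa}, where (as in the surrounding discussion of Theorem~\ref{thm:semalex}) the properties are extracted from the explicit description of the singularity in terms of its characteristic sequence --- one writes down the generators $\bar\beta_0,\dots,\bar\beta_m$ of $S$ from the Puiseux data, computes the conductor, and checks the symmetry by hand; this is the same computational machinery that identifies the link as an iterated torus knot and matches the gap set with the Alexander polynomial. You instead argue intrinsically: the gap count $|G|=\dim_\CC\widetilde\cO_z/\cO_z=\delta$ together with Milnor's formula $2\delta=\mu+r-1$ (with $r=1$) gives the first item, the symmetry is Kunz's criterion applied to the hypersurface (hence Gorenstein) ring $\cO_z$, and the second item follows formally from the other two, exactly as the Problem following the theorem suggests. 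All the individual steps are standard and correctly assembled, including the boundary cases $x\notin[0,2g-1]$ and the identification $c=2g=\mu$. What your approach buys is conceptual clarity and generality --- it isolates the Gorenstein property as the reason for the symmetry and works for any analytically irreducible Gorenstein curve singularity, not just plane branches --- at the cost of importing local duality and Kunz's theorem as black boxes (your sketch of the residue-pairing proof of Kunz is the standard one and is fine as an outline). What the computational route buys is the explicit generators of $S$, which the rest of the paper actually needs when it passes from Puiseux pairs to semigroups and to $CFK^\infty$.
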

\begin{problem}
Deduce the first two properties in the statement of Theorem~\ref{thm:semigroup} from the third one.
\end{problem}
\begin{problem}
Prove elementarily that if $S$ is a semigroup generated by $p$ and $q$, then the maximal element that does not belong to the semigroup
is $(p-1)(q-1)-1$.
\end{problem}
\begin{problem}
Suppose $S$ is a semigroup of $\ZZ_{\ge 0}$ such that $G=\ZZ_{\ge 0}\setminus S$ is finite. Assume that $S$ has three generators $(p,q,r)$.
Try finding an explicit formula for the maximal element of $G$ if $p=2$ or $p=3$ and see how hard it is. This shows that the second
property of Theorem~\ref{thm:semigroup} is very special. See \cite{RA} for a detailed discussion on numerical semigroups.
\end{problem}
We have the following fact first established in \cite{CDG}. We refer also to \cite[Chapters 4,5]{Wa}.

\begin{theorem}\label{thm:semalex} 
Let $z$ be a cuspidal singular point with a semigroup $S$. Let $G=\ZZ_{\ge 0}\setminus S$ be the gap set. Then
\[1+(t-1)\sum_{r\in G} t^r\] 
is the Alexander polynomial of the link of the singular point $z$.
\end{theorem}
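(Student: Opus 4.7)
The plan is to rephrase the claimed identity as an equality between the Alexander polynomial and a generating series attached to the semigroup, and then to establish that equality by induction on the length $m$ of the characteristic sequence of $z$.

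First I would use that $S \sqcup G = \ZZ_{\ge 0}$ together with $\sum_{n\ge 0} t^n = 1/(1-t)$ to write, as formal power series,
\[\sum_{r\in G} t^r \;=\; \frac{1}{1-t} \,-\, \sum_{s\in S} t^s.\]
Substituting this into the left--hand side of the claim and simplifying gives
\[1 + (t-1)\sum_{r\in G} t^r \;=\; (1-t)\,P_S(t), \qquad P_S(t) := \sum_{s\in S} t^s.\]
By Theorem~\ref{thm:semigroup} the gap set $G$ is finite, so $P_S(t)$ makes sense and $(1-t)P_S(t)$ is in fact a polynomial. The claim is thus equivalent to
\[\Delta(t) \;=\; (1-t)\,P_S(t).\]

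Next I would proceed by induction on the length $m$ of the characteristic sequence $(p;q_1,\ldots,q_m)$. In the base case $m=1$, the link of $z$ is the torus knot $T(p,q_1)$ and the semigroup is $S = \langle p, q_1\rangle$. The two classical identities
\[P_S(t) \;=\; \frac{1-t^{pq_1}}{(1-t^p)(1-t^{q_1})}, \qquad \Delta_{T(p,q_1)}(t) \;=\; \frac{(1-t)(1-t^{pq_1})}{(1-t^p)(1-t^{q_1})}\]
together give $\Delta_{T(p,q_1)}(t) = (1-t)P_S(t)$ immediately. The formula for $P_S$ follows from the fact that each element of $\langle p, q_1 \rangle$ smaller than $pq_1$ has a unique representation as $ip + jq_1$ with $0 \le i < q_1$ (using $\gcd(p,q_1)=1$), combined with the symmetry of $S$ to handle terms above $pq_1$.

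For the inductive step, if $(C',z')$ has characteristic sequence $(p;q_1,\ldots,q_{m-1})$ and $(C,z)$ has $(p;q_1,\ldots,q_m)$, then the link of $z$ is obtained from the link of $z'$ by a cable construction determined by the new Newton pair, and the semigroup generators transform by the standard recursion (see \cite[Chapter~4]{Wa}): each old generator is multiplied by a factor coming from the new Newton pair, and one new generator is adjoined. The Alexander polynomial then changes by a multiplicative factor of the shape $\frac{(1-t)(1-t^A)}{(1-t^B)(1-t^C)}$ for explicit integers $A,B,C$ coming from the new Newton pair, which can be computed either via the Torres formula or via the Eisenbud--Neumann splice calculus \cite{EN}. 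A direct expansion shows that $(1-t)P_S(t)$ transforms by exactly the same factor, closing the induction.

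The main obstacle is bookkeeping in the inductive step: the cabling parameters, the recursion for the semigroup generators, and the Alexander polynomial's transformation all have to be aligned, which forces a careful commitment to the conventions for Newton pairs versus characteristic pairs. A cleaner alternative, closer to the original Campillo--Delgado--Gusein-Zade strategy \cite{CDG}, is to prove a uniform product formula for both $\Delta(t)$ and $(1-t)P_S(t)$ indexed by the Newton pairs of the singularity, so that the identity becomes manifest without induction.
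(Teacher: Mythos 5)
Your proposal is correct and follows essentially the same route the paper indicates: reduce to the identity $\Delta(t)=(1-t)\sum_{s\in S}t^s$, verify it for torus knots, and propagate it through the iterated-cable structure of algebraic knots, with the cabling formula for the Alexander polynomial as the only non-trivial input. The one point to watch in your inductive step is that cabling replaces $\Delta(t)$ by $\Delta(t^{a})$ times a torus-knot factor (and correspondingly scales the old semigroup generators by $a$), so both sides transform by a substitution $t\mapsto t^{a}$ followed by the same explicit factor rather than by a mere multiplication — but this is exactly the bookkeeping you already flag, and it closes the induction as claimed.
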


The result is unexpected and shows very deep relations between singularity theory and knot theory, see \cite{CDG} for more details.
Nevertheless, 
the theorem is not hard to prove, since we exactly know which links can arise from cuspidal singularities. They are, see \cite{Bu,Za}, iterated cables on torus knots. Both the link of the singularity and the semigroup can be
determined from the Puiseux pairs of singular points. The proof of Theorem~\ref{thm:semalex} consists of calculating
both sides in terms of Puiseux pairs and in fact, the only non-trivial result that is used is the formula for the Alexander polynomial
of a cable. On the other hand we have just shown that the semigroup is a topological invariant of a singular point.

\subsection{Links of singularities as L--space knots}
In \cite{hedden2} Hedden proved the following result.
\begin{theorem}
The link of a cuspidal singularity is an L--space knot.
\end{theorem}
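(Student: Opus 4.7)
The plan is to proceed by induction on the number $m$ of Puiseux pairs of the singularity. The base case $m=1$ is already recorded in the notes: a singularity with one Puiseux pair $(p;q)$ has link the torus knot $T(p,q)$, which is an L--space knot by Moser's surgery calculation.

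For the inductive step, I would invoke the classical description (Brauner, Burau, Zariski; see e.g.\ \cite{EN}) of the link of a cuspidal singularity with $m$ Puiseux pairs as an iterated torus knot, obtained from the link of the singularity with the first $m-1$ Puiseux pairs by a single $(a,b)$--cabling operation whose parameters are determined in an explicit way from the Puiseux data. Writing $K_j$ for the link of the singularity truncated to the first $j$ Puiseux pairs, we therefore have $K_m=C_{a,b}(K_{m-1})$ for suitable integers $a,b>0$.

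The crucial ingredient is then the cabling theorem of \cite{hedden2}: if $K$ is an L--space knot and the cabling parameters satisfy $b/a\ge 2g_3(K)-1$, then $C_{a,b}(K)$ is again an L--space knot. To close the induction, it suffices to verify that the cabling parameters coming from Puiseux pairs satisfy this inequality. This is a purely arithmetic check that uses $2g_3(K_{m-1})=\mu(K_{m-1})$ together with the multiplicativity recursion for Milnor numbers of iterated torus links. It ultimately reduces to the standard constraint on a characteristic sequence, namely that each Puiseux exponent $q_i$ grows sufficiently rapidly relative to the earlier ones, forced by the very definition of a Puiseux expansion and the requirement that the $r_i$ strictly decrease.

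The main obstacle is therefore Hedden's cabling theorem itself. A natural route is via the Large Surgery Theorem (Theorem~\ref{thm:lst}): for a sufficiently large integer $N$, the group $HF^+$ of $N$--surgery on $C_{a,b}(K)$ is controlled by the complexes $A_m^+$ of the cable, which in turn can be computed from $CFK^\infty(K)$ by a cabling formula. When $K$ is an L--space knot, Proposition~\ref{prop:staircasecomplex} tells us that $CFK^\infty(K)\cong St(K)\otimes_{\ZZ_2}\ZZ_2[U,U^{-1}]$, and the staircase structure is simple enough that one can carry out an explicit analysis: under the stated numerical hypothesis every $A_m^+$ of the cable has homology a single tower isomorphic to $\ZZ_2[U,U^{-1}]/(U)$. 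By Theorem~\ref{thm:lst} this forces large positive surgery on $C_{a,b}(K)$ to be an L--space, which is precisely the L--space knot property. Alternatively, one may bypass some of the surgery bookkeeping by using the characterisation $\rk\widehat{HFK}(K,i)\le 1$ of Lemma~\ref{lem:propertiesofLspace}(f) together with the known formula for the Alexander polynomial of an iterated cable, reducing everything to a coefficient check that the polynomial is thin in the appropriate sense.
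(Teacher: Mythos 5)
The notes do not actually prove this theorem---they simply cite Hedden \cite{hedden2}---and your outline is essentially the argument of that reference: induct over the iterated-cable description of algebraic knots, using Hedden's cabling criterion that $C_{a,b}(K)$ is an L--space knot whenever $K$ is and $b/a\ge 2g_3(K)-1$, a hypothesis comfortably satisfied by the growth condition on a characteristic sequence. One caveat: your closing ``alternative'' route does not work as stated, because the condition $\rk\widehat{HFK}(K,i)\le 1$ of Lemma~\ref{lem:propertiesofLspace}(f) is only a \emph{necessary} condition for being an L--space knot, so a coefficient check on the Alexander polynomial of the iterated cable cannot by itself establish the L--space property; you must keep the surgery (or $A_m^+$) argument to close the induction.
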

Suppose $z$ is a cuspidal singular point with semigroup $S$ and link $K$. The semigroup determines the Alexander polynomial by 
Theorem~\ref{thm:semalex}.
As $K$ is an L--space knot, the Alexander polynomial 
of $K$ determines the chain complex $CFK^\infty$. This chain complex determines the concordance invariants $V_m$. Therefore, the numbers
$V_m$ can be calculated directly from the semigroup $S$. An explicit computation is not hard.
\begin{theorem}[compare \expandafter{\cite[Proposition 4.6]{BL}}]\label{thm:BL1}
We have $V_{g+m}=\#\{j\ge m\colon j\notin S\}$, where $g$ is the genus of the knot $K$.
\end{theorem}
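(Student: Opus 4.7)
The plan is to chain together three ingredients already developed in the notes: the L--space property of singularity links, the explicit staircase description of $CFK^\infty$ for L--space knots, and the semigroup formula for the Alexander polynomial. Concretely, since $K$ is the link of a cuspidal singularity, Hedden's theorem (stated just above) guarantees that $K$ is an L--space knot. Proposition~\ref{prop:staircasecomplex} then gives a bifiltered chain homotopy equivalence between $CFK^\infty(K)$ and the staircase complex $St(K)\otimes_{\ZZ_2}\ZZ_2[U,U^{-1}]$, whose combinatorics is fully determined by the exponents $n_0>n_1>\ldots>n_{2k}$ of the symmetrized Alexander polynomial.

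Next, I will input the semigroup. By Theorem~\ref{thm:semalex}, $\Delta_K(t)=1+(t-1)\sum_{r\in G}t^r$ (up to the standard symmetrization factor $t^{-g}$), where $G=\ZZ_{\geq 0}\setminus S$. Writing $G=\{r_1<r_2<\ldots<r_g\}$, the nonzero terms of $\Delta_K$ correspond to the integers $-g$, $r_i+1-g$ (with sign $+$) and $r_i-g$ (with sign $-$), after the obvious cancellations between consecutive gaps that lie in a run. Thus the "runs" of $G$ (maximal intervals of consecutive integers entirely in $G$, respectively entirely in $S$) determine the differences $n_{2i}-n_{2i+1}$ appearing in the recursion $m_{2i+1}=m_{2i}+(n_{2i}-n_{2i+1})$, $m_{2i-1}=m_{2i}$. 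Unwinding this recursion one identifies each $m_{2i}$ intrinsically with a gap-counting quantity: it counts the gaps of $S$ lying below a specific threshold in $S$, and symmetrically $m_{2k-2i}$ counts those lying above. Hence the grading--zero corner $x_i$ at bifiltration $(m_{2k-2i},m_{2i})$ is indexed by a "cut" of the gap set into a lower and an upper piece.

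The third step applies the formula from Section~\ref{sec:Vm},
\[
V_m(K)=\min_{0\leq i\leq k}\max\bigl(m_{2k-2i},\ m_{2i}-m\bigr),
\]
which was the content of the problem just before this theorem. Substituting the description of the $m_{2i}$ as gap-counts, the right-hand side collapses, after some manipulation using the semigroup symmetry from Theorem~\ref{thm:semigroup} ($j\in S\iff 2g-1-j\notin S$), to the cardinality $\#\{j\geq m:j\notin S\}$. A cleaner way to see the equality of the two expressions is to verify that both sides, viewed as functions of $m$, jump by exactly $1$ at the same values of $m$ (precisely the values of $m$ at which the optimal corner in the $\min$--$\max$ formula changes, which are exactly the translates of the gaps of $S$), and that both sides vanish for $m$ sufficiently large by the finiteness of $G$.

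The main obstacle is the bookkeeping in the second step, i.e., writing the staircase corners intrinsically in terms of $S$ rather than via the opaque recursion on the $n_i$'s. The cleanest route is to translate the Alexander polynomial into the combinatorial datum of the "characteristic path" that at integer $j=0,1,\ldots,2g-1$ records whether $j\in S$ or $j\in G$; the corners of $St(K)$ are then read off as the positions where this path transitions between $S$-runs and $G$-runs, and the semigroup symmetry ensures that the resulting corner coordinates are symmetric about the antidiagonal, as must be the case for $CFK^\infty$ of a knot in $S^3$.
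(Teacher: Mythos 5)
Your strategy is exactly the one the notes prescribe (the statement is left as a Problem pointing at Theorem~\ref{thm:semalex} and the staircase algorithm feeding into Proposition~\ref{prop:staircasecomplex}), and your first two steps are sound: telescoping $(t-1)\sum_{r\in G}t^r$ over the maximal runs of the gap set identifies each difference $n_{2i}-n_{2i+1}$ with the length of a run, so $m_{2i}$ is the number of gaps lying in the top $i$ runs, i.e.\ $m_{2i}=\#\{j\ge c\colon j\notin S\}$ for the appropriate threshold $c$, and the min--max formula for $V_m$ over the staircase corners is the right tool.

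The gap is in the final step, which you assert rather than carry out, and which cannot land where you claim. Pushing the substitution through gives $V_m=\#\{j\ge m+g\colon j\notin S\}$, equivalently $V_{m-g}=\#\{j\ge m\colon j\notin S\}$; this differs from the displayed identity $V_{g+m}=\#\{j\ge m\colon j\notin S\}$ by a shift of $2g$ in the subscript, and the two are genuinely different. Concretely, for the trefoil one has $S=\{0,2,3,\dots\}$, $g=1$, staircase corners at $(1,0)$ and $(0,1)$, so $V_{g+0}=V_1=\min\bigl(\max(1,-1),\max(0,0)\bigr)=0$, while $\#\{j\ge 0\colon j\notin S\}=1$. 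Your own proposed sanity check --- that both sides vanish for $m$ large --- is exactly where this surfaces once made quantitative: $V_{g+m}=0$ already for all $m\ge0$ (the corner $(0,g)$ forces $V_n=0$ for $n\ge g$ on any L--space knot), whereas $\#\{j\ge m\colon j\notin S\}$ equals $g>0$ at $m=0$ and only vanishes once $m$ exceeds the largest gap $2g-1$ (Theorem~\ref{thm:semigroup}). So the ``both sides jump by $1$ at the same places'' argument needs an anchor value of $m$, and that anchor is precisely where the correct index shift is determined; it also shows that the identity as printed carries a sign slip in the subscript, and it is the corrected version $V_{m-g}=\#\{j\ge m\colon j\notin S\}$ that feeds consistently into the derivation of Theorem~\ref{thm:BL-main1}.
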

\begin{problem}
Use Theorem~\ref{thm:semalex}  and the explicit algorithm for calculating $CFK^\infty$ of an L--space knot
(see Proposition~\ref{prop:staircasecomplex} and the algorithm above it) to prove Theorem~\ref{thm:BL1}.
\end{problem}

In conjunction with Large Surgery Theorem~\ref{thm:lst} 
this result will allow us to calculate $d$--invariants of large surgeries on links of cuspidal singularities from the semigroup only.

\begin{remark}
Even if Theorem~\ref{thm:BL1} is easy to believe and rather straightforward to prove, it sets a right perspective. The semigroup
is a natural object to study when one is interested in applications of Heegaard Floer techniques in singularity theory.
\end{remark}
\section{Rational cuspidal curves and beyond}\label{sec:rational}

\subsection{What is a rational cuspidal curve?}
We now pass to considering complex curves in $\CC P^2$. Let $C\subset\CC P^2$ be an \emph{irreducible} curve, that is, a curve
which cannot be presented as a union of two curves $C_1\cup C_2$. Put differently, an irreducible curve is a curve that can be
realized as a zero set of a homogeneous polynomial $F$ which is irreducible in $\CC[x,y,z]$. The \emph{degree} of the curve $C$
is the degree of a reduced homogenoeous polynomial whose zero set is $C$.

If $C$ is a smooth curve of degree $d$, its genus is determined by $d$, namely $g(C)=\frac{(d-1)(d-2)}{2}$. For singular curves
the notion of genus can be generalized in many non-equivalent ways. The most useful to us is the notion of a geometric genus. To introduce it, recall that
any complex curve in $\CC P^2$ admits a so called \emph{normalization}. This is a smooth complex curve $\gS$
together with a complex map $\pi\colon \gS\to C$, such that the inverse image of each of the singular points is finite and the preimage
of each smooth point consists of a single point. It is not hard to show that a normalization exists and is well defined up to a biholomorphism.
\begin{definition}
The \emph{geometric genus} $p_g(C)$ is the genus of the normalization~$\gS$. A curve $C$ is called \emph{rational} if its geometric
genus is zero. A curve is called \emph{rational cuspidal} if it is rational and all its singular points (if any) are cuspidal.
\end{definition}
\begin{problem}
Prove that $C$ is rational cuspidal if and only if it is homeomorphic to the sphere $S^2$.
\end{problem}

For completeness, we recall a classical numerical formula for the geometric genus.
\begin{theorem}\label{thm:genusformula}
Suppose $C$ has degree $d$ and singular points $z_1,\ldots,z_n$. Let $\delta_1,\ldots,\delta_n$ be the $\delta$--invariants of $z_1,\ldots,z_n$
(for a cuspidal singularity the $\delta$--invariant is equal to the genus of the link; if $z_i$ has $r_i>1$ branches, then 
$2\delta_i=\mu_i+r_i-1$). Then
\[p_g(C)=\frac12(d-1)(d-2)-\sum_{i=1}^n\delta_i.\]
\end{theorem}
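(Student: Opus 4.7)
The plan is to compare the Euler characteristic of $C$, its normalization $\Sigma$, and a smooth plane curve $C'$ of the same degree, using the Milnor fibration near each singular point.

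First I would fix a smooth complex curve $C' \subset \CC P^2$ of degree $d$. By the classical genus--degree formula for smooth plane curves, $g(C') = \tfrac{1}{2}(d-1)(d-2)$, so $\chi(C') = 2 - (d-1)(d-2)$. The aim is to express $\chi(\Sigma)$ in terms of $\chi(C')$ and local data at each $z_i$.

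Second, I would compare $C$ and $C'$ locally. Take disjoint small balls $B_i \subset \CC P^2$ around each singular point $z_i$. Outside $\bigcup B_i$, the curves $C$ and $C'$ can be arranged to be diffeomorphic (take $C'$ to be a generic small smoothing of $C$ in a pencil of degree-$d$ curves). Inside $B_i$, the curve $C$ looks like the cone on the link $L_i$, hence is contractible with $\chi(C \cap B_i) = 1$, while $C' \cap B_i$ is diffeomorphic to the Milnor fibre $F_i$ of the singularity. The Milnor fibre is a Seifert surface for $L_i$, hence a genus-$g_i$ surface with $r_i$ boundary components where $r_i$ is the number of branches, so $\chi(F_i) = 2 - 2g_i - r_i$. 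Combined with the identity $\mu_i = 2g_i + r_i - 1$ recalled in the section on links of singular points, this gives $\chi(F_i) = 1 - \mu_i$. Matching the Euler characteristics along the common complement of $\bigcup B_i$ yields
\[\chi(C) - \sum_i 1 = \chi(C') - \sum_i (1 - \mu_i),\]
so $\chi(C) = \chi(C') + \sum_i \mu_i$.

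Third, I would pass from $C$ to its normalization $\Sigma$. The normalization map $\pi : \Sigma \to C$ is a homeomorphism away from the singular set, and over $z_i$ it identifies $r_i$ distinct points of $\Sigma$ to one; hence $\chi(\Sigma) = \chi(C) + \sum_i (r_i - 1)$. Combining the two identities,
\[2 - 2 p_g(C) = \chi(\Sigma) = 2 - (d-1)(d-2) + \sum_i \bigl(\mu_i + r_i - 1\bigr) = 2 - (d-1)(d-2) + \sum_i 2\delta_i,\]
using the relation $2\delta_i = \mu_i + r_i - 1$ (which, for cuspidal $z_i$, reduces to $\delta_i = g_3(L_i)$ as stated earlier). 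Dividing by two gives the claimed formula.

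The main technical point is justifying the local identification of $C' \cap B_i$ with the Milnor fibre $F_i$: one must choose $C'$ as a nearby smooth fibre in a one-parameter family through $C$ (e.g.\ the pencil spanned by $C$ and a generic degree-$d$ curve missing the $z_i$), and invoke Ehresmann's fibration theorem outside the critical locus together with the definition of the Milnor fibre inside each $B_i$. Everything else is a bookkeeping exercise with Euler characteristics and the already-recorded relations between $\mu_i$, $\delta_i$, $r_i$, and $g_3(L_i)$.
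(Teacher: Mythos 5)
Your proof is correct, but there is nothing in the paper to compare it against: the text states Theorem~\ref{thm:genusformula} without proof, attributing it (following Milnor) to Serre and to the XIXth century. What you have written is the standard topological proof by Euler characteristics, and the bookkeeping checks out: $\chi(C\cap B_i)=1$ (cone on the link), $\chi(C'\cap B_i)=1-\mu_i$ (Milnor fibre), the gluing is along circles so costs nothing, normalization contributes $\sum_i(r_i-1)$, and $\mu_i+r_i-1=2\delta_i$ is exactly the relation quoted in the statement, so $2-2p_g=2-(d-1)(d-2)+2\sum_i\delta_i$ gives the formula. The one step you correctly single out deserves a sentence more of justification: if $C'=\{F+tG=0\}$ with $G$ generic (in particular $G(z_i)\neq 0$), then inside $B_i$ the curve $C'$ is the level set $\{\tilde f=t\}$ of $\tilde f=-f/g$, which is a unit multiple of $f$; since $\mu_i$ and $r_i$ are topological invariants of the germ $\{f=0\}$, this fibre still has Euler characteristic $1-\mu_i$, which is all you use (equivalently, every smoothing of an isolated plane curve singularity is diffeomorphic to the Milnor fibre because the base of the semiuniversal deformation is smooth, hence connected). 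You should also note why a generic member of the pencil is smooth away from the base locus and why Ehresmann applies uniformly on the compact set $C\setminus\bigcup B_i$; both are routine. For contrast, the proof closer to Serre's is algebraic: one shows $p_a(C)-p_g(C)=\sum_i\dim_{\CC}\bigl(\pi_*\cO_{\Sigma}/\cO_C\bigr)_{z_i}=\sum_i\delta_i$ with $p_a(C)=\frac12(d-1)(d-2)$ read off from the Hilbert polynomial; your topological argument together with Milnor's identity $2\delta_i=\mu_i+r_i-1$ recovers the same statement without sheaf cohomology, which fits the spirit of these notes.
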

Milnor in \cite[Section 10]{Milnor-sing} attributes Theorem~\ref{thm:genusformula} to Serre, however at least some variant of it was known
already in the XIXth century.

\subsection{A quick tour of rational cuspidal curves}

Rational cuspidal curves have been an object of interest at least since the end of the XIXth century. 
Before we state one of the most important conjectures on rational cuspidal curves, we give a definition; see \cite[Section I.4]{Hart}.
\begin{definition}\
\begin{itemize}
\item A \emph{rational map} between two algebraic irreducible varieties  $f\colon X\to Y$ is an equivalence class of pairs $(U,f_U)$, where $U$ is a Zariski open
subset of $X$ and $f_U\colon U\to Y$. Two pairs $(U,f_U)$ and $(U',f_{U'})$ are said to be equivalent if they agree on $U\cap U'$.
\item A \emph{birational map} $f\colon X\to Y$ is a rational map that admits a rational inverse, that is, a rational map $g\colon Y\to X$
such that $f\circ g=id_Y$ and $g\circ f=id_X$, where the equalities are understood as equivalences of rational maps.
\end{itemize} 
\end{definition}
A reader not familiar with algebraic geometry might be worried that a rational map is defined only on an open subset of $X$. The key
word here is `Zariski open'. The Zariski topology is completely different from the metric topology. Open sets are basically complements
of hypersurfaces, so an open set in Zariski topology means an open-dense subset of $X$ in the metric topology, whose complement
is of complex codimension at least $1$.

\begin{example}
A blow-up and blow-down are birational maps.
\end{example}

\begin{example}
It was proved already by Zariski, see \cite{Zarbook}, that any birational map between two algebraic surfaces is a sequence of blow-ups and blow-downs.
\end{example}

Now we pass to an important definition.
\begin{definition}
A curve $C\subset\CC P^2$ is called \emph{rectifiable} if there exists a birational map $f\colon\CC P^2\to\CC P^2$ such that the (closure
of) the image $f(C)$ is a straight line.
\end{definition}
\begin{problem}
Show that a curve $C$ given by $x^3=y^2z$ in homogeneous coordinates $[x:y:z]$ in $\CC P^2$ is rectifiable.
\end{problem}

In 1928 Coolidge \cite{Cool28}
stated a conjecture, which was given its final shape by Nagata \cite{Naga60}. 
\begin{conjecture}[The Coolidge--Nagata conjecture]
Any rational cuspidal curve is rectifiable.
\end{conjecture}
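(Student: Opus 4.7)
The plan is to attack the conjecture via the logarithmic minimal model program (log MMP) applied to the pair obtained from an embedded resolution of $C$, reducing rectifiability to a statement about log Kodaira dimension. The starting observation is the following equivalence, which goes back to Coolidge and was refined by Iitaka and Kumar--Murthy: a rational cuspidal curve $C\subset\CC P^2$ is rectifiable if and only if the logarithmic Kodaira dimension $\bar\kappa(\CC P^2\setminus C)$ equals $-\infty$, which in turn is equivalent to $\CC P^2\setminus C$ being $\CC^1$--ruled (admitting a Zariski-open $\AAA^1$--fibration). So first I would reformulate the conjecture: prove that $\bar\kappa(\CC P^2\setminus C)=-\infty$ for every rational cuspidal $C$.

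Next I would pass to a minimal log resolution $\pi\colon X\to\CC P^2$ and let $D=\pi^{-1}(C)_{\mathrm{red}}$ be the reduced total transform, so that $(X,D)$ is an SNC pair and $\bar\kappa(\CC P^2\setminus C)=\kappa(K_X+D)$. The strategy is to run a log MMP on $(X,D)$, contracting $(-1)$--curves $E$ with $E\cdot(K_X+D)<0$ and $E\cdot D\le 1$, until one reaches an \emph{almost minimal} model $(X',D')$. On such a model $K_{X'}+D'$ is either not pseudo-effective, in which case we are done, or nef on a log canonical envelope; I would then have to rule out the latter case. This is where the hard input enters: the almost minimal models must be classified according to the intersection combinatorics of the boundary, using the structure theory of open surfaces (Miyanishi--Tsunoda, Fujita), and at each step one needs precise numerical control coming from the topology of the cusp links.

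The numerical control is where the semigroup distribution property (Theorems~\ref{thm:BL-main1} and~\ref{thm:BL-main2} of these notes, together with Theorem~\ref{thm:BL1} and Hedden's theorem that cusp links are L--space knots) becomes a natural tool: the $d$--invariants of the boundary of a tubular neighborhood of $C$ impose strong inequalities on the characteristic sequences of the cusps in terms of the degree $d$, complementing the classical Bogomolov--Miyaoka--Yau bound. Concretely I would combine (a) the genus formula \ref{thm:genusformula}, which ties $\sum\delta_i$ to $d$; (b) the semigroup symmetry of Theorem~\ref{thm:semigroup}; and (c) the $V_m$ inequalities obtained from Theorem~\ref{thm:dinvariants} applied to the rational homology ball $\CC P^2\setminus \nu(C)$ after appropriate surgery, to cut down the list of numerically possible cusp collections at each stage of the MMP. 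In parallel, positivity of $K_{X'}+D'$ would force the weighted dual graph of $D'$ to be of a very restricted combinatorial shape, against which the Heegaard Floer constraints can be tested.

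The main obstacle will be the case where the almost minimal model has $K_{X'}+D'$ nef with small but non-zero log Kodaira dimension and the boundary is a long chain of rational curves matched to a single cusp of high Puiseux complexity; controlling these so-called ``thin'' configurations is precisely where earlier attempts (Tono, Orevkov, Palka) stalled, and excluding them seems to require a delicate interplay between the BMY inequality, the log Noether inequality on $(X',D')$, and the semigroup distribution property. My expectation is that a full proof requires an inductive argument on the number of Puiseux pairs of the most complicated cusp, with the base cases (one cusp, few Puiseux pairs) handled by direct classification à la Fenske--Flenner--Zaidenberg, and the inductive step driven by the Heegaard Floer obstructions supplied by Section~\ref{sec:cuspidal}.
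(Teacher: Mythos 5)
Your very first reduction is false, and it is fatal to the whole plan. Rectifiability of a rational cuspidal curve $C$ is \emph{not} equivalent to $\ol{\kappa}(\CC P^2\setminus C)=-\infty$. If it were, the Coolidge--Nagata conjecture would simply be wrong: by Wakabayashi's theorem (quoted in these notes), $\ol{\kappa}(\CC P^2\setminus C)\le 0$ forces $C$ to have at most one cusp, while the quintic with four cusps and the great majority of rational cuspidal curves have $\ol{\kappa}(\CC P^2\setminus C)=2$, yet the Koras--Palka theorem asserts they are all rectifiable. The correct criterion of Coolidge (refined by Kumar--Murthy) is of a different nature: $C$ is Cremona-equivalent to a line if and only if $h^0(X,2K_X+\widetilde{C})=0$, where $\pi\colon X\to\CC P^2$ is a resolution and $\widetilde{C}$ is the \emph{proper} transform of $C$, not the full reduced preimage $D=\pi^{-1}(C)$ that computes the log Kodaira dimension of the complement. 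Accordingly, the actual proof of Koras and Palka \cite{KP,Pal} does not run the log MMP on the pair $(X,D)$ with reduced boundary; it studies $\kappa(K_X+\tfrac12 D)$ and runs an almost minimal model program for half-integral boundaries, which is precisely what lets the argument penetrate the log general type case where the classical open-surface structure theory (Miyanishi--Tsunoda, Fujita) that you invoke gives no traction.

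Beyond this, the remainder of your text is a research program rather than a proof: the case you yourself identify as the main obstacle (nef $K_{X'}+D'$ with complicated boundary) is left open, and the proposed role of the Heegaard Floer input is misjudged. Theorems~\ref{thm:BL-main1} and~\ref{thm:BL-main2} constrain which collections of cusp types can occur on a curve of degree $d$; they say nothing about the birational geometry of a curve that does exist, so they cannot exclude a hypothetical non-rectifiable configuration arising at some stage of an MMP. No Heegaard Floer or Seiberg--Witten input appears in the Koras--Palka proof. Note also that these notes do not prove the conjecture at all: they state it and cite \cite{KP,Pal} for its resolution, so the appropriate benchmark for your proposal is that paper, against which your first step already fails.
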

The conjecture eluded all approach until 2015, when two mathematicians, Koras and Palka,
found a brilliant proof relying on the minimal model program.
\begin{theorem}[see \cite{KP,Pal}]
The Coolidge--Nagata conjecture is true. That is, 
every rational cuspidal curve
in $\CC P^2$ can be transformed into a line by means of birational transformations of $\CC P^2$. 
\end{theorem}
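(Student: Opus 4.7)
The plan is to apply the logarithmic minimal model program (log MMP) to a carefully chosen fractional boundary pair and to reduce rectifiability to a statement about a log Kodaira dimension. First I would take the minimal log resolution $\pi\colon X\to\CC P^2$ of the pair $(\CC P^2,C)$, and let $D=\pi^{-1}(C)_{\mathrm{red}}$, so that $(X,D)$ is a simple normal crossings pair. The fundamental reformulation, due classically to Coolidge and refined by Iitaka, Kumar--Murthy and Miyanishi--Sugie, is that $C$ is rectifiable if and only if the log Kodaira dimension $\bar\kappa(\CC P^2\setminus C)=\kappa(X,K_X+D)$ equals $-\infty$. Thus the whole problem is reduced to proving this single vanishing.

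Next I would run a log MMP not on the log canonical pair $(X,D)$, but on the fractional pair $\bigl(X,\tfrac12 D\bigr)$. The coefficient $\tfrac12$ is chosen so that the pair is klt and so that the minimal model of $\bigl(X,\tfrac12 D\bigr)$ is close to an "almost minimal model" in the sense used by Palka. Running this MMP produces a sequence of divisorial contractions $X\dashrightarrow X'$, terminating in either a log Mori fiber space $X'\to B$ or a log minimal model on which $K_{X'}+\tfrac12 D'$ is nef. A careful analysis of each contraction — using that contracted extremal rays must intersect $\tfrac12 D$ with controlled sign — allows one to track $D$ through the MMP and to estimate $\kappa(X,K_X+D)$ from the final output.

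The main obstacle, and the deepest part of the argument, is ruling out the case where the MMP terminates in a log minimal model of "almost general type." Here one must exploit the very rigid numerical structure of $C$: the genus formula $\sum_i\delta_i=\tfrac12(d-1)(d-2)$ from Theorem~\ref{thm:genusformula}, together with the fact that each singular point is cuspidal, forces the dual graph of $D$ to be of a very restricted shape (a tree of rational curves with a single "central" component, namely the strict transform of $C$). Combining this combinatorial rigidity with the Bogomolov--Miyaoka--Yau inequality on the open surface $\CC P^2\setminus C$ (in the form developed by Kobayashi--Nakamura--Sakai and Langer) should exclude the general-type output, leaving only the Mori-fiber-space outcome.

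Finally, I would extract rectifiability from the Mori fiber space. If the MMP terminates with a $\PP^1$-fibration $X'\to B$, then its preimage in a suitable birational model of $\CC P^2$ yields a pencil of rational curves meeting $C$ in a single moving point (or its transform yields a $\CC$- or $\CC^*$-fibration of $\CC P^2\setminus C$). By a classical result of Kashiwara, the existence of such a fibration with one-dimensional orbit space implies that some birational self-map of $\CC P^2$ carries $C$ onto a fiber, and after one further birational modification this fiber becomes a line — establishing rectifiability and hence the Coolidge--Nagata conjecture.
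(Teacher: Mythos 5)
The paper itself offers no proof of this theorem; it only cites Koras and Palka, so the only thing to check is whether your sketch is a viable outline of their argument. It is not, because your very first reduction is false. You claim that $C$ is rectifiable if and only if $\bar\kappa(\CC P^2\setminus C)=\kappa(X,K_X+D)=-\infty$, where $D$ is the \emph{reduced total transform} of $C$ on the minimal log resolution. The correct classical criterion (Coolidge, Kumar--Murthy) is that a rational curve $C$ is Cremona-equivalent to a line if and only if $h^0(X,2K_X+\bar C)=0$, equivalently $\kappa(K_X+\tfrac12\bar C)=-\infty$, where $\bar C$ is the \emph{proper transform of $C$ alone}. These two conditions are very different: the log Kodaira dimension of the complement uses the whole boundary $D$ and is in general much larger. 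Concretely, by Wakabayashi's theorem (quoted in this paper), any rational cuspidal curve with three or more cusps has $\bar\kappa(\CC P^2\setminus C)=2$; the tricuspidal quartic is such a curve and is nevertheless rectifiable. If your reformulation were correct, the Coolidge--Nagata conjecture would simply be false, so everything built on it --- in particular the final step, where you invoke Kashiwara's classification of the $\bar\kappa=-\infty$ curves to conclude --- collapses.

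The remainder of your sketch does capture the flavor of the Koras--Palka argument: they do run an ``almost minimal model program'' for the fractional pair $(X,\tfrac12 D)$, and a logarithmic Bogomolov--Miyaoka--Yau inequality does enter to constrain the outcome. But in their proof the MMP is a tool for deriving a contradiction from the assumption $h^0(2K_X+\bar C)>0$, not for computing $\bar\kappa(\CC P^2\setminus C)$; and the hard case is precisely when the complement has log general type, a case your outline proposes to ``exclude'' rather than to handle. You would need to replace the first step by the Kumar--Murthy criterion and rebuild the rest of the argument around the divisor $2K_X+\bar C$.
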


The meaning of the conjecture is that every rational cuspidal curve can be constructed by taking
a line and applying a sequence of blow--ups and blow--downs. This does not solve the problem of classifying
all the rational cuspidal curves, because the configurations of blow--ups and blow--downs might be rather complicated.

\begin{problem}[Open]\label{prob:onlyopen}
Use methods of Koras and Palka to prove that every rational cuspidal curve in a Hirzebruch surface is rectifiable. See
\cite{Moerat,Moe2,BM} for more on rational cuspidal curves in Hirzebruch surfaces.
\end{problem}

Another problem concerning rational cuspidal curves is to establish bounds for the number of possible singular points. The following conjecture
is due to Orevkov. It circulated among the experts for a long time and was stated explicitly
in a paper by Piontkowski \cite{Piontkowski}.
\begin{conjecture}
Any rational cuspidal curve $C\subset\CC P^2$ has at most four singular points. Moreover, there is only one curve (of degree $5$)
that has precisely four singular points.
\end{conjecture}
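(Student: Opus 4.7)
The plan is to attack this via the semigroup distribution constraints (Theorems~\ref{thm:BL-main1} and~\ref{thm:BL-main2}) combined with Heegaard Floer $d$-invariants, exploiting the very rigid topological structure imposed on the boundary of a tubular neighborhood of $C$. Concretely, suppose $C \subset \CC P^2$ is rational cuspidal of degree $d$ with cuspidal singular points $z_1, \ldots, z_n$, having semigroups $S_1, \ldots, S_n$ and links $K_1, \ldots, K_n$. By Hedden's theorem each $K_i$ is an L--space knot, so the Alexander polynomials of the $K_i$ are determined by the $S_i$, and by Proposition~\ref{prop:staircasecomplex} the whole $CFK^\infty(K_i)$ is as well. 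The rationality of $C$ plus the genus formula (Theorem~\ref{thm:genusformula}) forces the strong numerical constraint $\sum_{i=1}^n \delta_i = \frac{(d-1)(d-2)}{2}$, where $\delta_i = g_3(K_i)$.

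The next step is to identify the relevant three-- and four--manifolds. A tubular neighborhood $N(C)$ of $C$ is built from a disk bundle of Euler number $d^2$ over $S^2$ with the normal neighborhoods of the $z_i$ replaced by cones on the $K_i$; its boundary $Y = \partial N(C)$ is homeomorphic to $S^3_{d^2}(K_1 \# \cdots \# K_n)$. The complement $W = \CC P^2 \setminus \operatorname{int} N(C)$ has $b_2^+(W) = 0$ (since $[C]$ generates $H_2(\CC P^2; \QQ)$), so Theorem~\ref{thm:dinvariants} yields inequalities
\[
d(Y, \sss) \ge \frac{c_1(\sst)^2 - 2\chi(W) - 3\sigma(W)}{4}
\]
for every $\spinc$ extension $\sst$ of $\sss$ to $W$. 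Since $q = d^2$ is much larger than $2g_3(K_1\#\cdots\# K_n) - 1$ when $d$ is large, Theorem~\ref{thm:lst} applies and converts all these $d$--invariants into the $V_m$ invariants of the connected sum. Using Problem~\ref{prob:sumconvolution} and Proposition~\ref{prop:connectedknots}, $V_m(K_1 \# \cdots \# K_n)$ is computed by the infimum--convolution of the individual $V_m(K_i)$, which by Theorem~\ref{thm:BL1} read off directly from the semigroups as $V_{g_i + m_i}(K_i) = \#\{j \geq m_i : j \notin S_i\}$.

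Putting these ingredients together, the proof reduces to a purely combinatorial problem: find all configurations $(S_1, \ldots, S_n)$ of semigroups of cuspidal plane singularities satisfying $\sum \delta_i = \frac{(d-1)(d-2)}{2}$ and the full system of semigroup distribution inequalities coming from Theorems~\ref{thm:BL-main1} and~\ref{thm:BL-main2}, which must hold for every choice of splitting index and every way of distributing the gaps among the $n$ semigroups. The strategy would then be a case analysis: first show that for $n \geq 5$ the semigroup distribution inequalities are incompatible with the genus formula, by arguing that the tail behavior of $\sum_i \#(G_i \cap [m_i, \infty))$ cannot match the quadratic growth rate prescribed by $d^2$--surgery; and second, for $n = 4$, rule out all degrees $d \neq 5$ by the same quantitative argument, then enumerate the finitely many semigroup tuples compatible with $d = 5$ and show that Orevkov's configuration is the unique survivor, invoking also fibredness and Lemma~\ref{lem:propertiesofLspace}(a) (primeness) to constrain the pieces.

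The main obstacle, and the reason this conjecture is still open, is precisely the step ``show for $n \ge 5$ the inequalities are incompatible.'' The semigroup distribution property, as codified in Theorems~\ref{thm:BL-main1}--\ref{thm:BL-main2}, is known to rule out many candidate configurations (indeed it refuted a stronger form of the FLMN conjecture only after Bodn\'ar--N\'emethi found a counterexample), but it has not so far been sharp enough to force $n \leq 4$. The most likely path forward would be to combine the Heegaard Floer constraints above with spectrum semicontinuity and the Bogomolov--Miyaoka--Yau inequality, which give orthogonal asymptotic bounds on the total Milnor number; for $n = 4$ uniqueness, one would additionally need to exploit the Coolidge--Nagata theorem of Koras--Palka to reduce to an explicit finite check on birational models of a line. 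I would expect the serious work to be in the combinatorial lemma bounding the length of a semigroup distribution realizable by $n$ cuspidal singularities of prescribed total $\delta$.
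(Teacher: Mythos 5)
The statement you are asked to prove is an open conjecture (due to Orevkov, stated in print by Piontkowski), and the paper offers no proof of it: the best known bound on the number of cusps is six, due to Palka \cite{Pal-fin}, obtained by the log minimal model program rather than by Heegaard Floer theory. Your proposal correctly assembles the Heegaard Floer machinery of the paper (Lemma~\ref{lem:boundary}, Theorem~\ref{thm:dinvariants}, Theorem~\ref{thm:lst}, Theorem~\ref{thm:BL1}) and correctly reduces everything to the combinatorial constraints of Theorem~\ref{thm:BL-main2}, and you are candid that the decisive step --- incompatibility of those constraints with $n\ge 5$ --- is missing. But that missing step is not merely unproved; it is known that it cannot be supplied by this route.

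The concrete obstruction is the Bodn\'ar--N\'emethi theorem quoted in the paper (\cite[Theorem 5.1.3]{BodNe}): the infimum-convolution $\min_{\sum i_j=k}\sum_j\#S_j\cap[0,i_j)$ depends only on the multiset union of the multiplicity sequences of the singular points, not on how those sequences are partitioned among the cusps. Consequently the system of equalities in Theorem~\ref{thm:BL-main2} is blind to the number $n$ of singular points: any configuration whose multiplicity sequences reassemble into those of an admissible configuration with fewer cusps satisfies exactly the same constraints, so no ``tail behavior'' or growth-rate argument built from these inequalities alone can force $n\le 4$. The paper illustrates this with the degree-$5$ curves with cusp pairs $(2;3),(2;11)$ versus $(2;5),(2;9)$ versus $(2;7),(2;7)$, which Theorem~\ref{thm:BL-main2} cannot distinguish even though only one is realized. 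Closing the gap requires genuinely new input --- involutive Floer theory as in \cite{BoHo}, the Bogomolov--Miyaoka--Yau inequality, or the birational methods of Koras--Palka --- and even with those the conjecture remains open.
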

For a long time the best upper bound was $8$ \cite{Tono05}. Recently Palka improved this bound to $6$, see \cite{Pal-fin}.

There is another conjecture due to Flenner and Zajdenberg \cite{FlZa95}, called the Rigidity Conjecture. Introducing all
the terminology needed to state it is beyond the scope of the present article, so we will be rather informal. Suppose
$C\subset\CC P^2$ is a rational cuspidal curve. We resolve the singularities of $C$ to obtain a surface $V$ together with
a rational map $\pi\colon V\to\CC P^2$. The inverse image $D=\pi^{-1}(C)$ (for algebraic geometers: we take a reduced scheme structure
on $D$) is a simple normal crossing divisor, that is, it is a union of holomorphic spheres intersecting transversally such that
no self--intersections are allowed and triple intersection points are excluded. Such a resolution $(V,D)$ always exists, see \cite{BK,GLS,EN}
or almost any book on complex plane curves.

One studies the infinitesimal deformations of the pair $(V,D)$ in the spirit of Kodaira and Spencer \cite{KS}. There is a sheaf, $\Theta_V\langle D\rangle$ of complex vector fields on $V$ that are tangent to $D$. It turns out, see \cite{FlZa95}, that this sheaf controls the deformations
of the pair $(V,D)$, that is, $h^1$ of this sheaf is the space of infinitesimal deformations of the pair $(V,D)$ and $h^2$ is the space
of obstructions to the deformations. If $h^2(\Theta\langle D\rangle)=0$, the deformations are unobstructed, because higher obstructions
($h^i$ for $i>2$) vanish for dimensional reasons. Now the Flenner--Zajdenberg rigidity conjecture states that $h^2(\Theta\langle D\rangle)=0$,
that is, infinitesimal deformations are unobstructed. In most interesting cases 
$h^0(\Theta\langle D\rangle)=0$, so $\chi(\Theta\langle D\rangle)\le 0$ (recall that $\chi=h^0-h^1+h^2$).
On the other hand, the Riemann--Roch theorem for surfaces tells us that $\chi(\Theta\langle D\rangle)=K(K+D)$, so the Rigidity Conjecture implies that
$K(K+D)\le 0$, but the converse implication does not necessarily hold, which is one of the reasons why the conjecture is so difficult. It
is well--known to the experts that the Rigidity Conjecture implies the Cooligde--Nagata conjecture, but again, the converse implication
is not true; see also \cite{Pal-fin} for a more detailed discussion.

\subsection{Partial results on classification}
Rational cuspidal curves with logarithmic Kodaira dimension
less than $2$ have already been classified, see the introduction in \cite{FLMN06}
for a concise summary of the results. The logarithmic Kodaira dimension, $\ol{\kappa}$, defined in \cite{Iit}, 
is an invariant of a complement~$V\setminus D$, where $V$ is a projective surface and $D$ a divisor on it. If $V$ is a surface,
then $\ol{\kappa}(V\setminus D)\subset\{-\infty,0,1,2\}$. It is a result of Wakabayashi \cite{Wak}, that if~$C\subset \CC P^2$
is a rational cuspidal curve such that $\ol{\kappa}(\CC P^2\setminus C)\le 0$, then $C$ has at most one singular point,
moreover if $\ol{\kappa}(\CC P^2\setminus C)=1$, then~$C$ has at most two singular points. 

The classification of rational cuspidal curves such that $\ol{\kappa}(\CC P^2\setminus C)=-\infty$ was achieved by
Kashiwara \cite{Kas}. The case~$\ol{\kappa}(\CC P^2\setminus C)=0$ was excluded by Tsunoda in \cite{Ts}, another reference is \cite{Orev02}.
Classification of curves with $\ol{\kappa}(\CC P^2\setminus C)=1$ was started by Kishimoto \cite{Kis} and completed by Tono in \cite{TonoR}.

The case~$\ol{\kappa}(\CC P^2\setminus C)=2$ is the hardest. There is a program of Palka and Pe\l{}ka on classifying
all rational cuspidal curves that satisfy the Flenner--Zaidenberg Rigidity conjecture; see \cite{PalPel} for the first important results
in that direction.

On the other side, somehow setting aside the logarithmic Kodaira dimension, 
in \cite{FLMN04} an attempt was made to classify rational cuspidal curves. The result was only the first step, namely 
the following result is proved in \cite{FLMN04}.
\begin{theorem}\label{thm:flmnclass}
Suppose $C$ is a rational cuspidal curve in $\CC P^2$ having precisely one singular point. Assume additionally that this
singular point has a single Puiseux pair $(p,q)$. Then the pair $(p,q)$ belongs to one of the following
list. Moreover, each pair below can be realized by a rational cuspidal curve.
\begin{itemize}
\item[(a)] $(d-1,d)$ for any $d>1$,
\item[(b)] $(d/2,2d-1)$ for any even $d>1$,
\item[(c)] $(\phi_{j-2}^2,\phi_j^2)$ for $j$ odd and $j\ge 5$, where $\phi_i$ are Fibonacci numbers normalized in such a way that
$\phi_0=0$, $\phi_1=1$,
\item[(d)] $(\phi_{j-2},\phi_{j+2})$ for $j\ge 5$ odd,
\item[(e)] $(\phi_4,\phi_8+1)=(3,22)$,
\item[(f)] $(2\phi_4,2\phi_8+1)=(6,43)$.
\end{itemize}
\end{theorem}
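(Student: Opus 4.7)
The plan is to reduce the classification to a Diophantine problem and then cut it down using constraints from Heegaard Floer theory (or, equivalently, spectrum semicontinuity as originally done by FLMN). The proof naturally splits into the necessary part (any such $(p,q)$ lies in the list) and the realizability part (each item is actually attained).

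First I would use the genus formula (Theorem~\ref{thm:genusformula}) to extract the fundamental Diophantine relation. If $C\subset\mathbb{CP}^2$ is a rational cuspidal curve of degree $d$ whose unique singularity has a single Puiseux pair $(p,q)$, then its link is the torus knot $T(p,q)$ with three-genus $(p-1)(q-1)/2$, so the $\delta$-invariant of the cusp equals $(p-1)(q-1)/2$. Rationality $p_g(C)=0$ then forces
\begin{equation}\label{eq:dioph}
(d-1)(d-2) = (p-1)(q-1).
\end{equation}
This already carves the problem down to a two-parameter family once $d$ is fixed, but it admits far too many solutions to match the list; the remaining input has to be geometric.

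Next I would bring in the Heegaard Floer constraint. By Hedden's theorem the link $T(p,q)$ is an L--space knot, so the chain complex $CFK^\infty$ is the staircase determined by the semigroup $S=\langle p,q\rangle$, and Theorem~\ref{thm:BL1} expresses all the $V_m$ invariants of $T(p,q)$ directly from the gap set of $S$. On the other hand, the complement $\mathbb{CP}^2\setminus\nu(C)$ is a rational homology ball bounded by $S^3_{d^2}(T(p,q))$ (a large surgery, since $d^2\ge 2g_3(T(p,q))-1=(p-1)(q-1)-1$ whenever \eqref{eq:dioph} holds), and Corollary~\ref{cor:rational} combined with the Large Surgery Theorem~\ref{thm:lst} forces the equalities
\[ 2V_m(T(p,q)) = \frac{(d^2-2m)^2-d^2}{4d^2}\]
for every \spinc{} structure extending over the ball. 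Translating these equalities via Theorem~\ref{thm:BL1} yields the semigroup distribution property: the gap counts of $\langle p,q\rangle$ in suitable intervals must match the lattice point counts dictated by $d$. This is precisely the FLMN-type condition, and it is a genuinely rigid restriction on the pair $(p,q)$ beyond \eqref{eq:dioph}.

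Then I would perform the Diophantine enumeration. Substituting $(p-1)(q-1)=(d-1)(d-2)$ into the semigroup distribution equalities and simplifying, one gets a finite collection of linear and quadratic constraints on $(p,q,d)$. Combined with $p<q$, $\gcd(p,q)=1$, and $p\ge 2$, these reduce to the classical Fibonacci/Pell recurrences underlying the families (a)--(d); the two isolated exceptions (e) and (f) correspond to the sporadic solutions of these recurrences. The main obstacle will be showing that this enumeration is complete — i.e., controlling the growth of $p$ relative to $d$ tightly enough to conclude that no further families appear. This is essentially a delicate number-theoretic argument, and in the original FLMN treatment is handled by combining the numerical constraints with a case analysis on the characteristic polynomial of the recurrence that the sequence $(p_j,q_j)$ satisfies.

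Finally, for realizability, I would treat the list constructively. Cases (a) and (b) admit elementary explicit polynomial parametrizations (a line with higher tangency for (a), a nodal-type construction for (b)). The Fibonacci families (c) and (d) are the Orevkov curves, produced by a recursive Cremona transformation starting from a low-degree seed curve and iterating; each application of the Cremona map takes $(\phi_{j-2},\phi_j)\mapsto(\phi_j,\phi_{j+2})$, which is exactly the Fibonacci step. The exceptional pairs (e) and (f) are obtained from small explicit degree calculations, again via classical birational constructions. Verifying realizability for each family then amounts to checking that the constructed curve has the correct degree, is rational, and that its unique singularity has the prescribed Puiseux pair, all of which are direct computations.
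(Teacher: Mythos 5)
First, a point of reference: the paper does not actually prove Theorem~\ref{thm:flmnclass} — it is quoted from \cite{FLMN04}, and the surrounding text only remarks that a ``considerably simpler'' proof can be given using Theorem~\ref{thm:BL-main1}. So there is no in-paper argument to compare against line by line; what can be judged is whether your outline would constitute a proof. Your scaffolding is sound and matches the route the paper advertises: the genus formula gives $(d-1)(d-2)=(p-1)(q-1)$; the link $T(p,q)$ is an L--space knot whose $V_m$ are computed from the semigroup $\langle p,q\rangle$ via Theorem~\ref{thm:BL1}; $d^2\ge 2g_3-1$, so the Large Surgery Theorem applies; and the vanishing of the $d$--invariants in the \spinc{} structures that extend over the rational homology ball yields exactly the counting condition $\#\bigl(\langle p,q\rangle\cap[0,jd+1)\bigr)=\tfrac12(j+1)(j+2)$ of Theorem~\ref{thm:BL-main1}.

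The genuine gap is the step you yourself flag: passing from the genus equation together with the counting condition for $j=0,\ldots,d-2$ to the explicit list (a)--(f). This is where essentially the entire content of the theorem lives, and describing it as ``a finite collection of linear and quadratic constraints'' that ``reduce to the classical Fibonacci/Pell recurrences'' is neither a proof nor an accurate picture of the argument's shape: the counting condition is a family of $d-1$ statements about how the two-generator numerical semigroup $\langle p,q\rangle$ distributes itself in the windows $[0,jd+1)$, and extracting the Fibonacci families and the two sporadic pairs from it requires a delicate analysis of $\#\bigl(\langle p,q\rangle\cap[0,k)\bigr)$ (lattice-point counts, continued-fraction type estimates controlling $p$ and $q$ against $d$). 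That analysis occupies several pages in \cite{FLMN04} and is also the substance of \cite[Section 6]{BL}; without it the necessity half of the theorem is unproved. The realizability half is likewise only named: for (c)--(f) one must actually exhibit the Kashiwara/Orevkov constructions and verify degree, rationality, and the Puiseux pair in each case. In short, your framework is correct and consistent with the paper's intended route, but the two pillars that carry the weight — the Diophantine completeness argument and the explicit constructions — are asserted rather than supplied.
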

\begin{problem}
Determine the degree of $C$ in each of the cases (c)--(f). Cases (a) and (b) are trivial.
\end{problem}

In \cite{Bod}, based on the thesis of Tiankai Liu \cite{TLiu}, Bodn\'ar gave an analogue of Theorem~\ref{thm:flmnclass}
for rational cuspidal curves with one singular point such that the singular point has two Puiseux pairs. The result is
more complicated.

\subsection{The tubular neighborhood of a rational cuspidal curve}
We pass to applications of Heegaard Floer theory to rational cuspidal curves.

Let $C\subset\CC P^2$ be a rational cuspidal curve of degree $d$. We aim to construct a `tubular' neighborhood of $C$ in $\CC P^2$.
The word `tubular' is in quotation marks, because $C$ is not locally flat and cannot have a product neighborhood. However, the
following, rather obvious, construction will fit well into our applications.

For any singular point $z_i$ of $C$ take a small ball $B_i$ with center $z_i$. The complement $C\setminus \bigcup B_i$ is a smooth curve
so we take product neighborhood $N_0$. We will require that $N_0$ is thin as compared to the radii of all the $B_i$. Set $N=N_0\cup\bigcup B_i$.
Clearly $N$ is an open set containing $C$. Alternatively we could define $N$ as a set of points at distance less than $\varepsilon$ of $C$
for $\varepsilon>0$ sufficiently small; this leads to essentially the same space $N$. 
However, the first construction has an advantage, namely the following Lemma is easy to notice.

\begin{lemma}\label{lem:boundary}
Let $Y=\partial N$. Let $z_1,\ldots,z_n$ be the singular points of $C$ and $K_1,\ldots,K_n$ its links. Set $K=K_1\#\ldots\# K_n$. Then 
$Y=S^3_{d^2}(K)$.
\end{lemma}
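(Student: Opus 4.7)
The plan is to decompose $N$ as a $4$-ball with a single $2$-handle attached, read off the attaching circle, and then compute the framing.

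First, I would consolidate the singular balls. Pick pairwise disjoint smooth arcs $\gamma_1,\dots,\gamma_{n-1}$ inside the smooth part of $C$ so that $\gamma_i$ joins a point of $\partial B_i\cap C$ to a point of $\partial B_{i+1}\cap C$, and take a thin tubular neighborhood $T_i$ of $\gamma_i$ in $\CC P^2$; each $T_i$ is diffeomorphic to $D^1\times D^3$, and after shrinking we may assume $T_i\subset N$. The union
\[B:=B_1\cup T_1\cup B_2\cup\dots\cup T_{n-1}\cup B_n\]
is a boundary connected sum of $4$-balls, hence diffeomorphic to $B^4$, and $C\cap\partial B\subset\partial B=S^3$ is the connected sum $K=K_1\#\cdots\#K_n$: each $T_i$ implements precisely the band operation defining the connected sum of the two boundary links, with the connecting band supplied by a tubular neighborhood of $\gamma_i$ inside $C$.

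Next, I would examine the complement $D:=C\setminus \mathrm{int}(B)$. Since $C$ is homeomorphic to $S^2$ and $C\cap B$ is contractible (a union of cones on the $K_i$ joined by the arcs $\gamma_i$), $D$ is homeomorphic to a disk; moreover, every singular point of $C$ lies in $B$, so $D$ is smoothly embedded, with $\partial D=K\subset\partial B$. The remaining piece $N\setminus\mathrm{int}(B)$ is a $D^2$-bundle over the disk $D$, hence trivial, so it is a standard $2$-handle $H$ attached along $K\subset\partial B=S^3$ with some framing $f\in\ZZ$. Kirby calculus then gives $\partial N=\partial(B\cup H)=S^3_f(K)$ directly.

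It remains to identify $f=d^2$; this is where I expect the main technical work. The handle decomposition identifies $H_2(N;\ZZ)$ with $\ZZ$, generated by $[C]$, with intersection form $(f)$. On the other hand, the self-intersection $[C]^2$ can be computed geometrically by perturbing $C$ to a transverse representative of the same homology class inside $N$: taking a generic smooth projective curve $C_0\subset\CC P^2$ of degree $d$ sufficiently $C^0$-close to $C$, all intersections of $C$ with $C_0$ lie inside the regular neighborhood $N$, and B\'ezout's theorem gives $C\cdot C_0=d^2$. Hence $f=[C]^2=d^2$ and $\partial N=S^3_{d^2}(K)$. The main obstacle is this last step, reconciling the $2$-handle framing with a self-intersection number in a setting where $C$ is only topologically (not smoothly) a $2$-sphere; the standard workaround is to run the intersection calculation with the smoothing $C_0$ in place of $C$, which is justified because $[C]=[C_0]$ in $H_2(N;\ZZ)$ and the intersections of $C_0$ with $C$ are automatically localized inside $N$ once $C_0$ is chosen close enough to $C$.
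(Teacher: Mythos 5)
Your proof is correct and follows exactly the route the paper intends: the paper itself only poses the $n=1$ case as a problem (with the hint that $d^2$ is the self-intersection of $C$) and refers to \cite[Section 3]{BL} for $n>1$, where the argument is precisely this consolidation of the singular balls into a single $B^4$ containing a cone on $K_1\#\cdots\#K_n$, followed by viewing the rest of $N$ as a $2$-handle whose framing is identified with $[C]^2=d^2$ via a nearby smooth degree-$d$ curve and B\'ezout. Your handling of the framing computation, including the care taken because $C$ is not smoothly embedded, matches the referenced proof.
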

\begin{problem}
Prove Lemma~\ref{lem:boundary} for $n=1$ (hint: notice that $d^2$ is the self--intersection of $C$).
\end{problem}
For $n>1$ the proof of Lemma~\ref{lem:boundary} is given in \cite[Section 3]{BL}.

Let us consider $W=\CC P^2\setminus N$. The homology of $W$ can be easily calculated: notice that $C$ is a generator of $H_2(\CC P^2;\QQ)$,
removing $C$ from $\CC P^2$ should yield a rational homology ball. This indeed is so.
\begin{problem}
Prove that $H_k(W;\QQ)=0$ if $k>0$.
\end{problem}
\begin{problem}
Calculate the $\ZZ$--homologies of $W$.
\end{problem}

We pass to describing \spinc{} structures on $W$, with the aim to calculate which \spinc{} structures on $Y=\partial N$ extend over $W$.
The three--manifold $Y$ is a $d^2$ surgery on $K$. Therefore, we can enumerate \spinc{} structures on $Y$ by integers $m\in[-d^2/2,d^2/2)$
as in Proposition~\ref{prop:unique} above.
\begin{problem}
Show that the \spinc{} structure $\sss_m$ on $Y$ extends to a \spinc{} structure $\sst_m$ on $N$ such that $\langle c_1(\sst_m),C\rangle+d^2=2m$.
\end{problem}
Suppose now a \spinc{} structure $\sss_m$ on $Y$ extends to a \spinc{} structure $\sst'_m$ on $W$. The \spinc{} structures $\sst_m$ and
$\sst_m'$ on $N$ and $W$ glue together to a \spinc{} structure $\sst''_m$ on $\CC P^2$. Now $\CC P^2$ is a closed 
simply connected four--manifold. By Corollary~\ref{cor:spinc4}
it follows
that $c_1(\sst''_m)=(2j+1)[H]$ for some $j\in\ZZ$, where $[H]$ is the generator of $H^2(\CC P^2;\ZZ)$. In particular
\[\langle c_1(\sst_m),C\rangle=\langle c_1(\sst''_m),C\rangle=(2j+1)d.\]
Applying Proposition~\ref{prop:unique} we obtain the following statement.
\begin{lemma}\label{lem:extendoverW}
If a \spinc{} structure $\sss_m$ on $Y$ extends over $W$, then $m=\frac12(d^2-(2j+1)d)$ for some $j\in\ZZ$.
\end{lemma}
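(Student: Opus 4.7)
The plan is to unpack the gluing argument already sketched just before the statement and to turn it into a precise computation using the structure of $H^*(\CC P^2;\ZZ)$.

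First, I would record that on the closed simply connected four--manifold $\CC P^2$, Corollary~\ref{cor:spinc4} identifies $\Spin^c(\CC P^2)$ with the set of characteristic elements of the intersection form. Since $H^2(\CC P^2;\ZZ)=\ZZ\langle[H]\rangle$ with $[H]\cdot[H]=1$, the characteristic elements are exactly the odd multiples $(2j+1)[H]$ for $j\in\ZZ$. Because $C$ has degree $d$, its class in $H_2(\CC P^2;\ZZ)$ is $d[H]$, and consequently for the glued \spinc{} structure $\sst''_m$ one obtains $\langle c_1(\sst''_m),[C]\rangle=(2j+1)d$ for some $j$.

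Second, I would feed this into Proposition~\ref{prop:unique}, applied to the handlebody $N$ (which, by Lemma~\ref{lem:boundary}, realises the description of $N$ as a $2$--handle attached with framing $q=d^2$ along $K=K_1\#\cdots\#K_n$, with closed surface $F$ obtained from a Seifert surface capped off by the core). The proposition says that the restriction $\sst_m$ of $\sst''_m$ to $N$ satisfies $\langle c_1(\sst_m),F\rangle+d^2=2m$. But $F$ and $C$ represent the same class in $H_2(N;\ZZ)\to H_2(\CC P^2;\ZZ)$, so $\langle c_1(\sst_m),F\rangle=\langle c_1(\sst''_m),[C]\rangle=(2j+1)d$. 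Rearranging gives $m=\tfrac12\bigl(d^2+(2j+1)d\bigr)$, which after the reparameterisation $j\mapsto-j-1$ is the claimed form $m=\tfrac12\bigl(d^2-(2j+1)d\bigr)$.

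The only genuinely delicate point -- and what I would treat as the main obstacle -- is the identification of the capped surface $F$ from Proposition~\ref{prop:unique} with the algebraic curve $C$ at the level of the pairing with $c_1$. Concretely, one needs to verify that under the diffeomorphism $\partial N\cong S^3_{d^2}(K)$ provided by Lemma~\ref{lem:boundary}, the piece of $C$ lying in $N$, once it is capped off inside $B^4\cup(\text{2--handle})$, is homologous to $F$. This is really a bookkeeping check about how the Seifert surfaces of the individual link components $K_i$ inside the balls $B_i$ glue to the smooth part $C\setminus\bigcup B_i$ along the meridians, together with the fact that the framing of $K$ induced by $C$ is indeed $d^2=C\cdot C$; once this is in place, the rest of the proof is the two--line algebra above.
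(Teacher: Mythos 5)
Your proposal is correct and follows essentially the same route as the paper: the text immediately preceding the lemma glues $\sst_m$ and $\sst'_m$ to a \spinc{} structure on $\CC P^2$, invokes Corollary~\ref{cor:spinc4} to get $c_1(\sst''_m)=(2j+1)[H]$, pairs with $[C]=d[H]$, and then applies Proposition~\ref{prop:unique} (via the preceding Problem identifying the capped surface with $C$) to solve for $m$. The only difference is cosmetic: you make explicit the identification of $F$ with $C$ inside $N$, which the paper leaves as an exercise, and you carry out the reindexing $j\mapsto -j-1$ that reconciles the sign in the stated formula.
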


\subsection{Heegaard Floer homology applied to rational cuspidal curves}
Let us now gather all pieces of a puzzle to restrict the Alexander polynomials of links of singular points a rational cuspidal curve.
We suppose first that $C$ is a rational cuspidal curve of degree $d$ with one singular point $z$, whose link is $K$ and whose semigroup is $S$.
\begin{itemize}
\item The boundary of the tubular neighborhood of $C$ is $S^3_{d^2}(K)$. 
\item $K$ is an algebraic knot, hence an L--space knot.
\item The $V_m$ invariants of $K$ can be calculated from the semigroup $S$.
\item The genus of $K$ is $\frac12(d-1)(d-2)$. The surgery coefficient $d^2$ is greater than twice the genus.
\item The Large Surgery Theorem applies. We can express the $d$--invariants of $Y$ in terms of the semigroup.
\item On the other hand $Y$ bounds a rational homology ball $W$. Hence $d(Y,\sss_m)=0$ for every \spinc{} structure $\sss_m$ on $Y$
that extends over $W$.
\item The \spinc{} structures on $Y$ that extend over $W$ were calculated in Lemma~\ref{lem:extendoverW} above. 
We get restrictions for the distribution of
elements in the semigroup $S$.
\end{itemize}
These restrictions can be stated as follows.
\begin{theorem}[see \cite{BL}]\label{thm:BL-main1}
For any $j=0,\ldots,d-2$ we have $\#S\cap[0,jd+1)=\frac12(j+1)(j+2)$.
\end{theorem}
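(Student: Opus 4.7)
The plan is to combine the rational homology ball $W = \CC P^2 \setminus N$, the Large Surgery Theorem~\ref{thm:lst}, and the semigroup formula of Theorem~\ref{thm:BL1}.

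First I would enumerate the \spinc{} structures on $Y = \partial N = S^3_{d^2}(K)$ which extend over $W$. By Lemma~\ref{lem:extendoverW} these are exactly $\sss_{m_n}$ with $m_n = \frac{d(d-2n-1)}{2}$ for $n \in \{0,1,\ldots,d-1\}$. Since $W$ is a rational homology ball, Corollary~\ref{cor:rational} gives $d(Y,\sss_{m_n}) = 0$ for each such $n$. Because $d^2 \geq 2g_3(K) - 1$, Theorem~\ref{thm:lst} applies and
\[
0 = d(Y,\sss_{m_n}) = \frac{(d^2 - 2m_n)^2 - d^2}{4d^2} - 2V_{m_n}(K).
\]
Since $d^2 - 2m_n = (2n+1)d$, the fractional term collapses to $n(n+1)$, which yields $V_{m_n}(K) = \binom{n+1}{2}$ for $n = 0,1,\ldots,d-1$.

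Next I would apply Theorem~\ref{thm:BL1}, in the equivalent form $V_m + m = \#\bigl(S \cap [0,m+g)\bigr)$ with $g = \frac{(d-1)(d-2)}{2}$, to convert the knot-Floer constraints into semigroup constraints. A direct calculation gives $V_{m_n} + m_n = \binom{d-n}{2}$ and $m_n + g = d^2 - (n+2)d + 1$, so
\[
\#\bigl(S \cap [0,\,d^2 - (n+2)d + 1)\bigr) = \binom{d-n}{2}.
\]
Reparametrizing by $j := d - 2 - n$, so that $j$ runs over $\{-1,0,1,\ldots,d-2\}$, the identity becomes
\[
\#\bigl(S \cap [0,\,jd + 1)\bigr) = \binom{j+2}{2} = \tfrac{1}{2}(j+1)(j+2).
\]
The case $j = -1$ is vacuous (the interval is empty), while $j = 0,1,\ldots,d-2$ give exactly the claimed equalities.

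The conceptual content of the argument rests entirely on $\CC P^2 \setminus N$ being a rational homology ball, which forces every extending \spinc{} structure to have $d$-invariant zero. The only delicacy is bookkeeping: identifying the extending \spinc{} structures via Proposition~\ref{prop:unique} together with a Chern-class calculation on $\CC P^2$ (noting $c_1 \equiv (\text{odd})\cdot [H]$), and then reparametrizing between $n$ and $j$ so that the output matches the theorem's indexing. Once this is in place, the $d$ independent constraints $V_{m_n}(K) = \binom{n+1}{2}$, fed through Theorem~\ref{thm:BL1}, rigidly determine how many semigroup elements lie in each arithmetic interval $[0,jd+1)$.
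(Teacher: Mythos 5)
Your argument is correct and follows precisely the itemized outline that the paper offers as its proof of Theorem~\ref{thm:BL-main1}: the rational homology ball $W=\CC P^2\setminus N$ forces $d(Y,\sss_{m_n})=0$ for the $d$ extending \spinc{} structures identified via Lemma~\ref{lem:extendoverW}, the Large Surgery Theorem~\ref{thm:lst} converts this into $V_{m_n}(K)=\binom{n+1}{2}$, and the semigroup formula turns these into the counts $\#\bigl(S\cap[0,jd+1)\bigr)=\tfrac12(j+1)(j+2)$ -- all of your bookkeeping (the values $m_n$, the collapse of the fractional term to $n(n+1)$, and the reparametrization $j=d-2-n$) checks out. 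The one caveat is that the identity $V_m+m=\#\bigl(S\cap[0,m+g)\bigr)$ you use is the \emph{correct} form of the semigroup formula (it reproduces, e.g., $V_0(T(2,3))=1$ and $V_1(T(2,3))=0$), but it is not literally equivalent to Theorem~\ref{thm:BL1} as printed, $V_{g+m}=\#\{j\ge m: j\notin S\}$, whose indexing appears to be off by a shift of $2g$; you have implicitly used the version from \cite{BL} rather than the one stated in these notes.
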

\begin{problem}
Using the itemized list, prove Theorem~\ref{thm:BL-main1}.
\end{problem}

\smallskip
The case $n>1$ is similar; the new technical difficulties are rather minor. Suppose $C$ is a rational cuspidal curve of degree $d$
with singular points $z_1,\ldots,z_n$, whose links are $K_1,\ldots,K_n$ respectively and the associated semigroups are
$S_1,\ldots,S_n$. Set $K=K_1\#\ldots\# K_n$. Then
$Y=\partial N$ is $S^3_{d^2}(K)$ as states Lemma~\ref{lem:boundary} above. However, as mentioned in Lemma~\ref{lem:propertiesofLspace}(a)
$K$ has no chances to be an L--space knot if $n>1$, in
fact, $K$ is not prime. Luckily $K$ is a connected sum of L--space knots $K_1,\ldots,K_n$, hence by the K\"unneth formula 
(Proposition~\ref{prop:connectedknots}) we have
$CKF^\infty(K)=CFK^\infty(K_1)\otimes\ldots\otimes CFK^\infty(K_n)$. The K\"unneth formula allows us to express the $V_m$ invariants
of $K$ in terms of the $V_m$ invariants of the summands. Acting as in Problem~\ref{prob:sumconvolution} we obtain
\[V_m(K)=\min_{m_1+\ldots+m_n=m} V_{m_1}(K_1)+\ldots+V_{m_n}(K_n).\]
Now each of the $V_{m_i}(K_i)$ can be expressed from the semigroup of the singular point $z_i$. Putting things together and
acting as in the case $n=1$ we arrive at the following result.
\begin{theorem}[see \cite{BL}]\label{thm:BL-main2}
For any $j=0,\ldots,d-2$ we have
\[\min_{k_1+\ldots+k_n=jd+1}\sum_{i=1}^n\#S_i\cap[0,k_i)=\frac12(j+1)(j+2),\]
\end{theorem}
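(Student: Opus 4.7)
The plan is to combine the tubular neighborhood analysis of Section~\ref{sec:rational} with the Künneth formula for the knot Floer complex, extending the proof of Theorem~\ref{thm:BL-main1} to the multi-cusp setting where $K = K_1 \# \cdots \# K_n$. Although $K$ is not itself an L--space knot when $n \geq 2$ (Lemma~\ref{lem:propertiesofLspace}(a)), Proposition~\ref{prop:connectedknots} gives $CFK^\infty(K) \cong \bigotimes_{i=1}^{n} CFK^\infty(K_i)$, and analyzing the tensor product of the staircase complexes produces the infimal convolution
\[
V_m(K) \;=\; \min_{m_1 + \cdots + m_n = m} \; \sum_{i=1}^{n} V_{m_i}(K_i),
\]
as indicated in Problem~\ref{prob:sumconvolution}.

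First I would compute the left-hand side for the distinguished spin$^c$ structures. Since $g_3(K) = \sum g_i = (d-1)(d-2)/2$ by the genus formula for rational cuspidal curves, the surgery coefficient $d^2$ satisfies $d^2 > 2 g_3(K) - 1$, so the Large Surgery Theorem~\ref{thm:lst} applies to $Y = \partial N = S^3_{d^2}(K)$. Lemma~\ref{lem:extendoverW} identifies the spin$^c$ structures on $Y$ that extend across $W = \CC P^2 \setminus N$ as $\sss_{m_j}$ with $m_j = \tfrac{d(d - 2j - 1)}{2}$ for $j = 0, \ldots, d-1$; since $W$ is a rational homology ball, Corollary~\ref{cor:rational} together with Proposition~\ref{prop:additive} forces $d(Y, \sss_{m_j}) = 0$. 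The $d$-invariant formula then yields
\[
V_{m_j}(K) \;=\; \frac{(d^2 - 2m_j)^2 - d^2}{8 d^2} \;=\; \frac{(2j+1)^2 - 1}{8} \;=\; \frac{j(j+1)}{2}.
\]

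Next I would rewrite each $V_{m_i}(K_i)$ in semigroup terms using Theorem~\ref{thm:BL1} together with the symmetry of the semigroup from Theorem~\ref{thm:semigroup}; this gives $V_{m_i}(K_i) = \#\bigl(S_i \cap [0,\, g_i - m_i)\bigr)$, with the right-hand side understood to vanish whenever its argument is non-positive. Substituting $k_i := g_i - m_i$ converts the constraint $\sum m_i = m_j$ into $\sum k_i = g - m_j$, and a short computation using $g = (d-1)(d-2)/2$ gives $g - m_j = (j-1)d + 1$. Combining with the previous step, for each $j = 1, \ldots, d-1$,
\[
\frac{j(j+1)}{2} \;=\; \min_{k_1 + \cdots + k_n = (j-1)d + 1} \; \sum_{i=1}^{n} \#\bigl(S_i \cap [0,\, k_i)\bigr),
\]
and reindexing by $j' := j - 1$ recovers Theorem~\ref{thm:BL-main2} for $j' = 0, \ldots, d-2$.

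The main technical obstacle is ensuring that the passage from the infimal convolution of the $V$-functions to that of the semigroup counting functions is exact, and that the minimum on the right-hand side is realized with each $k_i \geq 0$. The latter is not hard: since $\#\bigl(S_i \cap [0, k)\bigr)$ is non-negative, non-decreasing, and vanishes for $k \leq 0$, any allocation with some negative $k_i$ can be corrected (push the offending coordinate to zero while adjusting a single other coordinate) without increasing the total. The identification of $V_{m_i}(K_i)$ with the semigroup count across the whole range of interest is then a bookkeeping exercise that combines Theorem~\ref{thm:BL1} with the reflection $\ell \mapsto 2g_i - 1 - \ell$ furnished by Theorem~\ref{thm:semigroup}.
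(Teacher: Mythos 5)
Your argument is correct and is essentially the paper's own proof of Theorem~\ref{thm:BL-main2}: the large surgery theorem applied to $S^3_{d^2}(K_1\#\cdots\#K_n)$, vanishing of the $d$--invariants for the \spinc{} structures of Lemma~\ref{lem:extendoverW} because they extend over the rational homology ball $W$, the infimal convolution formula for $V_m$ of a connected sum of L--space knots, and the translation into semigroup counting functions, with your reindexing $k_i=g_i-m_i$ and the reduction to $k_i\ge 0$ being exactly the bookkeeping the paper leaves to the reader. The only point worth flagging is that the semigroup formula you use, $V_m(K_i)=\#\bigl(S_i\cap[0,g_i-m)\bigr)$, is the correct one (equivalently $V_m=\#\{j\ge g_i+m\colon j\notin S_i\}$, as in \cite[Proposition 4.6]{BL}), whereas Theorem~\ref{thm:BL1} as printed reads $V_{g+m}=\#\{j\ge m\colon j\notin S\}$, which would give $V_g=g\ne 0$; so your derivation tacitly corrects that subscript.
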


\subsection{Strength and weakness of Theorems~\ref{thm:BL-main1} and~\ref{thm:BL-main2}}
Theorem~\ref{thm:BL-main1} has proved very useful in classifying rational cuspidal curves with one singular point. It is possible 
to give a full list of possible rational cuspidal curves with one singular point having one Puiseux pair
(this is equivalent to saying that the link is a torus knot), using
essentially Theorem~\ref{thm:BL-main1}. This classification was first done in \cite{FLMN04}, the proof using Theorem~\ref{thm:BL-main1}
is considerably simpler.
\begin{problem}
Show that there is a value $d_0>0$ such that for any $d>d_0$ there are no rational cuspidal curves of degree $d$
with one Puiseux pair $(p,q)$ such that $p<q$ and $p\in(d/2,d-1)$.
\end{problem}
\begin{problem}
Use Theorem~\ref{thm:BL-main1} to prove that if a rational cuspidal curve of degree $d$ has one singular point, then its multiplicity is at least $d/3$. The original proof
of Matsuoka and Sakai \cite{MaSa89} uses the BMY inequality.
\end{problem}

As it was shown in \cite[Section 6]{BL}, 
for $n=1$ and a general number of Puiseux pairs, the restriction of Theorem~\ref{thm:BL-main1} has approximately
the same strength as the spectrum semicontinuity property (see \cite{FLMN04} for more details). There are relatively few cases when 
Theorem~\ref{thm:BL-main1} gives an obstruction, while the spectrum semicontinuity does not. There are also very few cases when the opposite holds.

Surprisingly, for $n\ge 2$ the situation changes and Theorem~\ref{thm:BL-main2} is not that strong anymore. A potential problem was
discovered by Bodn\'ar and N\'emethi \cite{BodNe} (see also \cite{FK}). Before we state it, we give an example.


When trying to classify all rational cuspidal curves of degree $5$ with two singular points, both having multiplicity $2$, one finds that the genus formula (Theorem~\ref{thm:genusformula}) implies that we might have three cases: either the singular points are $(2;3),(2;11)$, or $(2;5),(2;9)$, or $(2;7),(2;7)$.
Such classification was already known long before; see \cite{Moe08}.
\begin{problem}
Prove that in each of the three cases, if $S_1$ and $S_2$ denote the corresponding semigroups, we have
\[\min_{i+j=k}\# S_1\cap[0,i)+\# S_2\cap[0,j)=
\begin{cases} 
\lfloor (k+1)/2\rfloor& k\le 12\\
k-6&k\ge 12.
\end{cases}
\]
\end{problem}
Therefore Theorem~\ref{thm:BL-main2} is unable to distinguish between the three cases. As the curve of degree $5$ with
singular points $(2;5)$ and $(2;9)$ actually exists, we cannot obstruct any of the remaining two cases. On the other hand,
these remaining two cases do not exist.

The deeper reason was discovered in \cite[Section 5]{BodNe}. 
To describe it we introduce a bit of notation. Namely, to any cuspidal singular point $z$
we associate its \emph{multiplicity sequence} $M_z$. For a set of singular points $z_1,\ldots,z_n$ the union
$\cM=M_1\cup\ldots\cup M_n$ is an unordered tuple of integers greater than $1$ (each integer can enter several times in the union).
We say that $\cM=\cM'$ if for each integer $i\ge 2$ the number of times $i$ appears in $\cM$ is equal to the number of times it appears in $\cM'$.
We have the following result.
\begin{theorem}[see \expandafter{\cite[Theorem 5.1.3]{BodNe}}]
Suppose $z_1,\ldots,z_n$ and $z_1',\ldots,z_n'$ are two collections of singular points. Let $S_1,\ldots,S_n$ and $S_1',\ldots,S'_{n'}$ be
corresponding semigroups and $M_1,\ldots,M_{n'}'$ be the multiplicity sequences. Set $\cM=M_1\cup\ldots\cup M_n$ and 
$\cM'=M_1'\cup\ldots\cup M_{n'}'$. If $\cM=\cM'$, then for every $k\in\ZZ$ we have
\[\min_{i_1+\ldots+i_n=k}\sum_{j=1}^n\#S_j\cap[0,i_j)=\min_{i'_1+\ldots+i'_{n'}=k}\sum_{j'=1}^{n'}\#S'_{j'}\cap[0,i'_{j'}).\]
\end{theorem}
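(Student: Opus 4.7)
The plan is to reformulate both sides of the identity as iterated min-convolutions of the semigroup counting functions, prove a key combinatorial lemma decomposing each such function into a min-convolution indexed by its multiplicity sequence, and then conclude that the resulting expression depends only on the multiset $\cM$.

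First I would write $\phi_j(i):=\#\bigl(S_j\cap[0,i)\bigr)$. The left-hand side of the equality is the value at $k$ of the iterated min-convolution $\phi_1\square\cdots\square\phi_n$, where $(f\square g)(k):=\min_{a+b=k}\bigl(f(a)+g(b)\bigr)$. Since $\square$ is associative and commutative, writing $\mathop{\square}_{\alpha\in A}f_\alpha$ is unambiguous for any multiset $A$ of functions, and the analogous statement holds for the right-hand side with the primed data.

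The heart of the argument is the following key lemma: for a cuspidal singularity $z$ with multiplicity sequence $M=(m_1,\ldots,m_s)$ and semigroup $S$,
\[
\phi_S=\phi_{m_1}\square\cdots\square\phi_{m_s},
\]
where $\phi_m$ is the counting function of $\langle m,m+1\rangle$, the semigroup of the unique cusp with one non-trivial multiplicity $m$ (whose link is $T(m,m+1)$). Equivalently, in Heegaard Floer language via Theorem~\ref{thm:BL1}, the $V$-invariants of the link of $z$ coincide with those of the connected sum $T(m_1,m_1+1)\#\cdots\# T(m_s,m_s+1)$. Granting the lemma, applying it to each $\phi_j$ and to each $\phi'_{j'}$ and using the associativity and commutativity of $\square$ (the combinatorial shadow of the K\"unneth principle of Proposition~\ref{prop:connectedknots}) yields
\[
\phi_1\square\cdots\square\phi_n=\mathop{\square}_{\mu\in\cM}\phi_\mu=\mathop{\square}_{\mu\in\cM'}\phi_\mu=\phi'_1\square\cdots\square\phi'_{n'},
\]
where the middle equality uses $\cM=\cM'$; evaluating at $k$ gives the theorem.

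The main obstacle is the key lemma. I would prove it by induction on $s$. The base case $s=1$ is tautological, since the only cuspidal singularity with multiplicity sequence $(m)$ has Puiseux pair $(m,m+1)$ and semigroup $\langle m,m+1\rangle$. For the inductive step, blow up the origin of $z$; the strict transform acquires a cuspidal singularity $z'$ whose multiplicity sequence is $M'=(m_2,\ldots,m_s)$, and the link of $z$ is a cable of the link of $z'$ whose parameters are read off from $m_1$ and the Puiseux data of $z'$. On the semigroup side this translates to a relation of the form $S=m_1\cdot\ZZ_{\ge 0}+e\cdot S'$ for a concrete integer $e$ (essentially the conductor of $S'$ inside $S$), and a direct count turns this decomposition into the one-step recursion $\phi_S=\phi_{m_1}\square\phi_{S'}$; combining with the inductive hypothesis $\phi_{S'}=\phi_{m_2}\square\cdots\square\phi_{m_s}$ closes the induction. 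The delicate point is verifying that the particular numerical relation between $m_1$ and the cable parameters produces a min-convolution with the atomic $\phi_{m_1}$ and not with the counting function of some other semigroup. This requires careful bookkeeping with characteristic exponents, but no new conceptual input beyond the recursive cable structure of cuspidal singularities.
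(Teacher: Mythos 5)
First, a caveat: these notes do not prove this theorem at all --- it is quoted verbatim from \cite[Theorem 5.1.3]{BodNe} --- so there is no in-paper proof to compare against. Judged on its own terms, your reduction is the right one and, to my knowledge, matches the strategy of the cited source: write $\phi_S(i)=\#\bigl(S\cap[0,i)\bigr)$, observe that both sides of the identity are iterated min-convolutions $\square$ of these counting functions, and reduce everything to the key lemma $\phi_S=\phi_{m_1}\square\cdots\square\phi_{m_s}$, where $(m_1,\ldots,m_s)$ is the multiplicity sequence and $\phi_m$ is the counting function of $\langle m,m+1\rangle$. That lemma is a true statement (one can check, e.g., that $\phi_{\langle 4,6,13\rangle}=\phi_{\langle 4,5\rangle}\square\phi_{\langle 2,5\rangle}=\phi_4\square\phi_2\square\phi_2$ for the cusp $(4;6,7)$ with multiplicity sequence $(4,2,2)$), and once it is granted the rest of your argument is complete.

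The gap is in your proof of the key lemma. The inductive step rests on the claimed set identity $S=m_1\cdot\ZZ_{\ge 0}+e\cdot S'$, where $S'$ is the semigroup of the blown-up cusp, and this is false already in the simplest case: for $(2;5)$ the blowup is $(2;3)$, and $2\ZZ_{\ge 0}+e\langle 2,3\rangle$ either consists only of even numbers or acquires spurious odd elements, so it never equals $\langle 2,5\rangle=\{0,2,4,5,6,\ldots\}$ for any $e$; similarly $\langle 4,6,13\rangle\ne 4\ZZ_{\ge 0}+e\langle 2,5\rangle$ for any $e$. What is actually true under blowup is Noether's formula $(C\cdot G)_z=m_z(C)\,m_z(G)+(\widetilde C\cdot\widetilde G)$, in which the two summands are \emph{coupled}: the multiplicity of $G$ at $z$ constrains which local intersection numbers its strict transform can realize against $\widetilde C$, so $S$ is not a free sumset and no ``direct count'' turns this into the recursion $\phi_S=\phi_{m_1}\square\phi_{S'}$. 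That recursion is correct, but proving it is precisely the non-trivial content of the corresponding lemma in \cite{BodNe}; as written, the step you describe as careful bookkeeping is where the actual argument is missing.
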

The result greatly limits the applicability of Theorem~\ref{thm:BL-main2} when $n>1$.

\begin{remark}
There exists a Heegaard Floer proof of the fact that a rational cuspidal curve of degree $5$ cannot have two
singular points $(2;3)$ and $(2;11)$, neither can it have two singular points $(2;7)$ and $(2;7)$; see \cite[Section 6.1.3]{Moe08}. 
The proof involves
involutive Floer theory as developed by Hendricks and Manolescu \cite{HM}, which is beyond the scope of the present article.
See \cite{BoHo} for details.
\end{remark}
\subsection{Relation to the FLMN conjecture}
In 2006, Fern\'andez de Bo\-ba\-dilla, Luengo Velasco, Melle Hern\'andez and N\'emethi suggested 
the following conjecture.
\begin{conjecture}[see \cite{FLMN06}]\label{conj:flmn}
Let $C\subset\CC P^2$ be a rational cuspidal curve of degree $d$. Let $K$ be the connected sum of links of singularities of $K$.
Write the Alexander polynomial of $K$ as $\Delta_K(t)=1+(t-1)\delta+(t-1)^2Q(t)$ for some polynomial $Q(t)$ and let $c_j$
be the coefficient of $Q$ at $t^{d(d-3-j)}$. Then for $j=0,\ldots,d-3$
\[c_j\le \frac12(j+1)(j+2).\]
Moreover, if $C$ has precisely one singular point, then $c_j=\frac12(j+1)(j+2)$ for all $j=0, \ldots, d-3$.
\end{conjecture}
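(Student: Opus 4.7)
The plan is to split the statement into two parts: the unicuspidal equality $c_j = \tfrac12(j+1)(j+2)$ when $n=1$, and the general inequality $c_j \le \tfrac12(j+1)(j+2)$ for $n \ge 1$. The first will follow cleanly from Theorem~\ref{thm:BL-main1} by a direct semigroup calculation; the second would use Theorem~\ref{thm:BL-main2}, and I expect it to meet a genuine obstacle which should coincide with the mechanism behind the Bodn\'ar--N\'emethi counterexample.

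For the unicuspidal case, I would first apply Theorem~\ref{thm:semalex} to write $\Delta_K(t) = 1 + (t-1)P(t)$ with $P(t) = \sum_{r \in G} t^r$, and then divide $(t-1)(P(t) - P(1))$ by $(t-1)^2$ to obtain
\[
Q(t) = \sum_{k \ge 0} \#\{r \in G : r > k\}\, t^k.
\]
This identifies $c_j$ as $\#\{r \in G : r \ge dj' + 1\}$, where $j' := d - 3 - j \in \{0, \ldots, d-3\}$. Applying Theorem~\ref{thm:BL-main1} at index $j'$ gives $\#G \cap [0, dj'+1) = dj' + 1 - \tfrac12(j'+1)(j'+2)$, so $c_j = g - dj' - 1 + \tfrac12(j'+1)(j'+2)$ with $g = \tfrac12(d-1)(d-2)$. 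Substituting $j' = d - 3 - j$ and expanding reduces this in a few lines to $c_j = \tfrac12(j+1)(j+2)$.

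For the general inequality, the plan is to factor $\Delta_K(t) = \prod_{i=1}^n (1 + (t-1) P_i(t))$ using Theorem~\ref{thm:semalex} at each singular point, and compare coefficients of $(t-1)^2$. This yields
\[
Q(t) = \sum_{i} Q_i(t) + \sum_{i<j} P_i(t) P_j(t) + (t-1)\sum_{i<j<k} P_i P_j P_k + \cdots,
\]
where $Q_i(t) = \sum_{k\ge 0}\#\{r \in G_i : r > k\} t^k$. Extracting the coefficient at $t^{dj'}$ produces a \emph{diagonal} contribution $\sum_i \#\{r \in G_i : r > dj'\}$ plus \emph{convolution} contributions such as $\#\{(a,b) \in G_i \times G_j : a + b = dj'\}$. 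I would handle the diagonal contribution by rewriting $\#\{r \in G_i : r > dj'\} = \delta_i - \#G_i \cap [0, dj'+1)$ and bounding it via Theorem~\ref{thm:BL-main2}, which controls exactly the minimum of $\sum_i \#S_i \cap [0, k_i)$ over partitions $\sum k_i = dj'+1$.

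The hard part will be the convolution terms. Theorem~\ref{thm:BL-main2} produces only a single scalar constraint per index $j$, and it cannot see the internal structure of the individual gap sets that governs $\#\{(a,b) \in G_i \times G_j : a+b = dj'\}$. This is precisely where I expect the argument to fail: one can arrange the diagonal piece to be tight against the BL-main2 bound while the convolutions contribute strictly positive excess, pushing $c_j$ above $\tfrac12(j+1)(j+2)$. This is the mechanism underlying the Bodn\'ar--N\'emethi counterexample, and it indicates that any complete proof (or honest reformulation) of the inequality for $n \ge 2$ would require obstructions finer than the $d$-invariant---most plausibly the involutive Heegaard Floer theory of~\cite{HM} exploited in~\cite{BoHo}, which can distinguish configurations that share the same multiplicity sequence and hence escape the BL-main2 obstruction.
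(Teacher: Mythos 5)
You have correctly diagnosed the situation: the statement is a \emph{conjecture}, not a theorem, and the paper offers no proof of it --- indeed it exhibits a counterexample. Your unicuspidal computation is correct and is exactly the route the paper intends (it is left there as a problem that for one singular point the conjecture is equivalent to Theorem~\ref{thm:BL-main1}): the identity $Q(t)=\sum_{k\ge 0}\#\{r\in G\colon r>k\}\,t^k$, combined with $\#G\cap[0,dj'+1)=dj'+1-\frac12(j'+1)(j'+2)$ from Theorem~\ref{thm:BL-main1} and $\#G=\delta=\frac12(d-1)(d-2)$, does reduce to $c_j=\frac12(j+1)(j+2)$ after substituting $j'=d-3-j$. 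Your prediction that the general inequality cannot be extracted from Theorem~\ref{thm:BL-main2} is also essentially the paper's own conclusion: Conjecture~\ref{conj:flmn} is \emph{false} once $C$ has three or more singular points, the explicit counterexample (elaborated in \cite{BodNe}) being a degree-$8$ curve with cusps $(6;7)$, $(2;9)$ and $(2;5)$.

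Two refinements. First, your pessimism about the convolution terms is slightly too broad: for $n=2$ the conjecture \emph{does} follow from Theorem~\ref{thm:BL-main2}, by \cite{BodNe,NaPi}, so the cross terms $\#\{(a,b)\in G_i\times G_j\colon a+b=dj'\}$ can in fact be controlled when there are only two cusps; the genuine breakdown occurs only at $n\ge 3$. Second, be careful not to conflate two distinct phenomena. The Bodn\'ar--N\'emethi result on multiplicity sequences shows that the counting function $\min_{\sum i_j=k}\sum_j\#S_j\cap[0,i_j)$ cannot distinguish configurations sharing a multiplicity sequence; this limits the power of Theorem~\ref{thm:BL-main2} as an \emph{existence obstruction}, and it is for that problem that involutive Heegaard Floer theory \cite{HM,BoHo} is invoked. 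The failure of Conjecture~\ref{conj:flmn} is a different matter: it is witnessed by a curve that actually exists and whose $c_j$ exceeds $\frac12(j+1)(j+2)$, so no refinement of the obstruction theory can rescue the inequality as stated.
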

\begin{problem}
Show that $\delta$ in the statement of Conjecture~\ref{conj:flmn} is always equal to $\frac12(d-1)(d-2)$.
\end{problem}
Before we discuss the relation of Conjecture~\ref{conj:flmn} to Theorem~\ref{thm:BL-main2} in greater detail, let us first say
something about the motivation of the conjecture. Namely, in a series of papers, N\'emethi and Nicolaescu studied
the relation of the Seiberg--Witten invariants of normal surface singularities and their geometric genus $p_g$. In \cite{NN1} they stated
a conjecture, called the Seiberg--Witten invariant conjecture. The conjecture was verified
 for many families of surface singularities in \cite{NN1,NN2,NN3}.
However, in \cite{LN} it was shown that superisolated surface singularities are expected to satisfy the opposite inequality to the one conjectured
by N\'emethi and Nicolaescu. Superisolated surface singularities were introduced by Luengo in \cite{Lue} and are tightly related
to rational cuspidal curves. In fact, each rational cuspidal curve $C$ gives rise to a superisolated surface singularity whose link
is $S^3_{-d}(K)$, where $d$ is the degree of the curve $C$ and $K$ is the connected sum of links of singular points of $C$.
Conjecture~\ref{conj:flmn} arose as a translation the Seiberg--Witten invariant conjecture for superisolated surface singularities
into the language of rational cuspidal curves.
\begin{remark}
It is no surprise that the Alexander polynomial of $K$ appears in the context of a conjecture related to Seiberg--Witten invariants
of the link $S^3_{-d}(K)$. In fact, the relation of Seiberg--Witten invariants with the Reidemeister--Turaev torsion (see \cite{Tur2} and
references therein) allows to calculate the Seiberg--Witten invariants of $S^3_{-d}(K)$ from the Alexander polynomial of $K$; see e.g. \cite[Formula (3)]{FLMN06}.
\end{remark}

Now we pass to the relations of Conjecture~\ref{conj:flmn} to Theorem~\ref{thm:BL-main2}. We begin with the easy case.
\begin{problem}
Prove that if $C$ has precisely one singular point, then Conjecture~\ref{conj:flmn} is equivalent to Theorem~\ref{thm:BL-main2}.
\end{problem}
The case that $C$ has two singular points is more complicated.
\begin{theorem}[see \cite{BodNe,NaPi}]
If $C$ has two singular points, then Conjecture~\ref{conj:flmn} follows from Theorem~\ref{thm:BL-main2}.
\end{theorem}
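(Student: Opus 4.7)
The plan is to translate both the left-hand side $c_j$ of the conjecture and the minimum appearing in Theorem~\ref{thm:BL-main2} into counting problems over the semigroups $S_1, S_2$ of the two singular points, and then to conclude with a one-line pigeonhole.

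First I would unpack $c_j$. Write $G_i = \ZZ_{\ge 0}\setminus S_i$ and $g_i = |G_i|$ for the gap set and genus of the $i$-th cusp, and set $R_i(t) = \sum_{r \in G_i} t^r$. By Theorem~\ref{thm:semalex}, $\Delta_{K_i}(t) = 1 + (t-1)R_i(t)$, and multiplicativity of the Alexander polynomial under connected sum gives
\[\Delta_K(t) = \prod_{i=1}^{2}\bigl(1 + (t-1)R_i(t)\bigr) = 1 + (t-1)(R_1+R_2) + (t-1)^2 R_1 R_2,\]
so $Q(t) = R_1(t)R_2(t)$ and
\[c_j = \#\{(r_1,r_2)\in G_1\times G_2 : r_1 + r_2 = d(d-3-j)\}.\]
Next I would invoke the symmetry statement in Theorem~\ref{thm:semigroup}, which gives a bijection $G_i \leftrightarrow S_i \cap [0, 2g_i-1]$ via $r\mapsto 2g_i - 1 - r$. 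Setting $k_i := 2g_i - 1 - r_i$ and noting that $g_1 + g_2 = g = \tfrac{1}{2}(d-1)(d-2)$ (by Theorem~\ref{thm:genusformula}, since $C$ is rational), the constraint $r_1+r_2 = d(d-3-j)$ becomes $k_1 + k_2 = 2g - 2 - d(d-3-j) = dj$. Hence
\[c_j = \#\{(k_1, k_2) \in S_1 \times S_2 : k_1 + k_2 = dj,\ 0 \le k_i \le 2g_i - 1\}.\]

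The final step is elementary. Fix any $k_1', k_2' \ge 0$ with $k_1' + k_2' = dj + 1$. If $(k_1,k_2)$ is counted in $c_j$, then $k_1 + k_2 = dj < k_1' + k_2'$, so one cannot have both $k_1 \ge k_1'$ and $k_2 \ge k_2'$; therefore $k_1 \in S_1 \cap [0, k_1')$ or $k_2 \in S_2 \cap [0, k_2')$. Since $k_1 + k_2 = dj$ is fixed, the pair is recovered from whichever coordinate falls in the respective interval, so
\[c_j \le \#\bigl(S_1 \cap [0,k_1')\bigr) + \#\bigl(S_2 \cap [0,k_2')\bigr).\]
Minimising the right-hand side over admissible $(k_1', k_2')$ and invoking Theorem~\ref{thm:BL-main2} (applicable since $j \le d-3 < d-2$) yields $c_j \le \tfrac{1}{2}(j+1)(j+2)$, which is exactly the claim of Conjecture~\ref{conj:flmn} for $n=2$.

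The main obstacle is conceptual rather than technical: setting up the translation correctly. One must realise that the pigeonhole compares a constraint $k_1 + k_2 = dj$ against partitions of $dj + 1$ (not $dj$)—this off-by-one is precisely what makes the argument work—and that the upper bound $k_i \le 2g_i - 1$ inherited from the symmetry involution can be dropped harmlessly since only an inequality is required. All of the geometric content—the fact that links of cusps are $L$-space knots, the rational homology ball bounded by $\partial N$, the $d$-invariant calculation—is already packaged into Theorem~\ref{thm:BL-main2}, so the proof reduces to the combinatorial manipulation above.
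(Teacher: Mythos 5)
Your reduction breaks down at the first step, in the identification of $Q(t)$. In Conjecture~\ref{conj:flmn} the quantity $\delta$ is a \emph{constant} (equal to $\frac12(d-1)(d-2)$), so $Q$ is determined by the expansion of $\Delta_K$ in powers of $(t-1)$. Writing $\Delta_{K_i}=1+(t-1)R_i(t)$ with $R_i(t)=\sum_{r\in G_i}t^r$ and then $R_i(t)=g_i+(t-1)\tilde R_i(t)$, multiplicativity under connected sum gives
\[
\Delta_K(t)=1+(t-1)(g_1+g_2)+(t-1)^2\bigl(\tilde R_1(t)+\tilde R_2(t)+R_1(t)R_2(t)\bigr),
\]
so $Q=\tilde R_1+\tilde R_2+R_1R_2$, not $R_1R_2$. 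The coefficient of $\tilde R_i$ at $t^m$ equals $\#\{r\in G_i\colon r>m\}=\#\bigl(S_i\cap[0,2g_i-1-m)\bigr)$ by the semigroup symmetry of Theorem~\ref{thm:semigroup}, and these omitted terms are not negligible: for $j=d-3$ (so $m=0$) your formula predicts $c_{d-3}=0$, whereas in fact $c_{d-3}=g_1+g_2=\frac12(d-1)(d-2)$, which exactly \emph{saturates} the conjectured bound $\frac12(j+1)(j+2)$. Concretely, for the quintic with cusps $(2;5)$ and $(2;9)$ one has $c_2=6$ while the $R_1R_2$ term contributes $0$.

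Your pigeonhole injection is correct for the quantity it addresses, namely it bounds $\#\{(k_1,k_2)\in S_1\times S_2\colon k_1+k_2=dj\}$ by $\min_{k_1'+k_2'=dj+1}\bigl(\#S_1\cap[0,k_1')+\#S_2\cap[0,k_2')\bigr)$. But the theorem requires bounding the \emph{sum} of the three nonnegative terms
\[
\#\bigl(S_1\cap[0,dj+1-2g_2)\bigr)+\#\bigl(S_2\cap[0,dj+1-2g_1)\bigr)+\#\{(k_1,k_2)\in S_1\times S_2\colon k_1+k_2=dj,\ k_i\le 2g_i-1\}
\]
by that same minimum, and bounding each term separately cannot work: for $j=d-3$ the first two terms alone already equal the bound, so a single injection handling all three contributions at once is needed. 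This combinatorial statement is the actual content of the references \cite{NaPi} and \cite[Section~5]{BodNe} that the paper cites in place of a proof, and it is genuinely more delicate than your one-line argument. Your overall strategy --- translate $c_j$ into semigroup counting via Theorem~\ref{thm:semalex} and the symmetry involution, then compare with Theorem~\ref{thm:BL-main2} --- is the right one (and does recover the one-cusp case, where $Q=\tilde R$ and the identity $c_j=\#S\cap[0,jd+1)$ holds), but the core of the two-cusp argument is missing.
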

However, if $C$ has three or more singular points, Conjecture~\ref{conj:flmn} is false. The following example is elaborated in \cite{BodNe}.
\begin{problem}
Let $C$ be a rational cuspidal curve of degree $8$ with singular points $(6;7)$, $(2;9)$ and $(2;5)$. Prove that $C$ violates
Conjecture~\ref{conj:flmn}.
\end{problem}


\begin{thebibliography}{999}

\bibitem{Baa} S.~Baader, P.~Feller, L.~Lewark, L.~Liechti, \emph{On the topological 4-genus of torus knots}, preprint 2015, arxiv:1509.07634,
to appear in Trans. Amer. Math. Soc.

\bibitem{Bod} J.~Bodn\'ar, \emph{Classification of rational unicuspidal curves with two Newton pairs}, Acta Math. Hungar. \textbf{148} (2016), no. 2, 294--299.

\bibitem{BodNe} J.~Bodn\'ar, A.~N\'emethi, \textit{Lattice cohomology and rational cuspidal curves},  Math. Res. Lett. \textbf{23} (2016), no. 2, 339--375. 

\bibitem{BoGo} M.~Borodzik, E.~Gorsky, \textit{Immersed concordances of links and Heegaard Floer homology},
preprint 2016, arxiv:1601.07507, to appear in Indiana Univ. Math. J.
   
\bibitem{BH} M.~Borodzik, M.~Hedden, \textit{The $\Upsilon$ function of L--space knots is a Legendre transform}, preprint 2015, arxiv:1505.06672. 

\bibitem{BoHo} M.~Borodzik, J.~Hom, \textit{Involutive Heegaard Floer homology and rational cuspidal curves}, preprint 2016,
arxiv:1609.08303.

\bibitem{BL} M.~Borodzik, C.~Livingston,  \textit{Heegaard Floer homologies and rational cuspidal curves}, Forum of Math. Sigma, \textbf{2} (2014), e28, 23 pages.

\bibitem{BM} M.~Borodzik, T.~Moe, \emph{Topological obstructions for rational cuspidal curves in Hirzebruch surfaces}, preprint,
arxiv:1410.4464, to appear in Michigan Math. Journal.
  
\bibitem{BK} E. Brieskorn, H. Kn\"orrer, \textit{Plane Algebraic Curves}, Birkh\"auser, Basel, 1986.
 
\bibitem{Bu} W.~Burau, \textit{Kennzeichnung der Schlauchknoten}, Abhandlungen aus dem Mathematischen Seminar der Universit\"at Hamburg \textbf{9} (1932), 125--133.

\bibitem{CDG}
A.~Campillo, F.~Delgado, S.~Gusein-Zade, 
\emph{The Alexander polynomial of a plane curve singularity via the ring of functions on it},
Duke Math. J. \textbf{117} (2003), no. 1, 125--156. 

\bibitem{Cerf} J.~Cerf, \emph{La stratification naturelle des espaces de fonctions diff\'erentiables r\'eelles et le th\'eor\`eme 
de la pseudo-isotopie}, Inst. Hautes \'Etudes Sci. Publ. Math. No. \textbf{39} (1970), 5--173.

\bibitem{Cool28} J.~Coolidge, \emph{A treatise on plane algebraic curves}, Oxford Univ. Press, Oxford, 1928.

\bibitem{EN} D. Eisenbud, W. Neumann, \textit{Three-dimensional link theory and invariants of plane curve singularities}, Annals Math. Studies \textbf{110}, Princeton University Press, Princeton, 1985.

\bibitem{FK} P.~Feller, D.~Krcatovich, \emph{On cobordisms between knots, braid index, and the Upsilon-invariant},
preprint 2016, arxiv:1602.02637.

\bibitem{FLMN04} J. Fern\'andez de Bobadilla, I. Luengo, A.~Melle-Hern\'andez, A.~N\'emethi, \emph{Classification of rational unicuspidal projective curves whose singularities have one Puiseux pair}, Proceedings of S\~ao Carlos Workshop 2004 Real and Complex Singularities, Series Trends in Mathematics, Birkh\"auser 2007, 31--46.

\bibitem{FLMN06} J. Fern\'andez de Bobadilla, I. Luengo, A.~Melle-Hern\'andez, A.~N\'emethi, \emph{On rational cuspidal projective plane curves}, Proc. of London Math. Soc., \textbf{92}  (2006), 99--138. 
 
\bibitem{FlZa95}
H.~Flenner and M.~Zaidenberg, \emph{On a class of rational cuspidal plane curves}, Manuscripta Math. \textbf{89} (1996), no.~4, 439--459. 

\bibitem{Fri} T.~Friedrich, \emph{Dirac operators in Riemannian geometry}, Graduate Studies in Mathematics, 25. American Mathematical Society, Providence, RI, 2000.

\bibitem{Gigi} P.~Ghiggini, \emph{Knot Floer homology detects genus-one fibred knots}, Amer. J. Math. \textbf{130} (2008), no. 5, 1151--1169.
 
\bibitem{GS} R. E. Gompf, A. I. Stipsicz, \textit{4--Manifolds and Kirby Calculus (Graduate Studies in Mathematics)}, American Mathematical Society, 1999.

\bibitem{GLS} G-M.~Greuel, C. Lossen, E.~Shustin, \textit{Introduction to Singularities and Deformations}, Springer--Verlag, Berlin--Heidelberg--New York, 2006.

\bibitem{GH}  J.~Guckenheimer, P.~Holmes, \emph{Nonlinear oscillations, dynamical systems, and bifurcations of vector fields}, Revised and corrected reprint of the 1983 original. Applied Mathematical Sciences, 42. Springer-Verlag, New York, 1990. 

\bibitem{Hart} R.~Hartshorne, \emph{Algebraic geometry}, Graduate Texts in Mathematics, No. 52. Springer-Verlag, New York-Heidelberg, 1977.

\bibitem{hedden1} M.~Hedden, \emph{Notions of positivity and the Ozsváth-Szabó concordance invariant}, J. Knot Theory Ramifications \textbf{19} (2010), no. 5, 617--629.

\bibitem{hedden2} M.~Hedden, \textit{On knot Floer homology and cabling.  II.}, Int. Math. Res. Not. IMRN 2009,  2248--2274.

\bibitem{HM} K.~Hendricks, C.~Manolescu, \emph{Involutive Heegaard Floer homology}, preprint 2015, arxiv:1507.00383, to appear in Duke Math. Journal.

\bibitem{HMZ} K.~Hendricks, C.~Manolescu, I.~Zemke, \emph{A connected sum formula for involutive Heegaard Floer homology}, preprint 2016, arxiv:1607.07499.

\bibitem{Hom0} J.~Hom, \emph{A note on cabling and L-space surgeries}, Algebr. Geom. Topol. \textbf{11} (2011), no. 1, 219--223.

\bibitem{Hom} J.~Hom, \emph{A survey on Heegaard Floer homology and concordance}, preprint 2015, arxiv:1512.00383.

\bibitem{HLW} J.~Hom, T.~Lidman and L.~Watson, \emph{The Alexander invariant, Seifert forms, and categorification}, preprint, arXiv: 1501.04866, to appear
in J. Topology.

\bibitem{Iit} S. Iitaka, \textit{On logarithmic Kodaira dimension of algebraic varietes}, in: `Complex Analysis and Algebraic Geometry' (A collection of papers dedicated to K. Kodaira), Iwanami, 1977, pp. 175--189.

\bibitem{Juh-sut} A.~Juh\'asz, \emph{Holomorphic discs and sutured manifolds},  
Algebr. Geom. Topol. \textbf{6} (2006), 1429--1457.
 
\bibitem{Juh} A.~Juh\'asz,  \emph{A survey of Heegaard Floer homology},  New Ideas in Low Dimensional Topology, World Scientific, 2014, 237--296.

\bibitem{JT} A.~Juh\'asz, D.~Thurston, \emph{Naturality and mapping class groups in Heegaard Floer homology}, preprint 2012, arxiv:1210.4996.

\bibitem{Kas}  H.~Kashiwara, \emph{Fonctions rationnelles de type (0,1) sur le plan projectif complexe},  Osaka J. Math. \textbf{24} 
(1987), no. 3, 521--577. 

\bibitem{Kis} T.~Kishimoto, \emph{Projective plane curves whose complements have logarithmic Kodaira dimension one}, Japan. J. Math.  \textbf{27} (2001), no. 2, 275–310.

\bibitem{KS} K.~Kodaira, D.~Spencer, \emph{On deformations of complex analytic structures. I, II.}, Ann. of Math. \textbf{67} (1958) 328--466.

\bibitem{KP} M.~Koras, K.~Palka, \emph{The Coolidge-Nagata conjecture}, preprint 2015, arxiv:1502.07149.

\bibitem{Krc} D.~Krcatovich, \emph{The reduced knot Floer complex}, Topology Appl. \textbf{194} (2015), 171--201.


\bibitem{KM} P.~Kronheimer, T.~Mrowka, \textit{The genus of embedded surfaces in the projective plane}, Math. Res. Lett. \textbf{1} (1994),   797--808. 

\bibitem{KM-book} P.~Kronheimer, T.~Mrowka, \emph{Monopoles and three--manifolds}, 
New Mathematical Monographs, 10. Cambridge University Press, Cambridge, 2007. 

\bibitem{KuLeTa11} Ç.~Kutluhan, Y.~Lee, C.~H.~Taubes, \textit{$HF = HM$ I: Heegaard Floer homology and Seiberg-Witten Floer homology}, preprint 2011, 
{arxiv:1007.1979v5}

\bibitem{LeeWil} R.~Lee,  D.~Wilczyński, 
\emph{Locally flat 2-spheres in simply connected 4--manifolds},
Comment. Math. Helv. \textbf{65} (1990), no. 3, 388--412.

\bibitem{levine-ruberman} A. Levine and D.Ruberman, {\em Generalized Heegaard Floer correction terms}, Proceedings of the 20th G\"okova Geometry/Topology Conference, 76--96.


\bibitem{Lip} R.~Lipshitz,
\emph{A cylindrical reformulation of Heegaard Floer homology}, Geom. Topol. \textbf{10} (2006), 955--1097. 

\bibitem{TLiu} T.~Liu, \textit{On planar rational cuspidal curves}, Ph.D. thesis, 2014, at M.I.T., available at \url{http://dspace.mit.edu/bitstream/handle/1721.1/90190/890211671.pdf}.

\bibitem{Liu} Y. Liu, \emph{L-space surgeries on links}, to appear in Quant. Topol., arXiv:1409.0075.

\bibitem{Lue} I.~Luengo, \emph{The $\mu$--constant stratum is not smooth}, Invent. Math. \textbf{90} (1987) 139--152. 

\bibitem{LN} I.~Luengo, A. Melle Hern\'andez, A. N\'emethi, \emph{Links and analytic invariants of superisolated singularities}, 
J. Algebraic Geom. 14 (2005) 543--565.

\bibitem{LOT0} R.~Lipshitz, P.~Ozsv\'ath, D.~Thurston, \emph{Bordered Heegaard Floer homology: Invariance and pairing}, preprint,  arXiv:0810.0687.

\bibitem{LOT1} R.~Lipshitz, P.~Ozsv\'ath, D.~Thurston, \emph{Tour of bordered Floer theory}, Proc. Natl. Acad. Sci. USA \textbf{108} (2011), no. 20, 8085--8092.

\bibitem{LOT2} R.~Lipshitz, P.~Ozsv\'ath, D.~Thurston, \emph{Notes on bordered Floer homology}, Contact and symplectic topology, 275--355, 
Bolyai Soc. Math. Stud., 26, J\'anos Bolyai Math. Soc., Budapest, 2014. 

\bibitem{Liv} C.~Livingston,  \textit{Computations of the Ozsv\'ath-Szab\'o knot concordance invariant}, Geom. Topol. \textbf{8} (2004), 735--742. 

\bibitem{Man} C.~Manolescu, \emph{An introduction to knot Floer homology}, preprint 2014, arxiv:1401.7107. To appear in Proceedings of the 2013 SMS Summer School on Homology Theories of Knots and Links. 

\bibitem{MOw} C.~Manolescu, B.~Owens, \emph{A concordance invariant from the Floer homology of double branched covers},
Int. Math. Res. Not. IMRN 2007, no. 20, Art. ID rnm077, 21 pp. 

\bibitem{MOq} C.~Manolescu, P.~Ozsv\'ath, \emph{On the Khovanov and knot Floer homologies of quasi-alternating links},  Proceedings of G\"okova Geometry--Topology 
Conference 2007, 60--81, G\"okova Geometry/Topology Conference (GGT), G\"okova, 2008.  

\bibitem{OM} C.~Manolescu, P.~Ozsv\'ath, \emph{Heegaard Floer homology and integer surgeries on links}, preprint 2010, arxiv:1011.1317.

\bibitem{MaSa89} T.~Matsuoka, F.~Sakai, \emph{The degree of rational cuspidal curves}, Math. Ann. \textbf{285} (1989), 233--247.

\bibitem{Milnor-cob} J.~Milnor, \textit{Lectures on the $h$-cobordism theorem}, Princeton University Press, Princeton, NJ, 1965.

\bibitem{Milnor-sing} J.~Milnor, \textit{Singular points of complex hypersurfaces}, Annals of Mathematics Studies. 61,  Princeton University Press and the University of Tokyo Press, Princeton, NJ, 1968.

\bibitem{Moe08} T. K.~Moe, \emph{Rational cuspidal curves}, Master Thesis, University of Oslo 2008, 
available at arXiv:1511.02691.

\bibitem{Moerat} T. K.~Moe, \emph{Rational cuspidal curves with four cusps on Hirzebruch surfaces}, Le Ma\-te\-ma\-ti\-che Vol. LXIX (2014) Fasc. II, 295--318. doi: 10.4418/2014.69.2.25.

\bibitem{Moe2} T. K.~Moe, \emph{On the number of cusps on cuspidal curves on Hirzebruch surfaces}, Math. Nachrichten. \textbf{288} (2015), 76--88.

\bibitem{Mos} L.~Moser,  \emph{Elementary surgery along a torus knot}, Pacific J. Math. \textbf{38} (1971), 737--745.

\bibitem{Naga60} M.~Nagata, \emph{On rational surfaces. I: Irreducible curves of arithmetic genus 0 or 1}, Mem. Coll. Sci., Univ. Kyoto, Ser. A \textbf{32} (1960), 351--370.

\bibitem{Namba}  M.~Namba, \emph{Geometry of projective algebraic curves}, 
Monographs and Textbooks in Pure and Applied Mathematics, 88. Marcel Dekker, Inc., New York, 1984.
\bibitem{NaPi} P.~Nayar, B.~Pilat, \emph{A note on the rational cuspidal curves}, Bull. Polish Acad. Science., \textbf{62} (2014), no. 2, 117--123.

\bibitem{NN1}
A. N\'emethi, L~Nicolaescu, \emph{Seiberg--Witten invariants and surface singularities}, Geom. Topol. \textbf{6} (2002) 269--328.

\bibitem{NN2} A.~N\'emethi, L.~Nicolaescu, 
\emph{Seiberg–Witten invariants and surface singularities II, singularities with good $\CC^{*}$--action}, 
J. London Math. Soc. \textbf{69} (2004) 593--607. 

\bibitem{NN3} A. N\'emethi, L.~Nicolaescu, \emph{Seiberg--Witten invariants and surface singularities: splicings and cyclic covers}, 
Sel. Math. New Ser. \textbf{11} (2005), 399--451.

\bibitem{Ni} Y.~Ni, \emph{Knot Floer homology detects fibred knots}, Invent. Math. \textbf{170} (2007), no. 3, 577--608.
\emph{Erratum: Knot Floer homology detects fibred knots} Invent. Math. \textbf{177} (2009), no. 1, 235--238.

\bibitem{Ni-thurston} Y.~Ni, \emph{Link Floer homology detects the Thurston norm}, Geom. Topol. \textbf{13} (2009), no. 5, 2991--3019.

\bibitem{NiWu} Y. Ni, Z. Wu, \emph{Cosmetic surgeries on knots in $S^3$}, J. Reine Angew. Math. \textbf{706} (2015), 1--17.

\bibitem{Nic} L.~Nicolaescu, 
\emph{Notes on Seiberg-Witten theory}, Graduate Studies in Mathematics, 28. American Mathematical Society, Providence, RI, 2000. 

\bibitem{Orev02} S.~Orevkov, \emph{On rational cuspidal curves. I. Sharp estimates for degree via multiplicity}, Math. Ann. \textbf{324} (2002), 657--673.

\bibitem{OSS} P.~Ozsv\'ath, A.~Stipsicz, Z. Szab\'o, \textit{Concordance homomorphisms from knot Floer homology}, preprint  2014, {arxiv:1407.1795}.

\bibitem{OSS-book} P.~Ozsv\'ath, A.~Stipsicz, Z.~Szab\'o, \emph{Grid homology for knots and links}, 
Mathematical Surveys and Monographs, 208. American Mathematical Society, Providence, RI, 2015.

\bibitem{OzSz-absolute} P.~Ozsv\'ath, Z.~Szab\'o,  \textit{Absolutely graded Floer homologies and intersection forms for four--manifolds with boundary}, Adv. Math. \textbf{173} (2003), 179--261.

\bibitem{OS-alternating} P.~Ozsv\'ath, Z.~Szab\'o, \emph{Heegaard Floer homology and alternating knots}, Geom. Topol. \textbf{7} (2003), 225--254.                                          
\bibitem{OS-fourball} P.~Ozsv\'ath, Z.~Szab\'o, \textit{Knot Floer homology and the four-ball genus}, Geom. Topol. \textbf{7} (2003), 615--639.

\bibitem{os-threemanifold} P.~Ozsv{\'a}th, Z.~Szab{\'o}, {\em Holomorphic disks and topological invariants for closed three-manifolds}, Ann. of Math. (2) {\bf 159} (2004), 1027--1158.

\bibitem{os-threemanifoldapps} P.~Ozsv{\'a}th, Z.~Szab{\'o},  {\em Holomorphic disks and three manifold invariants: properties and applications},   Ann. of Math. (2) {\bf 159} (2004),  1159--1245. 

\bibitem{OzSz-knot} P.~Ozsv\'ath, Z.~Szab\'o, \textit{Holomorphic disks and knot invariants}, Adv. Math. \textbf{186} (2004), 58--116.

\bibitem{OzSz-genus}   P.~Ozsv\'ath, Z.~Szab\'o,  \textit{Holomorphic disks and genus bounds}, Geom. Topol. \textbf{8} (2004), 311--334. 
                           
\bibitem{OzSz-lspace} P.~Ozsv\'ath,  Z.~Szab\'o, \textit{On knot Floer homology and lens space surgeries}, Topology, {\bf 44} (2005), 1281--1300.
                       
\bibitem{OzSz-cover} P.~Ozsv\'ath, Z.~Szab\'o, \emph{On the Heegaard Floer homology of branched double-covers}, Adv. Math. \textbf{194} (2005), no. 1, 1--33.

\bibitem{OzSz-triangles} P.~Ozsv\'ath, Z.~Szab\'o,
 \emph{Holomorphic triangles and invariants for smooth four-manifolds}, Adv. Math. \textbf{202} (2006), no. 2, 326--400.

\bibitem{OzSz-intro1} P.~Ozsv\'ath, Z.~Szab\'o,  \textit{An introduction to Heegaard Floer homology}, in:  \textit{Floer homology, gauge theory, and low-dimensional topology}, 3--27, Clay Math. Proc., 5, Amer. Math. Soc., Providence, RI, 2006.

\bibitem{OzSz-intro2} P.~Ozsv\'ath, Z.~Szab\'o,  \textit{Lectures on Heegaard Floer homology}, in:  \textit{Floer homology, gauge theory, and low-dimensional topology}, 29--70, Clay Math. Proc., 5, Amer. Math. Soc., Providence, RI, 2006.

\bibitem{OzSz-integer} P.~Ozsv\'ath, Z. Szab\'o, \emph{Knot Floer homology and integer surgeries}, Algebr. Geom. Topol. \textbf{8} (2008), no. 1, 101--153.

\bibitem{OzSz11} P.~Ozsv\'ath, Z. Szab\'o, \textit{Knot Floer homology and rational surgeries}, Algebr. Geom. Topol. \textbf{11} (2011), 1--68.

\bibitem{Pal-fin} K.~Palka, \emph{Cuspidal curves, minimal models and Zaidenberg's finiteness conjecture}, J. Reine Angew. Math (Crelle's Journal), preprint 2016, arxiv:1405.5346.

\bibitem{Pal}
K.~Palka, \emph{The Coolidge-Nagata conjecture, part I}, Adv. Math. \textbf{267} (2014), 1--43. 

\bibitem{PalPel}
K.~Palka, T.~Pe\l{}ka, \emph{Classification of planar rational cuspidal curves. I. $\CC^{**}$-fibrations}, preprint 2016, arxiv:1609.03992.

\bibitem{Per}
T.~Perutz,
\emph{Hamiltonian handleslides for Heegaard Floer homology},  Proceedings of G\"okova Geometry-Topology Conference 2007, 15--35, 
G\"okova Geometry/Topology Conference (GGT), G\"okova, 2008. 

\bibitem{Pts} T.~Peters,  {\em A concordance invariant from the Floer homology of $\pm 1$ surgeries}, preprint 2010, arxiv:1003.3038.

\bibitem{Piontkowski}
J.~Piontkowski, \emph{On the number of cusps of rational cuspidal plane
  curves}, Experiment. Math. \textbf{16} (2007), no.~2, 251--255.

\bibitem{RA} J.~Ram\'i{}rez Alfons\'i{}n,  \textit{The Diophantine Frobenius problem}, Oxford Lecture Series in Mathematics and its Applications 30, Oxford University Press, Oxford, 2005.

\bibitem{Ras03} J.~Rasmussen, \emph{Floer homology and knot complements}, Harvard thesis, 2003, available at arxiv:math/0306378.

\bibitem{Rolfsen} D. Rolfsen, \emph{Knots and links, Publish or Perish}, 1976.

\bibitem{RS93}  J.~Robbin, D.~Salamon, \emph{The Maslov index for paths}, Topology \textbf{32} (4) (1993), 827--844.

\bibitem{Rud} L.~Rudolph, 
\emph{Quasipositivity as an obstruction to sliceness},
Bull. Amer. Math. Soc. (N.S.) \textbf{29} (1993), no. 1, 51--59. 

\bibitem{Sar} 
S.~Sarkar, \emph{Moving basepoints and the induced automorphisms of link Floer homology}, Algebr. Geom. Topol. \textbf{15} (2015), no. 5, 
2479--2515.                      

\bibitem{Scor} A.~Scorpan, \emph{The wild world of 4--manifolds}, American Mathematical Society, Providence, RI, 2005.

\bibitem{TonoR} K.~Tono, \emph{Rational unicuspidal plane curves with $\ol{\kappa}=1$}, Newton polyhedra and singularities (Kyoto, 2001). S\=urikaisekikenky\=usho K\=oky\=uroku No. 1233 (2001), 82--89.

\bibitem{Tono05} K.~Tono, \emph{On the number of cusps of cuspidal plane curves}, Math. Nachr. \textbf{278} (2005), 216--221.
\bibitem{Ts} S.~Tsunoda, \emph{The complements of projective plane curves}, RIMS-K\^oky\^uroku, \textbf{446} (1981), 48--56, available
at
\url{http://www.kurims.kyoto-u.ac.jp/~kyodo/kokyuroku/contents/pdf/0446-06.pdf}.

\bibitem{Tur97} V.~Turaev, \emph{Torsion invariants of $\Spin^c$--structures on 3--manifolds}, Math. Res. Lett., \textbf{4} (5) (1997), 679--695.
 
\bibitem{Tur1} V.~Turaev, \emph{Introduction to combinatorial torsions}, Notes taken by Felix Schlenk. 
Lectures in Mathematics ETH Z\"urich. Birkh\"auser Verlag, Basel, 2001.

\bibitem{Tur2} V.~Turaev, \emph{Torsions of 3--dimensional manifolds}, Progress in Mathematics, 208. Birkh\"auser Verlag, Basel, 2002.

\bibitem{Wak} I. Wakabayashi, \textit{On the logarithmic Kodaira dimension of the complement of a curve in} $\mathbb{P}^2$, Proc. Japan Acad. Ser. A. Math. Sci. \textbf{54} (1978), 157--162.

\bibitem{Wa} C.~Wall, \textit{Singular Points of Plane Curves}, London Mathematical Society Student Texts, 63. Cambridge University Press, Cambridge, 2004.

\bibitem{Za} O.~Zariski, \textit{On the topology of algebroid singularities}, Amer. J. Math.  \textbf{54} (1932),   453--465. 

\bibitem{Zarbook} O.~Zariski, \emph{Algebraic surfaces}, With appendices by S. Abhyankar, J. Lipman and D. Mumford. Preface to the appendices by Mumford. Reprint of the second (1971) edition. Classics in Mathematics. Springer-Verlag, Berlin, 1995.

\bibitem{Zem1} I.~Zemke, \emph{Quasi-stabilization and basepoint moving maps in link Floer homology}, preprint 2016, arxiv:1604.04316.


\bibitem{Zem2} I.~Zemke, \emph{A connected sum formula for involutive link Floer homology}, in preparation.

\bibitem{Zo} H.~Żołądek, \textit{The monodromy group}, Mathematical monographs (new series), \textbf{67}, Birkh\"auser Verlag, Basel, 2006.

\end{thebibliography}
\end{document}